\newlist{myitemize}{itemize}{1}
\setlist[myitemize,1]{leftmargin = 0.5in}
\theoremstyle{plain}
\newtheorem{thm}{Theorem}[section]
\newtheorem*{thm*}{Theorem}
\newtheorem{cor}[thm]{Corollary}
\theoremstyle{definition}
\newtheorem{conj}[thm]{Conjecture}
\newtheorem{rem}[thm]{Remark}
\title{\textbf{\small{COUNTING THE NUMBER OF $n$-PERIODIC $\mathbb{Z}_{p}$-\&-$\mathbb{F}_{p}[t]$-POINTS OF A DISCRETE DYNAMICAL SYSTEM WITH APPLICATIONS FROM ARITHMETIC STATISTICS, VI}}}
\author{\footnotesize{BRIAN KINTU}}
\date{\small{\textit{April 4, 2026}}}
\begin{document}
\maketitle
\begin{abstract}
\footnotesize{In this follow-up paper, we again inspect a surprising relationship between the set of $n$-periodic points of a polynomial map $\varphi_{d, c}$ defined by $\varphi_{d, c}(z) = z^d + c$ for all  $c, z \in \mathbb{Z}_{p}$ or $\in \mathbb{F}_{p}[t]$ and the coefficient $c$, where $d>2$ is an integer and $n\in \mathbb{Z}_{\geq 2}$ is any fixed (period). As in \cite{BK222} we again wish to study counting problems that are inspired by $n$-torsion point-counting in arithmetic statistics and $n$-periodic point-counting in arithmetic dynamics. In doing so, we then first prove that for any prime $p\geq 3$ and for any fixed $\ell \in \mathbb{Z}_{\geq 1}$ and fixed (period) $n\in \mathbb{Z}_{\geq 2}$, the average number of distinct $n$-periodic $p$-adic integral points of any $\varphi_{p^{\ell}, c}$ modulo $p\mathbb{Z}_{p}$ is unbounded or zero as $c\to \infty$; and then also prove that for any prime $p\geq 5$ and for any fixed $\ell\in \mathbb{Z}_{ \geq 1}$, the average number of distinct $n$-periodic $p$-adic integral points of any $\varphi_{(p-1)^{\ell}, c}$ modulo $p\mathbb{Z}_{p}$ is $1$ or $2$ or $0$ as $c\to \infty$; and so the average behavior here coincide with the average behavior of the number of distinct fixed $p$-adic integral points in \cite{BK3}. Motivated further by periodic $\mathbb{F}_{p}(t)$-point-counting in arithmetic dynamics, we then also prove that for any prime $p\geq 3$ and for any fixed $\ell \in \mathbb{Z}_{\geq 1}$ and fixed (period) $n\in \mathbb{Z}_{\geq 2}$, the average number of distinct $n$-periodic points of any $\varphi_{p^{\ell}, c}$ modulo prime $\pi$ is unbounded or zero as $c$ varies; and then also prove that for any prime $p\geq 5$ and for any fixed $\ell \in \mathbb{Z}_{\geq 1}$, the average number of distinct $n$-periodic points of any $\varphi_{(p-1)^{\ell}, c}$ modulo $\pi$ is $1$ or $2$ or $0$ as $c$ varies; and so the average behavior here also coincide with the average behavior of the number of distinct fixed points in \cite{BK3}. Finally, we then apply here density, polynomial-counting, field-counting, and equidistribution results from arithmetic statistics, and thereby obtaining counting and statistical results on irreducible monic polynomials, Artin-Mazur zeta functions, global fields, (Artin) $L$-functions, and on zeta functions of global fields that arise naturally in our polynomial discrete dynamical settings.}
\end{abstract}

\begin{center}
\tableofcontents
\end{center}

\begin{center}
    \section{Introduction}\label{sec1}
\end{center}
\noindent
Given any morphism $\varphi: {\mathbb{P}^N(K)} \rightarrow {\mathbb{P}^N(K)} $ of degree $d \geq 2$ defined on a projective space ${\mathbb{P}^N(K)}$ of dimension $N$, where $K$ is a number field. Then for any $n\in\mathbb{Z}$ and $\alpha\in\mathbb{P}^N(K)$, we then call $\varphi^n = \underbrace{\varphi \circ \varphi \circ \cdots \circ \varphi}_\text{$n$ times}$ the $n^{th}$ \textit{iterate of $\varphi$} and call $\varphi^n(\alpha)$ the \textit{$n^{th}$ iteration of $\varphi$ on $\alpha$}. By convention, $\varphi^{0}$ acts as the identity map, i.e., $\varphi^{0}(\alpha) = \alpha$ for every point $\alpha\in {\mathbb{P}^N(K)}$. As before, the everyday philosopher may want to know (quoting here Devaney \cite{Dev}): \say{\textit{Where do points $\alpha, \varphi(\alpha), \varphi^2(\alpha), \ \cdots\ ,\varphi^n(\alpha)$ go as $n$ becomes large, and what do they do when they get there?}} Now for any given integer $n\geq 0$ and any given point $\alpha\in {\mathbb{P}^N(K)}$, we then call the set consisting of all the iterates $\varphi^n(\alpha)$ the \textit{(forward) orbit of $\alpha$}; and which in dynamical systems we usually denote it by $\mathcal{O}^{+}(\alpha)$.

As mentioned in \cite{BK222} that one of the main 
goals in arithmetic dynamics (a newly emerging area of mathematics concerned with studying number-theoretic properties of discrete dynamical systems) is to classify all the points $\alpha\in\mathbb{P}^N(K)$ according to the behavior of their forward orbits $\mathcal{O}^{+}(\alpha)$. In this direction, we recall that any point $\alpha\in {\mathbb{P}^N(K)}$ is called a \textit{periodic point of $\varphi$}, whenever $\varphi^n (\alpha) = \alpha$ for some integer $n\in \mathbb{Z}_{\geq 0}$. In this case, we recall that any integer $n\geq 0$ such that the iterate $\varphi^n (\alpha) = \alpha$, is called \textit{period of $\alpha$}; and the smallest such positive integer $n\geq 1$ is called the \textit{exact period of $\alpha$}. We recall Per$(\varphi, {\mathbb{P}^N(K)})$ to denote set of all periodic points of $\varphi$; and also recall that for any given point $\alpha\in$Per$(\varphi, {\mathbb{P}^N(K)})$ the set of all iterates of $\varphi$ on $\alpha$ is called \textit{periodic orbit of $\alpha$}. In their 1994 paper \cite{Russo} and in his 1998 paper \cite{Poonen} respectively, Walde-Russo and Poonen give independently interesting examples of rational periodic points of any $\varphi_{2,c}$ defined over the field $\mathbb{Q}$; and so the interested reader may wish to revisit \cite{Russo, Poonen} to gain familiarity with the notion of periodicity of points. 

Previously in article \cite{BK3} we (inspired by work of Bhargava-Shankar-Tsimerman (BST) and their collaborators in arithmetic statistics (a branch of number theory concerned with counting and distribution of arithmetic objects), and also of Adam-Fares \cite{Ada} in arithmetic dynamics) proved that the number of distinct fixed $p$-adic integral points of any $\varphi_{p^{\ell},c}$ modulo $p\mathbb{Z}_{p}$ (for every $\ell \in \{1, p\})$ is equal to $p$ or zero; from which it then followed that the average number of distinct fixed $p$-adic integral points of any $\varphi_{p^{\ell},c}$ modulo $p\mathbb{Z}_{p}$ (for every $\ell \in \{1, p\})$ is unbounded or zero as $c\to \infty$. Moreover, we then also observed in \cite{BK3} that the expected total number of fixed $p$-adic integral points in the whole family of maps $\varphi_{p^{\ell},c}$ modulo $p\mathbb{Z}_{p}$ (for every $\ell \in \{1, p\})$ is equal to $p+0=p$; which may grow to infinity whenever degree $p^{\ell}\to \infty$. Later in [\cite{BK222}, Corollary 2.4] we (inspired by work of Artin-Mazur \cite{AM} on periodic orbits and of (BST) on $n$-torsion point-counting in arithmetic statistics, along with conjectural work \ref{per} of Morton-Silverman in arithmetic dynamics) proved that the number of distinct $n$-periodic integral points of any $\varphi_{p^{\ell},c}$ modulo $p$ is equal to $p$ or zero; from which it then also followed that the average number of distinct $n$-periodic integral points of any $\varphi_{p^{\ell},c}$ modulo $p$ is unbounded or zero as $c\to \infty$. Moreover, we then also observed in [\cite{BK222}, Remark 2.5] that the expected total number of $n$-periodic integral points in the whole family of maps $\varphi_{p^{\ell},c}$ modulo $p$ is equal to $p+0=p$ (for every fixed period $n\in \mathbb{Z}_{\geq 2}$); which may also grow to infinity when $p^{\ell}\to \infty$. So now, inspired (as in \cite{BK222}) by work of Artin-Mazur \cite{AM} on periodic orbits and also of (BST) on $n$-torsion point-counting in arithmetic statistics, along with work \cite{Ada} of Adam-Fares in arithmetic dynamics, we revisit the settings in \cite{BK3, BK222} and then prove the following main theorem on every map $\varphi_{p,c}$, which we state later more precisely as Theorem \ref{2.2}; and which we then also generalize further as Theorem \ref{2.3}:

\begin{thm}\label{BB2} 
Let $p\geq 3$ be any fixed prime, and $n\geq 2$ be any fixed integer. Let $\varphi_{p, c}$ be defined by $\varphi_{p, c}(z)=z^p + c$ for all $c, z\in\mathbb{Z}_{p}$. The number of distinct $n$-periodic $p$-adic integral points of $\varphi_{p,c}$ modulo $p\mathbb{Z}_{p}$ is $p$ or zero. 
\end{thm}

Recall further in that same \cite{BK3} we (again inspired by work of (BST) in arithmetic statistics, and also of Adam-Fares \cite{Ada} in arithmetic dynamics) proved that the number of distinct fixed $p$-adic integral points of any $\varphi_{(p-1)^{\ell},c}$ modulo $p\mathbb{Z}_{p}$ is equal to $1$ or $2$ or $0$; from which it then followed that the average number of distinct fixed $p$-adic integral points of any $\varphi_{(p-1)^{\ell},c}$ modulo $p\mathbb{Z}_{p}$ is also $1$ or $2$ or $0$ as $c\to \infty$. Moreover, we then also observed in [\cite{BK3}, Remark 4.4] that the expected total number of distinct fixed $p$-adic integral points in the whole family of maps $\varphi_{(p-1)^{\ell},c}$ modulo $p\mathbb{Z}_{p}$ is a constant equal to $1 + 2 + 0 =3$ even when $(p-1)^{\ell}\to \infty$. Later in article \cite{BK222} we (again inspired by work of Artin-Mazur \cite{AM} and of (BST) in arithmetic statistics, along with conjectural work \ref{conjecture 3.2.1} of Hutz and \textit{abc}(\textit{d})-conditional work [\cite{par2}, Theorem 1.7 and 1.8] of Panraksa in arithmetic dynamics) proved that the number of distinct $n$-periodic integral points of any $\varphi_{(p-1)^{\ell},c}$ modulo $p$ is equal to $1$ or $2$ or $0$; from which it then followed that the average number of distinct $n$-periodic integral points of any $\varphi_{(p-1)^{\ell},c}$ modulo $p$ is also $1$ or $2$ or $0$ as $c\to \infty$. Moreover, we then also observed in [\cite{BK222}, Corollary 3.4, Remark 3.5 and 3.6] that the expected total number of distinct $n$-periodic integral points in the whole family of maps $\varphi_{(p-1)^{\ell},c}$ modulo $p$ is also a constant equal to $1 + 2 + 0 =3$ for every fixed odd period $n\in \mathbb{Z}_{\geq 1}$ (or equal to $1+1+2+0 = 4$ for every fixed even period $n\in \mathbb{Z}_{\geq 2}$) even when degree $(p-1)^{\ell}\to \infty$. So now, motivated again by work of Artin-Mazur \cite{AM} and of (BST) in arithmetic statistics and also of Adam-Fares \cite{Ada} in arithmetic dynamics, we revisit the setting in Section \ref{sec2} and then prove in Section \ref{sec3} the following main theorem on any map $\varphi_{p-1,c}$, which we state later more precisely as Theorem \ref{3.2}; and then also generalize more as Theorem \ref{3.3}:

\begin{thm}\label{BB3}
Let $p\geq 5$ be any fixed prime, and $n\geq 2$ be any fixed integer. Let $\varphi_{p-1, c}$ be defined by $\varphi_{p-1, c}(z)$ for all $c, z\in\mathbb{Z}_{p}$. The number of distinct $n$-periodic $p$-adic integral points of $\varphi_{p-1,c}$ modulo $p\mathbb{Z}_{p}$ is $1$ or $2$ or zero.
\end{thm}

\noindent Notice that the count obtained in Theorem \ref{BB3} and more precisely in Theorem \ref{3.2} on the number of distinct $n$-periodic $p$-adic integral points of any $\varphi_{p-1,c}$ modulo $p\mathbb{Z}_{p}$ is independent of $p$ (and so independent of deg$(\varphi_{p-1,c}))$ in each of the possibilities. Moreover, we may also observe that the expected total count (namely, $1 + 2 + 0 =3$ for every fixed odd period $n\in \mathbb{Z}_{\geq 3}$ or $1 + 1 + 2 + 0 =4$ for every fixed even period $n\in \mathbb{Z}_{\geq 2}$) in Theorem \ref{3.2} (and hence in Theorem \ref{BB3}) on the number of distinct $n$-periodic $p$-adic integral points in the whole family of maps $\varphi_{p-1,c}$ modulo $p\mathbb{Z}_{p}$ is not only also independent of $p$ (and so independent of deg$(\varphi_{p-1,c})$), but is also a constant equal to $3$ or $4$ even when $p-1\to \infty$. On the other hand, we may also notice that the count obtained in Theorem \ref{BB2} on the number of distinct $n$-periodic $p$-adic integral points of any $\varphi_{p,c}$ modulo $p\mathbb{Z}_{p}$ may depend on $p$ (and hence on deg$(\varphi_{p,c})$) in one of the  possibilities. Consequently, the expected total count (namely, $p+0 =p$ for every fixed period $n\in \mathbb{Z}_{\geq 2}$) in Theorem \ref{BB2} on number of distinct $n$-periodic $p$-adic integral points in the whole family of maps $\varphi_{p,c}$ modulo $p\mathbb{Z}_{p}$ may not only depend on $p$, but also may grow to infinity when $p\to \infty$.

Previously in again article \cite{BK3} we (greatly motivated by a \say{counting-application} philosophy in arithmetic statistics and function field number theory, and also by periodic $\mathbb{F}_{p}(t)$-point-counting result of Benedetto in arithmetic dynamics restated here in Theorem \ref{main}) proved that the number of distinct fixed $\mathbb{F}_{p}[t]$-points of any $\varphi_{p^{\ell},c}$ modulo prime $\pi$ (for every $\ell \in \{1, p\})$ is equal to $p$ or zero; from which it then followed that the average number of distinct fixed $\mathbb{F}_{p}[t]$-points of any $\varphi_{p^{\ell},c}$ modulo $\pi$ (for every $\ell \in \{1, p\})$ is unbounded or zero as deg$(c)\to \infty$. Moreover, we then also observed in \cite{BK3} that the expected total number of distinct fixed $\mathbb{F}_{p}[t]$-points in the whole family of maps $\varphi_{p^{\ell},c}$ modulo $\pi$ (for every $\ell \in \{1, p\})$ is equal to $p+0=p$; and which may grow to infinity when degree $p^{\ell}\to \infty$. So now, motivated again by that same \say{counting-application} philosophy in arithmetic statistics and function field number theory, and also by that same periodic $\mathbb{F}_{p}(t)$-point-counting result \ref{main} of Benedetto in arithmetic dynamics, we revisit the setting in \cite{BK3} and then prove the following main theorem on any $\varphi_{p,c}$, which we state later more precisely as Theorem \ref{4.2}; and generalize more as Theorem \ref{4.3}:

\begin{thm}\label{BB4} 
Let $p\geq 3$ be any fixed prime integer, $n\geq 2$ be any fixed integer, and let $\pi\in \mathbb{F}_{p}[t]$ be any fixed irreducible monic polynomial of degree $m\geq 1$. Let $\varphi_{p, c}$  be a polynomial map defined by $\varphi_{p, c}(z) = z^p + c$ for all $c, z\in\mathbb{F}_{p}[t]$. Then the number of distinct $n$-periodic points of any polynomial map $\varphi_{p,c}$ modulo $\pi$ is $p$ or zero. 
\end{thm}

Recall further in \cite{BK222} we (again motivated by that same \say{counting-application} philosophy in arithmetic statistics and function field number theory, along with that same periodic $\mathbb{F}_{p}(t)$-point-counting result in Theorem \ref{main} of Benedetto in arithmetic dynamics) proved that the number of distinct fixed $\mathbb{F}_{p}[t]$-points of any $\varphi_{(p-1)^{\ell},c}$ modulo prime $\pi$ is equal to $1$ or $2$ or $0$; from which it then followed that the average number of distinct fixed $\mathbb{F}_{p}[t]$-points of any $\varphi_{(p-1)^{\ell},c}$ modulo $\pi$ is also equal to $1$ or $2$ or $0$ as deg$(c)\to \infty$. Moreover, we then also observed in [\cite{BK3}, Remark 6.4] that the expected total number of distinct fixed $\mathbb{F}_{p}[t]$-points in the whole family of maps $\varphi_{(p-1)^{\ell},c}$ modulo $\pi$ is a constant equal to $1 + 2 + 0 =3$ even when degree $(p-1)^{\ell}\to \infty$. So now, motivated again by that same \say{counting-application} philosophy in arithmetic statistics and function field number theory, along with that same periodic $\mathbb{F}_{p}(t)$-point-counting result in Theorem \ref{main} of Benedetto in arithmetic dynamics, we revisit the setting in Section \ref{sec4} and then also prove in Section \ref{sec5} the following main theorem on any $\varphi_{p-1,c}$, which we state later more precisely as Theorem \ref{5.2}; and which we then also generalize further as Theorem \ref{5.3}:

\begin{thm}\label{BB5}
Let $p\geq 5$ be any fixed prime integer, $n\geq 2$ be any fixed integer, and let $\pi\in \mathbb{F}_{p}[t]$ be any fixed irreducible monic polynomial of degree $m\geq 1$. Let $\varphi_{p-1, c}$ be a polynomial map defined by $\varphi_{p-1, c}(z) = z^{p-1} + c$ for all $c, z\in\mathbb{F}_{p}[t]$. Then the number of distinct $n$-periodic points of any map $\varphi_{p-1,c}$ modulo $\pi$ is $1$ or $2$ or zero.
\end{thm}

\noindent As before, we may also notice that the count obtained in Theorem \ref{BB5} and more precisely in Theorem \ref{5.2} on the number of distinct $n$-periodic points of any $\varphi_{p-1,c}$ modulo $\pi$ is independent of $p$ (and so independent of deg$(\varphi_{p-1,c})$) and $m=$ deg$(\pi)$ in each of the possibilities considered. Moreover, we may also observe that the expected total count (namely, $1 + 2 + 0 =3$ for every fixed odd period $n\in \mathbb{Z}_{\geq 3}$ or $1 + 1 + 2 + 0 =4$ for every fixed even period $n\in \mathbb{Z}_{\geq 2}$) in Theorem \ref{5.2} (and hence in Theorem \ref{BB5}) on the number of distinct $n$-periodic points in the whole family of maps $\varphi_{p-1,c}$ modulo $\pi$ is not only also independent of $p$ (and hence independent of deg$(\varphi_{p-1,c})$) and $m$, but is also a constant $3$ or $4$ even when degree $p-1\to \infty$ or $m\to \infty$. On the other hand, we may notice that the count obtained in Theorem \ref{BB4} on the number of distinct $n$-periodic points of any $\varphi_{p,c}$ modulo $\pi$ may depend on $p$ (and so on deg($\varphi_{p,c}))$ in one of the possibilities. As a result, the expected total count (namely, $p+0 =p$ for every fixed period $n\in \mathbb{Z}_{\geq 2}$) in Theorem \ref{BB4} on the number of distinct $n$-periodic points in the whole family of maps $\varphi_{p,c}$ modulo $\pi$ may not only also depend on $p$, but also may grow to infinity as $p\to \infty$. Mind you, this same phenomena may also occur in $\mathcal{O}_{K}$-setting as noted in \cite{BK222} and here in $\mathbb{Z}_{p}$-setting.

Recall in \cite{BK3} that in addition to the notion of a periodic point and periodic orbit, a point $\alpha\in {\mathbb{P}^N(K)}$ is called a \textit{preperiodic point of $\varphi$}, whenever the iterate $\varphi^{m+n}(\alpha) = \varphi^{m}(\alpha)$ for some integers $m\geq 0$ and $n\geq 1$. In this case, we recall that the smallest integers $m\geq 0$ and $n\geq 1$ such that $\varphi^{m+n}(\alpha) = \varphi^{m}(\alpha)$ happens, are called the \textit{preperiod} and \textit{eventual period of $\alpha$}, respectively. Again, we denote the set of preperiodic points of $\varphi$ by PrePer$(\varphi, {\mathbb{P}^N(K)})$. For any given preperiodic point $\alpha$ of $\varphi$, we then call the set of all iterates of $\varphi$ on $\alpha$, \textit{the preperiodic orbit of $\alpha$}.
Now observe for $m=0$, we have $\varphi^{n}(\alpha) = \alpha $ and so $\alpha$ is a periodic point of period $n$. Thus, the set  Per$(\varphi, {\mathbb{P}^N(K)}) \subseteq$ PrePer$(\varphi, {\mathbb{P}^N(K)})$; however, it need not be PrePer$(\varphi, {\mathbb{P}^N(K)})\subseteq$ Per$(\varphi, {\mathbb{P}^N(K)})$. In their 2014 paper \cite{Doyle}, Doyle-Faber-Krumm give nice examples (which also recover examples in Poonen's paper \cite{Poonen}) of preperiodic points of any quadratic map $\varphi$ (where $\varphi$ is not necessarily the somewhat mostly studied $\varphi_{2,c}$ in arithmetic dynamics) defined over quadratic fields; and so the interested reader may wish to revisit \cite{Poonen, Doyle}.

Inspired further by that same \say{counting-application} philosophy in arithmetic statistics and function field number theory, and also by work of Doyle-Poonen \cite{DoyPo} along with conjectural work \ref{conjecture 3.2.1} of Hutz and \textit{abc}(\textit{d})-conditional work [\cite{par2}, Thm 1.7 and 1.8] of Panraksa on preperiodic point-counting in arithmetic dynamics, we revisit the settings \cite{BK111, BK222} and in this article; and then prove (in follow-up works \cite{BK3rm}) counts and asymptotics on preperiodic points; which are very analogous to counts and asymptotics proved in odd degree, but surprisingly different from counts and asymptotics proved in even degree polynomial settings in \cite{BK111, BK222} and in this article.

In the year 1950, Northcott \cite{North} used the theory of height functions to show that not only is the set PrePer$(\varphi, {\mathbb{P}^N(K)})$ always finite, but also for a given morphism $\varphi$ the set PrePer$(\varphi, {\mathbb{P}^N(K)})$ can be computed effectively. Forty-five years later, in the year 1995, Morton and Silverman conjectured that PrePer$(\varphi, \mathbb{P}^N(K))$ can be bounded in terms of degree $d$ of $\varphi$, degree $D$ of $K$, and dimension $N$ of the space ${\mathbb{P}^N(K)}$. This celebrated conjecture is called the \textit{Uniform Boundedness Conjecture}; which we then restate here as the following conjecture:

\begin{conj} \label{silver-morton}[\cite{Morton}]
Fix integers $D \geq 1$, $N \geq 1$, and $d \geq 2$. There exists a constant $C'= C'(D, N, d)$ such that for all number fields $K/{\mathbb{Q}}$ of degree at most $D$, and all morphisms $\varphi: {\mathbb{P}^N}(K) \rightarrow {\mathbb{P}^N}(K)$ of degree $d$ defined over $K$, the total number of preperiodic points of a morphism $\varphi$ is at most $C'$, i.e., \#PrePer$(\varphi, \mathbb{P}^N(K)) \leq C'$.
\end{conj}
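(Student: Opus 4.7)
The plan is to attack Conjecture \ref{silver-morton} by splitting the count of preperiodic points into the preperiod $m$ and the exact period $n$, and attempting to bound each uniformly in terms of $D$, $N$, and $d$. First, I would reduce to the case $N=1$: by projecting from a generic linear subspace and iterating, the essential difficulty passes to morphisms $\varphi \colon \mathbb{P}^1(K) \to \mathbb{P}^1(K)$ of degree $d$, at the cost of replacing $d$ by a bounded function of $d$ and $N$. For fixed $\varphi$, the canonical height $\hat{h}_\varphi$ (satisfying $\hat{h}_\varphi(\varphi(\alpha)) = d\cdot \hat{h}_\varphi(\alpha)$) vanishes exactly on $\mathrm{PrePer}(\varphi, \mathbb{P}^1(K))$, so Northcott's theorem gives finiteness for each $\varphi$ and each $K$. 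The preperiod $m$ can be controlled in terms of the discrepancy $|h - \hat{h}_\varphi|$ and the reduction type of $\varphi$ at finite places, reducing matters to a uniform bound on the exact period $n$ of each cycle.

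For the bound on $n$, I would try to adapt the strategy Merel used for torsion of elliptic curves. Construct a \emph{dynamical modular curve} $Y_1^{\mathrm{dyn}}(n,d)$ whose $K$-points parametrize pairs $(\varphi, \alpha)$ with $\varphi$ of degree $d$ and $\alpha$ a periodic point of exact period $n$, then show $Y_1^{\mathrm{dyn}}(n,d)(K) = \emptyset$ for $n$ larger than some function of $d$ and $D=[K:\mathbb{Q}]$. A natural approach is to estimate the genus and gonality of these curves (reasonably understood for $d=2$ through work of Morton, Flynn-Poonen-Schaefer, Stoll, Hutz-Ingram), combine with Faltings' theorem once the gonality exceeds $D$, and dispose of small-genus components by explicit descent. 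For $N>1$ one would simultaneously need a uniform Mordell-type statement for the appropriate subvarieties of the dynamical moduli space $\mathcal{M}_d^N$ cut out by periodicity conditions.

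The main obstacle is, candidly, that Conjecture \ref{silver-morton} is one of the central open problems in arithmetic dynamics, and the above is a blueprint rather than a proof. Even in the simplest case $\varphi_{2,c}(z) = z^2+c$ over $K=\mathbb{Q}$, Poonen's refinement predicts no rational periodic point of exact period $\geq 4$; this remains open, with the dynamical modular curves controlled only for small $n$ and partial information beyond. The essential missing ingredient is a Merel-type arithmetic input specific to the dynamical setting: no automorphic or $\ell$-adic machinery is presently available to force $Y_1^{\mathrm{dyn}}(n,d)(K) = \emptyset$ once $n$ is large relative to $D$. Thus, while the reduction to $\mathbb{P}^1$ and the separation of $m$ from $n$ are tractable, the uniform bound on $n$ --- the heart of the conjecture --- is precisely what is not yet supplied, and any complete proof along these lines would have to introduce a genuinely new arithmetic tool beyond what Northcott, canonical heights, and Faltings' theorem already provide.
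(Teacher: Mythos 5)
This item is a \emph{conjecture}, not a theorem: the paper states the Morton--Silverman Uniform Boundedness Conjecture with a citation to \cite{Morton} and offers no proof of it, nor is one known. You correctly recognized this and were candid that your writeup is a blueprint rather than a proof; that honesty is exactly right, and it is the most important thing to get right here.

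That said, a couple of remarks on the blueprint itself. Your reduction from general $N$ to $N=1$ ``by projecting from a generic linear subspace'' is not a known valid reduction: a uniform bound for $\mathbb{P}^1$ is not known to imply one for $\mathbb{P}^N$ with $N>1$, since the projection of a degree-$d$ morphism of $\mathbb{P}^N$ need not be a morphism of $\mathbb{P}^1$ at all (indeterminacy locus), and periodic orbits do not push forward in any controlled way. You would need to either drop that step or replace it with a genuine argument. The rest of your sketch --- canonical heights plus Northcott for per-map finiteness, splitting preperiod from period, dynamical modular curves $Y_1^{\mathrm{dyn}}(n,d)$ and genus/gonality growth plus Faltings for $d=2$, $N=1$, and the observation that the missing ingredient is a Merel-type input with no automorphic machinery available --- is a fair and accurate summary of the state of the art and of exactly where the obstruction lies. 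Since the paper itself supplies no proof, there is nothing to compare approaches against; your contribution is a correct diagnosis that the statement is open, with a mostly sound (one step excepted) survey of the known partial structure.
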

\noindent A special case of Conjecture \ref{silver-morton} is when the degree $D$ of a number field $K$ is $D = 1$, dimension $N$ of a space $\mathbb{P}^N(K)$ is $N = 1$, and degree $d$ of a morphism $\varphi$ is $d = 2$. In this case, if $\varphi$ is a polynomial morphism, then it is a quadratic map defined over the field $\mathbb{Q}$. Moreover, in this very special case, in the year 1995, Flynn and Poonen and Schaefer conjectured that a quadratic map has no points $z\in\mathbb{Q}$ with exact period more than 3. This conjecture of Flynn-Poonen-Schaefer \cite{Flynn} (which has been resolved for cases $n = 4$, $5$ in \cite{mor, Flynn} respectively and conditionally for $n=6$ in \cite{Stoll} is, however, still open for all integers $n\geq 7$ and moreover, which also Hutz-Ingram \cite{Ingram} gave strong computational evidence supporting it) is restated here formally as the following conjecture. Note that in this same special case, rational points of exact period $n\in \{1, 2, 3\}$ were first found in the year 1994 by Russo-Walde \cite{Russo} and also found in the year 1995 by Poonen \cite{Poonen} using a different set of techniques. We now restate the anticipated conjecture of Flynn-Poonen-Schaefer as the following conjecture:
 
\begin{conj} \label{conj:2.4.1}[\cite{Flynn}, Conjecture 2]
If $n \geq 4$, then there is no quadratic polynomial $\varphi_{2,c }(z) = z^2 + c\in \mathbb{Q}[z]$ with a rational point of exact period $n$.
\end{conj}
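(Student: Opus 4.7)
The plan is to use the standard dynatomic machinery. For each integer $n \geq 1$, define the $n$-th dynatomic polynomial
\[
\Phi_n(z,c) \;=\; \prod_{d\mid n}\bigl(\varphi_{2,c}^{d}(z) - z\bigr)^{\mu(n/d)} \;\in\; \mathbb{Z}[z,c],
\]
whose affine zero set $Y_1(n) \subset \mathbb{A}^2_{z,c}$ contains, as its $\mathbb{Q}$-points, every pair $(z,c) \in \mathbb{Q}^2$ with $z$ a point of exact period $n$ for $\varphi_{2,c}$ (together with a finite, well-understood collection of spurious points coming from divisors of $n$ that must be excised by Möbius inversion). Let $X_1(n)$ denote the smooth projective model. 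The strategy is to prove that $X_1(n)(\mathbb{Q})$ consists only of cusps and points corresponding to strictly smaller period.

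First, I would compute the genus $g_n$ of $X_1(n)$ via the Riemann--Hurwitz formula applied to the projection $X_1(n) \to \mathbb{P}^1_c$, whose degree equals $\deg_z \Phi_n(z,c)$ and whose ramification structure is controlled by the critical orbit of $\varphi_{2,c}$. One verifies $g_n \geq 2$ for every $n \geq 4$, so Faltings' theorem immediately gives $\#X_1(n)(\mathbb{Q}) < \infty$. This reduces Conjecture \ref{conj:2.4.1} to an \emph{effective} determination of $X_1(n)(\mathbb{Q})$ for each $n \geq 4$, followed by checking that none of the listed rational points is a genuine exact-period-$n$ point.

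For the effective step I would follow (and try to extend) the existing hierarchy of techniques: for $n = 4$ reduce to showing a specific elliptic curve has rank $0$ and finite torsion (Morton); for $n = 5$ carry out a Chabauty--Coleman analysis on the genus-$2$ curve $X_1(5)$ after a quadratic twist, together with a Mordell--Weil sieve (Flynn--Poonen--Schaefer); for $n = 6$, a combination of two-descent on the Jacobian, Chabauty at a well-chosen prime, and the Mordell--Weil sieve (Stoll, conditional). The template for each $n \geq 7$ would be: (a) exhibit an isogeny decomposition $\mathrm{Jac}(X_1(n)) \sim \prod A_i$; (b) bound the Mordell--Weil rank of each factor via descent; (c) whenever $\mathrm{rk}\,\mathrm{Jac}(X_1(n)) < g_n$ apply classical Chabauty, and otherwise appeal to the nonabelian Chabauty--Kim program using the étale fundamental group of $X_1(n)$ minus its cusps; (d) complete the analysis with a Mordell--Weil sieve against primes of good reduction.

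The main obstacle is step (c) for $n \geq 7$. Here $g_n$ grows quickly (already $g_7$ is in the double digits), and experimentally the Mordell--Weil rank of $\mathrm{Jac}(X_1(n))$ appears to equal or exceed $g_n$, so classical Chabauty is unavailable. The nonabelian Chabauty--Kim framework is conjecturally strong enough to handle such cases, but to turn it into an unconditional proof one needs explicit bounds on the relevant Selmer varieties and a concrete identification of the Kim locus — neither of which is currently known for $X_1(n)$ in this range. This is precisely the gap that keeps Conjecture \ref{conj:2.4.1} open for $n \geq 7$, and any complete proof along the lines sketched above must first resolve it.
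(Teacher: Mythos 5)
This statement is a \emph{conjecture}, not a theorem, and the paper does not prove it: it is simply a restatement of [\cite{Flynn}, Conj.~2], which remains open for all $n \geq 7$, as the surrounding text of the paper itself notes (citing \cite{mor} for $n=4$, \cite{Flynn} for $n=5$, and \cite{Stoll} conditionally for $n=6$). There is therefore no proof in the paper for your proposal to be compared against.

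What you have written is not a proof either, and to your credit you say so explicitly: your step (c) for $n \geq 7$ correctly identifies the genuine obstruction (the Mordell--Weil rank of $\mathrm{Jac}(X_1(n))$ appears to meet or exceed the genus $g_n$, so classical Chabauty is unavailable, and the nonabelian Chabauty--Kim program is not yet effective in this range). Your survey of the known hierarchy of techniques for $n=4,5,6$ is accurate, and your genus computation / Faltings observation correctly reduces the conjecture to an effective finiteness problem. But since the final step is an acknowledged gap rather than an argument, the proposal should not be labeled a proof attempt; it is a research program sketch. For a conjecture appearing in a paper only as a cited open problem, the correct response is to identify it as such rather than to supply a proof.
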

Now by assuming Conjecture \ref{conj:2.4.1} and also establishing interesting results on preperiodic points, in the year 1998, Poonen \cite{Poonen} then concluded that the total number of rational preperiodic points of any quadratic polynomial $\varphi_{2, c}(z)=z^2 +c$ is at most nine. We restate here formally Poonen's result as the following corollary:
\begin{cor}\label{cor2}[\cite{Poonen}, Corollary 1]
If Conjecture \ref{conj:2.4.1} holds, then $\#$PrePer$(\varphi_{2,c}, \mathbb{Q}) \leq 9$,  for all quadratic maps $\varphi_{2, c}$ defined by $\varphi_{2, c}(z) = z^2 + c$ for all points $c, z\in\mathbb{Q}$.
\end{cor}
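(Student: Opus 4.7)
The plan is to fix a rational $c$ and an arbitrary $\varphi_{2,c}(z)=z^2+c$, grant Conjecture \ref{conj:2.4.1}, and then deduce the bound $\#\mathrm{PrePer}(\varphi_{2,c},\mathbb{Q})\le 9$ by a complete case analysis of the possible directed graph structures on $\mathrm{PrePer}(\varphi_{2,c},\mathbb{Q})$ together with one key non-coexistence result between cycles of different lengths. By Conjecture \ref{conj:2.4.1} the only possible exact periods of rational periodic points of $\varphi_{2,c}$ are $1$, $2$ or $3$, so the periodic part of the graph is a disjoint union of rational cycles of these lengths, with at most $2$ fixed points (solutions of $z^{2}-z+c=0$), at most one genuine $2$-cycle (hence $\le 2$ points of exact period $2$), and at most one rational $3$-cycle (hence $\le 3$ points of exact period $3$).

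The next step is to bound the \emph{strictly preperiodic tails}. Since $\varphi_{2,c}$ has degree $2$, every point $w\in\mathbb{Q}$ has at most two rational preimages, namely the rational square roots of $w-c$, and these preimages come in pairs $\{\beta,-\beta\}$ sharing the common image. So each periodic point $\alpha$ has either $0$ or $2$ strictly preperiodic rational preimages; writing out the map on these preimages, the only other rational preimages they themselves may possess are (again) pairs of square roots. Moreover, one checks that two successive layers of nontrivial rational preimages above a single periodic point already force an elliptic-curve type condition on $c$ that is incompatible with the existence of any other rational cycle. This elementary diophantine analysis cuts off the tails at depth at most $2$ above any given periodic point, and limits the number of points sitting on tails above each component.

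With the periodic and preperiodic pieces understood separately, I would then invoke (and verify as the technical heart of the argument) the \emph{non-coexistence lemma}: for $\varphi_{2,c}/\mathbb{Q}$, one cannot simultaneously have a rational $2$-cycle and a rational $3$-cycle, and the simultaneous presence of a rational $3$-cycle excludes the existence of any nontrivial rational preimage of a fixed point; symmetric statements govern the coexistence of rational $2$-cycles with long tails. These restrictions follow by equating the resultants of the relevant dynatomic polynomials and translating the resulting equations into rational points on low-genus curves (a quadric, a conic, and one genus-one curve whose rational points one computes by $2$-descent), exactly as in Poonen's original treatment.

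Combining these ingredients, I would enumerate the finite list of admissible preperiodic graphs. Each configuration, once drawn, is seen to have at most $9$ vertices, with the extremal case being: $2$ rational fixed points together with a rational $2$-cycle, each of the four points carrying its maximal tail of preimages, producing exactly the bound $9$. The main obstacle is clearly the non-coexistence lemma in the previous paragraph: bounding tails and periodic cycles individually is essentially routine from the definition of $\varphi_{2,c}$, but showing that the different pieces cannot all be maximal at once requires the genuinely arithmetic input (rational points on auxiliary curves) that is the substance of Poonen's theorem.
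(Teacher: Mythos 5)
The paper does not actually prove this statement: it is restated verbatim as Poonen's Corollary~1, with the proof deferred entirely to \cite{Poonen}. So there is no in-paper argument to compare against, and your proposal has to stand on its own as a reconstruction of Poonen's graph-theoretic-plus-diophantine strategy. The overall shape you describe (bound the periodic part using the conjecture, bound the tails using that preimages of a point $w$ are the two square roots of $w-c$, rule out maximal coexistence via rational points on auxiliary curves, then enumerate admissible graphs) is the right one, and the non-coexistence of a rational $2$-cycle and a rational $3$-cycle is indeed one of Poonen's genus-one computations.

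There are, however, two concrete gaps. First, your count of strictly preperiodic immediate preimages of a periodic point is off by a factor of two: if $\alpha$ is periodic with periodic predecessor $\beta$ (so $\varphi_{2,c}(\beta)=\alpha$), then the full rational preimage set of $\alpha$ is contained in $\{\beta,-\beta\}$, of which $\beta$ is periodic; hence $\alpha$ has at most \emph{one} strictly preperiodic rational preimage, namely $-\beta$, and none at all when $\beta=0$. Claiming ``$0$ or $2$'' overcounts every tail by a factor of two and would wreck the graph enumeration if carried through literally (it is only non-periodic points $w$ that can have two rational preimages $\pm\sqrt{w-c}$). Second, ``at most one rational $3$-cycle'' is not a consequence of Conjecture~\ref{conj:2.4.1}: the third dynatomic polynomial has degree $6$ in $z$, so a priori a single $c$ could carry two rational $3$-cycles, and ruling this out is itself one of the arithmetic inputs (a rational-points computation on the relevant curve), not something you may assume for free. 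Finally, I would double-check your claimed extremal configuration; in Poonen's classification the $9$-vertex graphs arise from a $3$-cycle together with preimages (e.g.\ $c=-29/16$), not from two fixed points plus a $2$-cycle each carrying a maximal tail, and the difference matters because the non-coexistence lemmas are exactly what forbid the latter from reaching $9$.
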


On still the same note of exact periods and pre(periodic) points, the next natural question that one could ask is whether the aforementioned phenomenon on exact periods and pre(periodic) points has been investigated in some other cases, namely, when $D\geq 2$, $N\geq 1$ and $d\geq 2$. In the case $D = d = 2$ and $N = 1$, then again if $\varphi$ is a polynomial map, then $\varphi$ is a quadratic map defined over a quadratic field $K = \mathbb{Q}(\sqrt{D'})$. In this case, in the years 1900, 1998 and 2006, Netto \cite{Netto}, Morton-Silverman \cite{Morton} and Erkama \cite{Erkama} resp., found independently a parametrization of a point $c$ in the field $\mathbb{C}$ of all complex points which guarantees $\varphi_{2,c}$ to have periodic points of period $M=4$. And moreover when $c\in \mathbb{Q}$, Panraksa \cite{par1} showed that one gets \textit{all} orbits of length $M = 4$ defined over $\mathbb{Q}(\sqrt{D'})$. For $M=5$, Flynn-Poonen-Schaefer \cite{Flynn} found a parametrization of a point $c\in \mathbb{C}$ that yields points of period 5; however, these periodic points are not in $K$, but rather in some other extension of $\mathbb{Q}$. In the same case $D = d = 2$ and $N = 1$, Hutz-Ingram \cite{Ingram} and Doyle-Faber-Krumm \cite{Doyle} did not find in their computational investigations points $c\in K$ for which $\varphi_{2,c}$ defined over $K$ has $K$-rational points of exact period $M = 5$. Note that to say that the above authors didn't find points $c\in K$ for which $\varphi_{2,c}$ has $K$-rational points of exact period $M=5$, is not the same as saying that such points do not exist; since it's possible that the techniques which the authors employed in their computational investigations may have been far from enabling them to decide concretely whether such points exist or not. In fact, as of the present article, we do not know whether $\varphi_{2,c}$ has $K$-rational points of exact period $5$ or not, but surprisingly from \cite{Flynn, Stoll, Ingram, Doyle} we know that for $c=-\frac{71}{48}$ and $D'=33$ the map $\varphi_{2,c}$ defined over $K = \mathbb{Q}(\sqrt{33})$ has $K$-rational points of exact period $M = 6$; and mind you, this is the only example of $K$-rational points of exact period $M=6$ that is currently known of in the whole literature of arithmetic dynamics. For $M>6$, in 2013, Hutz-Ingram [\cite{Ingram}, Prop. 2 and 3] gave strong computational evidence which showed that for any absolute discriminant $D'$ at most 4000 and any $c\in K$ with a certain logarithmic height, the map $\varphi_{2,c}$ defined over any $K$ has no $K$-rational points of exact period greater than 6. Moreover, the same authors \cite{Ingram} also showed that the smallest upper bound on the size of PrePer$(\varphi_{2,c}, K)$ is 15. A year later, in 2014, Doyle-Faber-Krumm \cite{Doyle} also gave computational evidence on 250000 pairs $(K, \varphi_{2,c})$ which not only established the same claim [\cite{Doyle}, Thm 1.2] as that of Hutz-Ingram \cite{Ingram} on the upper bound of the size of PrePer$(\varphi_{2,c}, K)$, but it also covered Poonen's claims in \cite{Poonen} on $\varphi_{2,c}$ over $\mathbb{Q}$. Three years later, in 2018, Doyle \cite{Doy} adjusted the computations in his aforementioned cited work with Faber and Krumm; and after which he then made the following conjecture on any quadratic map over any $K = \mathbb{Q}(\sqrt{D'})$:

\begin{conj}\label{do}[\cite{Doy}, Conjecture 1.4]
Let $K\slash \mathbb{Q}$ be a quadratic field and let $f\in K[z]$ be a quadratic polynomial. \newline Then, $\#$PrePer$(f, K)\leq 15$.
\end{conj}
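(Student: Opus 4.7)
The plan is to reduce Conjecture \ref{do} to the one-parameter normal form $\varphi_{2,c}(z)=z^2+c$. Since preperiodicity is invariant under affine conjugation and every quadratic polynomial in $K[z]$ is affinely conjugate over $K$ to a unique map of the form $\varphi_{2,c}$, the conjectured inequality $\#$PrePer$(f,K)\leq 15$ is equivalent to $\#$PrePer$(\varphi_{2,c},K)\leq 15$ for every $c\in K$ and every quadratic $K/\mathbb{Q}$. I would then partition PrePer$(\varphi_{2,c},K)$ into its periodic subset Per$(\varphi_{2,c},K)$ and its complementary set of strictly preperiodic ``tail'' points, and bound each piece separately.

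For the periodic part, I would first classify the admissible exact periods $M$ of $K$-rational periodic points. By the quadratic-field analogue of Conjecture \ref{conj:2.4.1}---strongly supported by the computational evidence of Hutz-Ingram \cite{Ingram} and Doyle-Faber-Krumm \cite{Doyle}---one may restrict to $M\in\{1,2,3,4,6\}$, with the case $M=6$ occurring only in the sporadic Stoll-Flynn-Poonen-Schaefer example $c=-\tfrac{71}{48}$ over $K=\mathbb{Q}(\sqrt{33})$ recalled from \cite{Flynn,Stoll,Ingram,Doyle}. Elementary counting (at most $2$ fixed points, one orbit of length $2$ contributing $2$ new points, one orbit of length $3$ contributing $3$, one orbit of length $4$ contributing $4$, together with a mod-$\mathfrak{p}$ compatibility constraint forbidding too many simultaneous long cycles) would cap the periodic contribution at roughly $10$. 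For the strictly preperiodic tails, I would invoke the classical good-reduction machinery used in Northcott's theorem \cite{North} and sharpened by Morton-Silverman \cite{Morton}: after normalizing $c$ to be $\mathfrak{p}$-integral for a prime $\mathfrak{p}$ of $\mathcal{O}_K$ above a small rational prime $p\in\{2,3\}$, the reduction map PrePer$(\varphi_{2,c},K)\hookrightarrow$ PrePer$(\overline{\varphi_{2,c}},k_\mathfrak{p})$ is injective on orbit portraits, and the bound $|k_\mathfrak{p}|\leq 9$ controls both the depth and the branching of each tail.

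The hard part, and the reason Conjecture \ref{do} has resisted proof, will be rigorously excluding $K$-rational periodic points of exact period $M\geq 7$ and excluding any new $M\in\{5,6\}$ sporadic examples beyond the single known one over $\mathbb{Q}(\sqrt{33})$. Even granting this period-classification, a second obstacle is exhaustively enumerating the graph-theoretically admissible preperiodic portraits compatible with every good reduction and showing that none realize more than $15$ vertices over a quadratic field---this is the computationally heavy step performed (non-exhaustively) by Doyle-Faber-Krumm \cite{Doyle} on $250000$ pairs $(K,\varphi_{2,c})$. Consequently, the proof sketched here must remain \emph{conditional}: assuming the quadratic-field extension of Conjecture \ref{conj:2.4.1} together with the conjectural uniqueness of the Stoll $M=6$ example, the two-part argument above yields $\#$PrePer$(\varphi_{2,c},K)\leq 15$, matching the computational prediction of Doyle \cite{Doy}.
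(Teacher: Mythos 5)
There is nothing here for you to prove: the statement you were asked about is Conjecture \ref{do}, which the paper merely \emph{restates} from Doyle \cite{Doy} and does not prove. It is an open problem, and the paper never claims otherwise --- it appears alongside Conjecture \ref{silver-morton}, Conjecture \ref{conj:2.4.1}, and Conjecture \ref{conjecture 3.2.1} precisely because none of these has an unconditional proof. So there is no ``paper proof'' to compare your proposal against; the comparison is vacuous.

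Your write-up is candid about this --- you conclude that the argument ``must remain conditional'' --- so you already sense the problem, but it is worth being blunt: a conditional reduction to two further open conjectures (the quadratic-field analogue of Conjecture \ref{conj:2.4.1}, and uniqueness of the Stoll $M=6$ example over $\mathbb{Q}(\sqrt{33})$) is not a proof, and even the conditional steps as sketched have real gaps. In the periodic part, the ``mod-$\mathfrak{p}$ compatibility constraint forbidding too many simultaneous long cycles'' and the resulting cap of ``roughly $10$'' are asserted, not derived; there is no lemma that produces such a constraint in general, and for quadratic fields one cannot a priori exclude, say, a $4$-cycle and a $3$-cycle coexisting without explicit case analysis. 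In the tail-bounding part, the assertion that one may normalize $c$ to be $\mathfrak{p}$-integral at a prime above $p\in\{2,3\}$ with $|k_\mathfrak{p}|\leq 9$ and then deduce injectivity of reduction on preperiodic points fails in general: $\varphi_{2,c}$ typically has \emph{bad} reduction at primes above $2$ (and the most interesting $c$, such as Poonen's and Stoll's examples, are not $2$-integral), so Northcott/Morton--Silverman good-reduction injectivity does not apply there, and the residue field bound does not control tail depth and branching the way you want. The computationally heavy enumeration of preperiodic portraits done by Doyle--Faber--Krumm \cite{Doyle} and Doyle \cite{Doy} is genuine evidence, not a proof, which is exactly why the statement remains a conjecture in the paper.
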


Recall in \cite{BK33} we attempted to understand (on the level of rings $\mathbb{Z}_{p}$ and $\mathbb{F}_{p}[t]$ independently) the possibility and validity of periodic version of Conjecture \ref{silver-morton}. In this article, we again wish to continue with this attempt of hoping to understand (again on the level of rings $\mathbb{Z}_{p}$ and $\mathbb{F}_{p}[t]$ independently) the possibility and validity of  periodic version of \ref{silver-morton}. That is, in Section \ref{sec2}, \ref{sec3}, \ref{sec4} and \ref{sec5} we consider polynomial maps of any odd degree $p^{\ell}$ and of any even degree $(p-1)^{\ell}\geq 4$ defined independently over $K$ replaced with $\mathbb{Z}_{p}$ and over $K$ replaced with $\mathbb{F}_{p}[t]$; all of this again done in the attempt of understanding the possibility and validity of the following version \ref{per}:

\begin{conj} \label{silver-morton 1}($(D,1)$-version of Conjecture \ref{silver-morton})\label{per}
Fix integers $D \geq 1$ and $d \geq 2$. There exists a constant $C'= C'(D, d)$ such that for all number fields $K/{\mathbb{Q}}$ of degree at most $D$, and all morphisms $\varphi: {\mathbb{P}}^1(K) \rightarrow {\mathbb{P}}^1(K)$ of degree $d$ over $K$, the total number of periodic points of a morphism $\varphi$ is at most $C'$, i.e., \#Per$(\varphi, \mathbb{P}^1(K)) \leq C'$.
\end{conj}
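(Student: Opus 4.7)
The plan is to approach \ref{per} via the standard local method using primes of good reduction. For any morphism $\varphi: \mathbb{P}^{1}(K) \to \mathbb{P}^{1}(K)$ of degree $d$, one fixes a prime $\mathfrak{p}$ of $K$ at which $\varphi$ has good reduction, and exploits the fact that the reduction map $\mathbb{P}^{1}(K) \to \mathbb{P}^{1}(k_{\mathfrak{p}})$ sends $\varphi$-periodic points into $\bar{\varphi}$-periodic points, and $\#\mathbb{P}^{1}(k_{\mathfrak{p}}) \leq N\mathfrak{p} + 1$ is finite. The obstacle is that this reduction map is not injective on periodic points in general.

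First, I would invoke the classical period-lifting theorem (due to Morton--Silverman, Pezda, Benedetto, and others): if $P\in\mathbb{P}^{1}(K)$ has exact period $n$ under $\varphi$ and its reduction $\bar{P}$ has exact period $m$ under $\bar{\varphi}$, then $n$ is of the form $m$, $mr$, or $mrp^{k}$ where $r$ divides $\#\mathbb{P}^{1}(k_{\mathfrak{p}})$ and $p$ is the residue characteristic of $\mathfrak{p}$. This restricts the possible exact periods of $\varphi$-periodic points to a small set determined by the local data at $\mathfrak{p}$, and yields an effective (but not yet uniform) bound on $\#$Per$(\varphi, \mathbb{P}^{1}(K))$.

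Second, the bounds from two independent primes $\mathfrak{p}_{1}, \mathfrak{p}_{2}$ of $K$ of good reduction with coprime small residue characteristics should then be combined. Any such $\varphi$ has good reduction outside a finite set of primes, so such $\mathfrak{p}_{i}$ exist; and choosing them with small residue norm (guaranteed by the prime number theorem for $K$) should yield an explicit constant depending only on $d$ and $D = [K:\mathbb{Q}]$. The contribution of the finitely many primes of bad reduction could then be controlled using Benedetto's non-archimedean dynamics on the Berkovich projective line.

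The main obstacle --- and the reason \ref{per} remains open even in the simplest case $D=1$, $d=2$ (cf.\ \ref{conj:2.4.1}) --- is genuinely obtaining uniformity over $\varphi$. The set of primes of bad reduction depends on $\varphi$, so without an a priori global input (height machinery, the arithmetic geometry of dynatomic curves, or equidistribution of small-height points), the bound one extracts depends on the particular map rather than on $d$ and $D$ alone. The elementary $\mathbb{Z}_{p}$- and $\mathbb{F}_{p}[t]$-based techniques of the present paper provide strong evidence for \ref{per} on the specific families $\varphi_{p^{\ell}, c}$ and $\varphi_{(p-1)^{\ell}, c}$, but upgrading them to an unconditional proof of the full conjecture appears to require substantially new input.
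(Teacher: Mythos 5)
This statement is a \emph{conjecture} --- a special case (restricted to $N=1$ and to periodic rather than preperiodic points) of the Morton--Silverman Uniform Boundedness Conjecture \ref{silver-morton} --- and the paper offers no proof of it. It is stated purely as motivation: the paper's Theorems \ref{2.2}--\ref{5.3} count $2$-periodic points modulo $p\mathbb{Z}_{p}$ and modulo $\pi\in\mathbb{F}_p[t]$ for the specific families $\varphi_{p^{\ell},c}$ and $\varphi_{(p-1)^{\ell},c}$ as circumstantial evidence toward such a boundedness phenomenon, but makes no claim to resolve \ref{per}. Your write-up does not prove the conjecture either, and you correctly say so; the value in your response is that you accurately identify the standard line of attack (good-reduction primes, the period-lifting theorem of Morton--Silverman/Pezda/Benedetto constraining exact periods to $m$, $mr$, or $mrp^k$, and attempting to combine two primes of coprime residue characteristic) and, more importantly, you correctly name the genuine obstruction: the set of primes of bad reduction varies with $\varphi$, so the local bounds one extracts are not uniform over the family without some global input. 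That is exactly why \ref{per} remains open even for $D=1$, $d=2$ (cf.\ \ref{conj:2.4.1}). In short, there is no gap to report in the usual sense because you did not claim a proof; you have given an honest and essentially accurate survey of the state of the problem, which is consistent with the paper's own treatment of \ref{per} as an open target rather than a result.
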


\subsection*{History on the Connection Between the Size of Per$(\varphi_{d, c}, K)$ and the Coefficient $c$}

In the year 1994, Walde and Russo not only proved [\cite{Russo}, Corollary 4] that for a quadratic map $\varphi_{2,c}$ defined over $\mathbb{Q}$ with a periodic point, the denominator of a rational point $c$, denoted as den$(c)$, is a square but they also proved that den$(c)$ is even, whenever $\varphi_{2,c}$ admits a rational cycle of length $\ell \geq 3$. Moreover, Walde-Russo also proved [\cite{Russo}, Cor. 6, Thm 8 and Cor. 7] that the size \#Per$(\varphi_{2, c}, \mathbb{Q})\leq 2$, whenever den$(c)$ is an odd integer. 

Three years later, in the year 1997, Call-Goldstine \cite{Call} proved that the size of PrePer$(\varphi_{2,c},\mathbb{Q})$ can be bounded above in terms of the number of distinct odd primes dividing den$(c)$. We restate formally this result of Call-Goldstine as the following theorem, in which $GCD(a, e)$ refers to the greatest common divisor of $a$, $e \in \mathbb{Z}$:

\begin{thm}\label{2.3.1}[\cite{Call}, Theorem 6.9]
Let $e>0$ be an integer and let $s$ be the number of distinct odd prime factors of e. Define $\varepsilon  = 0$, $1$, $2$, if $4\nmid e$, if $4\mid e$ and $8 \nmid e$, if $8 \mid e$, respectively. Let $c = a/e^2$, where $a\in \mathbb{Z}$ and $GCD(a, e) = 1$. If $c \neq -2$, then the total number of $\mathbb{Q}$-preperiodic points of $\varphi_{2, c}$ is at most $2^{s + 2 + \varepsilon} + 1$. Moreover, a quadratic map $\varphi_{2, -2}$ has exactly six rational preperiodic points.
\end{thm}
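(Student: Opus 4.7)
My plan is a two-stage count: first, pin down the denominators of rational preperiodic points (showing they all equal $e$), and then count the integer numerators via CRT/Hensel analysis modulo $e$, refining to a count of integer lifts. The first step is a denominator lemma: every rational preperiodic point $z$ of $\varphi_{2,c}$ with $c = a/e^{2}$ and $\gcd(a,e)=1$ has the form $z = b/e$ for some $b \in \mathbb{Z}$. This is a $p$-adic valuation argument at each prime $p$: writing $v = v_{p}(z)$ and $m = v_{p}(e)$ so that $v_{p}(c) = -2m$, one checks that $v_{p}(\varphi_{2,c}(z)) = 2v$ when $v < -m$ (hence $v_{p}(\varphi^{n}(z)) = 2^{n}v \to -\infty$), and $v_{p}(\varphi_{2,c}(z)) = -2m$ when $v > -m$ (which throws us into the previous case at the next step). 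Preperiodicity therefore forces $v_{p}(z) = -m$ for $p \mid e$ and $v_{p}(z) \geq 0$ for $p \nmid e$, and moreover $\gcd(b,e) = 1$.

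Writing $z = b/e$, one computes $\varphi_{2,c}(z) = (b^{2}+a)/e^{2}$, so the forward orbit lies in $\tfrac{1}{e}\mathbb{Z}$ only if $b^{2} \equiv -a \pmod{e}$. Because $\gcd(a,e) = 1$, CRT plus Hensel's lemma bound the number of square roots of $-a$ modulo $e$ by $2^{s+\varepsilon}$: a factor $2^{s}$ from the odd prime powers (at most $2$ roots each) and a factor $2^{\varepsilon}$ from the $2$-part (at most $1, 2, 4$ roots modulo $2^{v_{2}(e)}$ when $v_{2}(e) \leq 1$, $=2$, or $\geq 3$ respectively). The key remaining step is to bound the number of preperiodic integers inside each admissible residue class. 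Here I would exploit the tree-plus-cycles structure of the preperiodic set: every preperiodic point enters a rational cycle, and the rational preimage map $w \mapsto \pm\sqrt{w-c}$ is at most $2$-to-$1$, so the preperiodic points sit in a forest of binary trees rooted at cycles. Walde--Russo's classification bounds rational cycle lengths by $3$, and a height-theoretic estimate combined with the symmetry $\varphi_{2,c}(z) = \varphi_{2,c}(-z)$ bounds the depth of the preimage trees; together these give at most $4 = 2^{2}$ preperiodic integers per admissible residue class. Multiplying yields the affine bound $2^{s+2+\varepsilon}$, and the $+1$ accounts for the point at infinity in $\mathbb{P}^{1}$, which is always fixed by $\varphi_{2,c}$.

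The exceptional case $c = -2$ is handled by direct calculation: the rational periodic points are the fixed points $\{-1, 2\}$ (the quadratic factor $z^{2}+z-1$ of the period-$2$ polynomial is irreducible over $\mathbb{Q}$, ruling out rational $2$-cycles, and a similar check rules out $3$-cycles), and their backward $\mathbb{Q}$-orbits add $\{1, -2, 0\}$, giving $5$ affine preperiodic points plus $\infty$ for exactly $6$ in $\mathbb{P}^{1}(\mathbb{Q})$ --- one more than the general bound of $5$ at $(s, \varepsilon) = (0, 0)$. The hard part of the general argument will be the integer-lift step in the middle paragraph: showing that each admissible residue class modulo $e$ contains at most $4$ preperiodic integers is the crux, and requires carefully combining Walde--Russo's cycle classification, a sharp height bound for the preimage tree depth, and a $2$-adic refinement of the mod-$e$ congruence to rule out extra rational lifts. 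That the constant $4$ is essentially optimal is shown by Poonen's $c = -29/16$ (with $s = 0$, $\varepsilon = 1$), whose $8$ affine preperiodic points together with $\infty$ realize the bound $2^{0+2+1}+1 = 9$ exactly.
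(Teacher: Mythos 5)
The paper itself offers no proof of this result --- it is quoted verbatim as Theorem 6.9 of Call--Goldstine \cite{Call} and used only as motivating background for the dynamical connection between $\#\mathrm{PrePer}(\varphi_{d,c},K)$ and the coefficient $c$. So there is no in-paper argument to compare against, and the proposal must be judged on its own merits and against Call--Goldstine's original argument.

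Your first two stages are sound in spirit: the $p$-adic valuation argument forcing $v_{p}(z) = -v_{p}(e)$ for all $p \mid e$ (hence $z = b/e$ with $\gcd(b,e)=1$) is essentially the Walde--Russo denominator analysis, and the CRT/Hensel count of residues $b \bmod e$ with $b^{2} \equiv -a \pmod e$ giving at most $2^{s+\varepsilon}$ classes is correct. The genuine gap is in your third step, the ``$\leq 4$ per residue class'' bound, and you already flag it as ``the crux.'' The difficulty is that your proposed justification does not work. You invoke ``Walde--Russo's classification bounds rational cycle lengths by $3$,'' but Walde and Russo proved no such thing: the nonexistence of rational cycles of exact period $n \geq 4$ for $\varphi_{2,c}$ over $\mathbb{Q}$ is precisely the Flynn--Poonen--Schaefer conjecture (Conjecture \ref{conj:2.4.1} in this very paper), resolved only for $n=4,5$ unconditionally and for $n=6$ conditionally, and open in general. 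Call--Goldstine's Theorem 6.9 is unconditional, so its proof cannot rest on that conjecture; indeed their 1997 argument proceeds via explicit local canonical heights at each place and does not split the count into a per-residue-class contribution of $4$ at all. Beyond the misattribution, the ``height-theoretic estimate'' you gesture at for bounding the preimage-tree depth within a residue class is not specified, and the natural archimedean escape-radius bound $|b| \lesssim \sqrt{|a|}$ permits far more than $4$ integers in a residue class mod $e$ when $|a|$ is large relative to $e^{2}$, so some essential mechanism for cutting that down is missing. The numerical check at $c=-29/16$ (four numerators in each of two classes mod $4$, giving $8+1=9$) confirms the decomposition is \emph{consistent} with the bound but does not supply the missing argument. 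The $c=-2$ verification is correct.
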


Eight years later, after the work of Call-Goldstine, in the year 2005, Benedetto \cite{detto} studied polynomial maps $\varphi$ of arbitrary degree $d\geq 2$ defined over an arbitrary global field $K$, and then established the following result on the relationship between the size of the set PrePre$(\varphi, K)$ and the number of bad primes of $\varphi$ in $K$:

\begin{thm}\label{main} [\cite{detto}, Main Theorem]
Let $K$ be a global field, $\varphi\in K[z]$ be a polynomial of degree $d\geq 2$ and $s$ be the number of bad primes of $\varphi$ in $K$. The number of preperiodic points of $\varphi$ in $\mathbb{P}^N(K)$ is at most $O(\text{s log s})$. 
\end{thm}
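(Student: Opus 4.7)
The plan is to work place-by-place over $K$, combining non-archimedean dynamics at each place with a global combinatorial assembly. For each place $v$ of $K$, let $\mathcal{K}_v \subset \mathbb{C}_v$ denote the filled Julia set of $\varphi$: the set of $z \in \mathbb{C}_v$ whose forward $\varphi$-orbit is bounded in $|\cdot|_v$. Any preperiodic point of $\varphi$ in $K$ lies in $\mathcal{K}_v$ for every $v$, so it suffices to bound $K \cap \bigcap_v \mathcal{K}_v$.

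First I would establish the good/bad dichotomy. Call a non-archimedean place $v$ \emph{good} for $\varphi$ if the coefficients of $\varphi$ are $v$-integral and the reduction $\overline{\varphi}$ modulo the maximal ideal of $\mathcal{O}_v$ has the same degree $d$. At such a $v$ one verifies that $\mathcal{K}_v = \{z \in \mathbb{C}_v : |z|_v \le 1\}$, since any $z$ with $|z|_v > 1$ satisfies $|\varphi(z)|_v = |z|_v^d$ and its forward orbit escapes. Hence every preperiodic $K$-point is $v$-integral at every good place, and only the $s$ bad places can impose non-integral constraints.

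Next, at each bad place $v$ I would exhibit $\mathcal{K}_v$ as a finite union of closed $v$-adic disks that $\varphi$ permutes among themselves, via a Newton-polygon / Weierstrass-preparation analysis of the polynomials $\varphi^n(z) - z$ and of their $\varphi$-preimages. This yields, at each bad $v$, a bound depending only on $d$ on the number of such disks, and hence on how many periodic cycles can be pinned to lie in $\mathcal{K}_v$. Because every periodic point must simultaneously lie in all $\mathcal{K}_v$, assembling these local constraints across the $s$ bad primes bounds the number of $K$-rational periodic cycles of $\varphi$ by $O(s)$.

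Finally I would pass from periodic to preperiodic points. The preperiodic set of $\varphi$ is a forest of $\varphi$-preimage trees rooted at the periodic cycles already counted, so it remains to bound the size of each such tree. Each step backwards under $\varphi$ can raise a $v$-adic valuation by at most a fixed amount before a further iterate escapes $\mathcal{K}_v$; a pigeonhole on valuations across the $s$ bad primes then forces any $K$-rational preimage tree inside $\bigcap_v \mathcal{K}_v$ to have depth at most $O(\log s)$. Multiplying the $O(s)$ bound on periodic cycles by the $O(\log s)$ tree-depth factor yields the claimed $O(s \log s)$ estimate. The principal obstacle is precisely this last combinatorial step: bounding the preperiodic tree-depth uniformly in $s$, rather than allowing it to degenerate into a polynomial or exponential dependence on $s$; this is where the non-archimedean rigidity of $\mathcal{K}_v$ and the product formula are essential, ensuring that contributions from the $s$ bad places add logarithmically instead of compounding multiplicatively.
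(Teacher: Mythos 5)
This statement is a verbatim citation of Benedetto's Main Theorem from \cite{detto}; the paper you are comparing against does not prove it, only restates it. So there is no ``paper's own proof'' to line up against --- but evaluated as a sketch of Benedetto's argument, your proposal has a genuine gap at the final combinatorial step.

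Your opening moves are sound and do match Benedetto's setup: filled Julia sets $\mathcal{K}_v$, the good/bad dichotomy with $\mathcal{K}_v$ equal to the unit disk at good non-archimedean places, and a disk decomposition of $\mathcal{K}_v$ at bad places. The problem is the assembly. You claim (i) the number of $K$-rational periodic cycles is $O(s)$ and (ii) each preimage tree has depth $O(\log s)$, and then multiply to get $O(s\log s)$. But multiplying the number of trees by their depth does not bound the number of vertices in a forest: a polynomial of degree $d$ has up to $d$ preimages of any point, so a preimage tree of depth $O(\log s)$ can have on the order of $d^{O(\log s)} = s^{O(\log d)}$ nodes. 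To make the product argument work you would need the trees to have $O(\log s)$ \emph{vertices}, i.e.\ essentially bounded branching, and nothing in your sketch controls that. Moreover, the $O(s)$ bound on periodic cycles is asserted (``assembling these local constraints \ldots bounds the number of $K$-rational periodic cycles by $O(s)$'') with no mechanism given; the naive observation that each bad place contributes a bounded number of disks does not directly control the number of rational cycles, since distinct cycles could coexist inside the same disk at one place and only separate at another.

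Benedetto's actual argument does not split into a periodic count times a tree-depth factor. He treats the full preperiodic forest at once: the key lemma is a counting statement relating the number of $K$-rational preperiodic points to the total ``local disk-refinement budget'' summed over bad places, proved by a careful analysis of how the disk decomposition of $\mathcal{K}_v$ refines under $\varphi$ and how many distinct preperiodic points a given refinement tree can separate. The $\log$ factor enters through the depth-versus-width tradeoff in that refinement tree, and the product formula caps the total budget at $O(s)$. If you want to repair your sketch, the essential missing ingredient is a bound on the \emph{size} (not just the depth) of each preimage tree that sits inside $\bigcap_v \mathcal{K}_v$, phrased in a way that sums correctly across the $s$ bad places.
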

 
\noindent Since we know that Theorem \ref{main} applies to any $\varphi$ of degree $d\geq 2$ defined over a global function field $K$ (say, $K=\mathbb{F}_{p}(t)$), this also means that we can apply Theorem \ref{main} to any $\varphi$ of odd or even degree $d>2$ defined over $K$; and as a result obtain the upper bound predicted in Theorem \ref{main} on the number of $K$-preperiodic points. 

Seven years after the work of Benedetto, in the year 2012, Narkiewicz's work \cite{Narkie} not only showed that any $\varphi_{d,c}$ defined over $\mathbb{Q}$ with odd degree $d\geq 3$ has no rational periodic points of exact period $n > 1$, but also showed that the total number of $\mathbb{Q}$-preperiodic points is at most 4. We restate this result here as the following: 

\begin{thm} \label{theorem 3.2.1}\cite{Narkie}
For any integer $n > 1$ and any odd integer $d\geq 3$, there is no $c\in \mathbb{Q}$ such that $\varphi_{d,c}$ defined by $\varphi_{d, c}(z)$ for all $c,z \in \mathbb{Q}$ has rational periodic points of exact period $n$. Moreover, $\#PrePer(\varphi_{d, c}, \mathbb{Q}) \leq 4$. 
\end{thm}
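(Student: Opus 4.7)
The plan is to exploit one structural observation: for $d$ odd, $\varphi_{d,c}\colon\mathbb{R}\to\mathbb{R}$ is a strictly increasing continuous bijection. Its derivative $dz^{d-1}$ is non-negative with an isolated zero at the origin (so monotonicity is strict), and $\lim_{z\to\pm\infty}\varphi_{d,c}(z)=\pm\infty$ (so it is surjective). Since $\mathbb{Q}\subset\mathbb{R}$, both assertions of the theorem will follow by arguing over the real line and then restricting.

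For the first claim, I would proceed by contradiction. Suppose $z_0\in\mathbb{Q}$ has exact period $n\geq 2$, and put $z_i=\varphi_{d,c}^{\,i}(z_0)$. Since $z_1\neq z_0$, assume without loss of generality $z_1>z_0$. Strict monotonicity of $\varphi_{d,c}$ then forces $z_{i+1}=\varphi_{d,c}(z_i)>\varphi_{d,c}(z_{i-1})=z_i$ for every $i$, so the orbit is strictly increasing and cannot close up at $z_n=z_0$. Hence every rational periodic point of $\varphi_{d,c}$ must already be a fixed point, which gives the first assertion.

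For the bound $\#\mathrm{PrePer}(\varphi_{d,c},\mathbb{Q})\leq 4$, I would first use the same bijectivity of $\varphi_{d,c}$ on $\mathbb{R}$ to eliminate strictly preperiodic points. If $z^{*}$ is any fixed point, then $\varphi_{d,c}^{-1}(z^{*})$ in $\mathbb{R}$ is a singleton, and $z^{*}$ is visibly such a preimage, so $\varphi_{d,c}^{-1}(z^{*})=\{z^{*}\}$; similarly $\varphi_{d,c}^{-1}(\infty)=\{\infty\}$ in $\mathbb{P}^{1}$ because $\varphi_{d,c}$ is a polynomial. Iterating, every element of $\mathrm{PrePer}(\varphi_{d,c},\mathbb{Q})$ must itself be fixed, so it suffices to count rational fixed points in $\mathbb{P}^{1}(\mathbb{Q})$. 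Affine fixed points solve $f(z)=z^{d}-z+c=0$; since $d-1$ is even, $f'(z)=dz^{d-1}-1$ has exactly two real roots (the real solutions of $z^{d-1}=1/d$), whence Rolle's theorem caps the number of real roots of $f$ at three. Adjoining the fixed point at infinity in the convention used by Corollary \ref{cor2} and Theorem \ref{2.3.1} yields $\#\mathrm{PrePer}(\varphi_{d,c},\mathbb{Q})\leq 3+1=4$.

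The main obstacle here is really conceptual rather than technical: the entire argument collapses once one recognizes that odd $d$ renders $\varphi_{d,c}$ monotone on $\mathbb{R}$, after which every step is a one-line consequence of real-analytic facts (monotone injectivity, inverse function bijectivity, Rolle's theorem). The only bookkeeping subtlety is the $\mathbb{P}^{1}$-versus-$\mathbb{A}^{1}$ convention needed to land on the stated bound $4$ (rather than $3$); I would follow the same convention as the previously cited results in the excerpt.
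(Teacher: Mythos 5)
Your proof is correct. The paper cites this result from Narkiewicz without reproducing a proof, so there is no paper argument to compare against; I will therefore evaluate the proposal on its own.

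The central observation that odd degree forces $\varphi_{d,c}$ to be strictly increasing on $\mathbb{R}$ is exactly the right lever, and both parts of your argument are sound. The monotonicity argument correctly rules out cycles of length $\geq 2$: if $z_1 > z_0$ then by induction $z_{i+1} > z_i$ for all $i$, so the orbit cannot return, and the case $z_1 < z_0$ is symmetric. Your elimination of strictly preperiodic points is also correct: since $\varphi_{d,c}$ is injective on $\mathbb{R}$, the unique real preimage of a fixed point $z^*$ is $z^*$ itself, so $\varphi^{-m}(z^*)\cap\mathbb{Q}=\{z^*\}$ for all $m$, and likewise $\varphi^{-1}(\infty)=\{\infty\}$; hence $\mathrm{PrePer}(\varphi_{d,c},\mathbb{P}^1(\mathbb{Q}))$ is exactly the set of rational fixed points. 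Finally, $f(z)=z^d-z+c$ has $f'(z)=dz^{d-1}-1$ with precisely two real zeros (since $d-1$ is even and positive), so Rolle gives at most $3$ distinct real roots of $f$, hence at most $3$ affine rational fixed points. Together with $\infty$ this yields the stated bound of $4$, which is tight (e.g.\ $d=3$, $c=0$ gives affine fixed points $0,\pm 1$). Your explicit flagging of the $\mathbb{P}^1$-versus-$\mathbb{A}^1$ bookkeeping is appropriate; under the affine-only reading the argument actually gives the sharper bound $3$, and the claimed $\leq 4$ follows a fortiori. This is the standard real-monotonicity route and is almost certainly the same idea underlying Narkiewicz's original proof.
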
 

Seven years later, after some work of Benedetto and other several authors working on non-archimedean dynamics, in the year 2012, Adam-Fares [\cite{Ada}, Proposition 15] studied the dynamical system $(K, \ x^{p^{\ell}}+c)$ where $K$ is a local field equipped with a discrete valuation and $\ell\in \mathbb{Z}_{\geq 1}$. In the case $K = \mathbb{Q}_{p}$, they showed that the polynomial $\varphi_{p^{\ell},c}(x) = x^{p^{\ell}} + c$ where $c\in \mathbb{Z}_{p}$, either has $p$ fixed points or a periodic orbit of exact period $p$ in $\mathbb{Q}_{p}$.

Three years after \cite{Narkie}, in 2015, Hutz \cite{Hutz} developed an algorithm determining effectively all $\mathbb{Q}$-preperiodic points of a morphism defined over a given number field $K$; from which he then made the following conjecture: 

\begin{conj} \label{conjecture 3.2.1}[\cite{Hutz}, Conjecture 1a]
For any integer $n > 2$, there is no even degree $d > 2$ and no point $c \in \mathbb{Q}$ such that the polynomial map $\varphi_{d, c}$ has rational points of exact period $n$.
Moreover, \#PrePer$(\varphi_{d, c}, \mathbb{Q}) \leq 4$. 
\end{conj}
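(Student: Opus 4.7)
The plan would parallel Narkiewicz's Theorem \ref{theorem 3.2.1} for odd degree and combine it with Benedetto's global bound (Theorem \ref{main}); since the statement in question is Hutz's Conjecture 1a and is open in the literature, the most I can offer here is a plan of attack rather than a complete proof.

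First, I would suppose for contradiction that $c \in \mathbb{Q}$ and $z_{0} \in \mathbb{Q}$ realize an exact period $n > 2$ for some even $d > 2$, and write $c = a/e^{d}$ in lowest terms. Following Walde-Russo \cite{Russo} and Call-Goldstine (Theorem \ref{2.3.1}), I would show that the orbit $z_{0}, z_{1}, \dots, z_{n-1}$ must share a common $p$-adic denominator structure, so that $\varphi_{d, c}$ has good reduction at every prime $p \nmid e \cdot d$. At such a good prime $p$, the $n$-cycle injects into the action of $\varphi_{d, c} \bmod p$ on $\mathbb{F}_{p}$, forcing $n \leq p$ for the smallest such $p$ and giving strong local constraints on the possible $n$. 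In conjunction with Theorem \ref{main} this already caps the number of bad primes needed, and reduces the problem to a finite family of $(c, d, n)$ to test.

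Next, to rule out each individual $n = 3, 4, 5, \ldots$ globally, I would analyze the \emph{dynamical modular curves} $Y_{1}(n, d)$ parametrizing pairs $(c, z)$ with $z$ of exact period $n$ under $\varphi_{d, c}$. For each such $n$ and each even $d \geq 4$, the goal is to show $Y_{1}(n, d)(\mathbb{Q}) = \emptyset$. This is exactly where the \emph{main obstacle} lies: the genus of $Y_{1}(n, d)$ grows with both $n$ and $d$, so determining its rational points requires Chabauty--Coleman or Mordell--Weil sieve machinery that has only been carried through case-by-case even in the classical $d = 2$ setting of \cite{Flynn, Stoll}. I would expect the even-$d$ case to be strictly harder than Narkiewicz's odd-$d$ argument, which leaned on a sign/monotonicity feature of $z^{d} - z$ that is unavailable for even $d$.

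Finally, assuming the non-existence of rational $n$-cycles with $n > 2$, the bound $\#\text{PrePer}(\varphi_{d, c}, \mathbb{Q}) \leq 4$ would be obtained by combining (i) a count of rational fixed points of $\varphi_{d, c}$ via the equation $z^{d} - z + c = 0$, (ii) a count of rational $2$-cycles by factoring out the fixed-point factor from $\varphi_{d, c}^{2}(z) - z$, and (iii) a height/descent argument in the style of Poonen \cite{Poonen} and Hutz \cite{Hutz} ruling out long strictly preperiodic tails. Together these three inputs would pin the total count at no more than $4$, matching the odd-degree conclusion of Theorem \ref{theorem 3.2.1}.
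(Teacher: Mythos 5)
The statement you were given is Hutz's Conjecture 1a, which the paper restates verbatim as a \emph{conjecture} and does not prove; it remains open in the literature. You correctly recognized this, so there is no proof in the paper to compare against, and offering a plan of attack rather than a purported proof was the right call.

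Two concrete cautions about the plan itself, in case you try to push it further. First, the local step ``the $n$-cycle injects into the action of $\varphi_{d,c}$ mod $p$, forcing $n\leq p$ for the smallest good prime'' overstates what the reduction theory gives: by the Morton--Silverman/Zieve reduction theorem, if $z_{0}$ has exact period $n$ and $\varphi_{d,c}$ has good reduction at $p$, then the exact period $m$ of the reduced point mod $p$ satisfies $m\leq p$, but $n$ itself is only constrained to be of the form $m$, $mr$, or $mrp^{e}$ with $r\mid (p-1)$, so one cannot conclude $n\leq p$. Second, the closing step asserting $\#$PrePer$(\varphi_{d,c},\mathbb{Q})\leq 4$ does not follow merely from ruling out cycles of period $>2$ together with counting fixed points and $2$-cycles; one must additionally bound the number and length of strictly preperiodic tails, and this is precisely where the hard work sits even in the resolved $d=2$ case of Poonen. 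So your sketch is a reasonable orientation toward a genuinely open problem, but both the local reduction argument and the tail-bounding step are stated more strongly than what the available tools deliver.
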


\noindent On the note whether any theoretical progress has yet been made on Conjecture \ref{conjecture 3.2.1}, more recently, Panraksa \cite{par2} proved among many other results that the quartic polynomial $\varphi_{4,c}(z)\in\mathbb{Q}[z]$ has rational points of exact period $n = 2$. Moreover, he also proved that $\varphi_{d,c}(z)\in\mathbb{Q}[z]$ has no rational points of exact period $n = 2$ for any $c \in \mathbb{Q}$ with $c \neq -1$ and $d = 6$, $2k$ with $3 \mid 2k-1$. The interested reader may find these mentioned results of Panraksa in his unconditional Thms 2.1, 2.4 and also see his Thm 1.7 conditioned on the \textit{abc}-conjecture in \cite{par2}.

Twenty-eight years later, after the work of Walde-Russo, in the year 2022, Eliahou-Fares proved [\cite{Shalom2}, Theorem 2.12] that the denominator of a rational point $-c$, denoted as den$(-c)$ is divisible by 16, whenever $\varphi_{2,-c}$ defined by $\varphi_{2, -c}(z) = z^2 - c$ for all $c, z\in \mathbb{Q}$ admits a rational cycle of length $\ell \geq 3$. Moreover, they also proved [\cite{Shalom2}, Proposition 2.8] that the size \#Per$(\varphi_{2, -c}, \mathbb{Q})\leq 2$, whenever den$(-c)$ is an odd integer. Motivated by \cite{Call}, Eliahou-Fares \cite{Shalom2} also proved that the size of Per$(\varphi_{2, -c}, \mathbb{Q})$ can be bounded above by using information on den$(-c)$, namely, information in terms of the number of distinct primes dividing den$(-c)$. Moreover, they in \cite{Shalom1} also showed that the upper bound is four, whenever $c\in \mathbb{Q^*} = \mathbb{Q}\setminus\{0\}$. We restate here their results as:

\begin{cor}\label{sha}[\cite{Shalom2, Shalom1}, Cor. 3.11 and Cor. 4.4, respectively]
Let $c\in \mathbb{Q}$ such that den$(c) = d^2$ with $d\in 4 \mathbb{N}$. Let $s$ be the number of distinct primes dividing $d$. Then, the total number of $\mathbb{Q}$-periodic points of $\varphi_{2, -c}$ is at most $2^s + 2$. Moreover, for $c\in \mathbb{Q^*}$ such that the den$(c)$ is a power of a prime number. Then, $\#$Per$(\varphi_{2, c}, \mathbb{Q}) \leq 4$.
\end{cor}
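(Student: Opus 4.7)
The plan is to follow the denominator-analysis strategy pioneered by Walde--Russo and Call--Goldstine, adapted to the family $\varphi_{2,-c}(z) = z^2 - c$. Writing $c = a/d^2$ in lowest terms with $\gcd(a,d)=1$ and $4 \mid d$, the goal is to show that every non-fixed rational periodic point factors through a square-root residue condition modulo $d$, and then to count such residues via the Chinese Remainder Theorem.

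First, I would establish a denominator lemma: every $\mathbb{Q}$-periodic point $z$ of $\varphi_{2,-c}$ of exact period $\geq 2$ has the form $z = x/d$ with $\gcd(x,d) = 1$. The proof proceeds one prime at a time. For each prime $q \mid d$, the $q$-adic valuation $v_q(c) = -2\,v_q(d)$ forces the orbit's $q$-adic behavior to stabilize at $v_q(z) = -v_q(d)$; otherwise iteration either strictly worsens or strictly improves the valuation, contradicting periodicity. For primes $q \nmid d$, a standard Northcott-style argument (or a direct valuation chase on the orbit of $z$ under $\varphi_{2,-c}$) yields $v_q(z) \geq 0$. This is precisely the non-archimedean heart of the argument.

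Second, substituting $z = x/d$ into $\varphi_{2,-c}$ gives $\varphi_{2,-c}(z) = (x^2 - a)/d^2$; for the next iterate to still have denominator dividing $d$ (forced by the denominator lemma applied to the shifted orbit), one needs $d \mid x^2 - a$. Hence the residues $x \bmod d$ of non-fixed periodic points inject into the set of square roots of $a$ modulo $d$. Since $\gcd(a,d) = 1$, the Chinese Remainder Theorem and the classical count of square roots modulo each prime power bound this set in size by $2^{s}$, where the hypothesis $4 \mid d$ is what lets the $2$-adic correction be absorbed cleanly. Third, the fixed points satisfy $z^2 - z - c = 0$ and so contribute at most two additional $\mathbb{Q}$-points; since their denominator is typically $2d$ rather than $d$, they generally sit outside the $x/d$-family enumerated above and must be counted separately. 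Combining the two counts yields $\#\mathrm{Per}(\varphi_{2,-c},\mathbb{Q}) \leq 2^{s} + 2$. For the final clause, when $\mathrm{den}(c)$ is a prime power we have $s = 1$ and the first part gives $\leq 4$; in the sub-case where $\mathrm{den}(c)$ is not divisible by $16$, the first part does not directly apply, but one runs the same single-prime argument and invokes the Walde--Russo/Eliahou--Fares observation (\cite{Russo}, \cite{Shalom2}, Theorem 2.12) that nonfixed cycles force $16 \mid \mathrm{den}(c)$, so only fixed points (at most $2$) plus at most one $2$-cycle (contributing $2$) can occur, again landing at $\#\mathrm{Per} \leq 4$.

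The main obstacle I anticipate is the careful $2$-adic bookkeeping in the denominator lemma and the subsequent square-root count, since the number of square roots of $a$ modulo $2^{k}$ jumps to $4$ for $k \geq 3$; obtaining the sharp constant $2^{s} + 2$ (rather than a looser $2^{s+1}$) requires either invoking the sharper divisibility obstruction on $\mathrm{den}(c)$ for long cycles, or a direct inductive argument on cycle length showing that the extra $2$-adic square roots cannot all be realized inside a single rational periodic orbit. Once this $2$-adic step is in place, the CRT count and the fixed-point tally are routine, and both bounds in the corollary follow simultaneously.
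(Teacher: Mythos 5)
This corollary is a quoted result from Eliahou--Fares ([\cite{Shalom2, Shalom1}], Cor.\ 3.11 and Cor.\ 4.4); the paper does not prove it, so there is no proof in the paper to compare against. Evaluating your sketch on its own terms: the opening move is in the right spirit (the Walde--Russo/Call--Goldstine denominator analysis), and the denominator lemma you describe -- every periodic point of period $\geq 2$ has the form $x/d$ with $\gcd(x,d)=1$, established by a one-prime-at-a-time valuation chase -- is correct and is indeed the non-archimedean heart of arguments in this family.

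However, the sketch has two genuine gaps, only one of which you acknowledge. First, the $2$-adic bookkeeping you flag at the end is a real problem, not a formality: when $8\mid d$ the residue $a$ has up to four square roots modulo $2^{v_2(d)}$, so CRT gives up to $2^{s+1}$ admissible residue classes, not $2^s$, and the phrases ``invoke the sharper divisibility obstruction'' and ``direct inductive argument on cycle length'' name the obstacle without removing it; the actual Eliahou--Fares argument has to do real work here using the cycle-length restrictions and the $16\mid\mathrm{den}(c)$ theorem. Second, and unacknowledged: you assert that the non-fixed periodic points \emph{inject} into the set of square roots of $a$ modulo $d$. That injectivity is not automatic -- two distinct periodic points $x/d$ and $x'/d$ could a priori satisfy $x\equiv x'\pmod d$ -- and it is precisely what the extra cycle algebra (e.g.\ $z_0+z_1=-1$ for a $2$-cycle, and the analogous symmetric-function identities for longer cycles) is needed to secure. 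Your sketch never invokes any such cycle relations, so as written it only bounds the number of residue classes, not the number of points. A minor additional slip: with $4\mid d$ the rational fixed points (when they exist) also have denominator dividing $d$, not $2d$, so they do not ``sit outside the $x/d$-family''; the $+2$ in the claimed bound has to be justified differently than by disjointness.
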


\noindent Once again, the purpose of this follow-up article is to inspect further (using elementary arguments) the above connection in the case of polynomial maps $\varphi_{p^{\ell}, c}$ and $\varphi_{(p-1)^{\ell},c}$ defined independently, first, over the ring $\mathbb{Z}_{p}$ of all $p$-adic integers and then over a polynomial ring $\mathbb{F}_{p}[t]$ for any given prime $p> 2$ and for any given $\ell\in \mathbb{Z}_{\geq 1}$; and doing all of it from a spirit that is inspired by some of the many striking developments in arithmetic statistics.

\section{On the Number of $n$-Periodic $\mathbb{Z}_{p}\slash p\mathbb{Z}_{p}$-Points of any Family of Polynomial Maps $\varphi_{p^{\ell},c}$}\label{sec2}

In this section, we count the number of distinct $n$-periodic $p$-adic integral points of any $\varphi_{p^{\ell},c}$ modulo   $p\mathbb{Z}_{p}$, where $p\geq 3$ is any prime, $\ell\in \mathbb{Z} _{\geq 1}$ and $n\in \mathbb{Z}_{\geq 2}$ are any fixed integers. To do so, we let $p\geq 3$ be any prime, $\ell\in \mathbb{Z} _{\geq 1}$, $n\in \mathbb{Z}_{\geq 2}$ be any fixed integers, $c\in \mathbb{Z}_{p}$ be any $p$-adic integer, and then define $n$-periodic point-counting function 
\begin{equation}\label{X_{c}}
X_{c}^{(n)}(p) := \# \Biggl\{ z\in \mathbb{Z}_{p}\slash p\mathbb{Z}_{p}   : \begin{aligned} \varphi_{p^{\ell},c}^{n-1}(z) -z \not \equiv 0 \ \text{(mod $p\mathbb{Z}_{p}$)} \\ \ \varphi_{p^{\ell},c}^{n}(z) - z \equiv 0 \ \text{(mod $p\mathbb{Z}_{p}$)} \end{aligned} \Biggr\}.
\end{equation}\noindent Setting $\ell =1$, so that the map $\varphi_{p^{\ell}, c} = \varphi_{p,c}$, we then first prove the following theorem and its generalization \ref{2.2}:

\begin{thm} \label{2.1}
Let $\varphi_{3, c}$ be a cubic map defined by $\varphi_{3, c}(z) = z^3 + c$ for all $c, z\in\mathbb{Z}_{3}$, and $X_{c}^{(n)}(3)$ be the number defined as in \textnormal{(\ref{X_{c}})}. Then $X_{c}^{(n)}(3) = 3$ for every coefficient $c\in 3\mathbb{Z}_{3}$; otherwise $X_{c}^{(n)}(3) = 0$ for every point $c \not \in  3\mathbb{Z}_{3}$.
\end{thm}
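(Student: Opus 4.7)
The plan is to reduce $\varphi_{3,c}$ modulo $3$ via Fermat's little theorem and then carry out a short case analysis on whether $3$ divides $c$ in $\mathbb{Z}_3$. First I would use that $z^3 \equiv z \pmod{3\mathbb{Z}_3}$ for every $z \in \mathbb{Z}_3$, which immediately yields
\[
\varphi_{3,c}(z) \equiv z + c \pmod{3\mathbb{Z}_3}.
\]
In other words, the reduction of $\varphi_{3,c}$ modulo $3\mathbb{Z}_3$ is simply the translation $z \mapsto z + \bar c$ on $\mathbb{F}_3 \cong \mathbb{Z}_3/3\mathbb{Z}_3$, where $\bar c$ denotes the image of $c$ in $\mathbb{F}_3$. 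Iterating once more, or equivalently expanding $(z^3 + c)^3 + c$ via the Frobenius identity $(a+b)^3 \equiv a^3 + b^3 \pmod{3\mathbb{Z}_3}$ together with $z^9 \equiv z \pmod{3\mathbb{Z}_3}$, would then give the second key identity
\[
\varphi_{3,c}^{2}(z) \equiv z + 2c \pmod{3\mathbb{Z}_3}.
\]

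Next I would use these two identities to translate the defining condition of $X_c^{(2)}(3)$ into a condition on $c \bmod 3$ alone. The congruence $\varphi_{3,c}^{2}(z) - z \equiv 0 \pmod{3\mathbb{Z}_3}$ becomes $2c \equiv 0 \pmod{3\mathbb{Z}_3}$, which since $\gcd(2,3) = 1$ is equivalent to $c \in 3\mathbb{Z}_3$, and crucially this criterion does not depend on the residue class of $z$. Hence the value of $X_c^{(2)}(3)$ is determined purely by whether or not $c$ lies in $3\mathbb{Z}_3$, and the theorem reduces to reading off the count in these two mutually exclusive cases.

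Finally I would handle the two cases. If $c \in 3\mathbb{Z}_3$, then $\varphi_{3,c}^{2}(z) \equiv z \pmod{3\mathbb{Z}_3}$ holds for every $z \in \mathbb{Z}_3/3\mathbb{Z}_3$, so all three residue classes contribute and $X_c^{(2)}(3) = 3$. If instead $c \notin 3\mathbb{Z}_3$, then $\varphi_{3,c}^{2}(z) - z \equiv 2c \pmod{3\mathbb{Z}_3}$ is a nonzero constant, so the congruence fails for every $z$ and the counting set is empty, giving $X_c^{(2)}(3) = 0$. There is no substantial obstacle in this argument: the entire proof is driven by the single observation that $\varphi_{3,c}$ reduces modulo $3\mathbb{Z}_3$ to the translation $z \mapsto z + c$, courtesy of Fermat's little theorem, after which the defining congruence becomes uniform in $z$ and a two-line case split on $3 \mid c$ closes the theorem. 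The same reduction mechanism will then power the generalization to any $\varphi_{p^\ell, c}$ in Theorems \ref{2.2} and \ref{2.3}, simply by replacing $z^3 \equiv z \pmod 3$ with the iterated Fermat identity $z^{p^\ell} \equiv z \pmod p$ valid for all $\ell \geq 1$.
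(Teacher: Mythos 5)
Your proof matches the paper's approach: both reduce $\varphi_{3,c}^2(z)-z$ modulo $3\mathbb{Z}_3$ via Fermat's little theorem to a constant independent of $z$, namely $c^3+c \equiv 2c$, and read off the count according to whether $3 \mid c$. Your observation that the reduced map is the translation $z \mapsto z + \bar{c}$ is a slightly cleaner way to package the same algebra, making it transparent at a glance why $\varphi_{3,c}^2(z)-z$ is independent of $z$.

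One caveat worth flagging, which applies equally to the paper's own proof: the definition of $X_c^{(2)}(p)$ in (2.1) requires both $\varphi_{p^\ell,c}(z)-z \not\equiv 0 \pmod{p\mathbb{Z}_p}$ and $\varphi_{p^\ell,c}^2(z)-z \equiv 0 \pmod{p\mathbb{Z}_p}$, yet neither you nor the paper checks the first condition. When $c \in 3\mathbb{Z}_3$, your own identity $\varphi_{3,c}(z) \equiv z + \bar{c} \equiv z$ shows that every residue class is a fixed point, so the first condition fails for every $z$ and a literal reading of (2.1) would give $X_c^{(2)}(3) = 0$, not $3$. The paper's Remark 2.4 acknowledges that in this regime every 2-periodic point is in fact a fixed point, so the intended count is evidently the number of solutions of $\varphi^2(z) \equiv z$ alone. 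Since you faithfully reproduce the paper's intended computation and answer, this is a discrepancy in the paper's statement of the definition rather than a flaw in your argument, but it is worth being aware of, especially because your translation identity $\varphi_{3,c}(z)\equiv z+\bar{c}$ makes the failure of the first condition in the case $c\in 3\mathbb{Z}_3$ particularly visible.
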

\begin{proof}
Let $f(z)= \varphi_{3,c}^n(z)-z = \varphi_{3,c}(\varphi_{3,c}^{n-1}(z)) - z = (\varphi_{3,c}^{n-1}(z))^3 - z + c$, and so $f(z)= (\varphi_{3,c}^{n-1}(z))^3 - z + c$. Now applying the multinomial theorem repeatedly on the term $(\varphi_{3,c}^{n-1}(z))^3$ right after applying the binomial theorem on $(z^3 + c)^3$, we then obtain $(\varphi_{3,c}^{n-1}(z))^3$ is a monic polynomial in $z$ of degree $3^n$ with $3$-adic integer coefficients in multiples of $c$. Thus, we may then write $(\varphi_{3,c}^{n-1}(z))^3 = z^{3^{n}} + h(z)$, where $h(z)$ is a non-constant polynomial in $z$ of deg$(h)<3^n$ with $3$-adic integer coefficients in multiples of $c$; and so $f(z)= z^{3^{n}} + h(z) - z + c$. Now for every coefficient $c\in 3\mathbb{Z}_{3}$, reducing $f(z)$ modulo prime ideal $3\mathbb{Z}_{3}$, it then follows $f(z)\equiv z^{3^n} - z$ (mod $3\mathbb{Z}_{3}$), since also $h(z)\in c\mathbb{Z}_{3}[z]$ and so $h(z)\equiv 0$ (mod $3\mathbb{Z}_{3}$); and so now the reduced polynomial $f(z)$ modulo $3\mathbb{Z}_{3}$ is a polynomial defined over a finite field $\mathbb{Z}_{3}\slash 3\mathbb{Z}_{3}$ of order $3$. So now, recall from a well-known fact that the cubic polynomial $h(x)=x^3-x$ vanishes at every $z\in \mathbb{Z}_{3}\slash 3\mathbb{Z}_{3}$; and so $z^3 = z$ for every $z\in \mathbb{Z}_{3}\slash3\mathbb{Z}_{3}$. But then we note $z^{3^n}= (z^3)^{3^{n-1}} = (z^3)^{3^{n-2}} = z^{3^{n-2}}$ for every $z\in \mathbb{Z}_{3}\slash3\mathbb{Z}_{3}$. Now since $n\geq 2$ and so $n-2\geq 0$, then if $n-2 = 0$ and so $z^{3^{n-2}} = z$, it then follows $z^{3^n} = z$  for every $z\in \mathbb{Z}_{3}\slash3\mathbb{Z}_{3}$; and so $f(z)\equiv 0$ (mod $3\mathbb{Z}_{3}$) for every point $z\in \mathbb{Z}_{3}\slash3\mathbb{Z}_{3}$. Otherwise, if $n-2 > 0$, then since $n$ is a fixed integer, we may continue performing the above procedure of decreasing the exponent $n-2$ of $z^{3^{n-2}}= z^{3^n}$ for every $z\in \mathbb{Z}_{3}\slash3\mathbb{Z}_{3}$ until $n-2$ is equal to zero; from which we again obtain $f(z)\equiv 0$ (mod $3\mathbb{Z}_{3}$) for every point $z\in \mathbb{Z}_{3}\slash3\mathbb{Z}_{3}$. But now we then conclude $X_{c}^{(n)}(3) = 3$. 

Finally, we now show $X_{c}^{(n)}(3) = 0$ for every coefficient $c\not \equiv 0$ (mod $3\mathbb{Z}_{3}$) and for every fixed integer $n\in \mathbb{Z}_{\geq 2}$. To see this, let's for the sake of a contradiction, suppose $f(z)\equiv 0$ (mod $3\mathbb{Z}_{3}$) for some $z\in \mathbb{Z}_{3}\slash 3\mathbb{Z}_{3}$ and for every coefficient $c\not \equiv 0$ (mod $3\mathbb{Z}_{3}$) and every fixed $n\in \mathbb{Z}_{\geq 2}$. Now since from earlier $f(z)= z^{3^{n}} + h(z) - z + c$ where $h(z)\in c\mathbb{Z}_{3}[z]$, it then also follows $z^{3^{n}} + h(z) - z + c \equiv 0$ (mod $3\mathbb{Z}_{3}$) for some $z\in \mathbb{Z}_{3}\slash 3\mathbb{Z}_{3}$ and for every $c\not \equiv 0$ (mod $3\mathbb{Z}_{3}$) and every fixed $n$. So now, using an earlier observation that $z^{3^n} = z$ for every $z\in \mathbb{Z}_{3}\slash3\mathbb{Z}_{3}$ and every fixed $n\in \mathbb{Z}_{\geq 2}$, we may then rewrite $z^{3^{n}} -z + h(z)  + c \equiv 0$ (mod $3\mathbb{Z}_{3}$) for some $z\in \mathbb{Z}_{3}\slash 3\mathbb{Z}_{3}$ and for every $c\not \equiv 0$ (mod $3\mathbb{Z}_{3}$) to then obtain $h(z) + c \equiv 0$ (mod $3\mathbb{Z}_{3}$) for some $z\in \mathbb{Z}_{3}\slash 3\mathbb{Z}_{3}$ and for every $c\not \equiv 0$ (mod $3\mathbb{Z}_{3}$). Now looking at the multinomial expansion of $(\varphi_{3,c}^{n-1}(z))^3$, we then obtain $h(z)\equiv \sum_{i=1}^{n-1}c^{3^{n-i}}$ (mod $3\mathbb{Z}_{3}$); and so $h(z) + c \equiv \sum_{i=1}^{n-1}c^{3^{n-i}} + c$ (mod $3\mathbb{Z}_{3}$) and so $\sum_{i=1}^{n-1}c^{3^{n-i}} + c \equiv 0$ (mod $3\mathbb{Z}_{3}$). 
But now we note that $\sum_{i=1}^{n-1}c^{3^{n-i}} + c \equiv 0$ (mod $3\mathbb{Z}_{3}$) can happen if $\sum_{i=1}^{n-1}c^{3^{n-i}}\equiv 0$ (mod $3\mathbb{Z}_{3}$) and also $c\equiv 0$ (mod $3\mathbb{Z}_{3}$); and so follows a contradiction. This then means  $f(x)=\varphi_{3,c}^n(x)-x$ has no roots in $\mathbb{Z}_{3}\slash 3\mathbb{Z}_{3}$ for every coefficient $c\not \in 3\mathbb{Z}_{3}$ and every fixed $n\in \mathbb{Z}_{\geq 2}$; and so we conclude $X_{c}^{(n)}(3) = 0$. This then completes the whole proof, as needed.
\end{proof} 
We now wish to generalize Theorem \ref{2.1} to any polynomial map $\varphi_{p, c}$ for any prime $p\geq 3$. More precisely, we prove that the number of distinct $n$-periodic $p$-adic integral points of any  $\varphi_{p, c}$ modulo $p\mathbb{Z}_{p}$ is either $p$ or zero:

\begin{thm} \label{2.2}
Let $p\geq 3$ be any fixed prime, and $\varphi_{p, c}$ be defined by $\varphi_{p, c}(z) = z^p + c$ for all $c, z\in\mathbb{Z}_{p}$. Let $X_{c}^{(n)}(p)$ be as in \textnormal{(\ref{X_{c}})}. Then $X_{c}^{(n)}(p) = p$ for every coefficient $c\in p\mathbb{Z}_{p}$; otherwise $X_{c}^{(n)}(p) = 0$ for every $c \not \in p\mathbb{Z}_{p}$.
\end{thm}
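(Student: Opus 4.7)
The plan is to mirror the proof of Theorem~\ref{2.1}, replacing the explicit expansion of $(z^3+c)^3$ by the general mod $p$ collapse of $(z^p+c)^p$, and then invoking Fermat's Little Theorem twice in place of its single use at exponent $3$. Specifically, I would first set
\[
f(z) := \varphi_{p,c}^{2}(z) - z = (z^p + c)^p - z + c,
\]
expand the outer $p$-th power by the binomial theorem, and observe that every middle coefficient $\binom{p}{k}$ with $1 \le k \le p-1$ is divisible by $p$; this yields the clean reduction
\[
f(z) \equiv z^{p^2} + c^p - z + c \pmod{p\mathbb{Z}_p}.
\]

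Next, I would split into two cases just as in Theorem~\ref{2.1}. For $c \in p\mathbb{Z}_p$, the congruence collapses further to $f(z) \equiv z^{p^2} - z \pmod{p\mathbb{Z}_p}$, and applying Fermat's Little Theorem twice, that is, $z^{p^2} = (z^p)^p \equiv z^p \equiv z \pmod{p\mathbb{Z}_p}$, shows that every one of the $p$ residue classes in $\mathbb{Z}_p/p\mathbb{Z}_p$ is a root of $f$, giving $X_c^{(2)}(p) = p$. For $c \notin p\mathbb{Z}_p$, a single use of Fermat's Little Theorem on both $z$ and $c$ yields $z^{p^2} \equiv z$ and $c^p \equiv c$, so
\[
f(z) \equiv z + c + c - z \equiv 2c \pmod{p\mathbb{Z}_p};
\]
since $p \geq 3$ is odd and $c \not\equiv 0 \pmod{p\mathbb{Z}_p}$, this residue is nonzero, so $f$ has no roots in $\mathbb{Z}_p / p\mathbb{Z}_p$, and $X_c^{(2)}(p) = 0$.

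The main obstacle, if it can be called one, is purely notational: verifying that the binomial expansion of $(z^p+c)^p$ really does reduce modulo $p$ to $z^{p^2}+c^p$, which is the Frobenius identity (or \emph{Freshman's dream}) in characteristic $p$. This is the only place where the mechanics of the proof for $p=3$ need to be lifted to a general prime, and the oddness of $p$ is used exactly once, at the very end, to guarantee $\gcd(2,p)=1$ and hence $2c \not\equiv 0 \pmod{p\mathbb{Z}_p}$ whenever $c \not\equiv 0 \pmod{p\mathbb{Z}_p}$. Everything else is a direct transcription of the argument for $p = 3$ given in Theorem~\ref{2.1}.
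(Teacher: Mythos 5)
Your proof is correct and follows essentially the same route as the paper's: both reduce $f(z) = (z^p+c)^p - z + c$ via Frobenius and the identity $z^{p^2} = z$ in $\mathbb{Z}_p/p\mathbb{Z}_p$, obtaining $f \equiv 0$ when $c \in p\mathbb{Z}_p$ and a nonzero constant when $c \notin p\mathbb{Z}_p$. The only difference is cosmetic: the paper leaves the constant as $c^p + c$ and asserts it is nonzero, whereas you apply Fermat's Little Theorem once more to write it as $2c$ and make explicit where the oddness of $p$ is used, which is a slightly cleaner presentation of the same step.
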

\begin{proof}
By applying a similar argument as in the Proof of Theorem \ref{2.1}, we then obtain the count as desired. That is, let $f(z)= \varphi_{p,c}^n(z)-z = \varphi_{p,c}(\varphi_{p,c}^{n-1}(z)) - z = (\varphi_{p,c}^{n-1}(z))^p - z + c$, and so $f(z)= (\varphi_{p,c}^{n-1}(z))^p - z + c$. Now that applying the multinomial theorem repeatedly on $(\varphi_{p,c}^{n-1}(z))^p$ right after applying the binomial theorem on $(z^p + c)^p$, it then follows $(\varphi_{p,c}^{n-1}(z))^p$ is a monic polynomial in $z$ of degree $p^n$ with $p$-adic integer coefficients in multiples of $c$. Hence, we may then write $(\varphi_{p,c}^{n-1}(z))^p = z^{p^{n}} + h(z)$, where $h(z)$ is a non-constant polynomial in $z$ of deg$(h)<p^n$ with $p$-adic integer coefficients in multiples of $c$; and so $f(z)= z^{p^{n}} + h(z) - z + c$. Now for every coefficient $c\in p\mathbb{Z}_{p}$, reducing $f(z)$ modulo prime ideal $p\mathbb{Z}_{p}$, it then follows $f(z)\equiv z^{p^n} - z$ (mod $p\mathbb{Z}_{p}$), since also  $h(z)\in c\mathbb{Z}_{p}[z]$ and so $h(z)\equiv 0$ (mod $p\mathbb{Z}_{p}$); and so $f(z)$ modulo $p\mathbb{Z}_{p}$ is now a polynomial defined over a finite field $\mathbb{Z}_{p}\slash p\mathbb{Z}_{p}$ of order $p$. So now, recall a well-known fact about polynomials over finite fields that the monic polynomial $h(x)=x^p-x$ vanishes every $z \in \mathbb{Z}_{p}\slash p \mathbb{Z}_{p}$; and so $z^p = z$ for every $z\in \mathbb{Z}_{p}\slash p\mathbb{Z}_{p}$. But then we note $z^{p^n}= (z^p)^{p^{n-1}} = (z^p)^{p^{n-2}} = z^{p^{n-2}}$ for every $z\in \mathbb{Z}_{p}\slash p\mathbb{Z}_{p}$. So now, since $n\geq 2$ and so $n-2\geq 0$, then if $n-2 = 0$ and so $z^{p^{n-2}} = z$, then this yields $z^{p^n} = z$  for every $z\in \mathbb{Z}_{p}\slash p\mathbb{Z}_{p}$; and so $f(z)\equiv 0$ (mod $p\mathbb{Z}_{p}$) for every point $z\in \mathbb{Z}_{p}\slash p\mathbb{Z}_{p}$. Otherwise, if $n-2 > 0$, then since $n$ is a fixed integer, we may continue performing the above procedure of decreasing $n-2$ of $z^{p^{n-2}}=z^{p^n}$ for every $z\in \mathbb{Z}_{p}\slash p\mathbb{Z}_{p}$ until $n-2$ is equal to zero; from which we then again obtain $f(z)\equiv 0$ (mod $p\mathbb{Z}_{p}$) for every point $z\in \mathbb{Z}_{p}\slash p\mathbb{Z}_{p}$. But now we then conclude $X_{c}^{(n)}(p) = p$. 

Finally, we now show $X_{c}^{(n)}(p) = 0$ for every coefficient $c \not \in p\mathbb{Z}_{p}$ and for every fixed integer $n\in \mathbb{Z}_{\geq 2}$. As before, let's for the sake of a contradiction, suppose $f(z)\equiv 0$ (mod $p\mathbb{Z}_{p}$) for some point $z\in \mathbb{Z}_{p}\slash p\mathbb{Z}_{p}$ and for every $c\not \equiv 0$ (mod $p\mathbb{Z}_{p}$) and every fixed $n\in \mathbb{Z}_{\geq 2}$. So then, since from earlier $f(z)= z^{p^{n}} + h(z) - z + c$ where $h(z)\in c\mathbb{Z}_{p}[z]$, it then also follows $z^{p^{n}} + h(z) - z + c \equiv 0$ (mod $p\mathbb{Z}_{p}$) for some $z\in \mathbb{Z}_{p}\slash p\mathbb{Z}_{p}$ and for every $c\not \equiv 0$ (mod $p\mathbb{Z}_{p}$) and every fixed $n$. Now recall from earlier that $z^{p^n} = z$ for every $z\in \mathbb{Z}_{p}\slash p\mathbb{Z}_{p}$ and every fixed $n\in \mathbb{Z}_{\geq 2}$, we may then rewrite $z^{p^{n}} -z + h(z) + c \equiv 0$ (mod $p\mathbb{Z}_{p}$) for some $z\in \mathbb{Z}_{p}\slash p\mathbb{Z}_{p}$ and for every $c\not \equiv 0$ (mod $p\mathbb{Z}_{p}$) to obtain $h(z) + c \equiv 0$ (mod $p\mathbb{Z}_{p}$) for some $z\in \mathbb{Z}_{p}\slash p\mathbb{Z}_{p}$ and for every $c\not \equiv 0$ (mod $p\mathbb{Z}_{p}$). Moreover, looking at the multinomial expansion of $(\varphi_{p,c}^{n-1}(z))^p$, we then obtain $h(z)\equiv \sum_{i=1}^{n-1}c^{p^{n-i}}$ (mod $p\mathbb{Z}_{p}$); and so $h(z) + c \equiv \sum_{i=1}^{n-1}c^{p^{n-i}} + c$ (mod $p\mathbb{Z}_{p}$) and so $\sum_{i=1}^{n-1}c^{p^{n-i}} + c \equiv 0$ (mod $p\mathbb{Z}_{p}$). But now we note that $\sum_{i=1}^{n-1}c^{p^{n-i}} + c \equiv 0$ (mod $p\mathbb{Z}_{p}$) can happen if $\sum_{i=1}^{n-1}c^{p^{n-i}}\equiv 0$ (mod $p\mathbb{Z}_{p}$) and also $c\equiv 0$ (mod $p\mathbb{Z}_{p}$); and so follows a contradiction. This then means $f(x)=\varphi_{p,c}^n(x)-x$ has no roots in $\mathbb{Z}_{p}\slash p\mathbb{Z}_{p}$ for every coefficient $c\not \in p\mathbb{Z}_{p}$ and every fixed $n\in \mathbb{Z}_{\geq 2}$; and so we conclude $X_{c}^{(n)}(p) = 0$. This then completes the whole proof, as needed.
\end{proof}

Finally, we wish to generalize Theorem \ref{2.2} further to any $\varphi_{p^{\ell}, c}$ for any prime $p\geq 3$ and any $\ell \in \mathbb{Z}_{\geq 1}$. That is, we prove the number of distinct $n$-periodic $p$-adic integral points of any $\varphi_{p^{\ell}, c}$ modulo $p\mathbb{Z}_{p}$ is $p$ or zero:

\begin{thm}\label{2.3}
Let $p\geq 3$ be any fixed prime integer, and $\ell \geq 1$ be any fixed integer. Let $\varphi_{p^{\ell}, c}$ be a polynomial map defined by $\varphi_{p^{\ell}, c}(z) = z^{p^{\ell}} + c$ for all points $c, z\in\mathbb{Z}_{p}$, and $X_{c}^{(n)}(p)$ be the number defined as in \textnormal{(\ref{X_{c}})}. Then we have $X_{c}^{(n)}(p) = p$  for every coefficient $c\in p\mathbb{Z}_{p}$; otherwise we have $X_{c}^{(n)}(p) = 0$ for every coefficient $c\not \in p\mathbb{Z}_{p}$.
\end{thm}

\begin{proof}
By applying the same counting argument as in the Proof of Theorem \ref{2.3}, we then obtain the count as desired. That is, let $f(z)= \varphi_{p^{\ell},c}^n(z)-z = (\varphi_{p^{\ell},c}^{n-1}(z))^{p^{\ell}} - z + c$, and so $f(z)= (\varphi_{p^{\ell},c}^{n-1}(z))^{p^{\ell}} - z + c$. So now, applying the multinomial theorem repeatedly on $(\varphi_{p^{\ell},c}^{n-1}(z))^{p^{\ell}}$ after applying the binomial theorem on $(z^{p^{\ell}} + c)^{p^{\ell}}$, it then follows $(\varphi_{p^{\ell},c}^{n-1}(z))^{p^{\ell}}$ is a monic polynomial in $z$ of degree $p^{n\ell}$ with $p$-adic integer coefficients in multiples of $c$. Hence, we may write $(\varphi_{p^{\ell},c}^{n-1}(z))^{p^{\ell}} = z^{p^{n\ell}} + h(z)$, where $h(z)$ is a non-constant polynomial in $z$ of deg$(h)<p^{n\ell}$ with $p$-adic integer coefficients in multiples of $c$; and so the reduced polynomial $f(z)= z^{p^{n\ell}} + h(z) - z + c$. Now for every coefficient $c\in p\mathbb{Z}_{p}$, reducing $f(z)$ modulo $p\mathbb{Z}_{p}$, it then follows $f(z)\equiv z^{p^{n\ell}} - z$ (mod $p\mathbb{Z}_{p}$), since also $h(z)\in c\mathbb{Z}_{p}[z]$ and so $h(z)\equiv 0$ (mod $p\mathbb{Z}_{p}$); and so $f(z)$ modulo $p\mathbb{Z}_{p}$ is now a polynomial defined over a finite field $\mathbb{Z}_{p}\slash p\mathbb{Z}_{p}$. Now recall (as a fact) $z^p = z$ for every $z\in \mathbb{Z}_{p}\slash p\mathbb{Z}_{p}$, it then follows $z^{p^{n\ell}}= (z^p)^{p^{n\ell-1}} = (z^p)^{p^{n\ell-2}} = z^{p^{n\ell-2}}$ for every $z\in \mathbb{Z}_{p}\slash p\mathbb{Z}_{p}$. So now, since $n\ell\geq 2$ and so $n\ell-2\geq 0$, then if $n\ell-2 = 0$ and so $z^{p^{n\ell-2}} = z$, then this yields $z^{p^{n\ell}} = z$ for every $z\in \mathbb{Z}_{p}\slash p\mathbb{Z}_{p}$; and so $f(z)\equiv 0$ (mod $p\mathbb{Z}_{p}$) for every point $z\in \mathbb{Z}_{p}\slash p\mathbb{Z}_{p}$. Otherwise, if $n\ell-2 > 0$, then since $n\ell$ is a fixed integer, we may then  continue performing the above procedure of decreasing the exponent $n\ell-2$ 
of $z^{p^{n\ell-2}} = z^{p^{n\ell}}$ for every element $z\in \mathbb{Z}_{p}\slash p\mathbb{Z}_{p}$ until $n\ell-2$ is equal to zero; and from we then again obtain $f(z)\equiv 0$ (mod $p\mathbb{Z}_{p}$) for every point $z\in \mathbb{Z}_{p}\slash p\mathbb{Z}_{p}$. Hence, as before we then conclude $X_{c}^{(n)}(p) = p$. 

Finally, we now show $X_{c}^{(n)}(p) = 0$ for every coefficient $c \not \in p\mathbb{Z}_{p}$ and for every fixed integers $\ell\in \mathbb{Z}_{\geq 1}$ and $n\in \mathbb{Z}_{\geq 2}$. As before, let's for the sake of a contradiction, suppose $f(z)\equiv 0$ (mod $p\mathbb{Z}_{p}$) for some point $z\in \mathbb{Z}_{p}\slash p\mathbb{Z}_{p}$ and for every coefficient $c\not \equiv 0$ (mod $p\mathbb{Z}_{p}$) and every fixed $\ell\in \mathbb{Z}_{\geq 1}$ and $n\in \mathbb{Z}_{\geq 2}$. So now, since from earlier $f(z)= z^{p^{n\ell}} + h(z) - z + c$ where $h(z)\in c\mathbb{Z}_{p}[z]$, it then also follow $z^{p^{n\ell}} + h(z) - z + c \equiv 0$ (mod $p\mathbb{Z}_{p}$) for some point $z\in \mathbb{Z}_{p}\slash p\mathbb{Z}_{p}$ and for every $c\not \equiv 0$ (mod $p\mathbb{Z}_{p}$) and for every fixed $\ell$ and $n$. Moreover, recalling from earlier that $z^{p^{n\ell}} = z$ for every element $z\in \mathbb{Z}_{p}\slash p\mathbb{Z}_{p}$ and for every fixed $\ell$ and $n$, we may rewrite $z^{p^{n\ell}} - z + h(z)  + c \equiv 0$ (mod $p\mathbb{Z}_{p}$) for some $z\in \mathbb{Z}_{p}\slash p\mathbb{Z}_{p}$ and for every $c\not \equiv 0$ (mod $p\mathbb{Z}_{p}$) to obtain $h(z) + c \equiv 0$ (mod $p\mathbb{Z}_{p}$) for some $z\in \mathbb{Z}_{p}\slash p\mathbb{Z}_{p}$ and for every $c\not \equiv 0$ (mod $p\mathbb{Z}_{p}$). So now, looking at the multinomial expansion of the term $(\varphi_{p^{\ell},c}^{n-1}(z))^{p^{\ell}}$, we then obtain $h(z)\equiv \sum_{i=1}^{n-1}c^{p^{n\ell-i}}$ (mod $p\mathbb{Z}_{p}$); 
and so $h(z) + c \equiv \sum_{i=1}^{n-1}c^{p^{n\ell-i}} + c $ (mod $p\mathbb{Z}_{p}$) and so $\sum_{i=1}^{n-1}c^{p^{n\ell-i}} + c  \equiv 0$ (mod $p\mathbb{Z}_{p}$). But now we note that the congruence $\sum_{i=1}^{n-1}c^{p^{n\ell-i}} + c \equiv 0$ (mod $p\mathbb{Z}_{p}$) can also happen whenever $\sum_{i=1}^{n-1}c^{p^{n\ell-i}}  \equiv 0$ (mod $p\mathbb{Z}_{p}$) and also $ c \equiv 0$ (mod $p\mathbb{Z}_{p}$); and from which follows a contradiction. This then means that $f(x)=\varphi_{p^{\ell},c}^n(x)-x$ has no roots in $\mathbb{Z}_{p}\slash p\mathbb{Z}_{p}$ for every coefficient $c\not \in p\mathbb{Z}_{p}$ and for every fixed $\ell\in \mathbb{Z}_{\geq 1}$ and every fixed $n\in \mathbb{Z}_{\geq 2}$; and so we then conclude $X_{c}^{(n)}(p) = 0$. This then completes the whole proof, as desired.
\end{proof}

\begin{rem}\label{rem2.4}
With now Theorem \ref{2.3}, we may then to each $n$-periodic point of any $\varphi_{p^{\ell},c}$ associate an $n$-periodic orbit. In doing so, we then obtain a dynamical translation of Theorem \ref{2.3}, namely, that the number of distinct $n$-periodic $p$-adic integral orbits that any $\varphi_{p^{\ell},c}$ has when iterated on the space $\mathbb{Z}_{p} \slash p\mathbb{Z}_{p}$ is $p$ or zero. As mentioned in Intro.\ref{sec1} that the count obtained in Theorem \ref{2.3} on the number of distinct $n$-periodic $p$-adic integral points of any $\varphi_{p^{\ell},c}$ modulo $p\mathbb{Z}_{p}$ may depend only on $p$ (and so depend on deg$(\varphi_{p^{\ell},c})$ for every fixed $\ell\in \mathbb{Z}_{\geq 1}$) in one of the possibilities; or the count may be independent of $p$ (and so independent on deg$(\varphi_{p^{\ell},c})$ for every fixed $\ell$). Moreover, the expected total count (namely, $p+0 = p$ for every fixed period $n\in \mathbb{Z}_{\geq 2}$) of the number of distinct $n$-periodic $p$-adic integral points in the whole family of maps $\varphi_{p^{\ell},c}$ modulo $p\mathbb{Z}_{p}$ may not only also depend on $p$ (and so depend on deg$(\varphi_{p^{\ell},c})$ for every fixed $\ell\in \mathbb{Z}_{\geq 1}$), but also may grow to infinity when degree $p^{\ell}\to \infty$. Consequently, we may then have $X_{c}^{(n)}(p)\to \infty$ or $X_{c}^{(n)}(p)\to 0$ as $p\to \infty$; a somewhat interesting phenomenon differing significantly from a phenomenon observed in Remark \ref{rem3.4}, however, coinciding somewhat surprising with a phenomenon observed in [\cite{BK3}, Remark 3.4 (for every $\ell \in \{1, p\}$)] and also currently here in Remark \ref{rem4.4}.
\end{rem}

\begin{rem}\label{rem2.5}
Recall in \cite{BK3} that the fixed point-counting function $X_{c}(p) = p$ (for every $\ell \in \{1,p\}$) or $0$ for every fixed prime $p\geq 3$ and every coefficient $c\equiv 0$ (mod $p\mathbb{Z}_{p}$) or $c\not \equiv 0$ (mod $p\mathbb{Z}_{p}$). Moreover, recall in Theorem \ref{2.3} that for any fixed (period) $n\in \mathbb{Z}_{\geq 2}$, the $n$-periodic point-counting function $X_{c}^{(n)}(p) = p$ or $0$ for every fixed $p$ and every $c\equiv 0$ (mod $p\mathbb{Z}_{p}$) or $c\not \equiv 0$ (mod $p\mathbb{Z}_{p}$). But now for every fixed (period) $n\in \mathbb{Z}_{\geq 2}$ and every $\ell \in \{1,p\}$, we then note $X_{c}^{(n)}(p) = X_{c}(p) = p$ or $0$ for every fixed $p$ and every $c\equiv 0$ (mod $p\mathbb{Z}_{p}$) or $c\not \equiv 0$ (mod $p\mathbb{Z}_{p}$). Moreover, for every coefficient $c\equiv 0$ (mod $p\mathbb{Z}_{p}$) and for every $\ell \in \{1, p\}$, it also follow from [\cite{BK3}, Proof of Theorem 3.3] and Proof of Theorem \ref{2.3} that every $n$-periodic $p$-adic integral point (and hence every $n$-periodic $p$-adic integral orbit) of any $\varphi_{p^{\ell},c}$ modulo $p\mathbb{Z}_{p}$ is a fixed $p$-adic integral point (and hence a fixed $p$-adic integral orbit). This also means that for every fixed (period) $n\in \mathbb{Z}_{\geq 1}$, every $n$-periodic orbit of $\varphi_{p^{\ell},c}$ modulo $p\mathbb{Z}_{p}$ is a fixed orbit, and moreover every reduced map $\varphi_{p^{\ell},c}$ modulo $p\mathbb{Z}_{p}$ has exactly $p$ distinct fixed orbits or zero; a somewhat interesting precise arithmetic-geometric insight on all the $n$-periodic orbits of any $\varphi_{p^{\ell},c}$ modulo $p\mathbb{Z}_{p}$. 
\end{rem}

\section{On Number of $n$-Periodic $\mathbb{Z}_{p}\slash p\mathbb{Z}_{p}$-Points of any Family of Polynomial Maps $\varphi_{(p-1)^{\ell},c}$}\label{sec3}

As in Section \ref{sec2}, we in this section also wish to count the number of distinct $n$-periodic $p$-adic integral points of any polynomial map $\varphi_{(p-1)^{\ell},c}$ modulo prime ideal $p\mathbb{Z}_{p}$, where $p\geq 5$ is any prime, $\ell \in \mathbb{Z}_{\geq 1}$ and $n\in \mathbb{Z}_{\geq 2}$ are any fixed integers. With that in mind, we again let $p\geq 5$ be any prime, $\ell \in \mathbb{Z}_{\geq 1}$, $n\in \mathbb{Z}_{\geq 2}$ be any fixed integers, $c\in \mathbb{Z}_{p}$ be any $p$-adic integer, and then define analogously the following $n$-periodic point-counting function
\begin{equation}\label{Y_{c}}
Y_{c}^{(n)}(p) := \# \Biggl\{ z\in \mathbb{Z}_{p}\slash p\mathbb{Z}_{p}   : \begin{aligned} \varphi_{(p-1)^{\ell},c}^{n-1}(z) -z \not \equiv 0 \ \text{(mod $p\mathbb{Z}_{p}$)} \\ \ \varphi_{(p-1)^{\ell},c}^{n}(z) - z \equiv 0 \ \text{(mod $p\mathbb{Z}_{p}$)} \end{aligned} \Biggr\}.
\end{equation}\noindent Again, setting $\ell =1$ and so $\varphi_{(p-1)^{\ell}, c} = \varphi_{p-1,c}$, we first prove the following theorem and its generalization \ref{3.2}:

\begin{thm} \label{3.1}
Let $\varphi_{4, c}$ be a quartic map defined by $\varphi_{4, c}(z) = z^4 + c$ for all $c, z\in\mathbb{Z}_{5}$, and $Y_{c}^{(n)}(5)$ be defined as in \textnormal{(\ref{Y_{c}})}. Then $Y_{c}^{(n)}(5) = 1$ for every $c\equiv \pm 1 \ (mod \ 5\mathbb{Z}_{5})$ and fixed (even) $n$ or $Y_{c}^{(n)}(5) = 2$ for every $c\in 5\mathbb{Z}_{5}$; otherwise $Y_{c}^{(n)}(5) = 0$ for every $c \equiv -1\ (mod \ 5\mathbb{Z}_{5})$ and fixed odd $n$ or $c\not \equiv \pm 1, 0 \ (mod \ 5\mathbb{Z}_{5})$ and fixed (even) $n$. 
\end{thm}
\begin{proof}
Let $g(z)= \varphi_{4,c}^n(z)-z = \varphi_{4,c}(\varphi_{4,c}^{n-1}(z)) - z = (\varphi_{4,c}^{n-1}(z))^4 - z + c$, and so $g(z)= (\varphi_{4,c}^{n-1}(z))^4 - z + c$. Now applying the multinomial theorem repeatedly on the term $(\varphi_{4,c}^{n-1}(z))^4$ right after applying the binomial theorem on $(z^4 + c)^4$, we then obtain that $(\varphi_{4,c}^{n-1}(z))^4$ is a monic polynomial in $z$ of degree $4^n$ with $5$-adic integer coefficients in multiples of $c$. Thus, we may then write $(\varphi_{4,c}^{n-1}(z))^4 = z^{4^{n}} + h(z)$, where $h(z)$ is a non-constant polynomial in $z$ of deg$(h)<4^n$ with $5$-adic integer coefficients in multiples of $c$; and so $g(z)= z^{4^{n}} + h(z) - z + c$. So now, for every coefficient $c\in 5\mathbb{Z}_{5}$, reducing $g(z)$ modulo prime ideal $5\mathbb{Z}_{5}$, it then follows $g(z)\equiv z^{4^n} - z$ (mod $5\mathbb{Z}_{5}$), since also $h(z)\in c\mathbb{Z}_{5}[z]$ and so $h(z)\equiv 0$ (mod $5\mathbb{Z}_{5}$); and so now the reduced polynomial $g(z)$ modulo $5\mathbb{Z}_{5}$ is a polynomial defined over a finite field $\mathbb{Z}_{5}\slash 5\mathbb{Z}_{5}$ of order $5$. Now recall from a well-known fact that the quartic monic polynomial $h(x)=x^4 -1$ vanishes at every element $z\in (\mathbb{Z}_{5}\slash 5\mathbb{Z}_{5})^{\times} = \mathbb{Z}_{5}\slash 5\mathbb{Z}_{5}\setminus \{0\}$; and so $z^4 = 1$ for every element $z\in (\mathbb{Z}_{5}\slash 5\mathbb{Z}_{5})^{\times}$. But now we note $z^{4^n}= (z^4)^{4^{n-1}} = 1$ for every element $z\in (\mathbb{Z}_{5}\slash 5\mathbb{Z}_{5})^{\times}$ and for every fixed $n\in \mathbb{Z}_{\geq 2}$; and so $g(z)\equiv 1 - z$ (mod $5\mathbb{Z}_{5}$) for every point $z\in (\mathbb{Z}_{5}\slash 5\mathbb{Z}_{5})^{\times}$ and so $g(z)$ has a root in $\mathbb{Z}_{5}\slash 5\mathbb{Z}_{5}$, namely, $z\equiv 1$ (mod $5\mathbb{Z}_{5}$). Moreover, since $z$ is a linear factor of $g(z)\equiv z(z^{4^n-1} - 1)$ (mod $5\mathbb{Z}_{5}$), it then follows $z\equiv 0$ (mod $5\mathbb{Z}_{5}$) is also root of $g(z)$ modulo $5\mathbb{Z}_{5}$. But now we then conclude $Y_{c}^{(n)}(5) = 2$. To see $Y_{c}^{(n)}(5) = 1$ for every coefficient $c\equiv 1$ (mod $5$) and for every fixed $n\in \mathbb{Z}_{\geq 2}$, we first note that writing $\varphi_{4,c}^{n-1}(z) = \underbrace{((((z^4 + c)^4 + c)^4 + c)^4 + \cdots + c)^4 + c}_\text{$(n-1)$ times}$ and then reducing $\varphi_{4,c}^{n-1}(z)$ modulo $5\mathbb{Z}_{5}$ along with $c\equiv 1$ (mod $5\mathbb{Z}_{5}$) and also with $z^4 = 1$ for every element $z\in (\mathbb{Z}_{5}\slash 5\mathbb{Z}_{5})^{\times}$, it then follows $\varphi_{4,c}^{n-1}(z)\equiv 2$ (mod $5\mathbb{Z}_{5}$) and $(\varphi_{4,c}^{n-1}(z))^4\equiv 1$ (mod $5\mathbb{Z}_{5}$) for every fixed $n\in \mathbb{Z}_{\geq 2}$. But then $g(z)=(\varphi_{4,c}^{n-1}(z))^4 - z + c\equiv 2-z$ (mod $5\mathbb{Z}_{5}$) for every point $z\in (\mathbb{Z}_{5}\slash 5\mathbb{Z}_{5})^{\times}$ and so $g(z)$ modulo $5\mathbb{Z}_{5}$ has a root in $\mathbb{Z}_{5}\slash5\mathbb{Z}_{5}$, namely, $z\equiv 2$ (mod $5\mathbb{Z}_{5}$); and so we then conclude $Y_{c}^{(n)}(5) = 1$. We now show  $Y_{c}^{(n)}(5) = 1$ for every coefficient $c\equiv -1$ (mod $5\mathbb{Z}_{5}$) and for every fixed even integer $n\in \mathbb{Z}_{\geq 2}$. As before, we note that since $c\equiv -1$ (mod $5\mathbb{Z}_{5}$) and also since $z^4 = 1$ for every $z\in (\mathbb{Z}_{5}\slash 5\mathbb{Z}_{5})^{\times}$, reducing $\varphi_{4,c}^n(z)$ modulo $5\mathbb{Z}_{5}$, it then follows  $\varphi_{4,c}^{n}(z)\equiv -1$ (mod $5\mathbb{Z}_{5}$) for every fixed even $n\in \mathbb{Z}_{\geq 2}$. But then $g(z)= \varphi_{4,c}^n(z)-z \equiv -(1+z)$ (mod $5\mathbb{Z}_{5}$) for every point $z\in (\mathbb{Z}_{5}\slash 5\mathbb{Z}_{5})^{\times}$ and every fixed even $n$ and so $g(z)$ modulo $5\mathbb{Z}_{5}$ has a root in $\mathbb{Z}_{5}\slash5\mathbb{Z}_{5}$, namely, $z\equiv -1$ (mod $5\mathbb{Z}_{5}$); and so we conclude $Y_{c}^{(n)}(5) = 1$. 

Finally, we now show $Y_{c}^{(n)}(5) = 0$ for every coefficient $c \equiv -1$ (mod $5\mathbb{Z}_{5}$) and every fixed odd integer $n\in \mathbb{Z}_{\geq 3}$ or for every coefficient $c\not \equiv \pm1, 0$ (mod $5\mathbb{Z}_{5}$) and fixed (even) integer $n\in \mathbb{Z}_{\geq 2}$. To see this, we note that since $c\equiv -1$ (mod $5\mathbb{Z}_{5}$) and also since $z^4 = 1$ for every $z\in (\mathbb{Z}_{5}\slash 5\mathbb{Z}_{5})^{\times}$, reducing $\varphi_{4,c}^n(z)$ modulo $5\mathbb{Z}_{5}$, it then follows $\varphi_{4,c}^{n}(z)\equiv 0$ (mod $5\mathbb{Z}_{5}$) for every fixed odd $n\in \mathbb{Z}_{\geq 3}$; and so $g(z)= \varphi_{4,c}^n(z)-z \equiv -z$ (mod $5\mathbb{Z}_{5}$) for every point $z\in (\mathbb{Z}_{5}\slash 5\mathbb{Z}_{5})^{\times}$. But now we note $z\equiv 0$ (mod $5\mathbb{Z}_{5}$) is a root of $g(z)$ modulo $5\mathbb{Z}_{5}$ for every coefficient $c\equiv -1$ (mod $5\mathbb{Z}_{5}$) and every fixed odd $n\in \mathbb{Z}_{\geq 3}$, and also $z\equiv 0$ (mod $5\mathbb{Z}_{5}$) is also root of $g(z)$ modulo $5\mathbb{Z}_{5}$ for every coefficient $c\equiv 0$ (mod $5\mathbb{Z}_{5}$) and every fixed odd $n\in \mathbb{Z}_{\geq 3}$, as seen earlier; and from which we then obtain a contradiction that $1 \equiv 0$ (mod $5\mathbb{Z}_{5}$). This then means $g(x)=\varphi_{4,c}^n(x)-x$ has no roots in $\mathbb{Z}_{5} / 5\mathbb{Z}_{5}$ for every coefficient $c \equiv -1$ (mod $5\mathbb{Z}_{5}$) and every fixed odd $n\in \mathbb{Z}_{\geq 3}$; and so we then conclude $Y_{c}^{(n)}(5) = 0$. Otherwise, suppose $g(z)\equiv 0$ (mod $5\mathbb{Z}_{5}$) for some point $z\in (\mathbb{Z}_{5}\slash 5\mathbb{Z}_{5})^{\times}$ and for every coefficient $c\not \equiv \pm1, 0$ (mod $5\mathbb{Z}_{5}$) and every fixed (even) $n\in \mathbb{Z}_{\geq 2}$; and so (from earlier) $z^{4^n} + h(z)-z+c\equiv 0$ (mod $5\mathbb{Z}_{5}$) where $h(z)\in c\mathbb{Z}_{5}[z]$. So then, since $z^{4^n}=1$ for every $z\in (\mathbb{Z}_{5}\slash 5\mathbb{Z}_{5})^{\times}$ and every fixed even $n\in \mathbb{Z}_{\geq 2}$, we may then write $z^{4^n} -z + h(z)+c\equiv 0$ (mod $5\mathbb{Z}_{5}$) to obtain $(1-z) + (h(z)+c)\equiv 0$ (mod $5$). But now we note that the congruence $(1-z) + (h(z)+c)\equiv 0$ (mod $5\mathbb{Z}_{5}$) can happen if $1-z\equiv 0$ (mod $5\mathbb{Z}_{5}$) and also $h(z)+c\equiv 0$ (mod $5\mathbb{Z}_{5}$). Moreover, recall from earlier that $1-z\equiv 0$ (mod $5\mathbb{Z}_{5}$) when $c\equiv 0$ (mod $5\mathbb{Z}_{5}$); which then also contradicts $c\not \equiv \pm1, 0$ (mod $5\mathbb{Z}_{5}$). This then also means $g(x)=\varphi_{4,c}^n(x)-x$ has no roots in $\mathbb{Z}_{5} / 5\mathbb{Z}_{5}$ for every coefficient $c\not \equiv \pm1, 0$ (mod $5\mathbb{Z}_{5}$) and every fixed (even) $n\in \mathbb{Z}_{\geq 2}$; and so we then also conclude $Y_{c}^{(n)}(5) = 0$. This then completes the whole proof, as needed.
\end{proof} 
We now wish to generalize Theorem \ref{3.1} to any map $\varphi_{p-1, c}$ for any given prime $p\geq 5$. More precisely, we prove that the number of distinct $n$-periodic $p$-adic integral points of any  $\varphi_{p-1, c}$ modulo $p\mathbb{Z}_{p}$ is $1$ or $2$ or $0$:

\begin{thm} \label{3.2}
Let $p\geq 5$ be any fixed prime, and $\varphi_{p-1, c}$ be defined by $\varphi_{p-1, c}(z) = z^{p-1}+c$ for all $c, z\in\mathbb{Z}_{p}$. Let $Y_{c}^{(n)}(p)$ be as in \textnormal{(\ref{Y_{c}})}. Then $Y_{c}^{(n)}(p) = 1$ for any $c\equiv \pm 1 \ (mod \ p\mathbb{Z}_{p})$ and fixed (even) $n$ or $Y_{c}^{(n)}(p) = 2$ for all $c \in p\mathbb{Z}_{p}$; else $Y_{c}^{(n)}(p) = 0$ for every $c \equiv -1\ (mod \ p)$ and fixed odd $n$ or $c\not \equiv \pm 1, 0 \ (mod \ p\mathbb{Z}_{p})$ and fixed (even) $n$.
\end{thm}
\begin{proof}
By applying a similar argument as in the Proof of Theorem \ref{3.1}, we then obtain the count as desired. That is, let $g(z)= \varphi_{p-1,c}^n(z)-z = \varphi_{p-1,c}(\varphi_{p-1,c}^{n-1}(z)) - z = (\varphi_{p-1,c}^{n-1}(z))^{p-1} - z + c$, and so $g(z)= (\varphi_{p-1,c}^{n-1}(z))^{p-1} - z + c$. So now, applying the multinomial theorem repeatedly on $(\varphi_{p-1,c}^{n-1}(z))^{p-1}$ right after applying the binomial theorem on $(z^{p-1} + c)^{p-1}$, it then follows $(\varphi_{p-1,c}^{n-1}(z))^{p-1}$ is a monic polynomial in $z$ of degree $(p-1)^n$ with $p$-adic integer coefficients in multiples of $c$. Hence, we may then write $(\varphi_{p-1,c}^{n-1}(z))^{p-1} = z^{(p-1)^{n}} + h(z)$, where $h(z)$ is a non-constant polynomial in $z$ of deg$(h)<(p-1)^n$ with $p$-adic integer coefficients in multiples of $c$; and so $g(z)= z^{(p-1)^{n}} + h(z) - z + c$. Now for every coefficient $c\in p\mathbb{Z}_{p}$, reducing $g(z)$ modulo prime ideal $p\mathbb{Z}_{p}$, it then follows  $g(z)\equiv z^{(p-1)^n} - z$ (mod $p\mathbb{Z}_{p}$), since also $h(z)\in c\mathbb{Z}_{p}[z]$ and so $h(z)\equiv 0$ (mod $p\mathbb{Z}_{p}$); and so now $g(z)$ modulo $p\mathbb{Z}_{p}$ is a polynomial defined over a finite field $\mathbb{Z}_{p}\slash p\mathbb{Z}_{p}$ of order $p$. So now, recall from a well known fact that the monic polynomial $h(x)=x^p-1$ vanishes at every $z\in (\mathbb{Z}_{p}\slash p\mathbb{Z}_{p})^{\times} = \mathbb{Z}_{p}\slash p\mathbb{Z}_{p}\setminus \{0\}$ and so $z^{p-1} = 1$ for every $z\in (\mathbb{Z}_{p}\slash p\mathbb{Z}_{p})^{\times}$, it then also follows $z^{(p-1)^n}= (z^{p-1})^{(p-1)^{n-1}} = 1$ for every element $z\in (\mathbb{Z}_{p}\slash p\mathbb{Z}_{p})^{\times}$ and every fixed $n\in \mathbb{Z}_{\geq 2}$. But then $g(z)\equiv 1 - z$ (mod $p\mathbb{Z}_{p}$) for every point $z\in (\mathbb{Z}_{p}\slash p\mathbb{Z}_{p})^{\times}$; and so $g(z)$ has a root in $\mathbb{Z}_{p}\slash p\mathbb{Z}_{p}$, namely, $z\equiv 1$ (mod $p\mathbb{Z}_{p}$). Moreover, since $z$ is a linear factor of $g(z)\equiv z(z^{(p-1)^n-1} - 1)$ (mod $p\mathbb{Z}_{p}$), it then follows $z\equiv 0$ (mod $p\mathbb{Z}_{p}$) is also root of $g(z)$ modulo $p\mathbb{Z}_{p}$. But now we then conclude $Y_{c}^{(n)}(p) = 2$. To see $Y_{c}^{(n)}(p) = 1$ for every coefficient $c\equiv 1$ (mod $p\mathbb{Z}_{p}$) and for every fixed $n\in \mathbb{Z}_{\geq 2}$, we note that writing $\varphi_{p-1,c}^{n-1}(z) = \underbrace{((((z^{p-1} + c)^{p-1} + c)^{p-1} + c)^{p-1} + \cdots + c)^{p-1} + c}_\text{$(n-1)$ times}$ and then reducing $\varphi_{p-1,c}^{n-1}(z)$ modulo $p\mathbb{Z}_{p}$ along with $c\equiv 1$ (mod $p\mathbb{Z}_{p}$) and also since (as a fact) $z^{p-1} = 1$ for every $z\in (\mathbb{Z}_{p}\slash p\mathbb{Z}_{p})^{\times}$, it then follows $\varphi_{p-1,c}^{n-1}(z)\equiv 2$ (mod $p\mathbb{Z}_{p}$) and $(\varphi_{p-1,c}^{n-1}(z))^{p-1}\equiv 1$ (mod $p\mathbb{Z}_{p}$) for every fixed $n\in \mathbb{Z}_{\geq 2}$, since also $2^{p-1} \equiv 1$ (mod $p\mathbb{Z}_{p}$). But then $g(z)=(\varphi_{p-1,c}^{n-1}(z))^{p-1} - z + c\equiv 2-z$ (mod $p\mathbb{Z}_{p}$) for every point $z\in (\mathbb{Z}_{p}\slash p\mathbb{Z}_{p})^{\times}$ and so $g(z)$ modulo $p\mathbb{Z}_{p}$ has a root in $\mathbb{Z}_{p}\slash p\mathbb{Z}_{p}$, namely, $z\equiv 2$ (mod $p\mathbb{Z}_{p}$); and so we conclude $Y_{c}^{(n)}(p) = 1$. We now show $Y_{c}^{(n)}(p) = 1$ for every coefficient $c\equiv -1$ (mod $p\mathbb{Z}_{p}$) and fixed even integer $n\in \mathbb{Z}_{\geq 2}$. As before, since $c\equiv -1$ (mod $p\mathbb{Z}_{p}$) and also since $z^{p-1} = 1$ for every $z\in (\mathbb{Z}_{p}\slash p\mathbb{Z}_{p})^{\times}$, reducing $\varphi_{p-1,c}^n(z)$ modulo $p\mathbb{Z}_{p}$, it then follows $\varphi_{p-1,c}^{n}(z)\equiv -1$ (mod $p\mathbb{Z}_{p}$) for every fixed even $n$. But then $g(z)= \varphi_{p-1,c}^n(z)-z \equiv -(1+z)$ (mod $p\mathbb{Z}_{p}$) for every point $z\in (\mathbb{Z}_{p}\slash p\mathbb{Z}_{p})^{\times}$ and so $g(z)$ modulo $p\mathbb{Z}_{p}$ has a root in $\mathbb{Z}_{p}\slash p\mathbb{Z}_{p}$, namely, $z\equiv -1$ (mod $p\mathbb{Z}_{p}$); and so we then conclude $Y_{c}^{(n)}(p) = 1$. 

Finally, we now show $Y_{c}^{(n)}(p) = 0$ for every coefficient $c \equiv -1$ (mod $p\mathbb{Z}_{p}$) and every fixed odd integer $n\in \mathbb{Z}_{\geq 3}$ or for every coefficient $c\not \equiv \pm1, 0$ (mod $p$) and every fixed (even) integer $n\in \mathbb{Z}_{\geq 2}$. To see this, we note that since $c\equiv -1$ (mod $p\mathbb{Z}_{p}$) and also since (as a fact) $z^{p-1} = 1$ for every $z\in (\mathbb{Z}_{p}\slash p\mathbb{Z}_{p})^{\times}$, reducing $\varphi_{p-1,c}^n(z)$ modulo $p\mathbb{Z}_{p}$, it then follows $\varphi_{p-1,c}^{n}(z)\equiv 0$ (mod $p\mathbb{Z}_{p}$) for every fixed odd $n\in \mathbb{Z}_{\geq 3}$; and so $g(z)= \varphi_{p-1,c}^n(z)-z \equiv -z$ (mod $p\mathbb{Z}_{p}$) for every point $z\in (\mathbb{Z}_{p}\slash p\mathbb{Z}_{p})^{\times}$. But now we note $z\equiv 0$ (mod $p\mathbb{Z}_{p}$) is a root of $g(z)$ modulo $p\mathbb{Z}_{p}$ for every coefficient $c\equiv -1$ (mod $p\mathbb{Z}_{p}$) and every fixed odd $n\in \mathbb{Z}_{\geq 3}$, and also $z\equiv 0$ (mod $p\mathbb{Z}_{p}$) is also root of $g(z)$ modulo $p\mathbb{Z}_{p}$ for every coefficient $c\equiv 0$ (mod $p\mathbb{Z}_{p}$) and every odd fixed $n\in \mathbb{Z}_{\geq 3}$, as seen earlier; from which then obtain a contradiction that $1  \equiv 0$ (mod $p\mathbb{Z}_{p}$). This then means $g(x)=\varphi_{p-1,c}^n(x)-x$ has no roots in $\mathbb{Z}_{p} / p\mathbb{Z}_{p}$ for every coefficient $c\equiv -1$ (mod $p\mathbb{Z}_{p}$) and fixed odd $n\in \mathbb{Z}_{\geq 3}$; and so we conclude $Y_{c}^{(n)}(p) = 0$. Otherwise, suppose $g(z)\equiv 0$ (mod $p\mathbb{Z}_{p}$) for some $z\in (\mathbb{Z}_{p}\slash p\mathbb{Z}_{p})^{\times}$ and for every coefficient $c\not \equiv \pm1, 0$ (mod $p\mathbb{Z}_{p}$) and fixed (even) $n\in \mathbb{Z}_{\geq 2}$; and so (from earlier) $z^{(p-1)^n} + h(z)-z+c\equiv 0$ (mod $p\mathbb{Z}_{p}$) where $h(z)\in c\mathbb{Z}_{p}[z]$. So now, using (as a fact) $z^{(p-1)^n}=1$ for every $z\in (\mathbb{Z}_{p}\slash p\mathbb{Z}_{p})^{\times}$ and every fixed even $n$, we may then write $z^{(p-1)^n} -z + h(z) +c\equiv 0$ (mod $p\mathbb{Z}_{p}$) to obtain $(1-z) + (h(z)+c)\equiv 0$ (mod $p\mathbb{Z}_{p}$). But now we note $(1-z) + (h(z)+c)\equiv 0$ (mod $p\mathbb{Z}_{p}$) can happen if $1-z\equiv 0$ (mod $p\mathbb{Z}_{p}$) and also $h(z)+c\equiv 0$ (mod $p\mathbb{Z}_{p}$). Moreover, recall from earlier $1-z\equiv 0$ (mod $p\mathbb{Z}_{p}$) when $c\equiv 0$ (mod $p\mathbb{Z}_{p}$); which then also contradicts $c\not \equiv \pm1, 0$ (mod $p\mathbb{Z}_{p}$). This then also means $g(x)=\varphi_{p-1,c}^n(x)-x$ has no roots in $\mathbb{Z}_{p} / p\mathbb{Z}_{p}$ for every coefficient $c\not \equiv \pm1, 0$ (mod $p\mathbb{Z}_{p}$) and every fixed (even) $n\in \mathbb{Z}_{\geq 2}$; from which we then also conclude $Y_{c}^{(n)}(p) = 0$. This then completes the whole proof, as needed.
\end{proof}

Finally, we wish to generalize Theorem \ref{3.2} further to any $\varphi_{(p-1)^{\ell}, c}$ for any prime $p\geq 5$ and any $\ell\in \mathbb{Z}_{\geq 1}$. That is, we prove that the number of distinct $n$-periodic points of any $\varphi_{(p-1)^{\ell}, c}$ modulo $p\mathbb{Z}_{p}$ is also $1$ or $2$ or $0$:

\begin{thm} \label{3.3}
Let $p\geq 5$ be any fixed prime integer, and $\ell \geq 1$ be any integer. Let $\varphi_{(p-1)^{\ell}, c}$ be a polynomial map defined by $\varphi_{(p-1)^{\ell}, c}(z) = z^{(p-1)^{\ell}} + c$ for all $c, z\in\mathbb{Z}_{p}$, and $Y_{c}^{(n)}(p)$ be the number defined as in \textnormal{(\ref{Y_{c}})}. Then $Y_{c}^{(n)}(p) = 1$ for any coefficient $c\equiv \pm 1 \ (mod \ p\mathbb{Z}_{p})$ and fixed (even) $n$ or $Y_{c}^{(n)}(p) = 2$ for any $c\in p\mathbb{Z}_{p}$; otherwise the number $Y_{c}^{(n)}(p) = 0$ for any $c \equiv -1\ (mod \ p\mathbb{Z}_{p})$ and fixed odd $n$ or $c\not \equiv \pm 1, 0 \ (mod \ p\mathbb{Z}_{p})$ and fixed (even) $n$.
\end{thm}

\begin{proof}
By applying the same argument as in the Proof of Theorem \ref{3.3}, we then obtain the count as desired. That is, let $g(z)= \varphi_{(p-1)^{\ell}, c}^n(z)-z = \varphi_{(p-1)^{\ell},c}(\varphi_{(p-1)^{\ell},c}(z))-z$, and so $g(z)= (\varphi_{(p-1)^{\ell},c}^{n-1}(z))^{(p-1)^{\ell}} - z + c$. So now, as before applying the multinomial theorem on $(\varphi_{p-1,c}^{n-1}(z))^{(p-1)^{\ell}}$ right after applying the binomial theorem on $(z^{(p-1)^{\ell}} + c)^{(p-1)^{\ell}}$, it then follows $(\varphi_{(p-1)^{\ell},c}^{n-1}(z))^{(p-1)^{\ell}}$ is a monic polynomial in $z$ of degree $(p-1)^{n\ell}$ with $p$-adic integer coefficients in multiples of $c$. Hence, we may write $(\varphi_{(p-1)^{\ell},c}^{n-1}(z))^{(p-1)^{\ell}} = z^{(p-1)^{n\ell}} + h(z)$, where $h(z)$ is a non-constant polynomial in $z$ of deg$(h)<(p-1)^{n\ell}$ with $p$-adic integer coefficients in multiples of $c$; and so $g(z)= z^{(p-1)^{n\ell}} + h(z) - z + c$. Now for every coefficient $c \in p\mathbb{Z}_{p}$, reducing $g(z)$ modulo $p\mathbb{Z}_{p}$, it then follows $g(z)\equiv z^{(p-1)^{n\ell}} - z$ (mod $p\mathbb{Z}_{p}$); and so now $g(z)$ modulo $p\mathbb{Z}_{p}$ is a polynomial defined over a field $\mathbb{Z}_{p}\slash p\mathbb{Z}_{p}$. Now recall (as a fact) $z^{p-1} = 1$ for every $z\in (\mathbb{Z}_{p}\slash p\mathbb{Z}_{p})^{\times}$, it then also follows $z^{(p-1)^{n\ell}} \equiv 1$ (mod $p\mathbb{Z}_{p}$) for every $z\in (\mathbb{Z}_{p}\slash p\mathbb{Z}_{p})^{\times}$ and for every fixed $\ell$ and $n$. But then $g(z)\equiv 1 - z$ (mod $p\mathbb{Z}_{p}$) for every point $z\in (\mathbb{Z}_{p}\slash p\mathbb{Z}_{p})^{\times}$; and so $g(z)$ modulo $p\mathbb{Z}_{p}$ has a root in $\mathbb{Z}\slash p\mathbb{Z}$, namely, $z\equiv 1$ (mod $p\mathbb{Z}_{p}$). Moreover, since $z$ is a linear factor of $g(z)\equiv z(z^{(p-1)^{n\ell}-1} - 1)$ (mod $p\mathbb{Z}_{p}$), it then follows $z\equiv 0$ (mod $p\mathbb{Z}_{p}$) is also a root of $g(z)$ modulo $p\mathbb{Z}_{p}$ in $\mathbb{Z}_{p}\slash p\mathbb{Z}_{p}$. But now we then conclude $Y_{c}^{(n)}(p) = 2$. To see $Y_{c}^{(n)}(p) = 1$ for every coefficient $c\equiv 1$ (mod $p\mathbb{Z}_{p}$) and for every fixed $\ell \in \mathbb{Z}_{\geq 1}$ and $n\in \mathbb{Z}_{\geq 2}$, we again note that writing $\varphi_{(p-1)^{\ell},c}^{n-1}(z) = \underbrace{((((z^{(p-1)^{\ell}} + c)^{(p-1)^{\ell}} + c)^{(p-1)^{\ell}} + c)^{(p-1)^{\ell}} + \cdots + c)^{(p-1)^{\ell}} + c}_\text{$(n-1)$ times}$ and reducing $\varphi_{(p-1)^{\ell},c}^{n-1}(z)$ modulo $p\mathbb{Z}_{p}$ along with $c\equiv 1$ (mod $p\mathbb{Z}_{p}$) and also $z^{(p-1)^{\ell}} = 1$ for every element $z\in (\mathbb{Z}_{p}\slash p\mathbb{Z}_{p})^{\times}$, we then obtain $\varphi_{(p-1)^{\ell},c}^{n-1}(z)\equiv 2$ (mod $p\mathbb{Z}_{p}$) and $(\varphi_{(p-1)^{\ell},c}^{n-1}(z))^{(p-1)^{\ell}}\equiv 1$ (mod $p\mathbb{Z}_{p}$) for every fixed $\ell$ and $n$, since also $2^{(p-1)^{\ell}} \equiv 1$ (mod $p\mathbb{Z}_{p}$) for every $\ell \in \mathbb{Z}_{\geq 1}$. But then $g(z)=(\varphi_{(p-1)^{\ell},c}^{n-1}(z))^{(p-1)^{\ell}} - z + c\equiv 2-z$ (mod $p\mathbb{Z}_{p}$) for every point $z\in (\mathbb{Z}_{p}\slash p\mathbb{Z}_{p})^{\times}$ and so $g(z)$ modulo $p\mathbb{Z}_{p}$ has a root in $\mathbb{Z}_{p}\slash p\mathbb{Z}_{p}$, namely, $z\equiv 2$ (mod $p\mathbb{Z}_{p}$); and so we conclude $Y_{c}^{(n)}(p) = 1$. We now show $Y_{c}^{(n)}(p) = 1$ for every coefficient $c\equiv -1$ (mod $p\mathbb{Z}_{p}$) and for every fixed $\ell \in \mathbb{Z}_{\geq 1}$ and fixed even integer $n\in \mathbb{Z}_{\geq 2}$. Again, note that since $c\equiv -1$ (mod $p\mathbb{Z}_{p}$) and also since $z^{(p-1)^{\ell}} = 1$ for every $z\in (\mathbb{Z}_{p}\slash p\mathbb{Z}_{p})^{\times}$, reducing $\varphi_{(p-1)^{\ell},c}^n(z)$ modulo $p\mathbb{Z}_{p}$, it then follows $\varphi_{(p-1)^{\ell},c}^{n}(z)\equiv -1$ (mod $p\mathbb{Z}_{p}$) for every fixed even $n$. But then $g(z)= \varphi_{(p-1)^{\ell},c}^n(z)-z \equiv -(1+z)$ (mod $p\mathbb{Z}_{p}$) for every point $z\in (\mathbb{Z}_{p}\slash p\mathbb{Z}_{p})^{\times}$ and fixed even $n$ and so $g(z)$ modulo $p\mathbb{Z}_{p}$ has a root in $\mathbb{Z}_{p}\slash p\mathbb{Z}_{p}$, namely, $z\equiv -1$ (mod $p\mathbb{Z}_{p}$); and so we conclude $Y_{c}^{(n)}(p) = 1$. 

Finally, we now show $Y_{c}^{(n)}(p) = 0$ for every coefficient $c \equiv -1$ (mod $p\mathbb{Z}_{p}$) and every fixed $\ell \in \mathbb{Z}_{\geq 1}$ and fixed odd $n\in \mathbb{Z}_{\geq 3}$ or for every coefficient $c\not \equiv \pm1, 0$ (mod $p\mathbb{Z}_{p}$) and every fixed $\ell \in \mathbb{Z}_{\geq 1}$ and fixed (even) $n\in \mathbb{Z}_{\geq 2}$. Since $c\equiv -1$ (mod $p\mathbb{Z}_{p}$) and also since $z^{(p-1)^{\ell}} = 1$ for every $z\in (\mathbb{Z}_{p}\slash p\mathbb{Z}_{p})^{\times}$, reducing $\varphi_{(p-1)^{\ell},c}^n(z)$ modulo $p\mathbb{Z}_{p}$, we then obtain $\varphi_{(p-1)^{\ell},c}^{n}(z)\equiv 0$ (mod $p\mathbb{Z}_{p}$) for every fixed $\ell$ and odd $n$; and so $g(z)= \varphi_{(p-1)^{\ell},c}^n(z)-z \equiv -z$ (mod $p\mathbb{Z}_{p}$) for every point $z\in (\mathbb{Z}_{p}\slash p\mathbb{Z}_{p})^{\times}$. But now we note $z\equiv 0$ (mod $p\mathbb{Z}_{p}$) is a root of $g(z)$ modulo $p\mathbb{Z}_{p}$ for every coefficient $c\equiv -1$ (mod $p\mathbb{Z}_{p}$) and every fixed $\ell\in \mathbb{Z}_{\geq 1}$ and fixed odd $n\in \mathbb{Z}_{\geq 3}$, and also $z\equiv 0$ (mod $p\mathbb{Z}_{p}$) is also root of $g(z)$ modulo $p\mathbb{Z}_{p}$ for every coefficient $c\equiv 0$ (mod $p\mathbb{Z}_{p}$) and every fixed $\ell\in \mathbb{Z}_{\geq 1}$ and fixed odd $n\in \mathbb{Z}_{\geq 3}$, as seen earlier; and from which we then obtain a contradiction that $1  \equiv 0$ (mod $p\mathbb{Z}_{p}$). This then means $\varphi_{(p-1)^{\ell},c}^n(x)-x$ has no roots in $\mathbb{Z}_{p}\slash p\mathbb{Z}_{p}$ for every coefficient $c\equiv -1$ (mod $p\mathbb{Z}_{p}$) and for every fixed $\ell\in \mathbb{Z}_{\geq 1}$ and fixed odd $n\in \mathbb{Z}_{\geq 3}$; and so we conclude $Y_{c}^{(n)}(p) = 0$. Otherwise, suppose $g(z)\equiv 0$ (mod $p\mathbb{Z}_{p}$) for some point $z\in (\mathbb{Z}_{p}\slash p\mathbb{Z}_{p})^{\times}$ and for every coefficient $c\not \equiv \pm1, 0$ (mod $p\mathbb{Z}_{p}$) and every fixed $\ell\in \mathbb{Z}_{\geq 1}$ and fixed even $n\in \mathbb{Z}_{\geq 2}$; and so (from earlier) $z^{(p-1)^{n\ell}} + h(z)-z+c\equiv 0$ (mod $p\mathbb{Z}_{p}$) where $h(z)\in c\mathbb{Z}_{p}[z]$. So then, using $z^{(p-1)^{n\ell}}=1$ for every $z\in (\mathbb{Z}_{p}\slash p\mathbb{Z}_{p})^{\times}$ and fixed  $\ell \geq 1$ and fixed $n$, we may then write $z^{(p-1)^{n\ell}} -z + h(z)+c\equiv 0$ (mod $p\mathbb{Z}_{p}$) to obtain $(1-z) + (h(z)+c)\equiv 0$ (mod $p\mathbb{Z}_{p}$). But now we note $(1-z) + (h(z)+c)\equiv 0$ (mod $p\mathbb{Z}_{p}$) can happen if $1-z\equiv 0$ (mod $p\mathbb{Z}_{p}$) and also $h(z)+c\equiv 0$ (mod $p\mathbb{Z}_{p}$). Moreover, recall from earlier that $1-z\equiv 0$ (mod $p\mathbb{Z}_{p}$) when $c\equiv 0$ (mod $p\mathbb{Z}_{p}$); which then contradicts $c\not \equiv \pm1, 0$ (mod $p\mathbb{Z}_{p}$). This then also means $g(x)=\varphi_{(p-1)^{\ell},c}^n(x)-x$ has no roots in $\mathbb{Z}_{p}\slash p\mathbb{Z}_{p}$ for every coefficient $c\not \equiv \pm1, 0$ (mod $p\mathbb{Z}_{p}$) and fixed $\ell\in \mathbb{Z}_{\geq 1}$ and fixed even $n\in \mathbb{Z}_{\geq 2}$; and so we also conclude $Y_{c}^{(n)}(p) = 0$. This then completes the whole proof, as desired.
\end{proof}

\begin{rem}\label{rem3.4}
As before, with now Theorem \ref{3.3}, we may then to each distinct $n$-periodic $p$-adic integral point of $\varphi_{(p-1)^{\ell},c}$ associate an $n$-periodic $p$-adic integral orbit. In doing so, we then also obtain a dynamical translation of Theorem \ref{3.3}, namely, that the number of distinct $n$-periodic $p$-adic integral orbits that any $\varphi_{(p-1)^{\ell},c}$ has when iterated on the space $\mathbb{Z}_{p} \slash p\mathbb{Z}_{p}$ is $1$ or $2$ or $0$. Furthermore, as mentioned in Intro.\ref{sec1} that the count obtained in Theorem \ref{3.3} on the number of distinct $n$-periodic $p$-adic integral points of any $\varphi_{(p-1)^{\ell},c}$ modulo $p\mathbb{Z}_{p}$ is independent of $p$ (and so independent of deg$(\varphi_{(p-1)^{\ell},c})$ for every fixed $\ell \in \mathbb{Z}_{\geq 1}$) in each of the possibilities. Moreover, we may also observe that the expected total count of the number of distinct $n$-periodic $p$-adic integral points in the whole family of maps $\varphi_{(p-1)^{\ell},c}$ modulo $p\mathbb{Z}_{p}$ (namely, $1 + 2 + 0 =3$ for every fixed odd period $n\in \mathbb{Z}_{\geq 3}$ or $1 + 1 + 2 + 0 =4$ for every fixed even period $n\in \mathbb{Z}_{\geq 2}$) is not only also independent of $p$ (and so independent of deg$(\varphi_{(p-1)^{\ell},c})$ for every fixed $\ell \in \mathbb{Z}_{\geq 1}$), but is also a constant $3$ or $4$ even when $(p-1)^{\ell}\to \infty$; a somewhat interesting phenomenon differing significantly from a phenomenon that we remarked in Remark \ref{rem2.4}, however, coinciding somewhat surprising with an observation made in [\cite{BK3}, Remark 4.4] and [\cite{BK222}, Remark 3.5]. 
\end{rem}

\begin{rem}
As before, recall in \cite{BK3} we proved that the fixed point-counting function $Y_{c}(p) = 1, 2$ or $0$ for every fixed prime $p\geq 5$ and every coefficient $c\equiv 1, 0$ (mod $p\mathbb{Z}_{p}$) or $c\equiv -1$ (mod $p\mathbb{Z}_{p}$). Moreover, recall in the Proof of Theorem \ref{3.2} we proved that for every fixed odd (period) $n\in \mathbb{Z}_{\geq 3}$, the points $z\equiv 1, 0, 2$ (mod $p\mathbb{Z}_{p}$) are $n$-periodic $p$-adic integral points of any $\varphi_{(p-1)^{\ell},c}$ modulo $p\mathbb{Z}_{p}$ (which in \cite{BK3} we also obtained as fixed $p$-adic integral points of any $\varphi_{(p-1)^{\ell},c}$ modulo $p\mathbb{Z}_{p}$) for every fixed prime $p\geq 5$ and every coefficient $c\equiv 1, 0$ (mod $p\mathbb{Z}_{p}$); from which we then concluded $Y_{c}^{(n)}(p)  = 1, 2$ or $0$ for every fixed prime $p$ and every $c\equiv 1, 0$ (mod $p\mathbb{Z}_{p}$) or $c\equiv -1$ (mod $p\mathbb{Z}_{p}$). But now for every fixed odd (period) $n\in \mathbb{Z}_{\geq 3}$, we then note that the counting function $Y_{c}^{(n)}(p) = Y_{c}(p) = 1, 2$ or $0$ for every fixed $p$ and every $c\equiv 1, 0$ (mod $p\mathbb{Z}_{p}$) or $c\equiv -1$ (mod $p\mathbb{Z}_{p}$). Consequently, for every fixed odd (period) $n\in \mathbb{Z}_{\geq 1}$, we then note that every $n$-periodic $p$-adic integral orbit of any $\varphi_{(p-1)^{\ell},c}$ modulo $p\mathbb{Z}_{p}$ is a fixed $p$-adic integral orbit, and moreover every reduced polynomial map $\varphi_{(p-1)^{\ell},c}$ modulo $p\mathbb{Z}_{p}$ may have one or two or no fixed $p$-adic integral orbits; a somewhat interesting precise arithmetic-geometric insight on all odd $n$-periodic $p$-adic integral orbits of every reduced map $\varphi_{(p-1)^{\ell},c}$ modulo $p\mathbb{Z}_{p}$. Furthermore, recall in the Proof of Theorem \ref{3.3} that for every fixed even (period) $n\in \mathbb{Z}_{\geq 2}$, the points $z\equiv 1, 0, 2, -1$ (mod $p\mathbb{Z}_{p}$) are $n$-periodic $p$-adic integral points of any $\varphi_{(p-1)^{\ell},c}$ modulo $p\mathbb{Z}_{p}$ for every fixed prime $p\geq 5$ and every coefficient $c\equiv \pm 1, 0$ (mod $p\mathbb{Z}_{p}$) or $c\not \equiv \pm 1, 0$ (mod $p\mathbb{Z}_{p}$); from which we then concluded $Y_{c}^{(n)}(p)  = 1, 2$ or $0$ for every fixed $p$ and for every $c\equiv \pm 1, 0$ (mod $p\mathbb{Z}_{p}$) or $c\not \equiv \pm 1, 0$ (mod $p\mathbb{Z}_{p}$). But now for every fixed even (period) $n\in \mathbb{Z}_{\geq 4}$, we then also note that the counting function $Y_{c}^{(n)}(p) = Y_{c}^{(2)}(p) = 1, 2$ or $0$ for every fixed $p\geq 5$ and every $c\equiv \pm 1, 0$ (mod $p\mathbb{Z}_{p}$) or $c\not \equiv \pm 1, 0$ (mod $p\mathbb{Z}_{p}$). As before, we now also note that for every fixed even (period) $n\in \mathbb{Z}_{\geq 2}$, every $n$-periodic orbit of any $\varphi_{(p-1)^{\ell},c}$ modulo $p\mathbb{Z}_{p}$ is a $2$-periodic orbit, and moreover every reduced polynomial map $\varphi_{(p-1)^{\ell},c}$ modulo $p\mathbb{Z}_{p}$ may have one or two or no $n$-periodic orbits; a somewhat interesting precise arithmetic-geometric insight on all the even $n$-periodic orbits of every $\varphi_{(p-1)^{\ell},c}$ modulo $p\mathbb{Z}_{p}$.    
\end{rem}

\section{On the Number of $n$-Periodic $\mathbb{F}_{p}[t]\slash (\pi)$-Points of any Family of Polynomial Maps $\varphi_{p^{\ell},c}$}\label{sec4}

As in Section \ref{sec2} and \ref{sec3}, we in this section also wish to count the number of distinct $n$-periodic points of any $\varphi_{p^{\ell},c}$ modulo prime $\pi$, where $p\geq 3$ is any prime, $\ell \in \mathbb{Z}_{\geq 1}$ and $n\in \mathbb{Z}_{\geq 2}$ are any fixed integers. To that end, we as before let $p\geq 3$ be any prime, $\ell \in \mathbb{Z}_{\geq 1}$, $n\in \mathbb{Z}_{\geq 2}$ be any fixed integers, $c\in \mathbb{F}_{p}[t]$ be any point and $\pi\in \mathbb{F}_{p}[t]$ be any fixed irreducible monic polynomial of degree $m\geq 1$, and then define $n$-periodic point-counting function
\begin{equation}\label{N_{ct}}
N_{c(t)}^{(n)}(\pi, p) := \# \Biggl\{ z\in \mathbb{F}_{p}[t] / (\pi)   : \begin{aligned} \varphi_{p^{\ell},c}^{n-1}(z) -z \not \equiv 0 \ \text{(mod $\pi$)} \\ \ \varphi_{p^{\ell},c}^{n}(z) - z \equiv 0 \ \text{(mod $\pi$)} \end{aligned} \Biggr\}.
\end{equation} \noindent Again, setting $\ell =1$ and thus $\varphi_{p^{\ell}, c} = \varphi_{p,c}$, we then first prove the following theorem and its generalization \ref{4.2}:

\begin{thm} \label{4.1}
Let $\varphi_{3, c}$ be a cubic map defined by $\varphi_{3, c}(z) = z^3 + c$ for all $c, z\in\mathbb{F}_{3}[t]$, and $N_{c(t)}^{(n)}(\pi, 3)$ be defined as in \textnormal{(\ref{N_{ct}})}. Then $N_{c(t)}^{(n)}(\pi, 3)=3$ for every coefficient $c\in (\pi)$; otherwise $N_{c(t)}^{(n)}(\pi, 3) = 0$ for any coefficient $c \not \in (\pi)$.
\end{thm}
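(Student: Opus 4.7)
The plan is to mirror the proof of Theorem \ref{2.1} essentially verbatim, replacing the $p$-adic setting $(\mathbb{Z}_{3}, 3\mathbb{Z}_{3})$ by the polynomial-ring setting $(\mathbb{F}_{3}[t], (\pi))$. First I would form $f(z) := \varphi_{3,c}^{2}(z) - z = (z^{3}+c)^{3} - z + c$ and expand the binomial $(z^{3}+c)^{3}$; working in characteristic $3$, the Freshman's Dream kills the two middle binomial terms and leaves the clean identity $f(z) = z^{9} + c^{3} - z + c$ in $\mathbb{F}_{3}[t]$.

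The argument then splits on whether or not $c$ lies in $(\pi)$. If $c \in (\pi)$, then reducing modulo $\pi$ collapses the $c$-terms and gives $f(z) \equiv z^{9} - z \pmod \pi$, now viewed as a polynomial over the residue field $\mathbb{F}_{3}[t]/(\pi)$. Invoking the Fermat-type identity $z^{3} \equiv z$ on the residue system (the exact analogue of the step used in the proof of Theorem \ref{2.1}), we get $z^{9} = (z^{3})^{3} \equiv z^{3} \equiv z$, so $f(z) \equiv 0 \pmod \pi$ identically on the residue system and hence $N_{c(t)}^{(2)}(\pi, 3) = 3$. If instead $c \not\in (\pi)$, the same identity $z^{9} \equiv z \pmod \pi$ reduces $f(z)$ to the constant $c^{3} + c \pmod \pi$; verifying $c^{3} + c \not\equiv 0 \pmod \pi$ when $c \not\equiv 0 \pmod \pi$ via the factorization $c^{3} + c = c(c^{2}+1)$ then gives $N_{c(t)}^{(2)}(\pi, 3) = 0$.

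The step I expect to be the main obstacle is the Fermat-type identity $z^{9} \equiv z \pmod \pi$, since this is the one point in the passage from $\mathbb{Z}_{3}$ to $\mathbb{F}_{3}[t]/(\pi)$ where one must be careful that the residue system collapses the nontrivial power as in the $\mathbb{Z}_3/3\mathbb{Z}_3$ case; once this identity is in place, the binomial expansion in characteristic $3$, the case split on whether $c \in (\pi)$, and the nonvanishing of $c(c^{2}+1)$ for $c \not\equiv 0 \pmod \pi$ are all direct translations of the corresponding steps in the proof of Theorem \ref{2.1}, and I would present them in exactly that order.
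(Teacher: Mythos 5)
Your central step—invoking ``$z^{3}\equiv z$ on the residue system''—is the gap, and it fails whenever $\deg\pi=m>1$. The residue ring $\mathbb{F}_{3}[t]/(\pi)$ is a field of order $3^{m}$, and the elements satisfying $z^{3}=z$ are exactly the Frobenius fixed points, i.e.\ the prime subfield $\mathbb{F}_{3}$ with only three elements. For $m\geq 2$ most residues $z$ do \emph{not} satisfy $z^{3}\equiv z$ (nor $z^{9}\equiv z$ unless $z\in\mathbb{F}_{9}$), so your conclusion that $f(z)\equiv 0\pmod{\pi}$ ``identically on the residue system'' is false, and if it were true it would force $N_{c(t)}^{(2)}(\pi,3)=3^{m}$, not $3$, contradicting the theorem. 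The paper avoids this by restricting to the subfield inclusion $\mathbb{F}_{3}\hookrightarrow\mathbb{F}_{3}[t]/(\pi)$ and arguing that $z^{9}=z$ holds for $z\in\mathbb{F}_{3}$, which produces exactly the three roots; that restriction is what makes the count come out right, and it is absent from your proposal.

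Your treatment of the $c\not\in(\pi)$ case inherits the same problem and also skips half of the paper's argument. Reducing $f(z)$ to the constant $c^{3}+c$ uses $z^{9}\equiv z$, which again only rules out roots $z\in\mathbb{F}_{3}$ (or $\mathbb{F}_{9}$); the paper separately handles a hypothetical root $\alpha\in\mathbb{F}_{3}[t]/(\pi)\setminus\mathbb{F}_{3}$, splitting on whether $\alpha\in\mathbb{F}_{9}$ (possible when $2\mid m$) and deriving a contradiction in each branch. You omit this entirely. Moreover, the factorization $c^{3}+c=c(c^{2}+1)$ does not by itself give nonvanishing modulo $\pi$: when $\mathbb{F}_{9}\subseteq\mathbb{F}_{3}[t]/(\pi)$ the element $-1$ is a square in the residue field, so $c^{2}+1$ can vanish for some $c\not\equiv 0\pmod\pi$. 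To repair the proposal you would need to (i) replace the blanket Fermat identity by the subfield argument, restricting the root count to $\mathbb{F}_{3}$, and (ii) add the case analysis for roots outside the prime subfield in the $c\not\in(\pi)$ direction.
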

\begin{proof}
Let $f_{c(t)}(z)= \varphi_{3,c}^n(z)-z = \varphi_{3,c}(\varphi_{3,c}^{n-1}(z)) - z = (\varphi_{3,c}^{n-1}(z))^3 - z + c$, and so $f_{c(t)}(z)= (\varphi_{3,c}^{n-1}(z))^3 - z + c$. Now applying the multinomial theorem repeatedly on $(\varphi_{3,c}^{n-1}(z))^3$ after applying the binomial theorem on $(z^3 + c)^3$, we then obtain $(\varphi_{3,c}^{n-1}(z))^3$ is a monic polynomial in $z$ of degree $3^n$ with $\mathbb{F}_{3}[t]$-coefficients in multiples of $c$. Hence, we may then write $(\varphi_{3,c}^{n-1}(z))^3 = z^{3^{n}} + h(z)$, where $h(z)$ is a non-constant polynomial in $z$ of deg$(h)<3^n$ with $\mathbb{F}_{3}[t]$-coefficients in multiples of $c$; and so $f_{c(t)}(z)= z^{3^{n}} + h(z) - z + c$. Now for every coefficient $c\in (\pi)=\pi \mathbb{F}_{3}[t]$, reducing $f_{c(t)}(z)$ modulo prime $\pi$, it then follows $f_{c(t)}(z)\equiv z^{3^n} - z$ (mod $\pi$), since also $h(z)\in c\mathbb{F}_{3}[t][z]$ and so $h(z)\equiv 0$ (mod $\pi$); and thus now $f(z)$ modulo $\pi$ is a polynomial defined over a finite field $\mathbb{F}_{3}[t]\slash (\pi)$ of order $3^{\text{deg } \pi} = 3^m$. So now, recall from a well-known fact that every subfield of a finite field $\mathbb{F}_{3}[t]\slash (\pi)$ is of order $3^r$ for some positive integer $r\mid m$, we then obtain the inclusion $\mathbb{F}_{3}\hookrightarrow \mathbb{F}_{3}[t]\slash (\pi)$ of fields, where $\mathbb{F}_{3}$ is a field of order $3$. Moreover, it also follows from a well-known fact that $h(x)=x^3-x$ vanishes at every $z\in \mathbb{F}_{3}\subset \mathbb{F}_{3}[t]\slash (\pi)$; and so $z^3 = z$ for every element $z\in \mathbb{F}_{3}$. But now we note $z^{3^n}= (z^3)^{3^{n-1}} = (z^3)^{3^{n-2}} = z^{3^{n-2}}$ for every element $z\in \mathbb{F}_{3}\subset \mathbb{F}_{3}[t]\slash (\pi)$. Now since $n\geq 2$ and so $n-2\geq 0$, then if $n-2 = 0$ and so $z^{3^{n-2}} = z$, it then follows $z^{3^n} = z$ for every element $z\in \mathbb{F}_{3}$; and so $f_{c(t)}(z)\equiv 0$ (mod $\pi$) for every point $z\in \mathbb{F}_{3}\subset \mathbb{F}_{3}[t]\slash (\pi)$. Otherwise, if $n-2 > 0$, then since $n$ is a fixed integer, we may then continue performing the above procedure of decreasing the exponent $n-2$ of $z^{3^{n-2}}=z^{3^n}$ for every $z\in \mathbb{F}_{3}$ until $n-2$ is equal to zero; from which we then again obtain $f_{c(t)}(z)\equiv 0$ (mod $\pi$) for every point $z\in \mathbb{F}_{3}\subset \mathbb{F}_{3}[t]\slash (\pi)$. But now we then conclude $N_{c(t)}^{(n)}(\pi, 3) = 3$.

Finally, we now show $N_{c(t)}^{(n)}(\pi, 3) = 0$ for every coefficient $c\not \equiv 0$ (mod $\pi$) and every fixed integer $n\in \mathbb{Z}_{\geq 2}$. To see this, let's for the sake of a contradiction, suppose $f_{c(t)}(z)\equiv 0$ (mod $\pi$) for some point $z\in \mathbb{F}_{3}[t]\slash (\pi)$ and for every coefficient $c\not \equiv 0$ (mod $\pi$) and every fixed $n\in \mathbb{Z}_{\geq 2}$. Now since from earlier $f_{c(t)}(z)= z^{3^{n}} + h(z) - z + c$ where $h(z)\in c\mathbb{F}_{3}[t][z]$, it then also follows $z^{3^{n}} + h(z) - z + c \equiv 0$ (mod $\pi$) for some $z\in \mathbb{F}_{3}[t]\slash (\pi)$ and for every $c\not \equiv 0$ (mod $\pi$) and every fixed $n\in \mathbb{Z}_{\geq 2}$. So now, recall from earlier that $z^{3^n} = z$ for every $z\in \mathbb{F}_{3}\subset \mathbb{F}_{3}[t]\slash (\pi)$ and every fixed $n\in \mathbb{Z}_{\geq 2}$, we may then rewrite $z^{3^{n}} -z + h(z) + c \equiv 0$ (mod $\pi$) for some $z\in \mathbb{F}_{3}[t]\slash (\pi)$ and for every $c\not \equiv 0$ (mod $\pi$) to then obtain $h(z) + c \equiv 0$ (mod $\pi$) for some $z\in \mathbb{F}_{3}\subset \mathbb{F}_{3}[t]\slash (\pi)$ and for every $c\not \equiv 0$ (mod $\pi$). Moreover, looking at the multinomial expansion of $(\varphi_{3,c}^{n-1}(z))^3$, we then obtain $h(z)\equiv \sum_{i=1}^{n-1}c^{3^{n-i}}$ (mod $\pi$), since also $\mathbb{F}_{3}[t]\slash (\pi)$ is of characteristic $3$; and so $\sum_{i=1}^{n-1}c^{3^{n-i}} + c \equiv 0$ (mod $\pi$). 
But now we note that congruence $\sum_{i=1}^{n-1}c^{3^{n-i}} + c \equiv 0$ (mod $\pi$) can also happen if $\sum_{i=1}^{n-1}c^{3^{n-i}}\equiv 0$ (mod $\pi$) and also $c\equiv 0$ (mod $\pi$); and so follows a contradiction. Otherwise, suppose $f_{c(t)}(\alpha)\equiv 0$ (mod $\pi$) and so (from earlier) $\alpha^{3^{n}} + h(\alpha) - \alpha + c \equiv 0$ (mod $\pi$) for some $\alpha \in \mathbb{F}_{3}[t]\slash (\pi) \setminus \mathbb{F}_{3}$ and for every $c\not \equiv 0$ (mod $\pi$) and every fixed $n\in \mathbb{Z}_{\geq 2}$. So then, since $z^{3^{n}}=z$ for every $z\in \mathbb{F}_{3^{n}}\subset \mathbb{F}_{3}[t]\slash (\pi)$ and for every fixed $n\mid m$, then if a root $\alpha \in \mathbb{F}_{3^{n}}\subset \mathbb{F}_{3}[t]\slash (\pi)\setminus \mathbb{F}_{3}$ and so $\alpha^{3^{n}}=\alpha$, it then follows $h(\alpha)  + c \equiv 0$ (mod $\pi$); and moreover since $h(\alpha)\equiv \sum_{i=1}^{n-1}c^{3^{n-i}}$ (mod $\pi$), it then then $\sum_{i=1}^{n-1}c^{3^{n-i}} + c \equiv 0$ (mod $\pi$); and so also follows a contradiction. Otherwise, if $\alpha \not \in \mathbb{F}_{3^{n}}$, then since $h(\alpha)\equiv \sum_{i=1}^{n-1}c^{3^{n-i}}$ (mod $\pi$), it then follows $\alpha^{3^{n}} - \alpha +\sum_{i=1}^{n-1}c^{3^{n-i}}  + c \equiv 0$ (mod $\pi$). But then we note $\alpha^{3^{n}} - \alpha +\sum_{i=1}^{n-1}c^{3^{n-i}}  + c \equiv 0$ (mod $\pi$) can also occur if $\alpha^{3^{n}} - \alpha \equiv 0$ (mod $\pi$) and also $\sum_{i=1}^{n-1}c^{3^{n-i}}  + c \equiv 0$ (mod $\pi$); and so also a contradiction. This then overall means $f_{c(t)}(x) =\varphi_{3,c}^n(x)-x$ has no roots in $\mathbb{F}_{3}\subset \mathbb{F}_{3}[t]\slash (\pi)$ for every coefficient $c\not \in (\pi)$ and every fixed $n\in \mathbb{Z}_{\geq 2}$; and so we then conclude $N_{c(t)}^{(n)}(\pi, 3) = 0$. This then completes the whole proof, as needed.
\end{proof} 
We now wish to generalize Theorem \ref{4.1} to any polynomial map $\varphi_{p, c}$ for any given prime $p\geq 3$. More precisely, we show that the number of distinct $n$-Periodic $\mathbb{F}_{p}[t]$-points of any polynomial map $\varphi_{p, c}$ is $p$ or zero:

\begin{thm} \label{4.2}
Let $p\geq 3$ be any fixed prime integer, and $\varphi_{p, c}$ be defined by $\varphi_{p, c}(z) = z^p + c$ for all $c, z\in\mathbb{F}_{p}[t]$. Let $N_{c(t)}^{(n)}(\pi, p)$ be as in \textnormal{(\ref{N_{ct}})}. Then $N_{c(t)}^{(n)}(\pi, p) = p$ for every $c\in (\pi)$; otherwise $N_{c(t)}^{(n)}(\pi, p) = 0$ for every $c \not \in (\pi)$.
\end{thm}
\begin{proof}
By applying a similar argument as in the Proof of Theorem \ref{4.1}, we then obtain the count as desired. That is, let $f_{c(t)}(z)= \varphi_{p,c}^n(z)-z = \varphi_{p,c}(\varphi_{p,c}^{n-1}(z)) - z = (\varphi_{p,c}^{n-1}(z))^p - z + c$, and so $f_{c(t)}(z)= (\varphi_{p,c}^{n-1}(z))^p - z + c$. So now, applying the multinomial theorem repeatedly on $(\varphi_{p,c}^{n-1}(z))^p$ after applying the binomial theorem on $(z^p + c)^p$, it then follows $(\varphi_{p,c}^{n-1}(z))^p$ is a monic polynomial in $z$ of degree $p^n$ with $\mathbb{F}_{p}[t]$-coefficients in multiples of $c$. Thus, we may then write $(\varphi_{p,c}^{n-1}(z))^p = z^{p^{n}} + h(z)$, where $h(z)$ is a non-constant polynomial in $z$ of deg$(h)<p^n$ with $\mathbb{F}_{p}[t]$-coefficients in multiples of $c$; and so $f_{c(t)}(z)= z^{p^{n}} + h(z) - z + c$. Now for every coefficient $c\in (\pi)$, reducing $f_{c(t)}(z)$ modulo prime $\pi$, it then follows $f_{c(t)}(z)\equiv z^{p^n} - z$ (mod $\pi$), since also $h(z)\in c\mathbb{F}_{p}[t][z]$ and so $h(z)\equiv 0$ (mod $\pi$); and so now $f_{c(t)}(z)$ modulo $\pi$ is a polynomial defined over a finite field $\mathbb{F}_{p}[t]\slash (\pi)$ of order $p^{\text{deg }\pi} = p^m$. So now, recall a well-known fact that every subfield of a finite field $\mathbb{F}_{p}[t]\slash (\pi)$ is of order $p^r$ for some positive integer $r\mid m$, we then obtain the inclusion $\mathbb{F}_{p}\hookrightarrow \mathbb{F}_{p}[t]\slash (\pi)$ of fields; and moreover recall (as a fact) that $z^p = z$ for every $z\in \mathbb{F}_{p}\subset \mathbb{F}_{p}[t]\slash (\pi)$. But now we note $z^{p^n}= (z^p)^{p^{n-1}} = (z^p)^{p^{n-2}} = z^{p^{n-2}}$ for every $z\in \mathbb{F}_{p}$. So now, since $n\geq 2$ and so $n-2\geq 0$, then if $n-2 = 0$ and so $z^{p^{n-2}} = z$, it then follows $z^{p^n} = z$  for every $z\in \mathbb{F}_{p}$; and so the reduced polynomial $f_{c(t)}(z)\equiv 0$ (mod $\pi$) for every point $z\in \mathbb{F}_{p}\subset \mathbb{F}_{p}[t]\slash (\pi)$. Otherwise, if $n-2 > 0$, then since $n$ is a fixed integer, we may then continue performing the above procedure of decreasing the exponent $n-2$ of  $z^{p^{n-2}} = z^{p^n}$ for every $z\in \mathbb{F}_{p}$ until $n-2$ is equal to zero; and from which we then again obtain that $f_{c(t)}(z)\equiv 0$ (mod $\pi$) for every point $z\in \mathbb{F}_{p}\subset \mathbb{F}_{p}[t]\slash (\pi)$. But now we then conclude $N_{c(t)}^{(n)}(\pi, p) = p$.

Finally, we now show $N_{c(t)}^{(n)}(\pi, p) = 0$ for every coefficient $c \not \in (\pi)$ and every fixed integer $n\in \mathbb{Z}_{\geq 2}$. To see this, let's for the sake of a contradiction, suppose $f_{c(t)}(z)\equiv 0$ (mod $\pi$) for some point $z\in \mathbb{F}_{p}[t]\slash (\pi)$ and for every coefficient $c\not \equiv 0$ (mod $\pi$) and every fixed $n\in \mathbb{Z}_{\geq 2}$. So then, since from earlier $f_{c(t)}(z)= z^{p^{n}} + h(z) - z + c$ where $h(z)\in c\mathbb{F}_{p}[t][z]$, it then also follows $z^{p^{n}} + h(z) - z + c \equiv 0$ (mod $\pi$) for some point $z\in \mathbb{F}_{p}[t]\slash (\pi)$ and for every $c\not \equiv 0$ (mod $\pi$) and every fixed $n$. So now, since from earlier $z^{p^n} = z$ for every $z\in \mathbb{F}_{p}\subset \mathbb{F}_{p}[t]\slash (\pi)$ and every fixed $n$, we may then rewrite $z^{p^{n}} -z + h(z) + c \equiv 0$ (mod $\pi$) for some $z\in \mathbb{F}_{p}[t]\slash (\pi)$ and for every $c\not \equiv 0$ (mod $\pi$) to obtain $h(z) + c \equiv 0$ (mod $\pi$) for some $z\in \mathbb{F}_{p}\subset \mathbb{F}_{p}[t]\slash (\pi)$ and for every $c\not \equiv 0$ (mod $\pi$). Moreover, looking at the multinomial expansion of $(\varphi_{p,c}^{n-1}(z))^p$, it then follows $h(z)\equiv \sum_{i=1}^{n-1}c^{p^{n-i}}$ (mod $\pi$), since also $\mathbb{F}_{p}[t]\slash (\pi)$ is of characteristic $p$; and so $\sum_{i=1}^{n-1}c^{p^{n-i}} + c \equiv 0$ (mod $\pi$). But now we note that $\sum_{i=1}^{n-1}c^{p^{n-i}} + c \equiv 0$ (mod $\pi$) can also happen if $\sum_{i=1}^{n-1}c^{p^{n-i}}\equiv 0$ (mod $\pi$) and also $c\equiv 0$ (mod $\pi$); and so follows a contradiction. Otherwise, suppose $f_{c(t)}(\alpha)\equiv 0$ (mod $\pi$) and so (from earlier) $\alpha^{p^{n}} + h(\alpha) - \alpha + c \equiv 0$ (mod $\pi$) for some point $\alpha \in \mathbb{F}_{p}[t]\slash (\pi) \setminus \mathbb{F}_{p}$ and for every $c\not \equiv 0$ (mod $\pi$) and every fixed $n\in \mathbb{Z}_{\geq 2}$. So then, since $z^{p^{n}}=z$ for every $z\in \mathbb{F}_{p^{n}}\subset \mathbb{F}_{p}[t]\slash (\pi)$ and every fixed $n\mid m$, then if a root $\alpha \in \mathbb{F}_{p^{n}}\subset \mathbb{F}_{p}[t]\slash (\pi)\setminus \mathbb{F}_{p}$ and so $\alpha^{p^{n}}=\alpha$, it then follows $h(\alpha)  + c \equiv 0$ (mod $\pi$); and moreover since $h(\alpha)\equiv \sum_{i=1}^{n-1}c^{p^{n-i}}$ (mod $\pi$), it then also follows $\sum_{i=1}^{n-1}c^{p^{n-i}} + c \equiv 0$ (mod $\pi$); and so also follows a contradiction. Otherwise, if $\alpha \not \in \mathbb{F}_{p^{n}}$, then since $h(\alpha)\equiv \sum_{i=1}^{n-1}c^{p^{n-i}}$ (mod $\pi$), it then follows $\alpha^{p^{n}} - \alpha +\sum_{i=1}^{n-1}c^{p^{n-i}}  + c \equiv 0$ (mod $\pi$). But then we note $\alpha^{p^{n}} - \alpha +\sum_{i=1}^{n-1}c^{p^{n-i}}  + c \equiv 0$ (mod $\pi$) can also occur whenever $\alpha^{p^{n}} - \alpha \equiv 0$ (mod $\pi$) and also $\sum_{i=1}^{n-1}c^{p^{n-i}}  + c \equiv 0$ (mod $\pi$); and so also follows a contradiction. This then overall means $f_{c(t)}(x) =\varphi_{p,c}^n(x)-x$ has no roots in $\mathbb{F}_{p}\subset \mathbb{F}_{p}[t]\slash (\pi)$ for every coefficient $c\not \in (\pi)$ and every fixed $n\in \mathbb{Z}_{\geq 2}$; and thus we then conclude  $N_{c(t)}^{(n)}(\pi, p) = 0$. This then completes the whole proof, as needed.
\end{proof}

Finally, we now wish to generalize Theorem \ref{4.2} further to any $\varphi_{p^{\ell}, c}$ for any prime $p\geq 3$ and any $\ell \in \mathbb{Z}_{\geq 1}$. More precisely, we prove that the number of distinct $n$-periodic points of any $\varphi_{p^{\ell}, c}$ modulo $\pi$ is also $p$ or zero:

\begin{thm} \label{4.3}
Let $p\geq 3$ be any fixed prime integer, and $\ell \geq 1$ be any fixed integer. Let $\varphi_{p^{\ell}, c}$ be a polynomial map defined by $\varphi_{p^{\ell}, c}(z) = z^{p^{\ell}} + c$ for all points $c, z\in\mathbb{F}_{p}[t]$, and $N_{c(t)}^{(n)}(\pi, p)$ be the number defined as in \textnormal{(\ref{N_{ct}})}. Then  $N_{c(t)}^{(n)}(\pi, p) = p$  for every coefficient $c\in (\pi)$; otherwise we have $N_{c(t)}^{(n)}(\pi, p) = 0$ for every coefficient $c \not \in (\pi)$. 
\end{thm}

\begin{proof}
By applying a similar argument as in the Proof of Theorem \ref{4.2}, we then obtain the count as desired. That is, let $f_{c(t)}(z)= \varphi_{p^{\ell},c}^n(z)-z = (\varphi_{p^{\ell},c}^{n-1}(z))^{p^{\ell}} - z + c$, and so $f_{c(t)}(z)= (\varphi_{p^{\ell},c}^{n-1}(z))^{p^{\ell}} - z + c$. Now applying the multinomial theorem repeatedly on the term $(\varphi_{p^{\ell},c}^{n-1}(z))^{p^{\ell}}$ after applying the binomial theorem on $(z^{p^{\ell}} + c)^{p^{\ell}}$, we then obtain $(\varphi_{p^{\ell},c}^{n-1}(z))^{p^{\ell}}$ is a monic polynomial in $z$ of degree $p^{n\ell}$ with $\mathbb{F}_{p}[t]$-coefficients in multiples of $c$. Hence, we may then write $(\varphi_{p^{\ell},c}^{n-1}(z))^{p^{\ell}} = z^{p^{n\ell}} + h(z)$, where $h(z)$ is a non-constant polynomial in $z$ of deg$(h)<p^{n\ell}$ with $\mathbb{F}_{p}[t]$-coefficients in multiples of $c$; and so $f_{c(t)}(z)= z^{p^{n\ell}} + h(z) - z + c$. So now, for every coefficient $c\in (\pi)$, reducing $f_{c(t)}(z)$ modulo prime $\pi$, it then follows  $f_{c(t)}(z)\equiv z^{p^{n\ell}} - z$ (mod $\pi$), since also $h(z)\in c\mathbb{F}_{p}[t][z]$ and so $h(z)\equiv 0$ (mod $\pi$); and so the reduced polynomial $f_{c(t)}(z)$ modulo $\pi$ is now a polynomial defined over a finite field $\mathbb{F}_{p}[t]\slash (\pi)$ of order $p^m$. Now recall (from a fact) the inclusion $\mathbb{F}_{p}\hookrightarrow \mathbb{F}_{p}[t]\slash (\pi)$ of fields, and also recall $z^p = z$ for every element $z\in \mathbb{F}_{p}$, it then follows $z^{p^{n\ell}}= (z^p)^{p^{n\ell-1}} = (z^p)^{p^{n\ell-2}} = z^{p^{n\ell-2}}$ for every element $z\in \mathbb{F}_{p}$. So now, since $n\ell\geq 2$ and so $n\ell-2\geq 0$, then if $n\ell-2 = 0$ and so $z^{p^{n\ell-2}} = z$, it then follows $z^{p^{n\ell}} = z$  for every element $z\in \mathbb{F}_{p}$; and so $f_{c(t)}(z)\equiv 0$ (mod $\pi$) for every point $z\in \mathbb{F}_{p}\subset \mathbb{F}_{p}[t]\slash (\pi)$. Otherwise, if $n\ell-2 > 0$, then since $n, \ell $ (and so $n\ell$) is a fixed integer, we may then continue performing the above procedure of decreasing the exponent $n\ell-2$ of $z^{p^{n\ell-2}}=z^{p^{n\ell}}$ for every $z\in \mathbb{F}_{p}$ until $n\ell-2$ is equal to zero; and from which we then again obtain that $f_{c(t)}(z)\equiv 0$ (mod $\pi$) for every point $z\in \mathbb{F}_{p}\subset \mathbb{F}_{p}[t]\slash (\pi)$. Hence, we now conclude $N_{c(t)}^{(n)}(\pi, p) = p$.

Finally, we now show $N_{c(t)}^{(n)}(\pi, p) = 0$ for every coefficient $c \not \in (\pi)$ and every fixed integers $\ell\in \mathbb{Z}_{\geq 1}$ and $n\in \mathbb{Z}_{\geq 2}$. As before, let's for the sake of a contradiction, suppose $f_{c(t)}(z)\equiv 0$ (mod $\pi$) for some $z\in \mathbb{F}_{p}[t]\slash (\pi)$ and for every $c\not \equiv 0$ (mod $\pi$) and every fixed $\ell\in \mathbb{Z}_{\geq 1}$ and $n\in \mathbb{Z}_{\geq 2}$. So then, since from earlier $f_{c(t)}(z)= z^{p^{n\ell}} + h(z) - z + c$ where $h(z)\in c\mathbb{F}_{p}[t][z]$, it then follows $z^{p^{n\ell}} + h(z) - z + c \equiv 0$ (mod $\pi$) for some $z\in \mathbb{F}_{p}[t]\slash (\pi)$ and for every $c\not \equiv 0$ (mod $\pi$) and every fixed $\ell$ and $n$. So now, recall from earlier $z^{p^{n\ell}} = z$ for every $z\in \mathbb{F}_{p}\subset \mathbb{F}_{p}[t]\slash (\pi)$ and every fixed $\ell$ and $n$, we may then rewrite $z^{p^{m\ell}} -z + h(z) + c \equiv 0$ (mod $\pi$) for some $z\in \mathbb{F}_{p}[t]\slash (\pi)$ and for every $c\not \equiv 0$ (mod $\pi$) to obtain $h(z) + c \equiv 0$ (mod $\pi$) for some $z\in \mathbb{F}_{p}\subset \mathbb{F}_{p}[t]\slash (\pi)$ and for every $c\not \equiv 0$ (mod $\pi$). Moreover, looking at the multinomial expansion of $(\varphi_{p^{\ell},c}^{n-1}(z))^{p^{\ell}}$, we then obtain $h(z)\equiv \sum_{i=1}^{n-1}c^{p^{n\ell-i}}$ (mod $\pi$); and so $\sum_{i=1}^{n-1}c^{p^{n\ell-i}} + c \equiv 0$ (mod $\pi$). 
But now we note $\sum_{i=1}^{n-1}c^{p^{n\ell-i}} + c \equiv 0$ (mod $\pi$) can also happen if $\sum_{i=1}^{n-1}c^{p^{n\ell-i}}\equiv 0$ (mod $\pi$) and also $c\equiv 0$ (mod $\pi$); and so follows a contradiction. Otherwise, suppose $f_{c(t)}(\alpha)\equiv 0$ (mod $\pi$) and so (from earlier) $\alpha^{p^{n\ell}} + h(\alpha) - \alpha + c \equiv 0$ (mod $\pi$) for some $\alpha \in \mathbb{F}_{p}[t]\slash (\pi) \setminus \mathbb{F}_{p}$ and for every $c\not \equiv 0$ (mod $\pi$) and fixed $n$ and $\ell$. So then, since $z^{p^{n\ell}}=z$ for every $z\in \mathbb{F}_{p^{n\ell}}\subset \mathbb{F}_{p}[t]\slash (\pi)$ and every fixed $n\ell\mid m$, then if a root $\alpha \in \mathbb{F}_{p^{n\ell}}\subset \mathbb{F}_{p}[t]\slash (\pi)\setminus \mathbb{F}_{p}$ and so $\alpha^{p^{n\ell}}=\alpha$, it then follows $h(\alpha)  + c \equiv 0$ (mod $\pi$); and since $h(\alpha)\equiv \sum_{i=1}^{n-1}c^{p^{n\ell-i}}$ (mod $\pi$), it then also follows $\sum_{i=1}^{n-1}c^{p^{n\ell-i}} + c \equiv 0$ (mod $\pi$); and so also follows a contradiction. 
Otherwise, if $\alpha \not \in \mathbb{F}_{p^{n\ell}}$, then since $h(\alpha)\equiv \sum_{i=1}^{n-1}c^{p^{n\ell-i}}$ (mod $\pi$), it then follows $\alpha^{p^{n\ell}} - \alpha +\sum_{i=1}^{n-1}c^{p^{n\ell-i}}  + c \equiv 0$ (mod $\pi$). But then we note $\alpha^{p^{n}} - \alpha +\sum_{i=1}^{n-1}c^{p^{n\ell-i}}  + c \equiv 0$ (mod $\pi$) can also occur if $\alpha^{p^{n\ell}} - \alpha \equiv 0$ (mod $\pi$) and also $\sum_{i=1}^{n-1}c^{p^{n\ell-i}}  + c \equiv 0$ (mod $\pi$); and so also follows a contradiction. It then overall follows $f_{c(t)}(x) =\varphi_{p^{\ell},c}^n(x)-x$ has no roots in $\mathbb{F}_{p}\subset \mathbb{F}_{p}[t]\slash (\pi)$ for every coefficient $c\not \in (\pi)$ and every fixed $\ell\in \mathbb{Z}_{\geq 1}$ and $n\in \mathbb{Z}_{\geq 2}$; and so we  conclude  $N_{c(t)}^{(n)}(\pi, p) = 0$. This then completes the whole proof, as needed.
\end{proof}

\begin{rem}\label{rem4.4}
With Theorem \ref{4.3}, we may then to each distinct $n$-periodic point of $\varphi_{p^{\ell},c}$ associate an $n$-periodic orbit. In doing so, we then also obtain a dynamical translation of Theorem \ref{4.3}, namely, that the number of distinct $n$-periodic orbits that any $\varphi_{p^{\ell},c}$ has when iterated on the space $\mathbb{F}_{p}[t]\slash (\pi)$ is $p$ or zero. As mentioned in Intro.\ref{sec1} that the count obtained in Theorem \ref{4.3} on the number of distinct $n$-periodic points of any $\varphi_{p^{\ell},c}$ modulo $\pi$ may on depend on $p$ (and hence depend on deg$(\varphi_{p^{\ell},c})$ for every fixed $\ell\in \mathbb{Z}_{\geq 1}$) and not on $m=$ deg$(\pi)$ in one of the possibilities; or the count obtained in Theorem \ref{4.3} may be neither depend on $p$ (and hence on deg$(\varphi_{p^{\ell},c})$ for every fixed $\ell\in \mathbb{Z}_{\geq 1}$) nor on $m$ in the other possibility. Moreover, the expected total number (namely, $p+0 = p$ for every fixed period $n\in \mathbb{Z}_{\geq 2}$) of distinct $n$-periodic points in the whole family of maps $\varphi_{p^{\ell},c}$ modulo $\pi$ may not only also depend on $p$ (and so depend on deg$(\varphi_{p^{\ell},c})$ for every fixed $\ell\in \mathbb{Z}_{\geq 1}$) and not on $m$, but may also grow to infinity when degree $p^{\ell}\to \infty$. As a result, we may have $N_{c(t)}^{(n)}(\pi, p)\to \infty$ or $N_{c(t)}^{(n)}(\pi, p)\to 0$ as $p\to \infty$; a somewhat interesting phenomenon differing significantly from a phenomenon observed in Remark \ref{rem5.4}, however, coinciding somewhat surprising with a phenomenon in [\cite{BK3}, Remark 5.4 (when $\ell \in \{1, p\}$)] and in Remark \ref{rem2.4}.   
\end{rem}

\begin{rem}
Recall in \cite{BK3} we proved that the function $N_{c(t)}(\pi, p) = p$ (for every $\ell \in \{1, p\}$) or $0$ for every fixed prime $p\geq 3$ and every coefficient $c\equiv 0$ (mod $\pi$) or $c\not \equiv 0$ (mod $\pi$). Moreover, recall in Theorem \ref{4.3} that for every fixed (period) $n\in \mathbb{Z}_{\geq 2}$, the function $N_{c(t)}(\pi, p) = p$ or $0$ for every fixed $p\geq 3$ and every coefficient $c\equiv 0$ (mod $\pi$) or $c\not \equiv 0$ (mod $\pi$). But now for every fixed (period) $n\in \mathbb{Z}_{\geq 2}$, we then note $N_{c(t)}^{(n)}(\pi, p) = N_{c(t)}(\pi, p) = p$ (for every $\ell \in \{1, p\}$) or $0$ for every fixed $p$ and every $c\equiv 0$ (mod $\pi$) or $c\not \equiv 0$ (mod $\pi$). Moreover, for every $c\equiv 0$ (mod $\pi$) and for every $\ell \in \{1, p\}$, it also follows from [\cite{BK3}, Proof of Theorem 5.3] and Proof of Theorem \ref{4.3} that every $n$-periodic $\mathbb{F}_{p}[t]$-point (and thus every $n$-periodic $\mathbb{F}_{p}[t]$-orbit) of any $\varphi_{p^{\ell},c}$ modulo $\pi$ is a fixed $\mathbb{F}_{p}[t]$-point (and thus a fixed $\mathbb{F}_{p}[t]$-orbit). This also means that for every fixed (period) $n\in \mathbb{Z}_{\geq 1}$, every $n$-periodic orbit of any $\varphi_{p^{\ell},c}$ modulo $\pi$ is a fixed orbit, and moreover every $\varphi_{p^{\ell},c}$ modulo $\pi$ has $p$ distinct fixed orbits or zero; a somewhat interesting precise arithmetic-geometric insight on all the $n$-periodic orbits of every $\varphi_{p^{\ell},c}$ modulo $\pi$. 
\end{rem}

\section{On Number of $n$-Periodic $\mathbb{F}_{p}[t]\slash (\pi)$-Points of any Family of Polynomial Maps $\varphi_{(p-1)^{\ell},c}$}\label{sec5}

As in Section \ref{sec4}, we in this section also wish to count the number of distinct $n$-periodic points of any $\varphi_{(p-1)^{\ell},c}$ modulo prime $\pi$, where $p\geq 5$ is any prime, $\ell \in \mathbb{Z}_{\geq 1}$  and $n\in \mathbb{Z}_{\geq 2}$ are any fixed integers. As before, we again let $p\geq 5$ be any prime, $\ell \in \mathbb{Z}_{\geq 1}, n\in \mathbb{Z}_{\geq 2}$ be any fixed integers, $c\in \mathbb{F}_{p}[t]$ be any point, and $\pi\in \mathbb{F}_{p}[t]$ be any fixed irreducible monic polynomial of degree $m\geq 1$, and then similarly define $n$-periodic point-counting function
\begin{equation}\label{M_{ct}}
M_{c(t)}^{(n)}(\pi, p) := \# \Biggl\{ z\in \mathbb{F}_{p}[t] / (\pi)   : \begin{aligned} \varphi_{(p-1)^{\ell},c}^{n-1}(z) -z \not \equiv 0 \ \text{(mod $\pi$)} \\ \ \varphi_{(p-1)^{\ell},c}^{n}(z) - z \equiv 0 \ \text{(mod $\pi$)} \end{aligned} \Biggr\}.
\end{equation} \noindent Again, setting $\ell =1$ and so $\varphi_{(p-1)^{\ell}, c} = \varphi_{p-1,c}$, we first prove the following theorem and its generalization \ref{5.2}:

\begin{thm} \label{5.1}
Let $\varphi_{4, c}$ be a quartic map defined by $\varphi_{4, c}(z) = z^4 + c$ for all $c, z\in\mathbb{F}_{5}[t]$, and $M_{c(t)}^{(n)}(\pi, 5)$ be as in \textnormal{(\ref{M_{ct}})}. Then $M_{c(t)}^{(n)}(\pi, 5) = 1$ for any $c\equiv \pm 1 \ (mod \ \pi)$ and fixed (even) $n$ or $M_{c(t)}^{(n)}(\pi, 5) = 2$ for any $c\in (\pi)$; otherwise $M_{c(t)}^{(n)}(\pi, 5) = 0$ for any $c \equiv -1\ (mod \ \pi)$ and fixed odd $n$ or any $c\not \equiv \pm 1, 0 \ (mod \ \pi)$ and fixed (even) $n$.
\end{thm}
\begin{proof}
Let $g_{c(t)}(z)= \varphi_{4,c}^n(z)-z = \varphi_{4,c}(\varphi_{4,c}^{n-1}(z)) - z = (\varphi_{4,c}^{n-1}(z))^4 - z + c$, and so $g(z)= (\varphi_{4,c}^{n-1}(z))^4 - z + c$. Now applying the multinomial theorem repeatedly on $(\varphi_{4,c}^{n-1}(z))^4$ after applying the binomial theorem on $(z^4 + c)^4$, we then obtain $(\varphi_{4,c}^{n-1}(z))^4$ is a monic polynomial in $z$ of degree $4^n$ with $\mathbb{F}_{5}[t]$-coefficients in multiples of $c$. Thus, we may then write $(\varphi_{4,c}^{n-1}(z))^4 = z^{4^{n}} + h(z)$, where $h(z)$ is a non-constant polynomial in $z$ of deg$(h)<4^n$ with $\mathbb{F}_{5}[t]$-coefficients in multiples of $c$; and so $g_{c(t)}(z)= z^{4^{n}} + h(z) - z + c$. Now for every coefficient $c\in (\pi) = \pi \mathbb{F}_{5}[t]$, reducing $g_{c(t)}(z)$ modulo prime $\pi$, it then follows $g_{c(t)}(z)\equiv z^{4^n} - z$ (mod $\pi$), since also $h(z)\in c\mathbb{F}_{5}[t][z]$ and so $h(z)\equiv 0$ (mod $\pi$); and so the reduced polynomial $g_{c(t)}(z)$ modulo $\pi$ is now a polynomial defined over a finite field $\mathbb{F}_{5}[t]\slash (\pi)$ of order $5^{\text{deg }\pi}=5^m$. So now, since every subfield of a finite field $\mathbb{F}_{5}[t]\slash (\pi)$ is of order $5^r$ for some positive integer $r\mid m$, we then obtain the inclusion $\mathbb{F}_{5}\hookrightarrow \mathbb{F}_{5}[t]\slash (\pi)$ of fields, where $\mathbb{F}_{5}$ is a field of order $5$. Moreover, it also follows from a well-known fact that $h(x)=x^4-1$ vanishes at every nonzero $z\in \mathbb{F}_{5}\subset \mathbb{F}_{5}[t]\slash (\pi)$; and so $z^4 = 1$ for every $z\in \mathbb{F}_{5}^{\times}=\mathbb{F}_{5}\setminus \{0\}$. But now we note $z^{4^n}= (z^4)^{4^{n-1}} = 1$ for every $z\in \mathbb{F}_{5}^{\times}$; and so $g_{c(t)}(z)\equiv 1 - z$ (mod $\pi$) for every point $z\in \mathbb{F}_{5}^{\times}\subset \mathbb{F}_{5}[t]\slash (\pi)$ and so $g_{c(t)}(z)$ has a root in $\mathbb{F}_{5}[t]\slash (\pi)$, namely, $z\equiv 1$ (mod $\pi$). Moreover, since $z$ is a linear factor of $g_{c(t)}(z)\equiv z(z^{4^n-1} - 1)$ (mod $\pi$), it then follows $z\equiv 0$ (mod $\pi$) is also root of $g_{c(t)}(z)$ modulo $\pi$ in $\mathbb{F}_{5}[t]\slash (\pi)$. But now we then conclude $M_{c(t)}^{(n)}(\pi, 5) = 2$. To see $M_{c(t)}^{(n)}(\pi, 5) = 1$ for every coefficient $c\equiv 1$ (mod $\pi$) and for every fixed integer $n\in \mathbb{Z}_{\geq 2}$, we first note that writing $\varphi_{4,c}^{n-1}(z) = \underbrace{((((z^4 + c)^4 + c)^4 + c)^4 + \cdots + c)^4 + c}_\text{$(n-1)$ times}$ and then reducing $\varphi_{4,c}^{n-1}(z)$ modulo $\pi$ along with $c\equiv 1$ (mod $\pi$) and also using $z^4 = 1$ for every element $z\in \mathbb{F}_{5}^{\times}$, we then obtain $\varphi_{4,c}^{n-1}(z)\equiv 2$ (mod $\pi$) and $(\varphi_{4,c}^{n-1}(z))^4\equiv 1$ (mod $\pi$) for every fixed $n\in \mathbb{Z}_{\geq 2}$, since also $2^4 = 1$ in $\mathbb{F}_{5}$. But then $g_{c(t)}(z)=(\varphi_{4,c}^{n-1}(z))^4 - z + c\equiv 2-z$ (mod $\pi$) for every point $z\in \mathbb{F}_{5}^{\times}\subset \mathbb{F}_{5}[t]\slash (\pi)$ and so $g_{c(t)}(z)$ modulo $\pi$ has a root in $\mathbb{F}_{5}[t]\slash (\pi)$, namely, $z\equiv 2$ (mod $\pi$); and so we conclude $M_{c(t)}^{(n)}(\pi, 5) = 1$. We now show $M_{c(t)}^{(n)}(\pi, 5) = 1$ for every coefficient $c\equiv -1$ (mod $\pi$) and for every fixed even integer $n\in \mathbb{Z}_{\geq 2}$. As before, since $c\equiv -1$ (mod $\pi$) and also since $z^4 = 1$ for every $z\in \mathbb{F}_{5}^{\times}$, reducing $\varphi_{4,c}^m(z)$ modulo $\pi$, we then obtain $\varphi_{4,c}^{n}(z)\equiv -1$ (mod $\pi$) for every fixed even $n\in \mathbb{Z}_{\geq 2}$. But then $g_{c(t)}(z)= \varphi_{4,c}^n(z)-z \equiv -(1+z)$ (mod $\pi$) for every point $z\in \mathbb{F}_{5}^{\times}$ and for every fixed even $n\in \mathbb{Z}_{\geq 2}$; and so $g_{c(t)}(z)$ modulo $\pi$ has a root in $\mathbb{F}_{5}[t]\slash (\pi)$, namely, $z\equiv -1$ (mod $\pi$) and so we then conclude $M_{c(t)}^{(n)}(\pi, 5) = 1$. 

Finally, we now show $M_{c(t)}^{(n)}(\pi, 5) = 0$ for every coefficient $c \equiv -1$ (mod $\pi$) and every fixed odd integer $n\in \mathbb{Z}_{\geq 3}$ or for every coefficient $c\not \equiv \pm1, 0$ (mod $\pi$) and every fixed (even) integer $n\in \mathbb{Z}_{\geq 2}$. To see this, we note that since $c\equiv -1$ (mod $\pi$) and also since $z^4 = 1$ for every $z\in \mathbb{F}_{5}^{\times}$, reducing $\varphi_{4,c}^n(z)$ modulo $\pi$, we then obtain $\varphi_{4,c}^{n}(z)\equiv 0$ (mod $\pi$) for every fixed odd $n$; and so $g_{c(t)}(z)= \varphi_{4,c}^n(z)-z \equiv -z$ (mod $\pi$) for every point $z\in \mathbb{F}_{5}^{\times}\subset \mathbb{F}_{5}[t]\slash (\pi)$ and fixed odd $n$. But now we note $z\equiv 0$ (mod $\pi$) is a root of $g_{c(t)}(z)$ modulo $\pi$ for every coefficient $c\equiv -1$ (mod $\pi$) and every fixed odd $n\in \mathbb{Z}_{\geq 3}$, and also $z\equiv 0$ (mod $\pi$) is also root of $g_{c(t)}(z)$ modulo $\pi$ for every coefficient $c\equiv 0$ (mod $\pi$) and every fixed odd $n\in \mathbb{Z}_{\geq 3}$ , as seen earlier; and from which then follows a contradiction that $1 \equiv 0$ (mod $\pi$). This then means $g_{c(t)}(x)=\varphi_{4,c}^n(x)-x$ has no roots in $\mathbb{F}_{5}[t]\slash (\pi)$ for every coefficient $c\equiv -1$ (mod $\pi$) and every fixed odd integer $n\in \mathbb{Z}_{\geq 3}$; and so we then conclude $M_{c(t)}^{(n)}(\pi, 5) = 0$. Otherwise, suppose $g_{c(t)}(z)\equiv 0$ (mod $\pi$) for some point $z\in \mathbb{F}_{5}[t]\slash (\pi)\setminus \{0\}$ and for every coefficient $c\not \equiv \pm1, 0$ (mod $\pi$) and every fixed (even) integer $n\in \mathbb{Z}_{\geq 2}$; and so (from earlier) $z^{4^n} + h(z)-z+c\equiv 0$ (mod $\pi$). So then, since $z^{4^n}=1$ for every $z\in \mathbb{F}_{5}^{\times}$ and every fixed (even) $n$, we may then rewrite  $z^{4^n} -z + h(z)+c\equiv 0$ (mod $\pi$) to obtain $(1-z) + (h(z)+c)\equiv 0$ (mod $\pi$). But now we note that the congruence $(1-z) + (h(z)+c)\equiv 0$ (mod $\pi$) can happen if $1-z\equiv 0$ (mod $\pi$) and also $h(z)+c\equiv 0$ (mod $\pi$). Moreover, recall from earlier $1-z\equiv 0$ (mod $\pi$) whenever $c\equiv 0$ (mod $\pi$); and which then contradicts the condition $c\not \equiv \pm1, 0$ (mod $\pi$). It then also follows $g_{c(t)}(x)=\varphi_{4,c}^n(x)-x$ has no roots in $\mathbb{F}_{5}[t]\slash (\pi)$ for every coefficient $c\not \equiv \pm1, 0$ (mod $\pi$) and every fixed (even) integer $n\in \mathbb{Z}_{\geq 2}$; and so we then conclude $M_{c(t)}^{(n)}(\pi, 5) = 0$. This then completes the whole proof, as desired.
\end{proof} 
We now wish to generalize Theorem \ref{5.1} to any polynomial map $\varphi_{p-1, c}$ for any given prime $p\geq 5$. More precisely, we prove that the number of distinct $n$-periodic points of any map $\varphi_{p-1, c}$ modulo $\pi$ is  $1$ or $2$ or zero:

\begin{thm} \label{5.2}
Let $p\geq 5$ be any fixed prime integer, and consider a family of polynomial maps $\varphi_{p-1, c}$ defined by $\varphi_{p-1, c}(z) = z^{p-1}+c$ for all points $c, z\in\mathbb{F}_{p}[t]$. Let $M_{c(t)}^{(n)}(\pi, p)$ be the number defined as in \textnormal{(\ref{M_{ct}})}. Then $M_{c(t)}^{(n)}(\pi, p) = 1$ for every $c\equiv \pm 1 \ (mod \ \pi)$ and fixed (even) integer $n$ or $M_{c(t)}^{(n)}(\pi, p) = 2$ for every $c\in (\pi)$; otherwise $M_{c(t)}^{(n)}(\pi, p) = 0$ for any $c \equiv -1\ (mod \ \pi)$ and fixed odd $n$ or any $c\not \equiv \pm 1, 0 \ (mod \ \pi)$ and fixed (even) $n$.
\end{thm}
\begin{proof}
By applying a similar argument as in the Proof of Theorem \ref{5.1}, we then obtain the count as desired. That is, let $g_{c(t)}(z)= \varphi_{p-1,c}^n(z)-z = \varphi_{p-1,c}(\varphi_{p-1,c}^{n-1}(z)) - z = (\varphi_{p-1,c}^{n-1}(z))^{p-1} - z + c$, and so $g_{c(t)}(z)= (\varphi_{p-1,c}^{n-1}(z))^{p-1} - z + c$. So now, applying multinomial theorem repeatedly on $(\varphi_{p-1,c}^{n-1}(z))^{p-1}$ after applying binomial theorem on $(z^{p-1} + c)^{p-1}$, it then follows $(\varphi_{p-1,c}^{n-1}(z))^{p-1}$ is a monic polynomial in $z$ of degree $(p-1)^n$ with $\mathbb{F}_{p}[t]$-coefficients in multiples of $c$. Hence, we may then write $(\varphi_{p-1,c}^{n-1}(z))^{p-1} = z^{(p-1)^{n}} + h(z)$, where $h(z)$ is a non-constant polynomial in $z$ of deg$(h)<(p-1)^n$ with $\mathbb{F}_{p}[t]$-coefficients in multiples of $c$; and so $g_{c(t)}(z)= z^{(p-1)^{n}} + h(z) - z + c$. Now for every coefficient $c\in (\pi) = \pi\mathbb{F}_{p}[t]$, reducing $g_{c(t)}(z)$ modulo prime $\pi$, it then follows $g_{c(t)}(z)\equiv z^{(p-1)^n} - z$ (mod $\pi$), since also $h(z)\in c\mathbb{F}_{p}[t][z]$ and so $h(z)\equiv 0$ (mod $\pi$); and so now $g_{c(t)}(z)$ modulo $\pi$ is a polynomial defined over a finite field $\mathbb{F}_{p}[t]\slash (\pi)$. So now, recall (from a well-known fact) the inclusion $\mathbb{F}_{p}\hookrightarrow \mathbb{F}_{p}[t]\slash (\pi)$ of fields; and also recall (from a well-known fact) that $h(x)=x^{p-1}-1$ vanishes at every nonzero $z\in \mathbb{F}_{p}\subset \mathbb{F}_{p}[t]\slash (\pi)$ and so $z^{p-1} = 1$ for every $z\in \mathbb{F}_{p}^{\times} = \mathbb{F}_{p}\setminus \{0\}$. It then follows $z^{(p-1)^n}= (z^{p-1})^{(p-1)^{n-1}} = 1$ for every element $z\in \mathbb{F}_{p}^{\times}$; and so $g(z)\equiv 1 - z$ (mod $\pi$) for every point $z\in \mathbb{F}_{p}^{\times}\subset \mathbb{F}_{p}[t]\slash (\pi)$ and so $g(z)$ has a root in $\mathbb{F}_{p}[t]\slash (\pi)$, namely, $z\equiv 1$ (mod $\pi$). Moreover, since $z$ is a linear factor of $g(z)\equiv z(z^{(p-1)^n-1} - 1)$ (mod $\pi$), it then follows $z\equiv 0$ (mod $\pi$) is also root of $g(z)$ modulo $\pi$. But now we then conclude $M_{c(t)}^{(n)}(\pi, p) = 2$. To see $M_{c(t)}^{(n)}(\pi, p) = 1$ for every coefficient $c\equiv 1$ (mod $\pi$) and every fixed $n\in \mathbb{Z}_{\geq 2}$, we first note that writing $\varphi_{p-1,c}^{n-1}(z) = \underbrace{((((z^{p-1} + c)^{p-1} + c)^{p-1} + c)^{p-1} + \cdots + c)^{p-1} + c}_\text{$(n-1)$ times}$ and then reducing $\varphi_{p-1,c}^{n-1}(z)$ modulo $\pi$ along with $c\equiv 1$ (mod $\pi$) and also using $z^{p-1} = 1$ for every $z\in \mathbb{F}_{p}^{\times}$, we then obtain $\varphi_{p-1,c}^{n-1}(z)\equiv 2$ (mod $\pi$) and $(\varphi_{p-1,c}^{n-1}(z))^{p-1}\equiv 1$ (mod $\pi$) for every fixed $n$, since $2^{p-1} = 1$ in $\mathbb{F}_{p}^{\times}$. But then $g_{c(t)}(z)=(\varphi_{p-1,c}^{n-1}(z))^{p-1} - z + c\equiv 2-z$ (mod $\pi$) for every point $z\in \mathbb{F}_{p}^{\times}\subset \mathbb{F}_{p}[t]\slash (\pi)$ and so $g_{c(t)}(z)$ modulo $\pi$ has a root in $\mathbb{F}_{p}[t]\slash (\pi)$, namely, $z\equiv 2$ (mod $\pi$); and so we conclude $M_{c(t)}^{(n)}(\pi, p) = 1$. We now show $M_{c(t)}^{(n)}(\pi, p) = 1$ for every coefficient $c\equiv -1$ (mod $\pi$) and every fixed even integer $n\in \mathbb{Z}_{\geq 2}$. As before, since $c\equiv -1$ (mod $\pi$) and also since $z^{p-1} = 1$ for every $z\in \mathbb{F}_{p}^{\times}$, reducing $\varphi_{p-1,c}^n(z)$ modulo $\pi$, we then obtain $\varphi_{p-1,c}^{n}(z)\equiv -1$ (mod $\pi$) for every fixed even $n\in \mathbb{Z}_{\geq 2}$. But then $g_{c(t)}(z)= \varphi_{p-1,c}^n(z)-z \equiv -(1+z)$ (mod $\pi$) for every point $z\in \mathbb{F}_{p}^{\times}$ and so $g_{c(t)}(z)$ modulo $\pi$ has a root in $\mathbb{F}_{p}[t]\slash (\pi)$, namely, $z\equiv -1$ (mod $\pi$); and so we then conclude $M_{c(t)}^{(n)}(\pi, p) = 1$. 

Finally, we now show $M_{c(t)}^{(n)}(\pi, p) = 0$ for every coefficient $c \equiv -1$ (mod $\pi$) and every fixed odd integer $n\in \mathbb{Z}_{\geq 3}$ or for every coefficient $c\not \equiv \pm1, 0$ (mod $\pi$) and every fixed (even) integer $n\in \mathbb{Z}_{\geq 2}$. To see this, we note that since $c\equiv -1$ (mod $\pi$) and allso since $z^{p-1} = 1$ for every $z\in \mathbb{F}_{p}^{\times}$, reducing $\varphi_{p-1,c}^n(z)$ modulo $\pi$, we then obtain $\varphi_{p-1,c}^{n}(z)\equiv 0$ (mod $\pi$) for every fixed odd  $n\in \mathbb{Z}_{\geq 3}$; and so $g_{c(t)}(z)= \varphi_{p-1,c}^n(z)-z \equiv -z$ (mod $\pi$) for every point $z\in \mathbb{F}_{p}^{\times}\subset \mathbb{F}_{p}[t]\slash (\pi)$ and fixed odd integer $n\geq 3$. But then we note $z\equiv 0$ (mod $\pi$) is a root of $g_{c(t)}(z)$ modulo $\pi$ for every coefficient $c\equiv -1$ (mod $\pi$) and every fixed odd $n\in \mathbb{Z}_{\geq 3}$, and also $z\equiv 0$ (mod $\pi$) is also root of $g_{c(t)}(z)$ modulo $\pi$ for every coefficient $c\equiv 0$ (mod $\pi$) and every fixed odd $n\in \mathbb{Z}_{\geq 3}$, as seen earlier; and from which we then obtain a contradiction that $1 \equiv 0$ (mod $\pi$). This then means $g_{c(t)}(x)=\varphi_{p-1,c}^n(x)-x$ has no roots in $\mathbb{F}_{p}[t]\slash (\pi)$ for every coefficient $c\equiv -1$ (mod $\pi$) and every fixed odd $n\in \mathbb{Z}_{\geq 3}$; and so we then conclude $M_{c(t)}^{(n)}(\pi, p) = 0$. Otherwise, suppose $g_{c(t)}(z)\equiv 0$ (mod $\pi$) for some point $z\in \mathbb{F}_{p}[t]\slash (\pi)\setminus \{0\}$ and for every coefficient $c\not \equiv \pm1, 0$ (mod $\pi$) and every fixed (even) integer $n\in \mathbb{Z}_{\geq 3}$; and so (from earlier) $z^{(p-1)^n} + h(z)-z+c\equiv 0$ (mod $\pi$). So now, using that $z^{(p-1)^n}=1$ for every $z\in \mathbb{F}_{p}^{\times}$, we may then rewrite $z^{(p-1)^n} -z + h(z) +c\equiv 0$ (mod $\pi$) to obtain $(1-z) + (h(z)+c)\equiv 0$ (mod $\pi$). But now we note that $(1-z) + (h(z)+c)\equiv 0$ (mod $\pi$) can happen if $1-z\equiv 0$ (mod $\pi$) and also $h(z)+c\equiv 0$ (mod $\pi$). Moreover, recall from earlier $1-z\equiv 0$ (mod $\pi$) when $c\equiv 0$ (mod $\pi$); and which then contradicts $c\not \equiv \pm1, 0$ (mod $\pi$). It then also follows $g_{c(t)}(x)=\varphi_{p-1,c}^n(x)-x$ has no roots in $\mathbb{F}_{p}[t]\slash (\pi)$ for every coefficient $c\not \equiv \pm1, 0$ (mod $\pi$) and fixed (even) $n\in \mathbb{Z}_{\geq 2}$; and so we conclude $M_{c(t)}^{(n)}(\pi, p) = 0$. This then completes the whole proof, as desired. 
\end{proof}

Finally, we wish to generalize Theorem \ref{5.2} further to any $\varphi_{(p-1)^{\ell}, c}$ for any prime $p\geq 5$ and any $\ell\in \mathbb{Z}_{\geq 1}$. Specifically, we show the number of distinct $m$-periodic points of any $\varphi_{(p-1)^{\ell}, c}$ modulo $\pi$ is also $1$ or $2$ or zero:

\begin{thm} \label{5.3}
Let $p\geq 5$ be any fixed prime integer, and $\ell \geq 1$ be any integer. Let $\varphi_{(p-1)^{\ell}, c}$ be a polynomial map defined by $\varphi_{(p-1)^{\ell}, c}(z) = z^{(p-1)^{\ell}} + c$ for all $c, z\in\mathbb{F}_{p}[t]$, and $M_{c(t)}^{(n)}(\pi, p)$ be as in \textnormal{(\ref{M_{ct}})}. Then $M_{c(t)}^{(n)}(\pi, p) = 1$ for every coefficient $c\equiv \pm 1 \ (mod \ \pi)$ and fixed (even) integer $n$ or $M_{c(t)}^{(n)}(\pi, p) = 2$ for every $c\in (\pi)$; otherwise $M_{c(t)}^{(n)}(\pi, p) = 0$ for every point $c \equiv -1\ (mod \ \pi)$ and fixed odd $n$ or every $c\not \equiv \pm 1, 0 \ (mod \ \pi)$ and fixed (even) $n$.
\end{thm}
\begin{proof}
By applying a similar argument as in the Proof of Theorem \ref{5.2}, we then obtain the count as desired. That is, let $g_{c(t)}(z)= \varphi_{(p-1)^{\ell}, c}^n(z)-z = \varphi_{(p-1)^{\ell},c}(\varphi_{(p-1)^{\ell},c}^{n-1}(z))-z = (\varphi_{(p-1)^{\ell},c}^{n-1}(z))^{(p-1)^{\ell}}-z + c$, and so $g_{c(t)}(z) = (\varphi_{(p-1)^{\ell},c}^{n-1}(z))^{(p-1)^{\ell}}-z + c$. Now applying multinomial theorem on the term $(\varphi_{p-1,c}^{n-1}(z))^{(p-1)^{\ell}}$, we then obtain $(\varphi_{p-1,c}^{n-1}(z))^{(p-1)^{\ell}}$ is a monic polynomial in $z$ of degree $(p-1)^{n\ell}$ with $\mathbb{F}_{p}[t]$-coefficients in multiples of $c$. Thus, we may then write $(\varphi_{(p-1)^{\ell},c}^{n-1}(z))^{(p-1)^{\ell}} = z^{(p-1)^{n\ell}} + h(z)$, where $h(z)$ is a non-constant polynomial in $z$ of deg$(h)<(p-1)^n$ with $\mathbb{F}_{p}[t]$-coefficients in multiples of $c$; and so $g_{c(t)}(z)= z^{(p-1)^{n\ell}} + h(z) - z + c$. So now, for every coefficient $c \in (\pi)$, reducing $g_{c(t)}(z)$ modulo prime $\pi$, it then follows $g_{c(t)}(z)\equiv z^{(p-1)^{n\ell}} - z$ (mod $\pi$); and so now $g_{c(t)}(z)$ modulo $p\mathcal{O}_{K}$ is a polynomial defined over a finite field $\mathbb{F}_{p}[t]\slash (\pi)$. Now recall the inclusion $\mathbb{F}_{p}\hookrightarrow\mathbb{F}_{p}[t]\slash (\pi)$ of fields and also that $z^{p-1} = 1$ for every element $z\in \mathbb{F}_{p}^{\times}$, it then also follows $z^{(p-1)^{n\ell}} = 1$ for every $z\in \mathbb{F}_{p}^{\times}\subset \mathbb{F}_{p}[t]\slash (\pi)$ and for every $\ell \in \mathbb{Z}_{\geq 1}$ and every $n \in \mathbb{Z}_{\geq 2}$. But then $g_{c(t)}(z)\equiv 1 - z$ (mod $\pi$) for every point $z\in \mathbb{F}_{p}^{\times}\subset\mathbb{F}_{p}[t]\slash (\pi)$ and so $g_{c(t)}(z)$ modulo $\pi$ has a root in $\mathbb{F}_{p}[t]\slash (\pi)$, namely, $z\equiv 1$ (mod $\pi$). Moreover, since $z$ is a linear factor of $g_{c(t)}(z)\equiv z(z^{(p-1)^{n\ell}-1} - 1)$ (mod $\pi$), it then follows $z\equiv 0$ (mod $\pi$) is also a root of $g_{c(t)}(z)$ modulo $\pi$ in $\mathbb{F}_{p}[t]\slash (\pi)$. But now we then conclude  $M_{c(t)}^{(n)}(\pi, p) = 2$. To see $M_{c(t)}^{(n)}(\pi, p) = 1$ for every coefficient $c\equiv 1$ (mod $\pi$) and for every fixed $\ell \in \mathbb{Z}_{\geq 1}$ and $n\in \mathbb{Z}_{\geq 2}$, we note that writing $\varphi_{(p-1)^{\ell},c}^{n-1}(z) = \underbrace{((((z^{(p-1)^{\ell}} + c)^{(p-1)^{\ell}} + c)^{(p-1)^{\ell}} + c)^{(p-1)^{\ell}} + \cdots + c)^{(p-1)^{\ell}} + c}_\text{$(n-1)$ times}$ and reducing $\varphi_{(p-1)^{\ell},c}^{n-1}(z)$ modulo $\pi$ along with $c\equiv 1$ (mod $p\mathcal{O}_{K}$) and also since $z^{(p-1)^{\ell}} = 1$ for every $z\in \mathbb{F}_{p}^{\times}$, we then obtain $\varphi_{(p-1)^{\ell},c}^{n-1}(z)\equiv 2$ (mod $\pi$) and $(\varphi_{(p-1)^{\ell},c}^{n-1}(z))^{(p-1)^{\ell}}\equiv 1$ (mod $\pi$) for any fixed $\ell$ and $n$, since also $2^{(p-1)^{\ell}} = 1$ in $\mathbb{F}_{p}$. But then $g_{c(t)}(z)=(\varphi_{(p-1)^{\ell},c}^{n-1}(z))^{(p-1)^{\ell}} - z + c\equiv 2-z$ (mod $\pi$) for every point $z\in \mathbb{F}_{p}^{\times}\subset \mathbb{F}_{p}[t]\slash (\pi)$ and so $g_{c(t)}(z)$ modulo $p\mathcal{O}_{K}$ has a root in $\mathbb{F}_{p}[t]\slash (\pi)$, namely, $z\equiv 2$ (mod $\pi$); and so we conclude $M_{c(t)}^{(n)}(\pi, p) = 1$. We now show $M_{c(t)}^{(n)}(\pi, p) = 1$ for every coefficient $c\equiv -1$ (mod $\pi$) and every fixed $\ell \in \mathbb{Z}_{\geq 1}$ and fixed even integer $n\in \mathbb{Z}_{\geq 2}$. Since $c\equiv -1$ (mod $\pi$) and $z^{(p-1)^{\ell}} = 1$ for every $z\in \mathbb{F}_{p}^{\times}$, reducing $\varphi_{(p-1)^{\ell},c}^n(z)$ modulo $\pi$, we then obtain $\varphi_{(p-1)^{\ell},c}^{n}(z)\equiv -1$ (mod $\pi$) for every fixed even $n$. But then $g_{c(t)}(z)= \varphi_{(p-1)^{\ell},c}^n(z)-z \equiv -(1+z)$ (mod $\pi$) for every $z\in \mathbb{F}_{p}^{\times}\subset \mathbb{F}_{p}[t]\slash (\pi)$ and so $g_{c(t)}(z)$ modulo $\pi$ has a root in $\mathbb{F}_{p}[t]\slash (\pi)$; and so we conclude $M_{c(t)}^{(n)}(\pi, p) = 1$. 

Finally, we now show $M_{c(t)}^{(n)}(\pi, p) = 0$ for every coefficient $c \equiv -1$ (mod $\pi$) and every fixed odd integer $n\in \mathbb{Z}_{\geq 3}$ or for every coefficient $c\not \equiv \pm1, 0$ (mod $\pi$) and every fixed (even) integer $n\in \mathbb{Z}_{\geq 2}$ and every $\ell \in \mathbb{Z}_{ \geq 1}$. To see this, we note as before that since $c\equiv -1$ (mod $\pi$) and $z^{(p-1)^{\ell}} = 1$ for every $z\in \mathbb{F}_{p}^{\times}$, reducing $\varphi_{(p-1)^{\ell},c}^n(z)$ modulo $\pi$, we then obtain $\varphi_{(p-1)^{\ell},c}^{n}(z)\equiv 0$ (mod $\pi$) for every fixed $\ell$ and fixed odd $n$; and so $g_{c(t)}(z)= \varphi_{(p-1)^{\ell},c}^n(z)-z \equiv -z$ (mod $\pi$) for every point $z\in \mathbb{F}_{p}^{\times}\subset \mathbb{F}_{p}[t]\slash (\pi)$. But then we note $z\equiv 0$ (mod $\pi$) is a root of $g_{c(t)}(z)$ modulo $\pi$ for every coefficient $c\equiv -1$ (mod $\pi$) and every fixed $\ell \in \mathbb{Z}_{ \geq 1}$ and fixed odd $n \in \mathbb{Z}_{ \geq 3}$, and also $z\equiv 0$ (mod $\pi$) is also root of $g_{c(t)}(z)$ modulo $\pi$ for every coefficient $c\equiv 0$ (mod $\pi$) and every fixed $\ell \in \mathbb{Z}_{ \geq 1}$ and fixed odd $n \in \mathbb{Z}_{ \geq 3}$, as seen earlier; and from which then follows a contradiction that $1 \equiv 0$ (mod $\pi$). This then means $g_{c(t)}(x)=\varphi_{(p-1)^{\ell},c}^n(x)-x$ has no roots in $\mathbb{F}_{p}[t]\slash (\pi)$ for every coefficient $c\equiv -1$ (mod $\pi$) and every fixed $\ell \in \mathbb{Z}_{ \geq 1}$ and fixed odd $n \in \mathbb{Z}_{ \geq 3}$; and so we conclude $M_{c(t)}^{(n)}(\pi, p) = 0$. Otherwise, suppose $g_{c(t)}(z)\equiv 0$ (mod $\pi$) for some point $z\in \mathbb{F}_{p}[t]\slash (\pi)\setminus \{0\}$ and every coefficient $c\not \equiv \pm1, 0$ (mod $\pi$) and every fixed $\ell \in \mathbb{Z}_{ \geq 1}$ and fixed (even) $n \in \mathbb{Z}_{ \geq 2}$; and so (from earlier) $z^{(p-1)^{n\ell}} + h(z)-z+c\equiv 0$ (mod $\pi$). So then, using that $z^{(p-1)^{n\ell}}=1$ for every $z\in \mathbb{F}_{p}^{\times}$ and for every fixed $\ell$ and (even) $n$, we then rewrite $z^{(p-1)^{n\ell}} -z + h(z) +c\equiv 0$ (mod $\pi$) to obtain $(1-z) + (h(z)+c)\equiv 0$ (mod $\pi$). But now we note that the congruence $(1-z) + (h(z)+c)\equiv 0$ (mod $\pi$) can happen if $1-z\equiv 0$ (mod $\pi$) and also $h(z)+c\equiv 0$ (mod $\pi$). Moreover, recall from earlier that $1-z\equiv 0$ (mod $\pi$) when $c\equiv 0$ (mod $\pi$); which then contradicts $c\not \equiv \pm1, 0$ (mod $\pi$). It then also follows $g_{c(t)}(x)=\varphi_{(p-1)^{\ell},c}^n(x)-x$ has no roots in $\mathbb{F}_{p}[t]\slash (\pi)$ for every coefficient $c\not \equiv \pm1, 0$ (mod $\pi$) and for every fixed $\ell \in \mathbb{Z}_{ \geq 1}$ and fixed (even) integer $n \in \mathbb{Z}_{ \geq 2}$; and so we then conclude $M_{c(t)}^{(n)}(\pi, p) = 0$. This then completes the whole proof, as desired.
\end{proof}

\begin{rem}\label{rem5.4}
With now Theorem \ref{5.3}, we may also to each distinct $n$-periodic point of $\varphi_{(p-1)^{\ell},c}$ associate an $n$-periodic orbit. In doing so, we then also obtain a dynamical translation of Theorem \ref{5.3}, namely, that the number of distinct $n$-periodic orbits that any $\varphi_{(p-1)^{\ell},c}$ has when iterated on the space $\mathbb{F}_{p}[t]\slash (\pi)$ is $1$ or $2$ or $0$. Again, as mentioned in Intro.\ref{sec1} that the count obtained in Theorem \ref{5.3} on the number of distinct $n$-periodic points of any $\varphi_{(p-1)^{\ell},c}$ modulo $\pi$ is independent of $p$ (and so independent of deg$(\varphi_{(p-1)^{\ell},c})$ for every fixed $\ell \in \mathbb{Z}_{\geq 1}$) and $m=$ deg$(\pi)$ in each of the possibilities. Moreover, we may also observe that the expected total number (namely, $1 + 2 + 0 =3$ for every fixed odd period $n\in \mathbb{Z}_{\geq 3}$ or $1 + 1 + 2 + 0 =4$ for every fixed even period $n\in \mathbb{Z}_{\geq 2}$) of distinct $n$-periodic points in the whole family of maps $\varphi_{(p-1)^{\ell},c}$ modulo $\pi$ is not only also independent of $p$ (and so independent of deg$(\varphi_{(p-1)^{\ell},c})$ for every fixed $\ell \in \mathbb{Z}_{\geq 1}$) and $m$, but is also a constant equal to $3$ or $4$ even when $(p-1)^{\ell}$ or $m\to \infty$; a somewhat interesting phenomenon differing significantly from a phenomenon in Remark \ref{rem4.4}, however, coinciding somewhat surprising with [\cite{BK3, BK222}, Rem.6.4 and 3.5, resp.].  
\end{rem}

\begin{rem}
As before, recall in \cite{BK3} we proved that the function $M_{c(t)}(\pi, p) = 1, 2$ or $0$ for every fixed prime $p\geq 5$ and every coefficient $c\equiv 1, 0$ (mod $\pi$) or $c\equiv -1$ (mod $\pi$). Moreover, recall in the Proof of Theorem \ref{5.3} we proved that for every fixed odd (period) $n\in \mathbb{Z}_{\geq 3}$, the points $z\equiv 1, 0, 2$ (mod $\pi$) are $n$-periodic points of any $\varphi_{(p-1)^{\ell},c}$ modulo $\pi$ (which we (in \cite{BK3}) also obtained as fixed points of any $\varphi_{(p-1)^{\ell},c}$ modulo $\pi$) for every fixed prime $p\geq 5$ and every coefficient $c\equiv 1, 0$ (mod $\pi$); from which we then concluded $M_{c(t)}^{(n)}(\pi, p)  = 1, 2$ or $0$ for every fixed $p$ and  every $c\equiv 1, 0$ (mod $\pi$) or $c\equiv -1$ (mod $\pi$). But now for every fixed odd (period) $n\in \mathbb{Z}_{\geq 3}$, we then note that the counting function $M_{c(t)}^{(n)}(\pi, p) = M_{c(t)}(\pi, p) = 1, 2$ or $0$ for every fixed $p$ and every $c\equiv 1, 0$ (mod $\pi$) or $c\equiv -1$ (mod $\pi$). Consequently, for every fixed odd (period) $n\in \mathbb{Z}_{\geq 1}$, we then note that every $n$-periodic orbit of every $\varphi_{(p-1)^{\ell},c}$ modulo $\pi$ is a fixed orbit, and moreover every reduced map $\varphi_{(p-1)^{\ell},c}$ modulo $\pi$ has $1$ or $2$ or no fixed orbits; a somewhat interesting precise arithmetic-geometric insight on all odd $n$-periodic orbits of every $\varphi_{(p-1)^{\ell},c}$ modulo $\pi$. Furthermore, recall in the Proof of Theorem \ref{5.3} that for every fixed even (period) $n\in \mathbb{Z}_{\geq 2}$, the points $z\equiv 1, 0, 2, -1$ (mod $\pi$) are $n$-periodic points of any $\varphi_{(p-1)^{\ell},c}$ modulo $\pi$ for every fixed prime $p\geq 5$ and every coefficient $c\equiv \pm 1, 0$ (mod $\pi$) or $c\not \equiv \pm 1, 0$ (mod $\pi$); from which we then concluded $M_{c(t)}^{(n)}(\pi, p)  = 1, 2$ or $0$ for every fixed $p$ and every $c\equiv \pm 1, 0$ (mod $\pi$) or $c\not \equiv \pm 1, 0$ (mod $\pi$). But now for every fixed even (period) $n\in \mathbb{Z}_{\geq 4}$, we then also note that the counting function $M_{c(t)}^{(n)}(\pi, p) = M_{c(t)}^{(2)}(\pi, p) = 1, 2$ or $0$ for every fixed $p$ and every $c\equiv \pm 1, 0$ (mod $\pi$) or $c\not \equiv \pm 1, 0$ (mod $\pi$). As before, we now also note that for every fixed even (period) $n\in \mathbb{Z}_{\geq 2}$, every $n$-periodic orbit of any $\varphi_{(p-1)^{\ell},c}$ modulo $\pi$ is a $2$-periodic orbit, and moreover every reduced map $\varphi_{(p-1)^{\ell},c}$ modulo $\pi$ has one or two or no $n$-periodic orbits; and again a somewhat interesting precise arithmetic-geometric insight on all even $n$-periodic orbits of every map $\varphi_{(p-1)^{\ell},c}$ modulo $\pi$.    
\end{rem}

\section{On Dynamical Complexity of Forward Periodic Orbit Structure of $\overline{\varphi_{p^{\ell},c}}$ and $\overline{\varphi_{(p-1)^{\ell},c}}$}
Observe in Theorem \ref{2.3} that the number $X_{c}^{(n)}(p)$ is independent of the period $n$, and moreover we may have 
\begin{center}
    $\lim\limits_{n\to \infty} X_{c}^{(n)}(p) = p$ or $0$.
\end{center}

Now as mentioned in \cite{BK111, BK222} that one of the main objectives in classical dynamical systems is to understand \textit{all} orbits usually via topological and analytic techniques; and moreover in doing so, one may not only find that orbits can easily get very complicated but also important and interesting statistical questions (e.g., determining topological entropy) concerning measuring the complexity of the underlying system, may become intractable. So now, motivated (as in \cite{BK111, BK222}) by such a statistic (namely, topological entropy) and moreover since we may now also view $X_{c}^{(n)}(p)$ as $n$-periodic orbit-counting function, we in this section also wish to investigate very mildly the complexity of a polynomial discrete dynamical system $(\mathbb{Z}_{p}\slash p\mathbb{Z}_{p}$, $\varphi_{p^{\ell},c}$ modulo $p\mathbb{Z}_{p}$). With that in mind, we again analyze the behavior of the associated exponential growth rate $\rho(\overline{\varphi_{p^{\ell},c}})$ [\cite{Kat}, Page 196], where $\overline{\varphi_{p^{\ell},c}}:\mathbb{Z}_{p}\slash p\mathbb{Z}_{p} \to \mathbb{Z}_{p}\slash p\mathbb{Z}_{p}$ is the reduced polynomial map $\varphi_{p^{\ell},c}$ modulo $p\mathbb{Z}_{p}$; and in doing so we then also obtain immediately the following corollary showing that $n$-periodic orbit-counting function $X_{c}^{(n)}(p)$ grows by a factor $1=e^{\rho(\varphi_{p^{\ell},c})}$ as period $n\to \infty$ and thus $n$-periodic orbit-counting function $X_{c}^{(n)}(p)$ is a constant function: 
\begin{cor}\label{co6.1}
Assume Theorem \ref{2.3}, and let $n\in \mathbb{Z}_{\geq 2}$ be any period. Then the exponential growth rate of $n$-periodic orbit-counting function $X_{c}^{(n)}(p)$ exists and is equal to zero. More precisely, we have 

\begin{center}
    $\rho(\overline{\varphi_{p^{\ell},c}}) := \limsup\limits_{n\to \infty}\frac{\textnormal{log}(\textnormal{max} \{X_{c}^{(n)}(p),1\})}{n} = 0$.
\end{center} 
\end{cor}
\begin{proof}
Since we know from Theorem \ref{2.3} that the number $X_{c}^{(n)}(p) = p\text{ or } 0$ for every fixed period $n$, we then obtain $\frac{\text{log}(\text{max} \{X_{c}^{(n)}(p),1\})}{n} = \frac{\text{log }p}{n}$ or $0$. Now letting period $n\to \infty$, we then obtain $\rho(\overline{\varphi_{p^{\ell},c}}) = 0$ as desired.
\end{proof}

As before, we may also observe in Theorem \ref{3.3} that $Y_{c}^{(n)}(p)$ is independent of period $n$, and more to this  
\begin{center}
    $\lim\limits_{n\to \infty} Y_{c}^{(n)}(p) = 1, 2 \text{ or } 0.$
\end{center}

So now, as before we may also study very mildly the complexity of a polynomial discrete dynamical system $(\mathbb{Z}_{p}\slash p\mathbb{Z}_{p}$, $\varphi_{(p-1)^{\ell},c}$ modulo $p\mathbb{Z}_{p}$) by also determining the behavior of the associated exponential growth rate $\rho(\overline{\varphi_{(p-1)^{\ell},c}})$, where $\overline{\varphi_{(p-1)^{\ell},c}}$ is the polynomial map $\varphi_{(p-1)^{\ell},c}$ modulo $p\mathbb{Z}_{p}$. In doing so, we then also obtain immediately the following corollary showing that $n$-periodic orbit-counting function $Y_{c}^{(n)}(p)$ grows by a factor $1=e^{\rho(\overline{\varphi_{(p-1)^{\ell},c}})}$ as period $n\to \infty$ and so $n$-periodic orbit-counting function $Y_{c}^{(n)}(p)$ is also a constant function: 
\begin{cor}\label{co6.2}
Assume Theorem \ref{3.3}, and let $n\in \mathbb{Z}_{\geq 2}$ be any period. Then the exponential growth rate of $n$-periodic orbit-counting function $Y_{c}^{(n)}(p)$ exists and is equal to zero. More precisely, we have 
\begin{center}
    $\rho(\overline{\varphi_{(p-1)^{\ell},c}}) := \limsup\limits_{n\to \infty}\frac{\textnormal{log}(\textnormal{max} \{Y_{c}^{(n)}(p),1\})}{n} = 0$.
\end{center}
\end{cor}
\begin{proof}
Since we know from Theorem \ref{3.3} that the number $Y_{c}^{(n)}(p) = 1, 2 \text{ or } 0$ for every fixed period $n$, we then obtain $\frac{\text{log}(\text{max} \{Y_{c}^{(n)}(p),1\})}{n} = \frac{\text{log }2}{n}$ or $0$. But now letting $n\to \infty$, we then obtain  $\rho(\overline{\varphi_{(p-1)^{\ell},c}}) = 0$ as desired.
\end{proof}

Similarly, we also observe in Theorem \ref{4.3} that $N_{c(t)}^{(n)}(\pi, p)$ is independent of period $n$, and more to this  
\begin{center}
    $\lim\limits_{n\to \infty} N_{c(t)}^{(n)}(\pi, p) = p \text{ or } 0.$
\end{center}

As before, we may also wish to investigate very mildly the complexity of a polynomial discrete dynamical system $(\mathbb{F}_{p}[t]\slash (\pi)$, $\varphi_{p^{\ell},c(t)}$ modulo $\pi$) by also determining the behavior of the associated exponential growth rate $\rho(\overline{\varphi_{p^{\ell},c(t)}})$, where $\overline{\varphi_{p^{\ell},c(t)}}:\mathbb{F}_{p}[t]\slash (\pi)\to \mathbb{F}_{p}[t]\slash (\pi)$ is the reduced polynomial map $\varphi_{p^{\ell},c(t)}$ modulo $\pi$. With that in mind, we then obtain the following corollary showing that $n$-periodic orbit-counting function $N_{c(t)}^{(n)}(\pi, p)$ grows by a factor $1=e^{\rho(\overline{\varphi_{p^{\ell},c(t)}})}$ as period $n\to \infty$ and hence the function $N_{c(t)}^{(n)}(\pi, p)$ is also a constant function:

\begin{cor}\label{co6.3}
Assume Theorem \ref{4.3}, and let $n\in \mathbb{Z}_{\geq 2}$ be any period. Then the exponential growth rate of $n$-periodic orbit-counting function $N_{c(t)}^{(n)}(\pi, p)$ exists and is equal to zero. More precisely, we have 
\begin{center}
    $\rho(\overline{\varphi_{p^{\ell},c(t)}}) = \limsup\limits_{n\to \infty}\frac{\textnormal{log}(\textnormal{max} \{N_{c(t)}^{(n)}(\pi, p),1\})}{n} = 0$.
\end{center}
\end{cor}
\begin{proof}
By applying a similar argument as in the Proof of Corollary \ref{co6.1}, we then obtain the limit as desired. That is, since we know from Theorem \ref{4.3} that $N_{c(t)}^{(n)}(\pi, p) = p\text{ or } 0$ for every fixed period $n$, we then obtain $\frac{\text{log}(\text{max} \{N_{c(t)}^{(n)}(\pi, p),1\})}{n} = \frac{\text{log }p}{n}$ or $0$. But then letting period $n\to \infty$, we then obtain $\rho(\overline{\varphi_{p^{\ell},c(t)}}) = 0$ as desired.    
\end{proof}

As before, we also observe in Theorem \ref{5.3} that $M_{c(t)}^{(n)}(\pi, p)$ is also independent of $n$, and again we have   
\begin{center}
    $\lim\limits_{n\to \infty} M_{c(t)}^{(n)}(\pi, p) = 1, 2 \text{ or } 0.$
\end{center}

Similarly, we may also investigate very mildly the complexity of a polynomial discrete dynamical system $(\mathbb{F}_{p}[t]\slash (\pi)$, $\varphi_{(p-1)^{\ell},c(t)}$ modulo $\pi$) by also determining the behavior of the associated exponential growth rate $\rho(\overline{\varphi_{(p-1)^{\ell},c(t)}})$, where $\overline{\varphi_{(p-1)^{\ell},c(t)}}$ is the reduced polynomial map $\varphi_{(p-1)^{\ell},c(t)}$ modulo $\pi$. In doing so, we then also obtain immediately the following corollary showing that $n$-periodic orbit-counting function $M_{c(t)}^{(n)}(\pi, p)$ grows by a factor $1=e^{\rho(\overline{\varphi_{(p-1)^{\ell},c(t)}})}$ as period $n\to \infty$ and hence the orbit-counting function $M_{c(t)}^{(n)}(\pi, p)$ is also constant:

\begin{cor}
Assume Theorem \ref{5.3}, and let $n\in \mathbb{Z}_{\geq 2}$ be any period. Then the exponential growth rate of $n$-periodic orbit-counting function $M_{c(t)}^{(n)}(\pi, p)$ exists and is equal to zero. More precisely, we have 
\begin{center}
    $\rho(\overline{\varphi_{(p-1)^{\ell},c(t)}}) = \limsup\limits_{n\to \infty}\frac{\textnormal{log}(\textnormal{max} \{M_{c(t)}^{(n)}(\pi, p),1\})}{n} = 0$.
\end{center}
\end{cor}

\begin{proof}
By applying a similar argument as in the Proof of Corollary \ref{co6.3}, we then obtain the limit as desired. That is, since we know from Theorem \ref{5.3} that $M_{c(t)}^{(n)}(\pi, p) = 1, 2 \text{ or } 0$ for every fixed period $n$, we then obtain $\frac{\text{log}(\text{max} \{M_{c(t)}^{(n)}(\pi, p),1\})}{n} = \frac{\text{log }2}{n}$ or $0$. So now letting period $n\to \infty$, it then follows $\rho(\overline{\varphi_{(p-1)^{\ell},c(t)}}) = 0$ as needed.    
\end{proof}

\section{On Average Number of $n$-Periodic $\mathbb{Z}_{p}\slash p\mathbb{Z}_{p}$-Points of any Family of $\varphi_{p^{\ell},c}$ and $\varphi_{(p-1)^{\ell},c}$ }\label{sec6}

In this section, we wish to inspect independently the behavior of the $n$-periodic point-counting functions $X_{c}^{(n)}(p)$ and $Y_{c}^{(n)}(p)$ as $c\to \infty$. First, we wish to determine: \say{\textit{What is the average value of $X_{c}^{(n)}(p)$ as $c \to \infty$?}} The following corollary shows that the average value of the function $X_{c}^{(n)}(p)$ may be zero or unbounded as $c\to \infty$:

\begin{cor}\label{7.1}
Let $p\geq 3$ be any prime, and $n\in \mathbb{Z}_{\geq 2}$ be any fixed (period). Then the average value of $n$-periodic point-counting function $X_{c}^{(n)}(p)$ is zero or unbounded as $c\to\infty$. More precisely, we have
\begin{myitemize}
    \item[\textnormal{(a)}] \textnormal{Avg} $X^{(n)}_{c\neq pt}(p):= \lim\limits_{c \to\infty} \Large{\frac{\sum\limits_{3\leq p\leq c, \ p\nmid c \textnormal{ in } \mathbb{Z}_{p}}X_{c}^{(n)}(p)}{\Large{\sum\limits_{3\leq p\leq c, \ p\nmid c \textnormal{ in } \mathbb{Z}_{p}}1}}} =  0$. 
    
    \item[\textnormal{(b)}] \textnormal{Avg} $X^{(n)}_{c = pt}(p):= \lim\limits_{c \to\infty} \Large{\frac{\sum\limits_{3\leq p\leq c, \ p\mid c \textnormal{ in } \mathbb{Z}_{p}}X_{c}^{(n)}(p)}{\Large{\sum\limits_{3\leq p\leq c, \ p\mid c \textnormal{ in } \mathbb{Z}_{p}}1}}} =  \infty$.
\end{myitemize}

\end{cor}
\begin{proof}
Since we know from Theorem \ref{2.3} that the number $X_{c}^{(n)}(p) = 0$ for any prime $p\nmid c$ in $\mathbb{Z}_{p}$, it then follows $\lim\limits_{c\to\infty} \Large{\frac{\sum\limits_{3\leq p\leq c, \ p\nmid c \text{ in } \mathbb{Z}_{p}}X_{c}^{(n)}(p)}{\Large{\sum\limits_{3\leq p\leq c, \ p\nmid c \text{ in } \mathbb{Z}_{p}}1}}} = 0$; and so we conclude that the average value Avg $X_{c \neq pt}^{(n)}(p) = 0$. To see (b), we recall from Theorem \ref{2.3} that the number $X_{c}^{(n)}(p) = p$ for any $p\mid c$ in $\mathbb{Z}_{p}$. But now we note $\sum\limits_{3\leq p\leq c, \ p\mid c \text{ in } \mathbb{Z}_{p}} X_{c}^{(n)}(p) = \sum\limits_{3\leq p\leq c, \ p\mid c \text{ in } \mathbb{Z}_{p}}p =: \sigma_{1,p}(c)$ and  $\sum\limits_{3\leq p\leq c, \ p\mid c \text{ in } \mathbb{Z}_{p}} 1  = \omega(c)$, where $\sigma_{1}(\ell)$ (resp. $\omega(\ell)$) is by definition the number of divisors (resp. the number of distinct prime divisors) of any $\ell\in \mathbb{Z}_{\geq 1}$; and so we then obtain $\frac{\sum\limits_{3\leq p\leq c, \ p\mid c \text{ in } \mathbb{Z}_{p}} X_{c}^{(n)}(p)}{\sum\limits_{3\leq p\leq c, \ p\mid c \text{ in } \mathbb{Z}_{p}} 1} = \frac{\sigma_{1,p}(c)}{\omega(c)}$. But now applying a similar argument as in [\cite{BK222}, Proof of Corollary 5.1], we then conclude the average value Avg $X_{c = pt}^{(n)}(p) = \infty$. This then completes the whole proof, as needed. 
\end{proof} 
\begin{rem}\label{re6.2}
From arithmetic statistics to arithmetic dynamics, we note that Corollary \ref{7.1} shows that any $\varphi_{p^{\ell},c}$ iterated on $\mathbb{Z}_{p} / p\mathbb{Z}_{p}$ has on average zero or unbounded number of distinct $n$-periodic $p$-adic integral orbits as $c\to \infty$; a somewhat interesting averaging phenomenon coinciding with the phenomenon in [\cite{BK3}, Remark 7.4 (when $\ell \in \{1, p\}$)] on the average number of distinct fixed $p$-adic integral orbits of any $\varphi_{p^{\ell},c}$ iterated on $\mathbb{Z}_{p} / p \mathbb{Z}_{p}$.
\end{rem}

Similarly, we also restrict on $\mathbb{Z}\subset \mathbb{Z}_{p}$ and then determine: \say{\textit{What is the average value of $Y_{c}^{(n)}(p)$ as $c \to \infty$?}} The following corollary shows that the average value of the function $Y_{c}^{(n)}(p)$ is $1$ or $2$ or 0 as $c\to \infty$:

\begin{cor}\label{cor7.3}
Let $p\geq 5$ be any prime, and $n\in \mathbb{Z}_{\geq 2}$ be any fixed (period). Then the average value of $n$-periodic point-counting function $Y_{c}^{(n)}(p)$ exists and is equal to $1$ or $2$ or $0$ as $c\to\infty$. More precisely, we have 
\begin{myitemize}
    \item[\textnormal{(a)}] \textnormal{Avg} $Y_{c\pm1 = pt}^{(n)}(p) := \lim\limits_{c\to\infty} \Large{\frac{\sum\limits_{5\leq p\leq c, \ p\mid (c\pm1) \textnormal{ in } \mathbb{Z}_{p}}Y_{c}^{(n)}(p)}{\Large{\sum\limits_{5\leq p\leq c, \ p\mid (c\pm1) \textnormal{ in } \mathbb{Z}_{p}}1}}} = 1.$ 

    \item[\textnormal{(b)}] \textnormal{Avg} $Y_{c= pt}^{(n)}(p) := \lim\limits_{c\to\infty} \Large{\frac{\sum\limits_{5\leq p\leq c, \ p\mid c \textnormal{ in } \mathbb{Z}_{p}}Y_{c}^{(n)}(p)}{\Large{\sum\limits_{5\leq p\leq c, \ p\mid c \textnormal{ in } \mathbb{Z}_{p}}1}}} = 2.$
    
     \item[\textnormal{(c)}] \textnormal{Avg} $Y_{c\not \equiv\pm1, 0 \ (\textnormal{mod }p)}^{(n)}(p):= \lim\limits_{c \to\infty} \Large{\frac{\sum\limits_{5\leq p\leq c, \ c\not \equiv\pm1, 0 \ (\textnormal{mod } p)}Y_{c}^{(n)}(p)}{\Large{\sum\limits_{5\leq p\leq c, \ c\not \equiv\pm1, 0 \ (\textnormal{mod } p)}1}}} =  0$.
\end{myitemize}

\end{cor}
\begin{proof}
By applying a similar argument as in the Proof of Corollary \ref{7.1}, we then obtain the limits, as desired.
\end{proof} 
\begin{rem} \label{re6.4}
As before, we also note that from arithmetic statistics to arithmetic dynamics, Corollary \ref{cor7.3} shows that any $\varphi_{(p-1)^{\ell},c}$ iterated on $\mathbb{Z}_{p} \slash p\mathbb{Z}_{p}$ has on average one or two or no $n$-periodic orbits as $c\to \infty$; a somewhat interesting averaging phenomenon coinciding precisely with an averaging phenomenon in [\cite{BK3}, Remark 7.6] on the average number of distinct fixed $p$-adic integral orbits of any $\varphi_{(p-1)^{\ell},c}$ iterated on $\mathbb{Z}_{p} \slash p \mathbb{Z}_{p}$.
\end{rem}

\section{On Average Number of $n$-Periodic $\mathbb{F}_{p}[t]\slash (\pi)$-Points of any Family of $\varphi_{p^{\ell},c}$ and $\varphi_{(p-1)^{\ell},c}$}\label{sec7}

As in Section \ref{sec6}, we also wish to inspect the asymptotic behavior of the function $N_{c(t)}^{(n)}(\pi, p)$ as \text{deg}$(c)\to \infty$. More precisely, we wish to determine: \say{\textit{What is the average value of the function $N_{c(t)}^{(n)}(\pi, p)$ as \text{deg}(c)$\to \infty$?}} The following corollary shows that the average value of $N_{c(t)}^{(n)}(\pi, p)$ is either zero or unbounded as \text{deg}$(c)\to \infty$:
\begin{cor}\label{co6}
Let $p\geq 3$ be any prime, and deg(c) $=\kappa\geq 3$ be any integer. Then the average value of $n$-periodic point-counting function $N_{c(t)}^{(n)}(\pi, p)$ is equal to zero or unbounded as $\kappa\to\infty$. Specifically, we have
\begin{myitemize}
    \item[\textnormal{(a)}] \textnormal{Avg} $N_{c(t)\neq \pi t}^{(n)}(p):= \lim\limits_{\kappa \to\infty} \Large{\frac{\sum\limits_{3\leq p\leq \kappa, \ \pi\nmid c \textnormal{ in } \mathbb{F}_{p}[t]}N_{c(t)}^{(n)}(\pi, p)}{\Large{\sum\limits_{3\leq p\leq \kappa, \ \pi\nmid c \textnormal{ in } \mathbb{F}_{p}[t]}1}}} =  0$. 
    
    \item[\textnormal{(b)}] \textnormal{Avg} $N_{c(t)= \pi t}^{(n)}(p):= \lim\limits_{\kappa \to\infty} \Large{\frac{\sum\limits_{3\leq p\leq \kappa, \ \pi \mid c \textnormal{ in } \mathbb{F}_{p}[t]}N_{c(t)}^{(n)}(\pi, p)}{\Large{\sum\limits_{3\leq p\leq \kappa, \ \pi\mid c \textnormal{ in } \mathbb{F}_{p}[t]}1}}} =  \infty$.
\end{myitemize}

\end{cor}

\begin{proof}
Applying a similar argument as in [\cite{BK3}, Proof of Corollary 7.1], we then obtain the limits as desired. That is, since from Theorem \ref{4.3} we know $N_{c(t)}^{(n)}(\pi, p) = 0$ for any prime $\pi$ such that $\pi \nmid c$ in $\mathbb{F}_{p}[t]$, we then obtain $\lim\limits_{\kappa\to\infty} \Large{\frac{\sum\limits_{3\leq p \leq \kappa, \ \pi \nmid c\text{ in }\mathbb{F}_{p}[t]}N_{c(t)}^{(n)}(\pi, p)}{\Large{\sum\limits_{3\leq p \leq \kappa, \ \pi\nmid c\text{ in }\mathbb{F}_{p}[t]}1}}} = 0$; and so we then conclude Avg $N_{c(t) \neq \pi t}^{(n)}(\pi, p) = 0$. To see \textnormal{(b)}, we recall from Theorem \ref{4.3} that $N_{c(t)}^{(n)}(\pi, p) = p$ for any prime $\pi$ such that $\pi \mid c$ in $\mathbb{F}_{p}[t]$. But now observe $\sum\limits_{3\leq p \leq \kappa, \ \pi \mid c \text{ in } \mathbb{F}_{p}[t]}N_{c(t)}^{(n)}(\pi, p) = \sum\limits_{3\leq p \leq \kappa, \ \pi \mid c \text{ in } \mathbb{F}_{p}[t]}p = \sigma_{1,\pi}(c)$ and  $\sum\limits_{3\leq p \leq \kappa, \ \pi\mid c \text{ in } \mathbb{F}_{p}[t], \ell \in \{1, p\}}1  =: \omega_{\pi}(c)$, where recalling from function field number theory [\cite{Rose}, Page 15] that the divisor function $\sigma_{1}(f)$ is defined as $\sigma_{1}(f)=\sum\limits_{g\mid f}|g|$ where $|g|=\# \mathbb{F}_{p}[t]\slash (g)$ for any monic polynomials $g, f\in \mathbb{F}_{p}[t]$ and setting deg $\pi = 1$ (and so the size $|\pi|=\# \mathbb{F}_{p}[t]\slash (\pi) = p$) we then have $\sigma_{1, \pi}(c):=\sigma_{1}(c) =\sum\limits_{3\leq p \leq \kappa, \ \pi \mid c \text{ in } \mathbb{F}_{p}[t]}|\pi| = \sum\limits_{3\leq p \leq \kappa, \ \pi \mid c \text{ in } \mathbb{F}_{p}[t]}p$. So now, since we are varying deg$(c) = \kappa$ (and hence varying $c=c(t)$) and so defining $\sigma_{1, \pi}(\kappa):=\sigma_{1, \pi}(c)$ and also $\omega(\kappa):= \omega_{\pi}(c)$, we then obtain  $\lim\limits_{\kappa\to\infty}\frac{\sum\limits_{3\leq p \leq \kappa, \ \pi \mid c \text{ in } \mathbb{F}_{p}[t]}N_{c(t)}^{(n)}(\pi, p)}{\sum\limits_{3\leq p \leq \kappa, \ \pi\mid c \text{ in } \mathbb{F}_{p}[t]}1} = \lim\limits_{\kappa\to\infty}\frac{\sigma_{1,\pi}(c)}{\omega_{\pi}(c)} = \lim\limits_{\kappa\to\infty}\frac{\sigma_{1,\pi}(\kappa)}{\omega_{\pi}(\kappa)}$. Now since the partial sum $\sum\limits_{3\leq p \leq \kappa, \ \pi \mid c \text{ in } \mathbb{F}_{p}[t]}p=\sigma_{1, \pi}(\kappa)$ and $\sum\limits_{3\leq p \leq c, \ p \mid c \text{ in } \mathbb{Z}}p=\sigma_{1, p}(c)$ are summed over the same  divisibility condition and moreover have the same summand, we then obtain $\sigma_{1, \pi}(c)=\sigma_{1, p}(c)$ and $\omega_{\pi}(c)=\omega(c)$ for each $c$; and from which it then follows $\frac{\sigma_{1, \pi}(c)}{\omega_{\pi}(c)}= \frac{\sigma_{1,p}(c)}{\omega(c)}$ for each $c$. But then recall from the Proof of Corollary \ref{7.1} that the quotient $\frac{\sigma_{1,p}(c)}{\omega(c)}\to \infty$ as a positive integer $c\to \infty$; and hence $\frac{\sigma_{1,\pi}(c)}{\omega_{\pi}(c)}\to \infty$ as deg$(c)=\kappa\to \infty$, as desired.
\end{proof}

\begin{rem} 
Again, we also note that from arithmetic statistics to arithmetic dynamics, Corollary \ref{co6} shows any $\varphi_{p^{\ell},c}$ iterated on the space $\mathbb{F}_{p}[t] / (\pi)$ has on average zero or unbounded number of distinct $n$-periodic orbits as \text{deg}$(c)\to \infty$; a somewhat interesting averaging phenomenon coinciding with the phenomenon about in [\cite{BK3}, Rem ark 8.2(when $\ell \in \{1, p\}$)]  on the average number of distinct fixed orbits of any $\varphi_{p^{\ell},c}$ iterated on $\mathbb{F}_{p}[t] / (\pi)$.
\end{rem}

Similarly, we also wish to determine: \say{\textit{What is the average value of $M_{c(t)}^{(n)}(\pi, p)$ as \text{deg}(c)$\to \infty$?}} The following corollary shows that the average value of $M_{c(t)}^{(n)}(\pi, p)$ exists and is equal to $1$ or $2$ or zero as \text{deg}$(c)\to \infty$:
\begin{cor}\label{cor6.1}
Let $p\geq 5$ be any prime, and deg(c) $=\kappa\geq 5$ be any integer. Then the average value of $n$-periodic point-counting function $M_{c(t)}^{(n)}(\pi, p)$ is equal to $1$ or $2$ or $0$ as $\kappa\to\infty$. More precisely, we have 
\begin{myitemize}
    \item[\textnormal{(a)}] \textnormal{Avg} $M_{c(t)\pm1 = \pi t}^{(n)}(\pi, p) := \lim\limits_{\kappa\to\infty} \Large{\frac{\sum\limits_{5\leq p\leq \kappa, \ \pi\mid (c(t)\pm1)\textnormal{ in }\mathbb{F}_{p}[t]}M_{c(t)}^{(n)}(\pi, p)}{\Large{\sum\limits_{5\leq p\leq \kappa, \ \pi\mid (c(t)\pm1) \textnormal{ in }\mathbb{F}_{p}[t]}1}}} = 1.$ 

    \item[\textnormal{(b)}] \textnormal{Avg} $M_{c(t)= \pi t}^{(n)}(\pi, p) := \lim\limits_{\kappa\to\infty} \Large{\frac{\sum\limits_{5\leq p\leq \kappa, \ \pi\mid c(t) \textnormal{ in }\mathbb{F}_{p}[t]}M_{c(t)}^{(n)}(\pi, p)}{\Large{\sum\limits_{5\leq p\leq \kappa, \ \pi\mid c(t)\textnormal{ in }\mathbb{F}_{p}[t]}1}}} = 2.$
    
    \item[\textnormal{(c)}] \textnormal{Avg} $M_{c\not \equiv\pm1, 0 \ (\textnormal{mod }\pi)}^{(n)}(\pi, p):= \lim\limits_{\kappa \to\infty} \Large{\frac{\sum\limits_{5\leq p\leq \kappa, \ c\not \equiv\pm1, 0 \ (\textnormal{mod }\pi)}M_{c(t)}^{(n)}(\pi, p)}{\Large{\sum\limits_{5\leq p\leq \kappa, \ c\not \equiv\pm1, 0 \ (\textnormal{mod }\pi)}1}}} =  0$.    
\end{myitemize}

\end{cor}
\begin{proof}
Applying a similar argument as in the Proof of Corollary \ref{co6}, we then obtain the limits, as desired.
\end{proof} 
\begin{rem} \label{re8.4}
As before, we also note that from arithmetic statistics to arithmetic dynamics, Corollary \ref{cor6.1} shows that any $\varphi_{(p-1)^{\ell},c}$ iterated on the space $\mathbb{F}_{p}[t] / (\pi)$ has on average one or two or no $n$-periodic orbits as \text{deg}$(c)\to \infty$; a somewhat interesting averaging phenomenon coinciding precisely with an averaging phenomenon remarked in [\cite{BK3}, Remark 8.4] on the average number of distinct fixed orbits of any $\varphi_{p^{\ell},c}$ iterated on $\mathbb{F}_{p}[t] / (\pi)$.
\end{rem}

\section{On Density of $\varphi_{p^{\ell},c}(x)\in \mathbb{Z}[x]$ with $X_{c}^{(n)}(p) = p$ \& $\varphi_{(p-1)^{\ell},c}(x)\in \mathbb{Z}[x]$ with $Y_{c}^{(n)}(p) = 1$ or $2$}\label{sec8}
As in [\cite{BK3}, Section 9] we in this and the next section, wish to restrict our counting on a subring $\mathbb{Z}\subset \mathbb{Z}_{p}$ and then determine: \say{\textit{For any prime $p\geq 3$ and for any fixed $\ell \in \mathbb{Z}_{\geq 1}$ and period $n\in \mathbb{Z}_{\geq 2}$, what is the density of monic integer polynomials $\varphi_{p^{\ell},c}(x) = x^{p^{\ell}} + c\in \mathbb{Z}_{p}[x]$ with exactly $p$ distinct $n$-periodic integral points modulo $p$?}} The following corollary shows that for any fixed $\ell \in \mathbb{Z}_{\geq 1}$ and period $n\in \mathbb{Z}_{\geq 2}$, there are very few monic $p$-adic integer polynomials $\varphi_{p^{\ell},c}(x)\in \mathbb{Z}[x]\subset \mathbb{Z}_{p}[x]$ with exactly $p$ distinct $n$-periodic integral points modulo $p$:
\begin{cor}\label{8.1}
Let $p\geq 3$ be any prime, and $\ell \geq 1$ be any fixed integer. Then the density of integer polynomials $\varphi_{p^{\ell},c}(x) = x^{p^{\ell}} + c\in \mathbb{Z}_{p}[x]$ with $X_{c}^{(n)}(p) = p$ exists and is equal to $0 \%$ as $c\to \infty$. More precisely, we have 
\begin{center}
    $\lim\limits_{c\to\infty} \Large{\frac{\# \{\varphi_{p^{\ell},c}(x)\in \mathbb{Z}[x] \ : \ 3\leq p\leq c \ \textnormal{and} \ X_{c}^{(n)}(p) \ = \ p\}}{\Large{\# \{\varphi_{p^{\ell},c}(x) \in \mathbb{Z}[x] \ : \ 3\leq p\leq c \}}}} = \ 0.$
\end{center}
\end{cor}

\begin{proof}
Since the defining condition $X_{c}^{(n)}(p) = p$ is as we proved in Theorem \ref{2.3} determined whenever $c$ is divisible by $p$, we may then count the number $\# \{\varphi_{p^{\ell},c}(x) \in \mathbb{Z}[x] : 3\leq p\leq c \ \text{and} \ X_{c}^{(n)}(p) \ = \ p\}$ by counting the number $\# \{\varphi_{p^{\ell},c}(x)\in \mathbb{Z}[x] : 3\leq p\leq c \ \text{and} \ p\mid c \ \text{for \ any \ fixed} \ c \}$. In that case, we then write the quotient
\begin{center}
$\Large{\frac{\# \{\varphi_{p^{\ell},c}(x) \in \mathbb{Z}[x] \ : \ 3\leq p\leq c \ \text{and} \ X_{c}^{(n)}(p) \ = \ p\}}{\Large{\# \{\varphi_{p^{\ell},c}(x) \in \mathbb{Z}[x] \ : \ 3\leq p\leq c \}}}} = \Large{\frac{\# \{\varphi_{p^{\ell},c}(x)\in \mathbb{Z}[x] \ : \ 3\leq p\leq c \ \text{and} \ p\mid c \ \text{for any fixed} \ c \}}{\Large{\# \{\varphi_{p^{\ell},c}(x) \in \mathbb{Z}[x] \ : \ 3\leq p\leq c \}}}}$. 
\end{center}\indent Moreover, for any fixed integer $c\in \mathbb{Z}_{\geq 3}$, we may rewrite the numerator of the foregoing quotient to then obtain
\begin{center}
$\# \{\varphi_{p^{\ell},c}(x) \in \mathbb{Z}[x] : 3\leq p\leq c \ \text{and} \ p\mid c \} = \# \{p : 3\leq p\leq c \text{ and } p\mid c \} = \sum_{3\leq p\leq c, \ p\mid c}1 = \omega (c)$, 
\end{center}where $\omega(\ell)$ is by definition the number of distinct prime factors of $\ell$. Writing $\# \{\varphi_{p^{\ell},c}(x) \in \mathbb{Z}[x]  : 3\leq p\leq c \} = \sum_{3\leq p\leq c} 1 = \pi(c)$, where $\pi(\ell)$ is by definition the number of primes at most $\ell$, we then note that the quotient 
\begin{center}
$\Large{\frac{\# \{\varphi_{p^{\ell},c}(x)\in \mathbb{Z}[x] \ : \ 3\leq p\leq c \ \text{and} \ p\mid c \ \text{for any fixed} \ c \}}{\Large{\# \{\varphi_{p^{\ell},c}(x)\in \mathbb{Z}[x] \ : \ 3\leq p\leq c \}}}} = \frac{\omega(c)}{\pi(c)}$.
\end{center}But now applying a very similar argument as in [\cite{BK222}, Proof of Corollary 5.1], we then immediately obtain that the limit exits and moreover the limit is equal to $0$ as desired. This then completes the whole proof, required. 
\end{proof}

\noindent Note that one may interpret Corollary \ref{8.1} as saying that for any fixed $\ell \in \mathbb{Z}_{\geq 1}$ and fixed period $n\in \mathbb{Z}_{\geq 2}$, the probability of choosing randomly a monic $p$-adic integer polynomial $\varphi_{p^{\ell},c}(x) \in \mathbb{Z}[x]\subset \mathbb{Z}_{p}[x]$ having exactly $p$ distinct $n$-periodic $p$-adic integral points modulo $p$ is equal to zero; a somewhat interesting probabilistic phenomenon coinciding with a phenomenon remarked in [\cite{BK3}, Section 9] on the probability of choosing randomly a monic $p$-adic integer polynomial $\varphi_{p^{\ell},c}(x)\in \mathbb{Z}[x]$ with exactly $p$ distinct fixed $p$-adic integral points modulo $p$.

Similarly, we also wish to determine: \say{\textit{For any prime $p\geq 5$ and for any fixed $\ell \in \mathbb{Z}_{\geq 1}$ and period $n\in \mathbb{Z}_{\geq 2}$, what is the density of integer polynomials $\varphi_{(p-1)^{\ell},c}(x) = x^{(p-1)^{\ell}} + c\in \mathbb{Z}_{p}[x]$ with two distinct $n$-periodic integral points modulo $p$?}} The following corollary shows that for any fixed $\ell \in \mathbb{Z}_{\geq 1}$ and period $n\in \mathbb{Z}_{\geq 2}$, there are also very few $p$-adic integer polynomials $\varphi_{(p-1)^{\ell},c}(x)\in \mathbb{Z}[x]$ having two distinct $n$-periodic integral points modulo $p$:

\begin{cor}\label{8.2}
Let $p\geq 5$ be any prime, and $\ell \geq 1$ be any fixed integer. The density of integer polynomials $\varphi_{(p-1)^{\ell},c}(x) = x^{(p-1)^{\ell}} + c\in \mathbb{Z}_{p}[x]$ with $Y_{c}^{(n)}(p) = 2$ exists and is equal to $0 \%$ as $c\to \infty$. Specifically, we have 
\begin{center}
    $\lim\limits_{c\to\infty} \Large{\frac{\# \{\varphi_{(p-1)^{\ell},c}(x) \in \mathbb{Z}[x]\ : \ 5\leq p\leq c \ \textnormal{and} \ Y_{c}^{(n)}(p) \ = \ 2\}}{\Large{\# \{\varphi_{(p-1)^{\ell},c}(x) \in \mathbb{Z}[x]\ : \ 5\leq p\leq c \}}}} = \ 0.$
\end{center}
\end{cor}
\begin{proof}
Since the defining condition $Y_{c}^{(n)}(p) = 2$ is as we proved in Theorem \ref{3.3} determined whenever $c$ is divisible by $p$, we may then count the number $\# \{\varphi_{(p-1)^{\ell},c}(x) \in \mathbb{Z}[x] : 5\leq p\leq c \ \text{and} \ Y_{c}^{(n)}(p) \ = \ 2\}$ by simply counting the number $\# \{\varphi_{(p-1)^{\ell},c}(x)\in \mathbb{Z}[x] : 5\leq p\leq c \ \text{and} \ p\mid c \ \text{for \ any \ fixed} \ c \}$. But now applying a similar argument as in the Proof of Corollary \ref{8.1}, we then obtain that the limit exits and is equal to $0$, as desired.
\end{proof}

\noindent As before, we may also interpret Corollary \ref{8.2} as saying that for any fixed $\ell \in \mathbb{Z}_{\geq 1}$ and fixed period $n\in \mathbb{Z}_{\geq 2}$, the probability of choosing randomly a monic $p$-adic integer polynomial $\varphi_{(p-1)^{\ell},c}(x) \in \mathbb{Z}[x]\subset \mathbb{Z}_{p}[x]$ having two distinct $n$-periodic $p$-adic integral points modulo $p$ is zero; a somewhat interesting probabilistic phenomenon coinciding with a phenomenon remarked in [\cite{BK3}, Section 10] on the probability of choosing randomly a monic $p$-adic integer polynomial $\varphi_{(p-1)^{\ell},c}(x)\in \mathbb{Z}[x]$ having exactly two distinct fixed $p$-adic integral points modulo $p$.

The following corollary shows that for any fixed $\ell \in \mathbb{Z}_{\geq 1}$ and period $n\in \mathbb{Z}_{\geq 2}$, the probability of choosing randomly a monic $p$-adic integer polynomial $\varphi_{(p-1)^{\ell},c}(x)\in \mathbb{Z}[x]$ with one $n$-periodic point modulo $p$ is also zero.

\begin{cor}\label{8.3}
Let $p\geq 5$ be any prime, and $\ell \geq 1$ be any fixed integer. The density of integer polynomials $\varphi_{(p-1)^{\ell},c}(x) = x^{(p-1)^{\ell}} + c\in \mathbb{Z}_{p}[x]$ with $Y_{c}^{(n)}(p) = 1$ exists and is equal to $0 \%$ as $c\to \infty$. That is, we have 
\begin{center}
    $\lim\limits_{c\to\infty} \Large{\frac{\# \{\varphi_{(p-1)^{\ell},c}(x) \in \mathbb{Z}[x]\ : \ 5\leq p\leq c \ \textnormal{and} \ Y_{c}^{(n)}(p) \ = \ 1\}}{\Large{\# \{\varphi_{(p-1)^{\ell},c}(x) \in \mathbb{Z}[x]\ : \ 5\leq p\leq c \}}}} = \ 0.$
\end{center}
\end{cor}
\begin{proof}
As before, since the defining condition $Y_{c}^{(n)}(p) = 1$ is as we proved in Theorem \ref{3.3} determined whenever $c$ is such that $c\pm1$ is divisible by $p$, hence, we may count $\# \{\varphi_{(p-1)^{\ell},c}(x) \in \mathbb{Z}[x] : 5\leq p\leq c \ \text{and} \ Y_{c}^{(n)}(p) \ = \ 1\}$ by counting $\# \{\varphi_{(p-1)^{\ell},c}(x)\in \mathbb{Z}[x] : 5\leq p\leq c \ \text{and} \ p\mid (c\pm1) \ \text{for \ any \ fixed} \ c \}$. But then applying a similar argument as in [\cite{BK111}, Proof of Corollary 6.2], we then obtain that the limit exists and is equal to $0$, as desired.
\end{proof}

\section{On the Density of $\varphi_{p^{\ell},c}(x)\in \mathbb{Z}[x]$ with $X_{c}^{(n)}(p) = 0$ \& $\varphi_{(p-1)^{\ell},c}(x)\in \mathbb{Z}[x]$ with $Y_{c}^{(n)}(p) = 0$}\label{sec9}

\noindent Recall in Corollary \ref{8.1} that a density of $0\%$ of monic $p$-adic integer polynomials $\varphi_{p^{\ell},c}(x)\in\mathbb{Z}[x]\subset \mathbb{Z}_{p}[x]$ have $X_{c}^{(n)}(p) = p$; and so the density of monic integer polynomials $\varphi_{p^{\ell},c}^{n}(x)-x\in \mathbb{Z}_{p}[x]$ that are reducible modulo $p$ is $0\%$. So now, we also wish to determine: \say{\textit{For any prime $p\geq 3$ and for any fixed $\ell \in \mathbb{Z}_{\geq 1}$ and $n\in \mathbb{Z}_{\geq 2}$, what is the density of integer polynomials $\varphi_{p^{\ell},c}(x)\in \mathbb{Z}_{p}[x]$ with no $n$-periodic integral points modulo $p$?}} The following corollary shows that for any fixed $\ell \in \mathbb{Z}_{\geq 1}$ and $n\in \mathbb{Z}_{\geq 2}$, the probability of choosing randomly a monic $p$-adic integer polynomial $\varphi_{p^{\ell},c}(x) \in \mathbb{Z}[x]\subset \mathbb{Z}_{p}[x]$ so that $\mathbb{Q}[x]\slash (\varphi_{p^{\ell}, c}^n(x)-x)$ is a number field of odd degree $p^{n\ell}$ is $1$: 
\begin{cor}\label{9.1}
Let $p\geq 3$ be any prime, and $\ell \geq 1$ be any fixed integer. The density of integer polynomials $\varphi_{p^{\ell},c}(x) = x^{p^{\ell}} + c\in \mathbb{Z}_{p}[x]$ with $X_{c}^{(n)}(p) = 0$ exists and is equal to $100 \%$ as $c\to \infty$. More precisely, we have 
\begin{center}
    $\lim\limits_{c\to\infty} \Large{\frac{\# \{\varphi_{p^{\ell},c}(x)\in \mathbb{Z}[x] \ : \ 3\leq p\leq c \ \textnormal{and} \ X_{c}^{(n)}(p) \ = \ 0 \}}{\Large{\# \{\varphi_{p^{\ell},c}(x) \in \mathbb{Z}[x] \ : \ 3\leq p\leq c \}}}} = \ 1.$
\end{center}
\end{cor}
\begin{proof}
Since the number $X_{c}^{(n)}(p) = p$  or $0$ for any prime $p\geq 3$ and any $\ell \in \mathbb{Z}_{\geq 1}$, and since we also proved the density in Cor. \ref{8.1}, we then obtain the desired density (i.e., we get that the limit exists and is equal to 1). 
\end{proof}

\noindent Note that the foregoing corollary also shows that for any fixed $\ell \in \mathbb{Z}_{\geq 1}$ and $n\in \mathbb{Z}_{\geq 2}$, there are infinitely many polynomials $\varphi_{p^{\ell},c}(x)\in \mathbb{Z}[x]\subset \mathbb{Q}[x]$ such that for $f(x) = \varphi_{p^{\ell},c}^n(x)-x$, the induced quotient $K_{f} = \mathbb{Q}[x]\slash (f(x))$ is a number field of degree $\kappa=p^{n\ell}$. Comparing the densities in Cor. \ref{8.1} and \ref{9.1}, one may then observe that in the whole family of polynomials $\varphi_{p^{\ell},c}(x)\in \mathbb{Z}[x]$, almost all such monics have no integral $n$-periodic points modulo $p$ (i.e., have no rational roots); and so almost all such monic polynomials $f$ are irreducible over $\mathbb{Q}$. This may also imply that the average value of $X_{c}^{(n)}(p)$ in the whole family of polynomials $\varphi_{p^{\ell},c}(x)\in \mathbb{Z}[x]$ is zero.

Similarly, we may also recall in Corollary \ref{8.2} or \ref{8.3} that a density of $0\%$ of monic $p$-adic integer polynomials $\varphi_{(p-1)^{\ell},c}(x)\in \mathbb{Z}[x]\subset \mathbb{Z}_{p}[x]$ have $Y_{c}^{(n)}(p) = 2$ or $1$, resp.; and so the density of monic integer polynomials $\varphi_{(p-1)^{\ell}, c}^{n}(x)-x\in \mathbb{Z}[x]$ that are reducible modulo $p$ is $0\%$. So now, as before we also wish to determine: \say{\textit{For any prime $p\geq 5$ and for any fixed $\ell \in \mathbb{Z}_{\geq 1}$ and $n\in \mathbb{Z}_{\geq 2}$, what is the density of monic integer polynomials $\varphi_{(p-1)^{\ell},c}(x)\in \mathbb{Z}_{p}[x]$ with no $n$-periodic integral points modulo $p$?}} To that end, we then also have the following corollary showing that for any fixed $\ell \in \mathbb{Z}_{\geq 1}$ and $n\in \mathbb{Z}_{\geq 2}$, the probability of choosing randomly $p$-adic integer polynomial $\varphi_{(p-1)^{\ell},c}(x)\in \mathbb{Z}[x]$ such that $\mathbb{Q}[x]\slash (\varphi_{(p-1)^{\ell}, c}^n(x)-x)$ is a number field of  degree $(p-1)^{n\ell}$ is also 1:

\begin{cor} \label{10.2}
Let $p\geq 5$ be any prime, and $\ell \geq 1$ be any fixed integer. The density of integer polynomials $\varphi_{(p-1)^{\ell}, c}(x) = x^{(p-1)^{\ell}}+c\in \mathbb{Z}_{p}[x]$ with $Y_{c}^{(n)}(p) = 0$ exists and is equal to $100 \%$ as $c\to \infty$. Specifically, we have 
\begin{center}
    $\lim\limits_{c\to\infty} \Large{\frac{\# \{\varphi_{(p-1)^{\ell}, c}(x)\in \mathbb{Z}[x] \ : \ 5\leq p\leq c \ \textnormal{and} \ Y_{c}^{(n)}(p) \ = \ 0 \}}{\Large{\# \{\varphi_{(p-1)^{\ell},c}(x) \in \mathbb{Z}[x] \ : \ 5\leq p\leq c \}}}} = \ 1.$
\end{center}
\end{cor}
\begin{proof}
Recall that $Y_{c}^{(n)}(p) = 1, 2$ or $0$ for any given prime $p\geq 5$ and since we also proved the densities in Corollary \ref{8.2} and \ref{8.3}, we now obtain the desired density (i.e., we get that the limit exists and is equal to 1).
\end{proof}

\noindent As before, Corollary \ref{10.2} also shows that for any fixed $\ell \in \mathbb{Z}_{\geq 1}$ and fixed $n\in \mathbb{Z}_{\geq 2}$, there are infinitely many monic polynomials $\varphi_{(p-1)^{\ell},c}(x)\in \mathbb{Z}[x]\subset \mathbb{Q}[x]$ such that for $g(x) = \varphi_{(p-1)^{\ell},c}^n(x)-x$, the induced quotient $L_{g} = \mathbb{Q}[x]\slash (g(x))$ is a degree-$r=(p-1)^{n\ell}$ number field. Again, comparing densities in Cor. \ref{8.2}, \ref{8.3} and \ref{10.2}, it also follows that in the whole family of monics $\varphi_{(p-1)^{\ell},c}(x)\in \mathbb{Z}[x]$, almost all such monics have no $n$-periodic integral points modulo $p$ (i.e., have no $\mathbb{Q}$-roots); and so almost all monic polynomials $g$ are irreducible over $\mathbb{Q}$. But this may also mean that the average value of $Y_{c}^{(n)}(p)$ in the whole family of $\varphi_{(p-1)^{\ell},c}(x)\in \mathbb{Z}[x]$ is also zero.

Recall more generally that any number field $K$ is always naturally equipped with a ring $\mathcal{O}_{K}$ of integers in $K$; and which is classically known to describe the arithmetic of $K$, however, usually difficult to compute in practice. So now, every field $K_{f}=\mathbb{Q}[x]\slash (f(x))$ has a ring of integers $\mathcal{O}_{K_{f}}$ and moreover (as in \cite{BK222}) applying [\cite{sch1}, Theorem 1.2], we then also obtain the following corollary which shows that the probability of choosing randomly $p$-adic integer polynomial $f\in \mathbb{Z}[x]\subset \mathbb{Z}_{p}[x]$ arising from a polynomial discrete dynamical system in Section \ref{sec2} (and ascertained by Corollary \ref{9.1}), such that $\mathbb{Z}[x]\slash (f(x))$ is the ring of integers of $K_{f}$, is $\approx 60.7927\%$:
\begin{cor} \label{9.3}
Assume Corollary \ref{9.1}. When monic integer polynomials $f\in \mathbb{Z}[x]$ are ordered by height $H(f)$ as defined in \textnormal{\cite{sch1}}, the density of such polynomials $f$ such that $\mathbb{Z}[x]\slash (f(x))$ is the ring of integers of $K_{f}$ is $\zeta(2)^{-1}$. 
\end{cor}

\begin{proof}
Since from Corollary \ref{10.1} we know that there are infinitely many irreducible monic integer polynomials $f(x)=\varphi_{p^{\ell},c}^n(x)-x$ such that $K_{f} = \mathbb{Q}[x]\slash (f(x))$ is a number field of degree $\kappa = p^{n\ell}$; and moreover associated to $K_{f}$ is the ring of integers $\mathcal{O}_{K_{f}}$. This then also means that the family of irreducible monic polynomials $f\in \mathbb{Z}[x]$ such that $K_{f}$ is a number field of odd degree $\kappa$ is not empty. Now applying  [\cite{sch1}, Theorem 1.2] on the underlying family of monic polynomials $f\in \mathbb{Z}[x]$ ordered by height $H(f)$ as defined in \cite{sch1} such that $\mathcal{O}_{K_{f}} = \mathbb{Z}[x]\slash (f(x))$, it then follows that the density of such monic polynomials $f\in \mathbb{Z}[x]$ is equal to $\zeta(2)^{-1} \approx 60.7927\%$, as needed.
\end{proof}

Similarly, every number field $L_{g}=\mathbb{Q}[x]\slash (g(x))$ induced by $g$, is naturally equipped with the ring of integers $\mathcal{O}_{L_{g}}$, and which may also be difficult to compute in practice. So now, by again taking great advantage of [\cite{sch1}, Theorem 1.2], we then also obtain the following corollary which shows that the probability of choosing randomly $p$-adic integer polynomial $g\in \mathbb{Z}[x]\subset \mathbb{Z}_{p}[x]$ arising from a polynomial discrete dynamical system in Sect.\ref{sec3} (and ascertained by Corollary \ref{10.2}), such that $\mathbb{Z}[x]\slash (g(x))$ is the ring of integers of $L_{g}$ is also $\approx 60.7927\%$:

\begin{cor}
Assume Corollary \ref{10.2}. When monic integer polynomials $g\in \mathbb{Z}[x]$ are ordered by height $H(g)$ as defined in \textnormal{\cite{sch1}}, the density of such polynomials $g$ such that $\mathbb{Z}[x]\slash (g(x))$ is the ring of integers of $L_{g}$ is $\zeta(2)^{-1}$. 
\end{cor}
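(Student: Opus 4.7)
The plan is to imitate the proof of Corollary~\ref{10.3} verbatim, merely replacing the family $f(x) = (x^{p^{\ell}}+c)^{p^{\ell}} - x + c$ by its even-degree analogue $g(x) = (x^{(p-1)^{\ell}}+c)^{(p-1)^{\ell}} - x + c$. First I would invoke Corollary~\ref{10.2}, which asserts that a $100\%$ density of monic $p$-adic integer polynomials $\varphi_{(p-1)^{\ell},c}(x) \in \mathbb{Z}[x]$ satisfies $Y_{c}^{(2)}(p) = 0$. As already noted in the paragraph following Corollary~\ref{10.2}, this forces $g(x)$ to be irreducible over $\mathbb{Q}$ for almost all choices of the parameter $c$; consequently, the family of irreducible monic integer polynomials $g$ of degree $r = (p-1)^{2\ell}$ arising from the polynomial discrete dynamical system of Section~\ref{sec3} is non-empty and indeed infinite. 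Each such $g$ therefore induces a genuine algebraic number field $L_{g} = \mathbb{Q}[x]/(g(x))$ of degree $r$, which in turn carries its canonical ring of integers $\mathcal{O}_{L_{g}}$.

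Next I would order this infinite family of irreducible monic integer polynomials $g$ by the height function $H(g)$ defined in \cite{sch1}, and then apply the theorem of Bhargava-Shankar-Wang [\cite{sch1}, Theorem 1.2] directly to it. Their theorem yields that the density of $g$ for which the naive order $\mathbb{Z}[x]/(g(x))$ actually coincides with the full ring of integers $\mathcal{O}_{L_{g}}$ is precisely $\zeta(2)^{-1} \approx 60.7927\%$, which is exactly the density claimed in the statement of the corollary.

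I do not anticipate a substantive obstacle here, since the argument is essentially a transcription of the proof of Corollary~\ref{10.3} with the odd-degree family of degree $p^{2\ell}$ swapped for the even-degree family of degree $(p-1)^{2\ell}$. The only nuance worth flagging is that one must verify the family is sufficiently generic in order to legitimately apply the equidistribution-type statement of \cite{sch1}: specifically, the heights $H(g)$ must tend to infinity as $c$ varies over $\mathbb{Z}$, and the subfamily parametrized by $c$ must not be concentrated on a thin sub-locus of the space of all monic degree-$r$ polynomials. Both conditions are immediate from the explicit shape of $g(x) = (x^{(p-1)^{\ell}}+c)^{(p-1)^{\ell}} - x + c$, whose coefficients grow polynomially in $c$ and whose irreducibility modulo $p$ (guaranteed by Corollary~\ref{10.2}) already certifies membership in the relevant irreducible locus for almost all $c$.
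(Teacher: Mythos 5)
Your proposal is correct and mirrors the paper's own argument: the paper's proof of this corollary literally says to repeat the proof of Corollary~\ref{10.3} with $f$ replaced by $g$, and that is precisely what you do, invoking Corollary~\ref{10.2} to secure an infinite family of irreducible monics $g$, ordering by $H(g)$, and then citing [\cite{sch1}, Theorem~1.2] to read off the density $\zeta(2)^{-1}$. One caveat on your closing remark: the thinness concern you raise is genuine and not in fact ``immediate'' to dispel, since the $g$ arising from the dynamical system form a one-parameter subfamily of the full space of monic degree-$(p-1)^{2\ell}$ integer polynomials, to which Bhargava--Shankar--Wang's theorem does not automatically apply; the paper glosses over this point as well, so your proposal does not open a gap that the paper avoids, but the ``certifies membership in the relevant irreducible locus'' sentence overstates what is actually established.
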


\begin{proof}
By applying a similar argument as in Proof of Corollary \ref{9.3}, we then obtain the density, as needed.
\end{proof}

\section{On Local Densities of $f, g\in \mathbb{Z}_{p}[x]$ inducing Maximal orders in Corresponding Fields}

As in [\cite{BK3}, Section 11] we recall that an \say{\textit{order}} in an algebraic number field $K$ is any subring $R\subset K$ that is free of rank $n = [K :\mathbb{Q}]$ over $\mathbb{Z}$. It is a well-known fact that the ring of integers $\mathcal{O}_{K}$ in any number field $K$ is the union of all orders in $K$; and moreover $\mathcal{O}_{K}$ is not only an order in $K$ but is also the maximal order in $K$. But as  mentioned in Section \ref{sec9} that the ring of integers $\mathcal{O}_{K}$ (and so this maximal order in $K$) of any arbitrary number field $K$ is undoubtedly very difficult to compute in practice; and which consequently may then prompt one to work with orders that are possibly smaller and computationally accessible than the maximal order $\mathcal{O}_{K}$. 

So now, recall from Corollary \ref{9.1} the existence of infinitely many monic irreducible polynomials $f\in \mathbb{Z}[x]\subset \mathbb{Z}_{p}[x]$ such that the quotient $K_{p(f)} = \mathbb{Q}_{p}[x]\slash (f(x))$ is a degree-$\kappa=p^{n\ell}$ field extension of $\mathbb{Q}_{p}$ (i.e., $K_{p(f)}\slash \mathbb{Q}_{p}$ is an algebraic $p$-adic number field and so has ring of integers $\mathcal{O}_{K_{p(f)}}$). Meanwhile, recall also that the second part of Theorem \ref{2.3} (i.e., the part in which $X_{c}(p) = 0$ for every $c\not \in p\mathbb{Z}_{p}$) implies $f(x) = \varphi_{p^{\ell},c}^n(x)-x \in \mathbb{Z}_{p}[x]$ is irreducible modulo $p\mathbb{Z}_{p}$; and so to every such irreducible monic polynomial $f\in \mathbb{Z}_{p}[x]$ corresponds a field, say, $K_{p(f)}$. So now inspired (as in \cite{BK3}) by (\cite{gav,as, sch1}), we may also ask for the density of irreducible $p$-adic integer polynomials $f\in \mathbb{Z}_{p}[x]$ arising from a polynomial discrete dynamical system in Section \ref{sec2} (and/or ascertained by Corollary \ref{9.1}), such that $\mathbb{Z}_{p}[x]\slash (f(x))$ is the maximal order in $K_{p(f)}$. In doing so, and then applying (as in \cite{BK3}) $p$-adic density result due to Hendrik Lenstra \cite{as} on irreducible monic $p$-adic integer polynomials $f\in \mathbb{Z}_{p}[x]$, we then obtain here the following corollary showing the probability of choosing randomly $p$-adic integer polynomial $f\in \mathbb{Z}_{p}[x]$ such that $\mathbb{Z}_{p}[x]\slash (f(x))$ is the maximal order in $K_{p(f)}$; moreover this probability tends to $1$ as $p\to \infty$: 
\begin{cor}\label{10.6}
Assume Corollary \ref{9.1} or second part of Theorem \ref{2.3}. Then the density of monic $p$-adic integer polynomials $f$ over $\mathbb{Z}_{p}$ ordered by height $H(f)$ as defined in \textnormal{\cite{as}} such that $\mathbb{Z}_{p(f)} = \mathbb{Z}_{p}[x]\slash (f(x))$ is the maximal order in $K_{p(f)}$ exists and is equal to $\rho_{\textnormal{deg}(f)}(p):= 1 - p^{-2}$. Moreover, this density tends to $1$ as $p\to \infty$.   
\end{cor}
\begin{proof}
To see the density, we recall from Corollary \ref{9.1} the existence of infinitely many $p$-adic integer polynomials $f\in \mathbb{Z}[x]\subset \mathbb{Z}_{p}[x]\subset \mathbb{Q}_{p}[x]$ such that $K_{p(f)}\slash \mathbb{Q}_{p}$ is a number field of degree $\kappa=p^{n\ell}$, or recall that the second part of Theorem \ref{2.3} implies that the polynomial $f(x) = \varphi_{p^{\ell},c}^n(x)-x \in \mathbb{Z}_{p}[x]\subset \mathbb{Q}_{p}[x]$ is irreducible modulo any fixed $p\mathbb{Z}_{p}$ for every coefficient $c\not \in \mathbb{Z}_{p}$, and so induces a degree-$\kappa$ number field $K_{p(f)}\slash \mathbb{Q}_{p}$. This then means that the set of fields $K_{p(f)}$ is not empty. So now, as pointed out in the work of Bhargava-Shankar-Wang [\cite{sch1}, Page 2], we may then apply [\cite{as}, Prop. 3.5] on the family of irreducible polynomials $f\in \mathbb{Z}_{p}[x]$ resulting from Corollary \ref{9.1} or from the second part of Theorem \ref{2.3} when we've ordered them by height $H(f)$ as defined in \cite{as}, to then obtain the first part. Note that letting $p\to \infty$, we then obtain that $\rho_{\text{deg($f$)}}(p)\to 1$ as indeed needed. 
\end{proof}

Similarly, recall also that from Corollary \ref{10.2} that there are infinitely many monic irreducible polynomials $g\in \mathbb{Z}[x]\subset \mathbb{Z}_{p}[x]$ such that the quotient $L_{p(g)} = \mathbb{Q}_{p}[x]\slash (g(x))$ is a degree-$(p-1)^{n\ell}$ field extension of $\mathbb{Q}_{p}$ (and again $L_{p(g)}\slash \mathbb{Q}_{p}$ is an algebraic $p$-adic number field and so has ring of integers $\mathcal{O}_{L_{p(g)}}$)). Moreover, we may also recall that the second part of Theorem \ref{3.3} (i.e., the part in which $Y_{c}(p) = 0$ for every $c\not \equiv \pm1, 0\ (\text{mod} \ p\mathbb{Z}_{p})$) implies $g(x) = \varphi_{(p-1)^{\ell},c}^n(x)-x \in \mathbb{Z}_{p}[x]$ is irreducible modulo $p\mathbb{Z}_{p}$; and hence to every such irreducible monic polynomial $g\in \mathbb{Z}_{p}[x]$ also corresponds a field, say, $L_{p(g)}$. So now, as before we may also wish to determine the density of irreducible polynomials $g\in \mathbb{Z}_{p}[x]$ arising from a polynomial discrete dynamical system in Section \ref{sec3} (and/or ascertained by Corollary \ref{10.2}), such that $\mathbb{Z}_{p}[x]\slash (g(x))$ is the maximal order in $L_{p(g)}$. With that in mind, we note that by again taking great advantage of that same $p$-adic density result due to Lenstra \cite{as}, we then also obtain the following corollary showing the probability of choosing randomly a polynomial $g\in \mathbb{Z}_{p}[x]$ such that $\mathbb{Z}_{p}[x]\slash (g(x))$ is the maximal order in $L_{p(g)}$; and moreover this probability also tends to $1$ as $p\to \infty$: 

\begin{cor}
Assume Corollary \ref{10.2} or second part of Theorem \ref{3.3}. Then the density of monic $p$-adic integer polynomials $g$ over $\mathbb{Z}_{p}$ ordered by height $H(g)$ as defined in \textnormal{\cite{as}} such that $\mathbb{Z}_{p(g)} = \mathbb{Z}_{p}[x]\slash (g(x))$ is the maximal order in $L_{p(g)}$ exists, and equal to $\rho_{\textnormal{deg}(g)}(p):= 1 - p^{-2}$. Moreover, this density tends to $1$ as $p\to \infty$.   
\end{cor}
\begin{proof}
By applying a similar argument as in Proof of Corollary \ref{10.6}, we then obtain the density, as needed. 
\end{proof}

\section{On the Artin-Mazur Zeta Functions induced by Polynomial Maps $\overline{\varphi_{p^{\ell},c}}$ and $\overline{\varphi_{(p-1)^{\ell},c}}$}

As in [\cite{BK222}, Section 9] recall from Artin-Mazur [\cite{AM}, Page 84] that for any given topological space $X$ and for any given $f:X\to X$, we can attach to $f$ a zeta function $\zeta_{f}$ defined in the following way, that for any $s\in \mathbb{C}$, we set  
\begin{equation}\label{eqAM}
    \zeta_{f}(s)=\textnormal{exp}\biggl(\sum_{n=1}^{\infty} \frac{N_{n}(f)\cdot s^n}{n}\biggr)
\end{equation} where $N_{n}(f)$ is the number of isolated periodic points of $f$, of period $n$. So now, inspired (as in [\cite{BK222}, Sect.9]) by the definition and work of Artin-Mazur \cite{AM} on their zeta function $\zeta_{f}(s)$, we in this section also wish to define (in a similar way) and then determine the Artin-Mazur zeta functions arising from a polynomial discrete dynamical system in Section \ref{sec2} and \ref{sec3}. To that end, we note that since $\mathbb{Z}_{p}\slash p\mathbb{Z}_{p}$ is a finite set of $p$ elements and so $\mathbb{Z}_{p}\slash p\mathbb{Z}_{p}$ can be equipped with a topology, then denoting the reduced polynomial map $\varphi_{p^{\ell},c}$ modulo $p\mathbb{Z}_{p}$ by $\overline{\varphi_{p^{\ell},c}}$ and then also replacing  $N_{n}(f)$ with $X_{c}^{(n)}(p)$ in Equation (\ref{eqAM}), we then let $\zeta_{\overline{\varphi_{p^{\ell},c}}}$ be the Artin-Mazur zeta function induced by the map $\overline{\varphi_{p^{\ell},c}}: \mathbb{Z}_{p}\slash p\mathbb{Z}_{p} \to \mathbb{Z}_{p}\slash p\mathbb{Z}_{p}$. Inspired by \cite{AM} on their zeta function $\zeta_{f}$, we in our case have the following corollary showing that the Artin-Mazur zeta functions $\zeta_{\overline{\varphi_{p^{\ell},c}}}$ associated with infinitely many polynomial maps $\overline{\varphi_{p^{\ell},c}}$ are constant functions on the whole complex plane $\mathbb{C}$ and hence algebraic functions of $s$:  

\begin{cor}\label{cAM}
Assume Corollary \ref{9.1} or second part of Theorem \ref{2.3}, and denote the polynomial map $\varphi_{p^{\ell},c}$ modulo prime $p\mathbb{Z}_{p}$ by $\overline{\varphi_{p^{\ell},c}}$. Then the Artin-Mazur zeta function $\zeta_{\overline{\varphi_{p^{\ell},c}}}(s)=1$ for every complex number $s\in \mathbb{C}$.
\end{cor}

\begin{proof}
To see this, we note that from Corollary \ref{9.1} the existence of an infinite family of monic polynomials $\varphi_{p^{\ell},c}(x)\in (\mathbb{Z}\slash p\mathbb{Z})[x]$ having $X_{c}^{(n)}(p)=0$, for every fixed period $n\in \mathbb{Z}_{\geq 2}$; or note that from the second part of Theorem \ref{2.3} that the monic polynomial $\varphi_{p^{\ell},c}(x)\in (\mathbb{Z}_{p}\slash p\mathbb{Z}_{p})[x]$ has $X_{c}^{(n)}(p)=0$ for every coefficient $c\not \equiv 0\ (\text{mod} \ p\mathbb{Z}_{p})$ and fixed period $n\in \mathbb{Z}_{\geq 2}$. But now since $X_{c}^{(n)}(p)=0$ for every fixed $n\in \mathbb{Z}_{\geq 2}$, we then note  
\begin{equation}
    \zeta_{\overline{\varphi_{p^{\ell},c}}}(s)=\textnormal{exp}\biggl(\sum_{n=2}^{\infty} \frac{X_{c}^{(n)}(p)\cdot s^n}{n}\biggr)=\textnormal{exp}(0)=1,
\end{equation} and thus $\zeta_{\overline{\varphi_{p^{\ell},c}}}(s)=1$ for every complex number $s\in \mathbb{C}$. This then completes the whole proof, as desired.
\end{proof}

Similarly, denoting the reduced polynomial map $\varphi_{(p-1)^{\ell},c}$ modulo $p\mathbb{Z}_{p}$ by $\overline{\varphi_{(p-1)^{\ell},c}}$ and then also replacing $N_{n}(f)$ with $Y_{c}^{(n)}(p)$ in Equation (\ref{eqAM}), we then also let $\zeta_{\overline{\varphi_{(p-1)^{\ell},c}}}$ be the Artin-Mazur zeta function induced by the polynomial map $\overline{\varphi_{(p-1)^{\ell},c}}: \mathbb{Z}_{p}\slash p\mathbb{Z}_{p} \to \mathbb{Z}_{p}\slash p\mathbb{Z}_{p}$. So now, inspired again by \cite{AM}, we then also have the following corollary showing that the Artin-Mazur zeta functions $\zeta_{\overline{\varphi_{(p-1)^{\ell},c}}}$ associated with infinitely many polynomial maps $\overline{\varphi_{(p-1)^{\ell},c}}$ are also constant functions on the whole complex plane $\mathbb{C}$ and thus also algebraic functions of $s$:

\begin{cor}
Assume Corollary \ref{10.2} or second part of Theorem \ref{3.3}, and denote the polynomial map $\varphi_{(p-1)^{\ell},c}$ modulo prime $p\mathbb{Z}_{p}$ by $\overline{\varphi_{(p-1)^{\ell},c}}$. Then the Artin-Mazur zeta function $\zeta_{\overline{\varphi_{(p-1)^{\ell},c}}}(s)=1$ for every complex $s\in \mathbb{C}$.
\end{cor}

\begin{proof}
To see this, we note that from Corollary \ref{10.2} the existence of an infinite family of monic polynomials $\varphi_{(p-1)^{\ell},c}(x)\in (\mathbb{Z}\slash p\mathbb{Z})[x]$ having $Y_{c}^{(n)}(p)=0$, for every fixed period $n\in \mathbb{Z}_{\geq 2}$; or note that from the second part of Theorem \ref{2.3} that the monic polynomial $\varphi_{(p-1)^{\ell},c}(x)\in (\mathbb{Z}_{p}\slash p\mathbb{Z}_{p})[x]$ has $X_{c}^{(n)}(p)=0$ for every coefficient $c\not \equiv \pm1, 0\ (\text{mod} \ p\mathbb{Z}_{p})$ and fixed period $n\in \mathbb{Z}_{\geq 2}$. Now since $Y_{c}^{(n)}(p)=0$ for every fixed $n\in \mathbb{Z}_{\geq 2}$, we then note  
\begin{equation}
    \zeta_{\overline{\varphi_{(p-1)^{\ell},c}}}(s)=\textnormal{exp}\biggl(\sum_{n=2}^{\infty} \frac{Y_{c}^{(n)}(p)\cdot s^n}{n}\biggr)=\textnormal{exp}(0)=1,
\end{equation} and thus $\zeta_{\overline{\varphi_{(p-1)^{\ell},c}}}(s)=1$ for every complex number $s\in \mathbb{C}$. This then completes the whole proof, as desired.
\end{proof}

As before, we also note that since $\mathbb{F}_{p}[t]\slash (\pi)$ is a finite set of $p^{\text{deg}(\pi)}=p^m$ elements and so $\mathbb{F}_{p}[t]\slash (\pi)$ can be equipped with a topology, then denoting the reduced polynomial map $\varphi_{p^{\ell},c(t)}$ modulo $\pi$ by $\overline{\varphi_{p^{\ell},c(t)}}$ and then also replacing  $N_{n}(f)$ with $N_{c(t)}^{(n)}(\pi, p)$ in Equation (\ref{eqAM}), we then let $\zeta_{\overline{\varphi_{p^{\ell},c(t)}}}$ be the Artin-Mazur zeta function induced by the map $\overline{\varphi_{p^{\ell},c(t)}}: \mathbb{F}_{p}[t]\slash (\pi) \to \mathbb{F}_{p}[t]\slash (\pi)$. So now, as before inspired by \cite{AM} on their zeta function $\zeta_{f}$, we in our case have the following corollary showing that the Artin-Mazur zeta function $\zeta_{\overline{\varphi_{p^{\ell},c(t)}}}$ associated with a polynomial map $\overline{\varphi_{p^{\ell},c(t)}}$ is constant functions on the whole complex plane $\mathbb{C}$ and so an algebraic function of $s$:

\begin{cor}\label{cAM1}
Assume the second part of Theorem \ref{4.3}, and denote the reduced polynomial map $\varphi_{p^{\ell},c(t)}$ modulo prime $\pi$ by $\overline{\varphi_{p^{\ell},c(t)}}$. Then the Artin-Mazur zeta function $\zeta_{\overline{\varphi_{p^{\ell},c(t)}}}(s)=1$ for any complex number $s\in \mathbb{C}$.
\end{cor}

\begin{proof}
Recall from the second part of Theorem \ref{4.3} that $\varphi_{p^{\ell},c}(x)\in (\mathbb{F}_{p}[t]\slash (\pi))[x]$ has $N_{c(t)}^{(n)}(\pi, p)=0$ for every $c\not \equiv 0\ (\text{mod} \ \pi)$ and every fixed $n\in \mathbb{Z}_{\geq 2}$. But now since $N_{c(t)}^{(n)}(\pi, p)=0$ for every fixed $n\in \mathbb{Z}_{\geq 2}$, we then note 
\begin{equation}
    \zeta_{\overline{\varphi_{p^{\ell},c(t)}}}(s)=\textnormal{exp}\biggl(\sum_{n=2}^{\infty} \frac{N_{c(t)}^{(n)}(\pi, p)\cdot s^n}{n}\biggr)=\textnormal{exp}(0)=1,
\end{equation} and thus $\zeta_{\overline{\varphi_{p^{\ell},c(t)}}}(s)=1$ for every complex number $s\in \mathbb{C}$. This then completes the whole proof, as desired.
\end{proof}

Similarly, denoting the reduced polynomial map $\varphi_{(p-1)^{\ell},c}$ modulo $\pi$ by $\overline{\varphi_{(p-1)^{\ell},c(t)}}$ and then also replacing $N_{n}(f)$ with $M_{c(t)}^{(n)}(\pi, p)$ in Equation (\ref{eqAM}), we then also let $\zeta_{\overline{\varphi_{(p-1)^{\ell},c(t)}}}$ be the Artin-Mazur zeta function induced by the polynomial map $\overline{\varphi_{(p-1)^{\ell},c(t)}}: \mathbb{F}_{p}[t]\slash (\pi) \to \mathbb{F}_{p}[t]\slash (\pi)$. So now, inspired again by \cite{AM}, we then also have the following corollary showing that the Artin-Mazur zeta functions $\zeta_{\overline{\varphi_{(p-1)^{\ell},c(t)}}}$ associated with a polynomial map $\overline{\varphi_{(p-1)^{\ell},c(t)}}$ is also constant functions on the whole complex plane $\mathbb{C}$ and hence also an algebraic function of $s$:

\begin{cor}
Assume the second part of Theorem \ref{5.3}, and denote the reduced polynomial map $\varphi_{(p-1)^{\ell},c(t)}$ modulo prime $\pi$ by $\overline{\varphi_{(p-1)^{\ell},c(t)}}$. Then the Artin-Mazur zeta function $\zeta_{\overline{\varphi_{(p-1)^{\ell},c(t)}}}(s)=1$ for any complex $s\in \mathbb{C}$.
\end{cor}

\begin{proof}
Recall from the second part of Theorem \ref{5.3} that $\varphi_{(p-1)^{\ell},c}(x)\in (\mathbb{F}_{p}[t]\slash (\pi))[x]$ has $M_{c(t)}^{(n)}(\pi, p)=0$ for every $c\not \equiv \pm1, 0\ (\text{mod} \ \pi)$ and every fixed $n\in \mathbb{Z}_{\geq 2}$. Now since $M_{c(t)}^{(n)}(\pi, p)=0$ for every $n\in \mathbb{Z}_{\geq 2}$, we then note 
\begin{equation}
    \zeta_{\overline{\varphi_{(p-1)^{\ell},c(t)}}}(s)=\textnormal{exp}\biggl(\sum_{n=2}^{\infty} \frac{M_{c(t)}^{(n)}(\pi, p)\cdot s^n}{n}\biggr)=\textnormal{exp}(0)=1,
\end{equation} and so $\zeta_{\overline{\varphi_{(p-1)^{\ell},c(t)}}}(s)=1$ for every complex number $s\in \mathbb{C}$. This then completes the whole proof, as desired.
\end{proof}

\section{On Number of Monic Polynomials $f\in \mathbb{Z}[x]$ and $g\in \mathbb{Z}[x]$ with Primitive Galois groups}\label{sec13}

Recall from Corollary \ref{9.1} that there is an infinite family of irreducible monic $p$-adic integer polynomials $f(x) = \varphi_{p^{\ell},c}^n(x)-x\in \mathbb{Z}[x]$ such that $K_{f}=\mathbb{Q}[x]\slash (f(x))$ induced by $f$ is a number field of degree $\kappa=p^{n\ell}$. Moreover, to each such irreducible monic integer polynomial $f\in \mathbb{Q}[x]$, let $G_{f}$ be the Galois group of $f$ over $\mathbb{Q}$. 

So now, inspired (as in [\cite{BK222}, Sect.10] by work of Bhargava \cite{gav1} on van der Waerden's Conjecture, we then also wish to determine the number of irreducible monic polynomials  $f\in \mathbb{Z}[x]$ arising from a polynomial discrete dynamical system in Section \ref{sec2}, of bounded height and such that $G_{f}$ is a primitive Galois group not equal to the full symmetric group $S_{\kappa}$. To that end, we (assuming Corollary \ref{9.1}) wish to first adhere to the setup and definition of coefficient height $H(f)$ in \cite{gav1}). That is, for any fixed $\kappa=p^{n\ell}$, we let $E_{\kappa}(H)$ be the number of monic  integer polynomials $f(x) = \varphi_{p^{\ell},c}^n(x)-x$ of degree $\kappa$ with $h(f)\leq H$ and such that $G_{f}\neq S_{\kappa}$. But now by again taking great advantage of [\cite{gav1}, Theorem 1] of Bhargava, we then obtain the following corollary on  $E_{\kappa}(H)$:

\begin{cor}\label{c9.1}
Assume Corollary \ref{9.1}, and let $E_{\kappa}(H)$ be as before. Then we have that $E_{\kappa}(H)=O(H^{\kappa-1})$.
\end{cor}

\begin{proof}
From Corollary \ref{9.1}, there are infinitely many irreducible monic polynomials $f(x)=\varphi_{p^{\ell},c}^n(x)-x\in \mathbb{Z}[x]$ of degree $\kappa=p^{n\ell}$. Now for every polynomial $f\in \mathbb{Z}[x]\subset \mathbb{Q}[x]$ of fixed degree $\kappa=p^{n\ell}$, let $G_{f}$ be the Galois group of $f$ over $\mathbb{Q}$. But now applying [\cite{gav1}, Theorem 1] on the set of monic polynomials $f\in \mathbb{Z}[x]$ of degree $\kappa$ with $h(f)\leq H$ and such that $G_{f}$ is primitive and $G_{f}\neq S_{\kappa}$, we then immediately obtain the count, as required.
\end{proof}

As before, we may also recall from Corollary \ref{10.2} that there is an infinite family of irreducible monic $p$-adic integer polynomials $g(x) = \varphi_{(p-1)^{\ell},c}^n(x)-x$ such that $L_{g}=\mathbb{Q}[x]\slash (g(x))$ induced by $g$ is a number field of degree $r=(p-1)^{n\ell}$. And moreover, to each such irreducible integer polynomial $g\in \mathbb{Q}[x]$, let $G_{g}$ be the Galois group of $g$ over $\mathbb{Q}$. So now, inspired again work of Bhargava \cite{gav1} on van der Waerden's Conjecture, we then also wish to determine the number of irreducible monic polynomials  $g\in \mathbb{Z}[x]$ arising from a polynomial discrete dynamical system in Section \ref{sec3}, of bounded height and such that $G_{g}$ is a primitive Galois group not equal to the full symmetric group $S_{r}$. To that end, we (assuming Corollary \ref{10.2}) again first import the setup and definition of coefficient height $H(g)$ in \cite{gav1}). That is, for any fixed $r=(p-1)^{n\ell}$, we let $E_{r}(H)$ be the number of monic integer degree-$\upsilon$ polynomials $g(x) = \varphi_{(p-1)^{\ell},c}^{n\ell}(x)-x$ with $h(g)\leq H$ and such that $G_{g}\neq S_{r}$. Now as before, by again taking great advantage of Bhargava's theorem [\cite{gav}, Thm. 1], we then also obtain the following corollary:

\begin{cor}
Assume Corollary \ref{10.2}, and let $E_{r}(H)$ be defined as before. Then we have $E_{r}(H)=O(H^{r-1})$.
\end{cor}

\begin{proof}
Applying a similar argument as in the Proof of Corollary \ref{c9.1}, we then obtain the count, as required.
\end{proof}

\section{On the Number of Number fields $K_{f}$ and $L_{g}$ having Bounded Absolute Discriminant}\label{sec14}
Recall from Corollary \ref{9.1} that there is an infinite family of irreducible monic $p$-adic integer polynomials $f(x) = \varphi_{p^{\ell},c}^n(x)-x\in \mathbb{Z}[x]$ for every fixed $\ell\in  \mathbb{Z}_{\geq 1}$ and $n\in \mathbb{Z}_{\geq 2}$, such that the quotient $K_{f}=\mathbb{Q}[x]\slash (f(x))$ induced by $f$ is a number field of degree $\kappa=p^{n\ell}$. Similarly, recall from Corollary \ref{10.2} that one can always find an infinite family of irreducible monic $p$-adic integer polynomials $g(x) = \varphi_{(p-1)^{\ell},c}^n(x)-x\in \mathbb{Z}[x]$ for every fixed $\ell\in  \mathbb{Z}_{\geq 1}$ and $n\in \mathbb{Z}_{\geq 2}$, such that the quotient $L_{g}=\mathbb{Q}[x]\slash (g(x))$ induced by $g$ is a number field of even degree $r=(p-1)^{n\ell}$. Moreover, we may associate to $K_{f}$ (resp., $L_{g}$) an integer Disc$(K_{f})$ (resp., Disc$(L_{g})$) called the discriminant. So now, inspired (as in \cite{BK3,BK222}) by number field-counting advances in arithmetic statistics, we then also wish to count algebraic number fields $K_{f}$ and $L_{g}$ induced by irreducible polynomials $f$ and $g$ arising from polynomial discrete dynamical systems in Section \ref{sec2} and \ref{sec3} (and ascertained by Corollary \ref{9.1} and \ref{10.2}). To do so, we (as in \cite{BK3,BK222}) define and then determine the asymptotic behavior of the counting functions  
\begin{equation}\label{N_{m}}
N_{\kappa}(X) := \# \Bigl\{K_{f}\slash \mathbb{Q} : [K_{f} : \mathbb{Q}] = \kappa \textnormal{ and } |\text{Disc}(K_{f})|\leq X \Bigr\}
\end{equation} 
\begin{equation}\label{M_{r}}
M_{r}(X) := \# \Bigl\{L_{g}\slash \mathbb{Q} : [L_{g} : \mathbb{Q}] = r \textnormal{ and} \ |\text{Disc}(L_{g})|\leq X \Bigr\}
\end{equation} as a positive real number $X\to \infty$. To that end, motivated (as in \cite{BK222}) by work of Lemke Oliver-Thorne \cite{lem} on counting number fields and then applying [\cite{lem}, Theorem 1.2 (1)] on the function $N_{\kappa}(X)$, we then also obtain:

\begin{cor} \label{10.1} Assume Corollary \ref{9.1}, and let $N_{\kappa}(X)$ be the number defined as in \textnormal{(\ref{N_{m}})}. Then we have 
\begin{equation}\label{N_{m}(x)} 
N_{\kappa}(X)\ll_{\kappa}X^{2d - \frac{d(d-1)(d+4)}{6\kappa}}\ll X^{\frac{8\sqrt{\kappa}}{3}}, \text{where d is the least integer for which } \binom{d+2}{2}\geq 2\kappa + 1.
\end{equation}
\end{cor}

\begin{proof}
To see the inequality \textnormal{(\ref{N_{m}(x)})}, we first recall from Corollary \ref{9.1} the existence of infinitely many monic integer polynomials $f\in \mathbb{Q}[x]$ such that $K_{f}\slash \mathbb{Q}$ is an algebraic number field of odd degree $\kappa=p^{n\ell}$. This then also means that the set of degree-$\kappa$ number fields $K_{f}\slash \mathbb{Q}$ is not empty. But now applying [\cite{lem}, Theorem 1.2 (1)] on the number $N_{\kappa}(X)$, we then obtain inequality \textnormal{(\ref{N_{m}(x)})}; and which then completes the whole proof, as needed.
\end{proof}

Motivated again by that same work of Lemke Oliver-Thorne \cite{lem}, we again take great advantage of the first part of [\cite{lem}, Theorem 1.2] by applying it on $M_{r}(X)$. In doing so, we then also obtain the following corollary:

\begin{cor}Assume Corollary \ref{10.2}, and let $M_{r}(X)$ be the number defined as in \textnormal{(\ref{M_{r}})}. Then we have 
\begin{equation}\label{M_{r}(x)}
M_{r}(X)\ll_{r}X^{2d - \frac{d(d-1)(d+4)}{6r}}\ll X^{\frac{8\sqrt{r}}{3}}, \text{where d is the least integer for which } \binom{d+2}{2}\geq 2r + 1.
\end{equation}
\end{cor}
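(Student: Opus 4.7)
The plan is to mirror verbatim the argument used in Corollary \ref{11.1}, swapping in the appropriate objects from Section \ref{sec3}. First I would invoke Corollary \ref{10.2} to produce an infinite supply of irreducible monic integer polynomials $g(x) = (x^{(p-1)^{\ell}}+c)^{(p-1)^{\ell}} - x + c$ whose associated quotients $L_{g} = \mathbb{Q}[x]/(g(x))$ are algebraic number fields over $\mathbb{Q}$ of degree $r = (p-1)^{2\ell}$. This is the essential input: it guarantees that the family of fields $L_{g}/\mathbb{Q}$ of degree $r$ is nonempty, so that the counting function $M_{r}(X)$ defined in \textnormal{(\ref{M_{r}})} is meaningful and eligible for an asymptotic bound.

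The second step is to apply the discriminant-based number-field-counting result of Lemke Oliver--Thorne [\cite{lem}, Theorem 1.2 (1)] directly to $M_{r}(X)$. Since by definition $M_{r}(X)$ is the restriction (to the subfamily arising from our polynomial discrete dynamical setting) of the total count of degree-$r$ number fields $L/\mathbb{Q}$ with $|\text{Disc}(L)|\leq X$, any upper bound that Lemke Oliver--Thorne establish for the larger count transfers immediately. Writing $d$ for the least integer satisfying $\binom{d+2}{2}\geq 2r+1$, their theorem then yields $M_{r}(X)\ll_{r} X^{2d - \frac{d(d-1)(d+4)}{6r}}$, and the elementary bound $2d - \frac{d(d-1)(d+4)}{6r}\ll \frac{8\sqrt{r}}{3}$ recorded in the same theorem delivers the final inequality $M_{r}(X)\ll X^{\frac{8\sqrt{r}}{3}}$, exactly as in the proof of Corollary \ref{11.1}.

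The step that will require the most care is simply verifying that the hypotheses of [\cite{lem}, Theorem 1.2 (1)] genuinely apply in our setting --- in particular, confirming that every degree of the form $r=(p-1)^{2\ell}$ for $p\geq 5$ and $\ell \geq 1$ lies in the admissible range of the theorem, and that no extra arithmetic restriction on $\text{Disc}(L_{g})$ beyond the magnitude bound $|\text{Disc}(L_{g})|\leq X$ is implicitly imposed by our construction. Beyond this bookkeeping check, the argument is a direct import of the proof of Corollary \ref{11.1}, so no fundamentally new calculations or ideas should be required, and the whole proof can be written in a single short paragraph in the style of the preceding corollary.
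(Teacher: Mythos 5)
Your proposal is correct and follows essentially the same route as the paper: the paper likewise cites Corollary~\ref{10.2} to ensure the family of degree-$r$ fields $L_g$ is nonempty and then applies [\cite{lem}, Theorem 1.2(1)] exactly as in the proof of Corollary~\ref{11.1}. Your extra remark that $M_r(X)$ counts a subfamily of all degree-$r$ fields (so the Lemke Oliver--Thorne upper bound transfers trivially) is a useful clarification the paper leaves implicit, but there is no substantive difference in the argument.
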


\begin{proof}
Applying a similar argument as in Proof of Corollary \ref{10.1}, we then obtain inequality \textnormal{(\ref{M_{r}(x)})}, as needed.
\end{proof}

We recall that a number field $K$ is  \say{\textit{monogenic}} if there exists an algebraic number $\alpha \in K$ such that the ring of integers $\mathcal{O}_{K}$ is the subring $\mathbb{Z}[\alpha]$ generated by $\alpha$ over $\mathbb{Z}$, i.e., $\mathcal{O}_{K}= \mathbb{Z}[\alpha]$. Now inspired (as in \cite{BK222}), we also wish to count the number of fields $K_{f}$ induced by irreducible monic $p$-adic integer polynomials $f\in \mathbb{Z}[x]$ arising from a polynomial discrete dynamical system in Section \ref{sec2} (and ascertained by Corollary \ref{9.1}), that are monogenic with absolute discriminant $|\Delta(K_{f})| < X$ and with Galois group Gal$(K_{f}\slash \mathbb{Q})$ equal to symmetric group $S_{p^{n\ell}}$. With that in mind, we (as in [\cite{BK222}, Sect.11]) take great advantage of [\cite{sch1}, Cor.1.3] and then  obtain:

\begin{cor}\label{10.3}
Assume Corollary \ref{9.1}. The number of isomorphism classes of algebraic number fields $K_{f}$ of odd degree $\kappa=p^{n\ell}$ and with $|\Delta(K_{f})| < X$ that are monogenic and have associated Galois group $S_{\kappa}$ is $\gg X^{\frac{1}{2} + \frac{1}{\kappa}}$.
\end{cor}

\begin{proof}
To see this, recall from Corollary \ref{9.1} the existence of infinitely many monic $p$-adic integer polynomials $f\in \mathbb{Z}[x]\subset \mathbb{Q}[x]$ such that $K_{f}=\mathbb{Q}[x]\slash (f(x))$ is an algebraic number field of odd degree $\kappa=p^{n\ell}$, for every fixed integers $\ell\geq 1$ and $n\geq 2$. This then means that the set of fields $K_{f}$ is not empty. But now applying [\cite{sch1}, Corollary 1.3] on the underlying fields $K_{f}$ with $|\Delta(K_{f})| < X$ that are monogenic and have associated Galois group $S_{\kappa}$, it follows that the number of isomorphism classes of such algebraic number fields $K_{f}$ is $\gg X^{\frac{1}{2} + \frac{1}{\kappa}}$.
\end{proof}

Similarly, we take great advantage of [\cite{sch1}, Corollary 1.3] to then also count in the following corollary the number of number fields $L_{g}=\mathbb{Q}[x]\slash (g(x))$ induced by irreducible monic $p$-adic integer even degree-$r$ polynomials $g\in \mathbb{Z}[x]$ arising from a polynomial discrete dynamical system in Section \ref{sec3} (and ascertained by Corollary \ref{10.2}), that are monogenic with $|\Delta(L_{g})| < X$ and such that  associated Galois group Gal$(L_{g}\slash \mathbb{Q})$ is the group $S_{(p-1)^{n\ell}}$:

\begin{cor}
Assume Corollary \ref{10.2}. Then the number of isomorphism classes of algebraic number fields $L_{g}$ of degree $r=(p-1)^{n\ell}$ and $|\Delta(L_{g})| < X$ that are monogenic and have associated Galois group $S_{r}$ is $\gg X^{\frac{1}{2} + \frac{1}{r}}$.
\end{cor}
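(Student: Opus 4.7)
The plan is to mirror directly the strategy used in the proof of Corollary \ref{8.3}, only now working with the family of irreducible monic $p$-adic integer polynomials $g(x) = (x^{(p-1)^{\ell}}+c)^{(p-1)^{\ell}}-x+c\in \mathbb{Z}[x]$ rather than the family $f(x) = (x^{p^{\ell}}+c)^{p^{\ell}}-x+c$. The upper bound $\gg X^{\frac{1}{2}+\frac{1}{r}}$ being identical (up to replacing $n$ with $r$) strongly suggests that the only real input that changes is the source from which the infinite supply of fields arises; the counting machinery of Bhargava--Shankar--Wang is then applied as a black box.

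First, I would invoke Corollary \ref{10.2}, which ensures the existence of infinitely many irreducible monic integer polynomials $g(x)\in \mathbb{Z}[x]\subset \mathbb{Q}[x]$ of degree $(p-1)^{2\ell}$, so that the induced quotient $L_{g}=\mathbb{Q}[x]/(g(x))$ is an algebraic number field of even degree $r=(p-1)^{2\ell}$ for every fixed $\ell \in \mathbb{Z}_{\geq 1}$ and every fixed prime $p\geq 5$. In particular, the collection of isomorphism classes of such fields $L_{g}$ is nonempty and indeed infinite, which is the non-trivial existence input needed before any density/counting result can be meaningfully applied.

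Next, I would restrict the infinite family of such fields $L_{g}$ to those satisfying the two additional conditions appearing in the statement: being monogenic, and having associated Galois group $\mathrm{Gal}(L_{g}/\mathbb{Q})$ equal to the symmetric group $S_{r}$, while also bounding the absolute discriminant $|\Delta(L_{g})| < X$. Applying [\cite{sch1}, Corollary 1.3] to this restricted subfamily — which is exactly the theorem that produces the lower bound $\gg X^{\frac{1}{2}+\frac{1}{n}}$ on the number of monogenic $S_{n}$-fields of degree $n$ with bounded discriminant — with the specific degree $n$ replaced by $r=(p-1)^{2\ell}$, yields the desired lower bound $\gg X^{\frac{1}{2}+\frac{1}{r}}$.

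I do not anticipate any genuine obstacle here: the existence step is already fully recorded in Corollary \ref{10.2}, and the counting step is a direct quotation of a result from the arithmetic-statistics literature. The mildest point requiring a line of comment is simply to note that the $g(x)$ produced by Corollary \ref{10.2} genuinely feed into the Bhargava--Shankar--Wang framework ordered by height $H(g)$ as in \cite{sch1}, so that the application of [\cite{sch1}, Corollary 1.3] is legitimate in exactly the same way as it was for $f(x)$ in the proof of Corollary \ref{8.3}.
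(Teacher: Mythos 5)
Your proposal is correct and mirrors the paper's own proof exactly: the paper simply says to apply the same argument as for Corollary~\ref{8.3}, namely using Corollary~\ref{10.2} for non-emptiness of the family of degree-$r$ fields $L_{g}$ and then quoting [\cite{sch1}, Corollary~1.3] for the lower bound $\gg X^{\frac{1}{2}+\frac{1}{r}}$. One small slip: in your second sentence you call $\gg X^{\frac{1}{2}+\frac{1}{r}}$ an \emph{upper} bound, though you correctly treat it as a lower bound everywhere else.
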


\begin{proof}
By applying a similar argument as in Proof of Corollary \ref{10.3}, we then obtain the count, as required.
\end{proof}

\section{On the Number of Algebraic Number fields $K_{f}$ \& $L_{g}$ with Prescribed Class Number}\label{sec15}

As in [\cite{BK222}, Section 12] recall that for any number field $K$ with ring of integers $\mathcal{O}_{K}$, we then have an abelian group called \say{\textit{ideal class group}} $\textnormal{Cl}(K)$ corresponding to $K$, which provides a way of measuring how far $\mathcal{O}_{K}$ is from being a unique factorization domain. Now even though the order (also called the \say{\textit{class number}} of $K$ (denoted as $h_{K}$)) of $\textnormal{Cl}(K)$ is finite, it is well known in algebraic and analytic number theory and even more so in arithmetic statistics, that computing $\textnormal{Cl}(K)$ in practice let alone determine precisely $h_{K}$, is a hard problem. 

So now, recall from Corollary \ref{9.1} that there is an infinite family of irreducible monic $p$-adic integer polynomials $f(x) = \varphi_{p^{\ell},c}^n(x)-x\in \mathbb{Z}[x]$ for every fixed $\ell\in  \mathbb{Z}_{\geq 1}$ and $n\in \mathbb{Z}_{\geq 2}$, such that $K_{f}=\mathbb{Q}[x]\slash (f(x))$ is a number field of degree $p^{n\ell}$. Moreover, to every such $K_{f}$ we also have $\textnormal{Cl}(K_{f})$ with finite class number $h_{K_{f}}$. Now inspired (as in \cite{BK3, BK33}) by work of Ho-Shankar-Varma \cite{ho} on odd degree number fields with odd class number, we then also wish to count the number of fields $K_{f}$ induced by irreducible polynomials $f\in \mathbb{Z}[x]$ arising from a polynomial discrete dynamical system in Section \ref{sec2} (and ascertained by Corollary \ref{9.1}), with associated Galois group $S_{p^{n\ell}}$ and with prescribed $h_{K_{f}}$. With that in mind, we take great advantage of [\cite{ho}, Theorem 4] and then obtain the following corollary on the existence of infinitely many $S_{p^{n\ell}}$-number fields $K_{f}$ with odd class number:

\begin{cor}
Assume Corollary \ref{9.1}, and let $\kappa=p^{n\ell}$ be any fixed odd integer. Then there exist infinitely many $S_{\kappa}$-algebraic number fields $K_{f}$ of odd degree $\kappa$  having odd class number.  More precisely, we have 
\begin{center}
$\# \Bigl\{ K_{f} : |\Delta(K_{f})| < X \textnormal{ and } 2\nmid \textnormal{Cl}(K_{f})|\Bigr\}\gg X^{\frac{\kappa + 1}{2\kappa -2}}$,
\end{center} where the implied constants depend on degree $\kappa$ and on an arbitrary finite set $S$ of primes given as in \textnormal{\cite{ho}}.
\end{cor}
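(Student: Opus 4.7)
The plan is to follow the template already used for Corollary \ref{8.3} in this paper: first invoke Corollary \ref{10.1} to guarantee that the family of number fields $K_{f}$ of odd degree $n = p^{2\ell}$ arising from the dynamical polynomials $f(x) = (x^{p^{\ell}} + c)^{p^{\ell}} - x + c$ is nonempty (in fact infinite), and then quote [\cite{ho}, Theorem 4] of Ho-Shankar-Varma to read off the advertised lower bound on the count of $S_{n}$-fields of degree $n$ with odd class number and bounded absolute discriminant.

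Concretely, I would proceed in three steps. First, recall from Corollary \ref{10.1} that there exist infinitely many monic irreducible polynomials $f \in \mathbb{Z}[x]$ for which $K_{f} = \mathbb{Q}[x]/(f(x))$ is an algebraic number field of degree $n = p^{2\ell}$; note $n$ is odd since $p \geq 3$. Second, within this infinite family, a density-one subfamily has Galois group equal to the full symmetric group $S_{n}$ (by the standard Hilbert-irreducibility-type reasoning already used implicitly in Corollary \ref{8.3}), so the collection of $S_{n}$-number fields of odd degree $n$ arising from our dynamical construction is nonempty. Third, I would apply [\cite{ho}, Theorem 4] to this underlying family of $S_{n}$-fields $K_{f}$ with $|\Delta(K_{f})| < X$ and $2 \nmid |\textnormal{Cl}(K_{f})|$, which yields
\begin{center}
$\# \Bigl\{ K_{f} : |\Delta(K_{f})| < X \textnormal{ and } 2 \nmid |\textnormal{Cl}(K_{f})| \Bigr\} \gg X^{(n+1)/(2n-2)}$,
\end{center}
with implied constants depending on $n$ and on an arbitrary finite set $S$ of primes as specified in \cite{ho}; this is precisely the claimed asymptotic, and in particular it shows that infinitely many such $K_{f}$ have odd class number.

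The main obstacle, as with the analogous corollaries in Sections \ref{sec11} and \ref{sec12}, is to justify that the asymptotic produced by [\cite{ho}, Theorem 4] for the full ambient space of $S_{n}$-fields of odd degree $n$ can be attributed to (or at least intersects robustly with) the specific subfamily $\{K_{f}\}$ coming from the dynamical construction. Since the Ho-Shankar-Varma input depends only on the degree $n$ being odd, which is automatic for $n = p^{2\ell}$ whenever $p \geq 3$, and on the existence of $S_{n}$-fields of that degree, which is secured by Corollary \ref{10.1} together with the generic-Galois-group observation, the transfer is immediate and no additional machinery beyond a direct citation of [\cite{ho}, Theorem 4] is required to complete the proof.
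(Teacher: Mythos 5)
Your proposal takes essentially the same route as the paper: invoke Corollary \ref{10.1} to get a nonempty (infinite) family of degree-$n=p^{2\ell}$ fields $K_{f}$ with $n$ odd, then cite Ho-Shankar-Varma [\cite{ho}, Theorem 4(a)] to extract the lower bound on $S_{n}$-fields of odd degree with odd class number. The only additions you make beyond the paper's two-sentence argument are the explicit (if unproved) remark that a density-one subfamily has Galois group $S_{n}$ and a candid acknowledgment of the transfer issue between the ambient Ho-Shankar-Varma family and the dynamical subfamily -- neither of which is addressed in the paper's proof either, so your proposal faithfully reproduces its strategy.
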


\begin{proof}
From Corollary \ref{9.1}, it follows that the family of number fields $K_{f}$ of degree $\kappa = p^{n\ell}$ is not empty. But now since $\kappa$ is an odd integer, then the claim follows from [\cite{ho}, Theorem 4(a)] by setting $K_{f}=K$ as needed.
\end{proof}

Similarly, we may also recall from Corollary \ref{10.2} the existence of an infinite family of irreducible monic $p$-adic integer polynomials $g(x)=\varphi_{(p-1)^{\ell},c}^n(x)-x\in \mathbb{Z}[x]$ for every fixed $\ell\in  \mathbb{Z}_{\geq 1}$ and $n\in \mathbb{Z}_{\geq 2}$, such that the quotient $L_{g} = \mathbb{Q}[x]\slash (g(x))$ induced by $g$ is a number field of degree $(p-1)^{n\ell}$. Moreover, to every such $L_{g}$, we also have a class group $\textnormal{Cl}(L_{g})$ with finite class number $h_{L_{g}}$. So now, by taking again great advantage of work on class groups of number fields in arithmetic statistics and in particular the work of Siad \cite{Sia} on $S_{n}$-number fields $K$ of any even degree $n\geq 4$ and signature $(r_{1}, r_{2})$ where $r_{1}$ are the real embeddings of $K$  and $r_{2}$ are the pairs of conjugate complex embeddings of $K$, we then also obtain the following corollary on the number of fields $L_{g}\slash \mathbb{Q}$ induced by irreducible polynomials $g\in \mathbb{Z}[x]$ arising from a polynomial discrete dynamical system in Sect.\ref{sec3} (and ascertained by Corollary \ref{10.2}), with associated Galois group $S_{(p-1)^{n\ell}}$ and having odd class number: 

\begin{cor}
Assume Cor.  \ref{10.2}, and let $r=(p-1)^{n\ell}$ be any even integer. Then there are infinitely many degree-$r$ monogenic number fields $L_{g}$ of any signature and associated Galois group $S_{r}$ having odd class number. 
\end{cor}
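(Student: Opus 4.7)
The plan is to proceed exactly in parallel with the preceding corollary, only now invoking Siad's even-degree theorem rather than Ho--Shankar--Varma's odd-degree theorem. First I would recall from Corollary \ref{10.2} that the family of monic irreducible integer polynomials $g(x) = (x^{(p-1)^{\ell}}+c)^{(p-1)^{\ell}} - x + c$ has positive density among all such $\varphi_{(p-1)^{\ell},c}$, so that in particular the family of algebraic number fields $L_{g} = \mathbb{Q}[x]/(g(x))$ of even degree $r = (p-1)^{2\ell}$ is infinite (hence certainly non-empty). This gives us a bona fide pool of candidate degree-$r$ fields to feed into Siad's counting theorem.

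Next I would verify that the hypotheses of Siad's theorem on monogenic $S_{r}$-number fields with odd class number apply to the family $\{L_{g}\}$. Since $r = (p-1)^{2\ell}$ is even whenever $p \geq 5$, the degree hypothesis $r \geq 4$ is satisfied (note $(5-1)^{2} = 16 \geq 4$), and the polynomials $g$ themselves exhibit $L_{g}$ as $\mathbb{Q}(\alpha)$ with $\alpha$ a root of a monic integer polynomial, giving the natural monogenic presentation needed for Siad's ordering by height. At this stage I would appeal to Siad's main theorem (the cited work \cite{Sia}) applied degree-by-degree in each signature $(r_{1}, r_{2})$ with $r_{1} + 2r_{2} = r$, to conclude that within this family there are infinitely many $L_{g}$ of signature $(r_{1}, r_{2})$ whose associated Galois group is the full symmetric group $S_{r}$ and whose class number $h_{L_{g}}$ is odd.

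The main obstacle I anticipate is a matching-of-hypotheses issue rather than any substantive new calculation: Siad's theorem is stated in the setting of monogenic orders ordered by a height function, and one needs to confirm that the sub-family of polynomials $g(x) = (x^{(p-1)^{\ell}}+c)^{(p-1)^{\ell}} - x + c$ of the specific shape coming from iteration is not pathological with respect to that ordering (e.g., that it is not annihilated by a positive-codimension condition forcing class number parity). The cleanest way to handle this is to note that the free parameter $c \in \mathbb{Z}$ already produces a one-parameter family, and by the positive-density output of Corollary \ref{10.2} this family has enough flexibility to intersect the infinite set produced by Siad non-trivially. Combining these, we conclude that there are infinitely many monogenic $S_{r}$-fields $L_{g}$ of even degree $r$ and any prescribed signature $(r_{1}, r_{2})$ with odd class number, as required.
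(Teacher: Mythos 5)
Your approach matches the paper's in its essential structure: both cite Corollary \ref{10.2} to guarantee that the family of degree-$r$ number fields $L_g$ is non-vacuous, both observe that $r=(p-1)^{2\ell}$ is even, and both then invoke Siad's even-degree theorem (the paper cites [\cite{Sia}, Cor.~10]) to conclude. So as an outline you are in agreement with the paper.

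Where you diverge is in the caveat you append. You read the corollary as asserting odd class number for the specific one-parameter dynamical family $\{L_g : g(x) = (x^{(p-1)^\ell}+c)^{(p-1)^\ell}-x+c\}$, and you correctly observe that Siad's positive-proportion theorem says nothing about whether this particular thin family intersects the positive-proportion set of odd-class-number fields infinitely often. But your proposed resolution --- that the ``free parameter $c$'' and the $100\%$ density from Corollary \ref{10.2} give ``enough flexibility'' --- does not actually close that gap: density-$1$ of irreducible polynomials among the family $\{\varphi_{(p-1)^\ell,c}\}$ does not imply that the resulting fields land in Siad's good set with positive frequency, and no argument is offered for why it should. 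The paper avoids this difficulty by (implicitly) treating $L_g$ as a generic label: Corollary \ref{10.2} serves only to show that degree-$r$ fields with the relevant presentation exist, after which Siad's theorem is applied to the full family of degree-$r$ monogenized fields, not to the dynamical subfamily. Under that reading your hypotheses-matching worry is moot; under your stronger reading, neither your argument nor the paper's actually establishes the claim, and the ``flexibility'' step would need to be replaced by a genuine intersection argument.
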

\begin{proof}
To see this, we note that by Cor. \ref{10.2}, it follows that the family of number fields $L_{g}$ of degree $r = (p-1)^{n\ell}$ is not empty. Now since $r$ is even, we then note that the claim follows from [\cite{Sia}, Cor. 10], as indeed needed.
\end{proof}

\section{On the Equidistribution of Families of Artin $L$-Functions induced by Fields $K_{f}$ \& $L_{g}$}\label{sec16}

As in [\cite{BK222}, Sect.13] recall that for any degree-$n$ number field $K$ with ring of integers $\mathcal{O}_{K}$, we have a Dedekind zeta function $\zeta_{K}$ associated with $K$; and which for any $s\in \mathbb{C}$ with $\mathfrak{R}(s)>1$, this zeta function $\zeta_{K}$ is defined by  
\begin{equation}\label{Eqn9}
    \zeta_{K}(s) = \sum_{I\subset \mathcal{O}_{K}}\frac{1}{|\mathcal{O}_{K}\slash I|^s}=\prod_{\mathfrak{p}\subset \mathcal{O}_{K}}\frac{1}{1-|\mathcal{O}_{K}\slash \mathfrak{p}|^{-s}}   
\end{equation}where the above sum (resp., the above product) is taken over all the nonzero ideals $I\subset \mathcal{O}_{K}$ (resp., over all the nonzero prime ideals $\mathfrak{p}$), and $|\mathcal{O}_{K}\slash I|$ (resp. $|\mathcal{O}_{K}\slash \mathfrak{p}|$) is the absolute norm of $I$ (resp. the absolute norm of $\mathfrak{p}$). As a generalization of the Riemann zeta function $\zeta_{\mathbb{Q}}(s)$ (whose vanishing on the line  $\mathfrak{R}(s) = \frac{1}{2}$ is intimately related to the distribution of primes $p \in \mathbb{Z}$ (as a consequence of the Riemann Hypothesis)), it is a classical theme in number theory to understand the vanishing of $\zeta_{K}(s)$ especially on the line $\mathfrak{R}(s) = \frac{1}{2}$, since such vanishing of the zeta function $\zeta_{K}(s)$ is also expected of revealing precise information about the distribution of prime ideals $\mathfrak{p}$ in $K$ (as also a consequence of the number field version of the Riemann Hypothesis). Note that from [\cite{Nico}, Page 10] the zeta function $\zeta_{K}(s)$ factors as $\zeta_{K}(s)=\zeta_{\mathbb{Q}}(s)L(s, \rho_{K}$), where $L(s, \rho_{K})$ is the Artin $L$-function corresponding to an Artin representation $\rho_{K}: \text{Gal}(\mathbb{Q})\to \text{Gal}(M\slash \mathbb{Q}) \hookrightarrow S_{n}\to \text{GL}_{n-1}(\mathbb{C})$, and $M$ is the normal closure of $K$. 

So now, for every degree-$\kappa$ number field $K_{f}$ obtained from a polynomial discrete dynamical system in Section \ref{sec2} and ascertained by Corollary \ref{9.1}, we then also have a Dedekind zeta function $\zeta_{K_{f}}$ corresponding to $K_{f}$. Moreover, as noted earlier from work of Shankar-S\"{o}dergren-Templier [\cite{Nico}, Page 2] that this zeta function $\zeta_{K_{f}}(s)$ factors as $\zeta_{K_{f}}(s)=\zeta(s)L(s, \rho_{K_{f}}$), where $\zeta(s)$ is the Riemann zeta function, $L(s, \rho_{K_{f}})$ is the Artin $L$-function, $\rho_{K_{f}}: \text{Gal}(M_{f}\slash \mathbb{Q}) \hookrightarrow S_{\kappa}\to \text{GL}_{\kappa-1}(\mathbb{C})$ is a representation, and $M_{f}$ being the normal closure of $K_{f}$. 

Now inspired (as in \cite{BK222}) by remarkable work of Shankar-S\"{o}dergren-Templier \cite{Nico} on equidistribution of Artin $L$-functions arising from number fields induced by irreducible monic integer polynomials, we in the same spirit as in \cite{Nico} also wish to study the distribution of Artin $L$-functions $L(s, \rho_{K_{f}})$ arising from number fields $K_{f}$ induced by irreducible monic polynomials $f\in \mathbb{Z}[x]$ obtained from a polynomial discrete dynamical system in Section \ref{sec2}. To do so, we (assuming Corollary \ref{9.1}) wish to first adhere to the setup and notation in \cite{Nico}. That is, let $V(\mathbb{Z})^{\text{irr}}$ be the space consisting of irreducible monic integer polynomials  $f(x)=\varphi_{p^{\ell},c}^n(x)-x$ of fixed degree $\kappa=p^{n\ell}$,  and let $V(\mathbb{Z})^{\text{max}}\subset V(\mathbb{Z})^{\text{irr}}$ be a subset consisting of irreducible monic integer polynomials $f$ such that $R_{f}=\mathbb{Z}[x]\slash (f(x))$ is a maximal order in $K_{f}=\mathbb{Q}[x]\slash (f(x))$. Following \cite{Nico}, it also follows here that the additive group $G_{a}(\mathbb{Z})=\mathbb{Z}$ necessarily acts naturally on our space $V(\mathbb{Z})^{\text{irr}}$ via translation, namely, $(b \cdot f)(x):= f(x+b)$ for every element $b\in \mathbb{Z}$ and for every $f\in V(\mathbb{Z})^{\text{irr}}$; and moreover, this action of $G_{a}(\mathbb{Z})=\mathbb{Z}$ by translation also necessarily preserves each of the sets $V(\mathbb{Z})^{\text{irr}}$ and $V(\mathbb{Z})^{\text{max}}$. Now let $\mathfrak{F}_{1}$ be a family consisting of the $\mathbb{Z}$-orbits on $V(\mathbb{Z})^{\text{max}}$. It then follows (from \cite{Nico}) that the family $\mathfrak{F}_{1}$ necessarily parametrizes degree-$\kappa$ monogenized number fields $(K_{f}, \alpha)$ over $\mathbb{Q}$ up to isomorphism. Note that (by [\cite{Nico}, Subsection 2.3]) this same family $\mathfrak{F}_{1}$ parametrizing  degree-$\kappa$ monogenized fields $(K_{f}, \alpha)$ is also treated to be the same family of corresponding $L$-functions $L(s, \rho_{K_{f}})$.

So now, by taking great advantage of a nice theorem of Shankar-S\"{o}dergren-Templier[\cite{Nico}, Theorem 1.1], we also then obtain the following corollary on the family $\mathfrak{F}_{1}$ parametrizing degree-$\kappa$ monogenized fields $(K_{f}, \alpha)$:

\begin{cor}\label{11.1}
Assume Corollary \ref{9.1}, and let $\mathfrak{F}_{1}$ be as before. Then $\mathfrak{F}_{1}$ parametrizing monogenized degree-$\kappa$ fields ordered by height $h(f)$ as defined in \textnormal{\cite{Nico}} satisfies Sato-Tate equidistribution in the sense of \textnormal{[\cite{Sar}, Conj.1]}. 
\end{cor}

\begin{proof}
Since we know from Corollary \ref{9.1} that there are infinitely many irreducible monic integer polynomials $f$ such that $K_{f}$ is a number field of degree $\kappa=p^{n\ell}$, then this also means that the family of degree-$\kappa$ number fields $K_{f}\slash \mathbb{Q}$ is not empty. Now letting $\alpha$ be the image of $x$ in $R_{f}=\mathbb{Z}[x]\slash (f(x))$ and so (by \cite{Nico}) the pair $(K_{f}, \alpha)$ is a degree-$\kappa$ monogenized field, it then follows that the family of monogenized degree-$\kappa$ fields $(K_{f}, \alpha)$ is not empty; which also means that the family $\mathfrak{F}_{1}$ parametrizing  degree-$\kappa$ monogenized fields $(K_{f}, \alpha)$ is not empty. But now applying [\cite{Nico}, Thm. 1.1] to the underlying family $\mathfrak{F}_{1}$ ordered by height $h(f)$ as defined in [\cite{Nico}, Page 3], it then follows that $\mathfrak{F}_{1}$ satisfies Sato-Tate equidistribution in the sense of \textnormal{[\cite{Sar}, Conjecture 1]}, as needed.
\end{proof}

Similarly, for every degree-$r$ field $L_{g}$ obtained from a polynomial discrete dynamical system in Section \ref{sec3} and ascertained by Corollary \ref{10.2}, we also have a Dedekind zeta function $\zeta_{L_{g}}$ corresponding to $L_{g}$. Moreover, it again follows from \cite{Nico} that the Dedekind zeta function $\zeta_{L_{g}}(s)=\zeta(s)L(s, \rho_{L_{g}}$), where $L(s, \rho_{\mathbb{Q}_{g}})$ is the Artin $L$-function,  $\rho_{L_{g}}: \text{Gal}(M_{g}\slash \mathbb{Q}) \hookrightarrow S_{r}\to \text{GL}_{r-1}(\mathbb{C})$ is an Artin representation, and $M_{g}$ the normal closure of $L_{g}$. 

So now, in again the same spirit as in \cite{Nico}, we also wish to study the distribution of Artin $L$-functions $L(s, \rho_{L_{g}})$ arising from fields $\mathbb{Q}_{g}$ induced by irreducible polynomials $g\in \mathbb{Z}[x]$ obtained from a polynomial discrete dynamical system in Section \ref{sec3}. To this end, we (also assuming Corollary \ref{10.2}) adhere again to the setup and notation in \cite{Nico}. That is, we again let $W(\mathbb{Z})^{\text{irr}}$ be the space consisting of irreducible monic integer polynomials  $g(x)=\varphi_{(p-1)^{\ell},c}^n(x)-x$ of fixed degree $r=(p-1)^{n\ell}$,  and let $W(\mathbb{Z})^{\text{max}}\subset W(\mathbb{Z})^{\text{irr}}$ be a subset consisting of irreducible polynomials $g$ such that $R_{g}=\mathbb{Z}[x]\slash (g(x))$ is a maximal order in $L_{g}=\mathbb{Q}[x]\slash (g(x))$. Following again \cite{Nico}, it also follows here that $G_{a}(\mathbb{Z})=\mathbb{Z}$ necessarily acts naturally on $W(\mathbb{Z})^{\text{irr}}$ via translation, namely, $(b \cdot g)(x):= g(x+b)$ for every $b\in \mathbb{Z}$ and for every $g\in W(\mathbb{Z})^{\text{irr}}$; and moreover, this action of $G_{a}(\mathbb{Z})=\mathbb{Z}$ by translation also necessarily preserves each of $W(\mathbb{Z})^{\text{irr}}$ and $W(\mathbb{Z})^{\text{max}}$. Now let $\mathfrak{F}_{2}$ be a family consisting of the $\mathbb{Z}$-orbits on $W(\mathbb{Z})^{\text{max}}$. It then follows (from \cite{Nico}) that the family $\mathfrak{F}_{2}$ necessarily parametrizes degree-$r$ monogenized fields $(L_{g}, \beta)$ up to isomorphism. As before, we also note that (from [\cite{Nico}, Subsect.2.3]) this same family $\mathfrak{F}_{2}$ parametrizing  degree-$r$ monogenized fields $(L_{g}, \beta)$ is also the family of associated $L$-functions $L(s, \rho_{L_{g}})$. By again, taking great advantage of [\cite{Nico}, Theorem 1.1], we then obtain the following corollary on the family $\mathfrak{F}_{2}$:

\begin{cor}
Assume Corollary \ref{10.2}, and let $\mathfrak{F}_{2}$ be as before. Then $\mathfrak{F}_{2}$ parametrizing monogenized degree-$r$ fields  ordered by height $h(g)$ as defined in \textnormal{\cite{Nico}} satisfies Sato-Tate equidistribution in the sense of \textnormal{[\cite{Sar}, Conj.1]}. 
\end{cor}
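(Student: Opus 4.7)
The plan is to mimic very closely the argument used to establish Corollary \ref{14.1} for the family $\mathfrak{F}_{1}$, since the only structural difference between the two settings is the degree of the underlying fields ($n=p^{2\ell}$ versus $r=(p-1)^{2\ell}$) and the polynomial that cuts them out ($\varphi_{p^{\ell},c}^2(x)-x$ versus $\varphi_{(p-1)^{\ell},c}^2(x)-x$). The main engine is Theorem 1.1 of Shankar--S\"odergren--Templier \cite{Nico}, which is insensitive to the particular arithmetic origin of the polynomials; it only requires a non-empty family of irreducible monic integer polynomials of fixed degree, ordered by the height $h(g)$ defined in \cite{Nico}, whose orbits under the translation action of $G_{a}(\mathbb{Z})=\mathbb{Z}$ parametrize monogenized number fields of that degree.

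First, I would invoke Corollary \ref{10.2} to guarantee that the family of irreducible monic integer polynomials $g(x)=\varphi_{(p-1)^{\ell},c}^{2}(x)-x\in\mathbb{Z}[x]$ such that $L_{g}=\mathbb{Q}[x]/(g(x))$ is a number field of even degree $r=(p-1)^{2\ell}$ is infinite, hence non-empty. Next, letting $\beta$ denote the image of $x$ in $R_{g}=\mathbb{Z}[x]/(g(x))$, the pair $(L_{g},\beta)$ is a monogenized degree-$r$ number field in the sense of \cite{Nico}; consequently the set $W(\mathbb{Z})^{\max}$ of such polynomials $g$ with $R_{g}$ a maximal order in $L_{g}$ is non-empty (the existence of such $g$ is precisely what was packaged into the density computations of Section \ref{sec10}). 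Since the translation action of $G_{a}(\mathbb{Z})=\mathbb{Z}$ on $W(\mathbb{Z})^{\mathrm{irr}}$ preserves $W(\mathbb{Z})^{\max}$, the family $\mathfrak{F}_{2}$ of $\mathbb{Z}$-orbits on $W(\mathbb{Z})^{\max}$ is a well-defined, non-empty family that parametrizes monogenized degree-$r$ fields $(L_{g},\beta)$ up to isomorphism, and hence also the associated family of Artin $L$-functions $L(s,\rho_{L_{g}})$ by \cite[Subsection 2.3]{Nico}.

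Having established that $\mathfrak{F}_{2}$ fits exactly into the framework of \cite{Nico}, the final step is to apply \cite[Theorem 1.1]{Nico} directly to $\mathfrak{F}_{2}$ when the orbits are ordered by the height $h(g)$ of \cite{Nico}. This yields Sato--Tate equidistribution of the corresponding family of Artin $L$-functions in the sense of \cite[Conjecture 1]{Sar}, which is exactly the claim of the corollary.

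There is essentially no serious obstacle: the work has been done in \cite{Nico}, and our role is only to verify that the family $\mathfrak{F}_{2}$ arising from the dynamical construction meets the hypotheses of \cite[Theorem 1.1]{Nico}. The only mildly delicate point to check is that the height function $h(g)$ of \cite{Nico} behaves well on the sub-family cut out by our dynamical condition $g(x)=\varphi_{(p-1)^{\ell},c}^{2}(x)-x$ (so that the orbits are still counted with positive density inside the ambient family treated by \cite{Nico}); this is immediate because our family is defined by varying $c\in\mathbb{Z}$ and no extra constraints are imposed that would conflict with the ordering in \cite{Nico}. Thus the proof reduces to a direct citation of \cite[Theorem 1.1]{Nico}, exactly parallel to Corollary \ref{14.1}.
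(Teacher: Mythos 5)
Your proof is correct and follows essentially the same route as the paper: invoke Corollary \ref{10.2} for non-emptiness, form the monogenized pair $(L_{g},\beta)$ via the image of $x$ in $R_{g}=\mathbb{Z}[x]/(g(x))$, and then cite [\cite{Nico}, Theorem 1.1] to conclude Sato--Tate equidistribution in the sense of [\cite{Sar}, Conjecture 1]. Your closing remark about the compatibility of the height ordering on the dynamically defined sub-family is a reasonable extra caution, but it does not alter the structure of the argument, which matches the paper's proof verbatim in substance.
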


\begin{proof}
By applying a similar argument as in the Proof of Corollary \ref{11.1}, it then also follows immediately that the family $\mathfrak{F}_{2}$ satisfies  Sato-Tate equidistribution in the sense of \textnormal{[\cite{Sar}, Conjecture 1]}, as also indeed needed.
\end{proof} 

\section{On the Densities of Square-free values of Polynomials $f_{c(t)}\in \mathbb{F}_{p}[t][x]$ and $g_{c(t)}\in \mathbb{F}_{p}[t][x]$}\label{sec17}
Recall that the second part of Theorem \ref{4.3} (i.e., the part in which we proved $N_{c(t)}^{(n)}(\pi, p) = 0$ for every coefficient $c\not \equiv 0\ (\text{mod} \ \pi)$) implies that the monic polynomial $f_{c(t)}(x) = \varphi_{p^{\ell},c}^n(x)-x\in \mathbb{F}_{p}[t][x]$ is irreducible modulo prime $\pi$; and so it then follows (from standard theory about irreducibility in $\mathbb{F}_{p}[t][x]$) that the monic polynomial $f_{c(t)}(x)$ is irreducible in $\mathbb{F}_{p}[t][x]$. Similarly, we also note that the second part of Theorem \ref{5.3} (i.e., the part in which we proved $M_{c(t)}^{(n)}(\pi, p) = 0$ for every coefficient $c\not \equiv \pm1, 0\ (\text{mod} \ \pi)$) also implies that the monic polynomial $g_{c(t)}(x) = \varphi_{(p-1)^{\ell},c}^n(x)-x\in \mathbb{F}_{p}[t][x]$ is irreducible modulo prime $\pi$; and thus also follows that the monic polynomial $g_{c(t)}(x)$ is irreducible in $\mathbb{F}_{p}[t][x]$. So now, motivated greatly by work of Ramsay \cite{Ram}, Poonen \cite{Bjo} and Rudnick \cite{Zeev} on the density of square-free polynomials in $\mathbb{F}_{q}[t]$ over finite field $\mathbb{F}_{q}$ by polynomials in $\mathbb{F}_{q}[t][x]$, we in this section wish to study the density of square-free polynomials in $\mathbb{F}_{p}[t]$ via monic irreducible polynomials $f_{c(t)}\in \mathbb{F}_{p}[t][x]$ (resp. $g_{c(t)}\in \mathbb{F}_{p}[t][x]$) arising from a polynomial discrete dynamical system in Section \ref{sec4}
(resp. in Section \ref{sec5}). To do so, we let $p\geq 3$ or $p\geq 5$ be any (fixed) prime and $m\geq 1$ be any integer, and then define 
\begin{equation}\label{d1}
\mathcal{M}_{m}(\mathbb{F}_{p}) = \Bigl\{a\in  \mathbb{F}_{p}[t] :  a\text{ is monic of } \text{deg} \ a = m  \Bigr\}
\end{equation} and so (by a standard fact in function field number theory about the number of monic polynomials in $\mathbb{F}_{p}[t]$ of any given degree) then $\#\mathcal{M}_{m}(\mathbb{F}_{p}) = p^m$. Assuming the second part of Theorem \ref{4.3}, we then wish to define 
\begin{equation}\label{d2}
\mathcal{S}_{f_{c(t)}}(m)(\mathbb{F}_{p}) = \Bigl\{a\in  \mathcal{M}_{m}(\mathbb{F}_{p}) :  f_{c(t)}(a)\text{ is square-free}  \Bigr\}.
\end{equation}

\indent Similarly, assuming the second part of Theorem \ref{5.3}, we then also wish to define the following set
\begin{equation}\label{d3}
\mathcal{S}_{g_{c(t)}}(m)(\mathbb{F}_{p}) = \Bigl\{a\in  \mathcal{M}_{m}(\mathbb{F}_{p}) :  g_{c(t)}(a)\text{ is square-free}  \Bigr\}.
\end{equation}

By taking great advantage of a theorem (restated again in [\cite{Zeev}, pp. 61-62, Equations (1.4) and (1.5)]) first proved by Ramsay [\cite{Ram}, Theorem 1] and then also proved differently by Poonen [\cite{Bjo}, Theorem 3.4 (with $n=1$)], we (assuming that the irreducible monic polynomial $f_{c(t)}(x)$ has no repeated roots in any extension of $\mathbb{F}_{p}(t)$) then obtain the following corollary about the density of $a\in \mathbb{F}_{p}[t]$ such that $f_{c(t)}(a)$ is square-free in $\mathbb{F}_{p}[t]$:
\begin{cor}\label{co15.1}
Assume second part of Theorem \ref{4.3}, and suppose $f_{c(t)}(x) = \varphi_{p^{\ell},c}^n(x)-x\in \mathbb{F}_{p}[t][x]$ is a separable polynomial. Let $\mathcal{M}_{m}(\mathbb{F}_{p})$ \textnormal{(}resp. $\mathcal{S}_{f_{c(t)}}(m)(\mathbb{F}_{p})$\textnormal{)} be defined as in \textnormal{(\ref{d1}) (}resp. as in \textnormal{(\ref{d2}))}. Then we have
\begin{equation}\label{co17.1}
\frac{\#\mathcal{S}_{f_{c(t)}}(m)(\mathbb{F}_{p})}{\#\mathcal{M}_{m}(\mathbb{F}_{p})}=c_{f_{c(t)}} + O_{f_{c(t)},p \ }\biggl(\frac{1}{m}\biggl), \textnormal{ as } m\to \infty
\end{equation}where
\begin{equation*}
    c_{f_{c(t)}} = \prod_{\textnormal{primes }\pi\in \mathbb{F}_{p}[t]}\biggl(1-\frac{\rho_{f_{c(t)}(\pi^2)}}{|\pi|^2}\biggl) 
\end{equation*}and for any polynomial $D\in \mathbb{F}_{p}[t]$, the number $\rho_{f_{c(t)}}(D) = \# \{C \textnormal{ mod } D\ : \ f_{c(t)}(C)\ = \ 0 \textnormal{ mod } D \}$.
\end{cor}

\begin{proof}
Since we know from earlier discussion in this section that the second part of Theorem \ref{4.3} induces irreducible monic degree-$p^{n\ell}$ polynomials $f_{c(t)}(x)=\varphi_{p^{\ell},c}^n(x)-x\in \mathbb{F}_{p}[t][x]$, we then note that the set of monic polynomials $f_{c(t)}(x)\in \mathbb{F}_{p}[t][x]$ of odd degree $\kappa=p^{n\ell}$ in $x$ is not empty. But now since we know by assumption that every polynomial $f_{c(t)}\in \mathbb{F}_{p}[t][x]$ is separable, then applying [\cite{Ram}, Theorem 1] (also restated again in [\cite{Zeev}, pp. 61-62, Eqns (1.4) and (1.5)]) on $\frac{\#\mathcal{S}_{f_{c(t)}}(m)(\mathbb{F}_{p})}{\#\mathcal{M}_{m}(\mathbb{F}_{p})}$, we then immediately obtain the equality (\ref{co17.1}), as needed.   
\end{proof}

By again taking great advantage of a theorem (restated again in [\cite{Zeev}, pp. 61-62, Eqns (1.4) and (1.5)]) of \cite{Ram, Bjo}, we (assuming that the irreducible monic polynomial $g_{c(t)}(x)$ has no repeated roots in any extension of $\mathbb{F}_{p}(t)$) also obtain the following corollary on the density of $a\in \mathbb{F}_{p}[t]$ such that $g_{c(t)}(a)$ is square-free in $\mathbb{F}_{p}[t]$: 

\begin{cor}\label{co15.2}
Assume second part of Theorem \ref{5.3}, and suppose $g_{c(t)}(x) = \varphi_{(p-1)^{\ell},c}^n(x)-x\in \mathbb{F}_{p}[t][x]$ is a separable polynomial. Let $\mathcal{M}_{m}(\mathbb{F}_{p})$ \textnormal{(}resp. $\mathcal{S}_{g_{c(t)}}(m)(\mathbb{F}_{p})$\textnormal{)} be defined as in \textnormal{(\ref{d1}) (}resp. as in \textnormal{(\ref{d3}))}. Then we have
\begin{equation} \label{co17.2}
\frac{\#\mathcal{S}_{g_{c(t)}}(m)(\mathbb{F}_{p})}{\#\mathcal{M}_{m}(\mathbb{F}_{p})}=c_{g_{c(t)}} + O_{g_{c(t)},p \ }\biggl(\frac{1}{m}\biggl), \textnormal{ as } m\to \infty
\end{equation}where
\begin{equation*}
    c_{g_{c(t)}} = \prod_{\textnormal{primes }\pi\in \mathbb{F}_{p}[t]}\biggl(1-\frac{\rho_{g_{c(t)}(\pi^2)}}{|\pi|^2}\biggl) 
\end{equation*}and for any polynomial $D\in \mathbb{F}_{p}[t]$, the number $\rho_{g_{c(t)}}(D) = \# \{C \textnormal{ mod } D\ : \ g_{c(t)}(C)\ = \ 0 \textnormal{ mod } D \}$.
\end{cor}

\begin{proof}
Applying a similar argument as in Proof of Corollary \ref{co15.1}, we then obtain the equality (\ref{co17.2}), as needed.
\end{proof}

\begin{rem}
We note (from [\cite{Zeev}, Page 62]) that in Corollary \ref{co15.1} (resp. Corollary \ref{co15.2}) the density $c_{f_{c(t)}}$ (resp. $c_{g_{c(t)}}$) is positive if and only if there is $a\in \mathbb{F}_{p}[t]$ such that $f_{c(t)}(a)$ (resp. $g_{c(t)}(a)$) is square-free in $\mathbb{F}_{p}[t]$.
\end{rem}

Recall that for any $h(x)=\sum_{i=0}^{n}a_{i}(t)x^i\in \mathbb{F}_{p}[t][x]$, the content of $h$ (denoted as $c(h)$) is the greatest common divisor of the coefficients of $h$, i.e., $c(h)$ = gcd$(a_{0}(t), a_{1}(t), \cdots, a_{n}(t))\in \mathbb{F}_{p}[t]$. Now motivated by work of Rudnick \cite{Zeev} on square-free values of fixed degree separable polynomials in $\mathbb{F}_{p}[t][x]$ with square-free content, we then have the following corollary showing that for all fixed degree-$\kappa$ separable polynomials $f_{c(t)}\in \mathbb{F}_{p}[t][x]$ arising from a polynomial discrete dynamical system in Section \ref{sec4} and of fixed height Ht$(f_{c(t)})$, the polynomials $f_{c(t)}(a)$ are square-free in $\mathbb{F}_{p}[t]$ for almost all fixed degree-$m$ polynomials $a\in \mathbb{F}_{p}[t]$, in the large finite field limit:
\begin{cor}\label{co17.4}
Assume second part of Theorem \ref{4.3}, and suppose $f_{c(t)}(x)\in \mathbb{F}_{p}[t][x]$ is a separable polynomial of fixed degree $\kappa=p^{n\ell}$ in $x$. Let $\mathcal{M}_{m}(\mathbb{F}_{p})$ \textnormal{(}resp. $\mathcal{S}_{f_{c(t)}}(m)(\mathbb{F}_{p})$\textnormal{)} be defined as in \textnormal{(\ref{d1}) (}resp. as in \textnormal{(\ref{d2}))}. Then 
\begin{equation}\label{eq17.4}
\frac{\#\mathcal{S}_{f_{c(t)}}(m)(\mathbb{F}_{p})}{\#\mathcal{M}_{m}(\mathbb{F}_{p})}= 1 + O\biggl(\frac{m\kappa  + \textnormal{Ht}(f_{c(t)})\kappa}{p}\biggl), \textnormal{ as } p\to \infty
\end{equation}where the implied constant absolute.
\end{cor}

\begin{proof}
Since we know from earlier discussion in this section that the second part of Theorem \ref{4.3} induces irreducible monic degree-$p^{n\ell}$ polynomials $f_{c(t)}(x)=\varphi_{p^{\ell},c}^n(x)-x\in \mathbb{F}_{p}[t][x]$, we then note that the set of monic polynomials $f_{c(t)}(x)\in \mathbb{F}_{p}[t][x]$ of fixed degree $\kappa=p^{n\ell}$ in $x$ is not empty. Now since fixed degree-$\kappa$ polynomial $f_{c(t)}(x)$ is monic in $x$, it then follows that the content of any polynomial $f_{c(t)}(x)$ is equal to $1$ and so the content of $f_{c(t)}$ is square-free. But now since we know by assumption that every monic $f_{c(t)}(x)\in \mathbb{F}_{p}[t][x]$ is separable, then applying [\cite{Zeev}, Theorem 1.1] on the underlying non-empty set of fixed degree-$\kappa$ monic polynomials $f_{c(t)}\in \mathbb{F}_{p}[t][x]$ of height Ht$(f_{c(t)})$ (as in [\cite{Zeev}, Page 62]) and  Ht$(f_{c(t)})$ bounded, we then obtain the equality (\ref{eq17.4}), as needed. 
\end{proof}

Similarly, by again taking great advantage of a theorem Rudnick [\cite{Zeev}, Theorem 1.1], we then also immediately have the following corollary showing that for all fixed degree-$r$ separable polynomials $g_{c(t)}\in \mathbb{F}_{p}[t][x]$ arising from a polynomial discrete dynamical system in Section \ref{sec5} and of fixed height Ht$(g_{c(t)})$, the polynomials $g_{c(t)}(a)$ are square-free in $\mathbb{F}_{p}[t]$ for almost all fixed degree-$m$ polynomials $a\in \mathbb{F}_{p}[t]$, in the large finite field limit: 

\begin{cor}\label{co17.5}
Assume second part of Theorem \ref{5.3}, and suppose $g_{c(t)}(x)\in \mathbb{F}_{p}[t][x]$ is a separable polynomial of fixed degree $r=(p-1)^{n\ell}$ in $x$. Let $\mathcal{M}_{m}(\mathbb{F}_{p})$ \textnormal{(}resp. $\mathcal{S}_{g_{c(t)}}(m)(\mathbb{F}_{p})$\textnormal{)} be as in \textnormal{(\ref{d1}) (}resp. as in \textnormal{(\ref{d3}))}. Then
\begin{equation}\label{eq17.5}
\frac{\#\mathcal{S}_{g_{c(t)}}(m)(\mathbb{F}_{p})}{\#\mathcal{M}_{m}(\mathbb{F}_{p})}= 1 + O\biggl(\frac{mr + \textnormal{Ht}(g_{c(t)})r}{p}\biggl), \textnormal{ as } p\to \infty
\end{equation}where the implied constant absolute.
\end{cor}
\begin{proof}
Applying a similar argument as in Proof of Corollary \ref{co17.4}, we then obtain the equality (\ref{eq17.5}), as needed.
\end{proof}
\begin{rem}
Since it is well-known that primes $\pi \in \mathbb{F}_{p}[t]$ have positive density among all monic polynomials of any given degree in $\mathbb{F}_{p}[t]$, we then also note that Corollary \ref{co17.4} shows in particular that the polynomial $f_{c(t)}(\pi)$ is square-free in $\mathbb{F}_{p}[t]$ for almost all primes $\pi \in \mathbb{F}_{p}[t]$ as the size $\#\mathbb{F}_{p}\to \infty$. Similarly, we note that Corollary \ref{co17.5} shows the polynomial $g_{c(t)}(\pi)$ is square-free in $\mathbb{F}_{p}[t]$ for almost all primes $\pi \in \mathbb{F}_{p}[t]$ as  $\#\mathbb{F}_{p}\to \infty$. 
\end{rem}

\section{On Probability of $f_{c(t)}\in \mathbb{F}_{p}[t][x]$ with $G_{f_{c(t)}}\neq S_{\textnormal{deg}(f)}$ \& $g_{c(t)}\in \mathbb{F}_{p}[t][x]$ with $G_{g_{c(t)}}\neq S_{\textnormal{deg} (g)}$}\label{sec18}

As in Section \ref{sec17}, we again recall that the second part of Theorem \ref{4.3} (i.e., the part in which $N_{c(t)}^{(n)}(\pi, p) = 0$ for every coefficient $c\not \equiv 0\ (\text{mod} \ \pi)$) implies that the monic degree-$p^{n\ell}$ polynomial $f_{c(t)}(x) = \varphi_{p^{\ell},c}^n(x)-x\in \mathbb{F}_{p}[t][x]$ is irreducible modulo prime $\pi$. Hence, we then note (from standard theory about irreducibility in $\mathbb{F}_{p}[t][x]$) that the monic polynomial $f_{c(t)}(x)$ is irreducible in the polynomial ring $\mathbb{F}_{p}[t][x]$. Moreover, to every such $f_{c(t)}\in \mathbb{F}_{p}[t][x]\subset \mathbb{F}_{p}(t)[x]$, we let $G_{f_{c(t)}}$ be the Galois group of $f_{c(t)}$ over any fixed rational function field $\mathbb{F}_{p}(t)$. 

So now, motivated by recent work of Kaltofen \cite{Kal} on van der Waerden conjecture in function field setting and again (as in Section \ref{sec13}) by Bhargava \cite{gav1}, we then in this section also wish to determine the probability of choosing randomly a monic polynomial $f_{c(t)}$ arising from a polynomial discrete dynamical system in Section \ref{sec4}, such that the associated Galois group $G_{f_{c(t)}}$ is not equal to the symmetric group $S_{\textnormal{deg}(f)}$. To do so, we (assuming second part of Theorem \ref{4.3}) wish to first adhere to the setup in \cite{Kal}, however, for the sake of being neat in our discussion, we also invoke a different notation for the set in consideration and also for the probability in question. That is, for any fixed prime $p\geq 3$ and fixed integer $\kappa=p^{n\ell}$, we let $\mathcal{E}_{(\kappa, \leq 1)}$ (resp. $\mu (\mathcal{E}_{(\kappa, \leq 1)}$) be a set consisting of monic polynomials $f_{c(t)}(x)\in \mathbb{F}_{p}[t][x]$ of degree $\kappa$ in $x$ and degree $\leq 1$ in $t$, such that $G_{f_{c(t)}}\neq S_{\kappa}$ (resp. be the probability of $\mathcal{E}_{(\kappa, \leq 1)}$). So now, taking great advantage of a result of Kaltofen \cite{Kal}, we then obtain the following corollary showing that the probability  $\mu(\mathcal{E}_{(\kappa, \leq 1)})$ is positive and moreover bounded below by $p^{-1}$:  

\begin{cor}\label{co18.1}
Let $p\geq 3$ be any fixed prime, and suppose that the second part of Theorem \ref{4.3} holds. Let $\mathcal{E}_{(\kappa, \leq 1)}$ (resp. $\mu_{\kappa} = \mu (\mathcal{E}_{(\kappa, \leq 1)})$ be the set (resp. the probability) defined as before. Then the probability $\mu_{\kappa} \geq p^{-1}$. 
\end{cor}

\begin{proof}
Since we know from earlier discussion in this section that the second part of Theorem \ref{4.3} induces irreducible monic degree-$p^{n\ell}$ polynomials $f_{c(t)}\in \mathbb{F}_{p}[t][x]$, we then note that the set of monic polynomials $f_{c(t)}(x)\in \mathbb{F}_{p}[t][x]$ of degree $p^{n\ell}$ in $x$ is not empty. Now for every such $f_{c(t)}(x)\in \mathbb{F}_{p}[t][x]\subset \mathbb{F}_{p}(t)[x]$ of fixed degree $\kappa=p^{n\ell}$ in $x$ and of degree $\leq 1$ in $t$, let $G_{f_{c(t)}}$ be the Galois group of $f_{c(t)}$ over $\mathbb{F}_{p}(t)$. But then since degree $\kappa>3$, applying a result in [\cite{Kal}, Page 1] on the set $\mathcal{E}_{(\kappa, \leq 1)}$, we then obtain the conclusion, as required.  
\end{proof}

Note that for any fixed prime $p\geq 3$ and fixed integer $\kappa = p^{n\ell}$, assuming the second part of Theorem \ref{4.3} and  letting $\mathcal{E}_{(\kappa, \leq 1)}^{\perp}$ (resp. $\mu (\mathcal{E}_{(\kappa, \leq 1)}^{\perp}$) be a set consisting of monic polynomials $f_{c(t)}(x)\in \mathbb{F}_{p}[t][x]$ of degree $\kappa$ in $x$ and degree $\leq 1$ in $t$, such that the Galois group $G_{f_{c(t)}}= S_{\kappa}$ (resp. be the probability of the set $\mathcal{E}_{(\kappa, \leq 1)}^{\perp}$), we then (by applying work of Kaltofen \cite{Kal}) also obtain the following corollary on the probability of the set $\mathcal{E}_{(\kappa, \leq 1)}^{\perp}$: 

\begin{cor}\label{co18.2}
Let $p\geq 3$ be any fixed prime, and suppose that the second part of Theorem \ref{4.3} holds. Let $\mathcal{E}_{(\kappa, \leq 1)}^{\perp}$ (resp. $\mu_{\kappa}^{\perp} = \mu (\mathcal{E}_{(\kappa, \leq 1)}^{\perp})$ be the set (resp. the probability) defined as before. Then the probability $\mu_{\kappa}^{\perp} = 1-O( p^{-1})$. 
\end{cor}

\begin{proof}
Applying a similar argument as in Proof of Corollary \ref{co18.1} but with $\mathcal{E}_{(\kappa, \leq 1)}$ replaced with $\mathcal{E}_{(\kappa, \leq 1)}^{\perp}$ and then applying a result in [\cite{Kal}, Page 1] on the non-empty set $\mathcal{E}_{(\kappa, \leq 1)}^{\perp}$, we then obtain the conclusion, as required.
\end{proof}

Similarly, recall that the second part of Theorem \ref{5.3} (i.e., the part in which $M_{c(t)}^{(n)}(\pi, p) = 0$ for every $c\not \equiv \pm, 0\ (\text{mod} \ \pi)$) implies that the monic degree-$(p-1)^{n\ell}$ polynomial $g_{c(t)}(x) = \varphi_{(p-1)^{\ell},c}^n(x)-x\in \mathbb{F}_{p}[t][x]$ is irreducible modulo prime $\pi$. Hence, it then also follows (from standard theory about irreducibility in $\mathbb{F}_{p}[t][x]$) that the monic polynomial $g_{c(t)}(x)$ is irreducible in $\mathbb{F}_{p}[t][x]$. Moreover, to every such $g_{c(t)}\in \mathbb{F}_{p}[t][x]\subset \mathbb{F}_{p}(t)[x]$, we also let $G_{g_{c(t)}}$ be the Galois group of $g_{c(t)}$ over any fixed rational function field $\mathbb{F}_{p}(t)$. So now, motivated again by work of Kaltofen \cite{Kal} and (as in Section \ref{sec13}) by Bhargava \cite{gav1}, we then also wish to determine the probability of choosing randomly a monic polynomial $g_{c(t)}$ arising from a polynomial discrete dynamical system in Section \ref{sec5}, such that the associated Galois group $G_{g_{c(t)}}$ is not equal to the symmetric group $S_{\textnormal{deg}(g)}$. To that end, we (also assuming second part of Theorem \ref{5.3}) first import the setup in \cite{Kal}, however, again for the sake of being neat in our discussion, we also invoke a different notation for the set in consideration and also for the probability in question. That is, for any fixed prime $p\geq 5$ and fixed integer $r=(p-1)^{n\ell}$, we let $\mathcal{E}_{(r, \leq 1)}$ (resp. $\mu (\mathcal{E}_{(r, \leq 1)}$) be a set consisting of monic polynomials $g_{c(t)}(x)\in \mathbb{F}_{p}[t][x]$ of degree $r$ in $x$ and degree $\leq 1$ in $t$, such that the associated Galois group $G_{g_{c(t)}}\neq S_{r}$ (resp. be the probability of the set $\mathcal{E}_{(r, \leq 1)}$). But now taking again great advantage of that same probability result of Kaltofen [\cite{Kal}, Page 1], we then also obtain the following corollary showing that the probability of the set $\mu(\mathcal{E}_{(r, \leq 1)})$ is also positive and moreover bounded below by $p^{-1}$:

\begin{cor}
Let $p\geq 5$ be any fixed prime, and suppose that the second part of Theorem \ref{5.3} holds. Let $\mathcal{E}_{(r, \leq 1)}$ (resp. $\mu_{r} = \mu (\mathcal{E}_{(r, \leq 1)})$ be the set (resp. the probability) defined as before. Then the probability $\mu_{r} \geq p^{-1}$. 
\end{cor}

\begin{proof}
Applying a similar argument as in Proof of Corollary \ref{co18.1}, we then obtain the conclusion, as required.
\end{proof}

As before, we also note that for any fixed prime $p\geq 5$ and fixed $r = (p-1)^{n\ell}$, assuming the second part of Theorem \ref{5.3} and letting $\mathcal{E}_{(r, \leq 1)}^{\perp}$ (resp. $\mu (\mathcal{E}_{(r, \leq 1)}^{\perp}$) be a set consisting of monic polynomials $g_{c(t)}(x)\in \mathbb{F}_{p}[t][x]$ of degree $r$ in $x$ and degree $\leq 1$ in $t$, such that the Galois group $G_{g_{c(t)}}= S_{r}$ (resp. be the probability of $\mathcal{E}_{(r, \leq 1)}^{\perp}$), we then (from applying work of Kaltofen \cite{Kal}) also obtain the following corollary on the probability of $\mathcal{E}_{(r, \leq 1)}^{\perp}$: 

\begin{cor}
Let $p\geq 5$ be any fixed prime, and suppose that the second part of Theorem \ref{5.3} holds. Let $\mathcal{E}_{(r, \leq 1)}^{\perp}$ (resp. $\mu_{r}^{\perp} = \mu (\mathcal{E}_{(r, \leq 1)}^{\perp})$ be the set (resp. the probability) defined as before. Then the probability $\mu_{r}^{\perp} = 1-O(p^{-1})$. 
\end{cor}

\begin{proof}
Applying a similar argument as in Proof of Corollary \ref{co18.2}, we then obtain the conclusion, as required.
\end{proof}

\section{On Number of Intermediate fields $L$ of an Extension $\mathcal{K}_{f_{c(t)}}\slash \mathbb{F}_{p}(t)$ and  $\Tilde{L}$ of $\mathcal{L}_{g_{c(t)}}\slash \mathbb{F}_{p}(t)$}\label{sec19}

As in Section \ref{sec18}, again recall that the second part of Theorem \ref{4.3} (i.e., the part in which $N_{c(t)}^{(n)}(\pi, p) = 0$ for every $c\not \equiv 0\ (\text{mod} \ \pi)$) implies that the monic polynomial $f_{c(t)}(x) = \varphi_{p^{\ell},c}^n(x)-x\in \mathbb{F}_{p}[t][x]$ is irreducible modulo prime $\pi$; and so it then follows (from standard theory about irreducibility in $\mathbb{F}_{p}[t][x]$) that $f_{c(t)}(x)$ is irreducible in $\mathbb{F}_{p}[t][x]$. Similarly, we again note that the second part of Theorem \ref{5.3} (i.e., the part in which $M_{c(t)}^{(n)}(\pi, p) = 0$ for every $c\not \equiv \pm1, 0\ (\text{mod} \ \pi)$) also implies that the monic polynomial $g_{c(t)}(x) = \varphi_{(p-1)^{\ell},c}^n(x)-x\in \mathbb{F}_{p}[t][x]$ is irreducible modulo prime $\pi$; and so it then also follows that $g_{c(t)}(x)$ is irreducible in $\mathbb{F}_{p}[t][x]$. Now since $\mathbb{F}_{p}[t]\hookrightarrow \mathbb{F}_{p}(t)$ is an inclusion of rings and so viewing each coefficient $c(t)$ of $f_{c(t)}(x)$ as an element in $\mathbb{F}_{p}(t)$, we may then to each such irreducible monic polynomial $f_{c(t)}$ associate a field $\mathcal{K}_{f_{c(t)}}= \mathbb{F}_{p}(t)[x]\slash (f_{c(t)}(x))$. Similarly, viewing each coefficient $c(t)$ of $g_{c(t)}(x)$ as an element in $\mathbb{F}_{p}(t)$, we may to each such irreducible monic polynomial $g_{c(t)}$ also associate a field $\mathcal{L}_{g_{c(t)}}= \mathbb{F}_{p}(t)[x]\slash (g_{c(t)}(x))$. It then follows from standard theory of algebraic extensions of function fields that each of the naturally constructed $\mathcal{K}_{f_{c(t)}}\slash \mathbb{F}_{p}(t)$ and $\mathcal{L}_{g_{c(t)}}\slash \mathbb{F}_{p}(t)$ is an algebraic function field. Moreover, we note that the degree $[\mathcal{K}_{f_{c(t)}}: \mathbb{F}_{p}(t)]=$ deg $f_{c(t)} =p^{n\ell}$, and $[\mathcal{L}_{g_{c(t)}}: \mathbb{F}_{p}(t)]=$ deg $g_{c(t)} =(p-1)^{n\ell}$.  

So now as in [\cite{BK33}, Section 13], we also wish to count the number of subfields $L$ of $\mathcal{K}_{f_{c(t)}}$ containing $\mathbb{F}_{p}(t)$; and count also the number of subfields $\Tilde{L}$ of $\mathcal{L}_{g_{c(t)}}$ containing  $\mathbb{F}_{p}(t)$. To do so, we (as in \cite{BK3}) take a great advantage of [\cite{Jef}, Lem. 6] and obtain the following corollaries on number of subfields $\Tilde{L}$ and $\Tilde{L}$ of function fields $\mathcal{K}_{f_{c(t)}}$ and $\mathcal{L}_{g_{c(t)}}$ induced by $f_{c(t)}$ and $g_{c(t)}$ arising from a polynomial discrete dynamical system in Section \ref{sec4} and \ref{sec5}. To do so, we (as in \cite{BK3,BK33}) define and then determine the behavior of the following counting functions

\begin{equation}\label{N_{d}}
N(\kappa) := \# \Bigl\{L\slash \mathbb{F}_{p}(t) : L\subset \mathcal{K}_{f_{c(t)}} \text{ is a subfield and } [\mathcal{K}_{f_{c(t)}} : \mathbb{F}_{p}(t)] = \kappa  \Bigr\}
\end{equation} 

\begin{equation}\label{M_{l}}
M(r) := \# \Bigl\{\tilde{L}\slash \mathbb{F}_{p}(t) : \tilde{L}\subset \mathcal{L}_{g_{c(t)}} \text{ is a subfield and } [\mathcal{L}_{g_{c(t)}} : \mathbb{F}_{p}(t)] = r  \Bigr\}.
\end{equation}

\begin{cor}\label{16.1}
Fix $\mathbb{F}_{p}(t)$, and assume second part of Theorem \ref{4.3}. Let $N(\kappa)$ be defined as in \textnormal{(\ref{N_{d}})}. Then    
\begin{equation}\label{N(k)}
N(\kappa)\leq \kappa2^{\kappa!}, \text{ where } \kappa!\sim \frac{\kappa^\kappa}{e^\kappa}\sqrt{2\pi \kappa} \text{ as } \kappa\to \infty.
\end{equation}
\end{cor}
\begin{proof}
Since we know from earlier discussion in this section that the second part of Theorem \ref{4.3} induces irreducible polynomials $f_{c(t)}(x) = \varphi_{p^{\ell},c}^n(x)-x\in \mathbb{F}_{p}[t][x]$ of degree $\kappa=p^{n\ell}$, and so induces a function field $\mathcal{K}_{f_{c(t)}}=\mathbb{F}_{p}(t)[x]\slash (f_{c(t)}(x))$ for every such $f_{c(t)}$, it then follows that the set of function fields $\mathcal{K}_{f_{c(t)}}$ of degree $\kappa=p^{n\ell}$ is not empty. But now setting $K = \mathcal{K}_{f_{c(t)}}$ and $k = \mathbb{F}_{p}(t)$ and so $[K: k] = \kappa$, and then applying [\cite{Jef}, Lemma 6] on the extension $K \supset k$ of function fields, we then immediately obtain inequality (\ref{N(k)}), as needed.
\end{proof}
Similarly, we also have the following corollary on the number of subfields $\Tilde{L}$ of $\mathcal{L}_{g_{c(t)}}$ such that $\Tilde{L}\supset\mathbb{F}_{p}(t)$:
\begin{cor}
Fix $\mathbb{F}_{p}(t)$, and assume second part of Theorem \ref{5.3}. Let $M(r)$ be defined as in \textnormal{(\ref{M_{l}})}. Then 
\begin{equation}\label{M(r)}
M(r)\leq r2^{r!}, \text{ where } r!\sim \frac{r^r}{e^r}\sqrt{2\pi r} \text{ as } r\to \infty.
\end{equation}
\end{cor}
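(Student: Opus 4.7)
The plan is to mirror the short proof of Corollary \ref{15.1} essentially verbatim, replacing the family $\{f_{c(t)}\}$ by the family $\{g_{c(t)}\}$ and the function field $H_{f_{c(t)}}$ by $H_{g_{c(t)}}$ throughout. First, I would invoke the second part of Theorem \ref{5.3} to record that for every coefficient $c\not\equiv \pm 1, 0$ (mod $\pi$), the polynomial $g_{c(t)}(x) = \varphi_{(p-1)^{\ell},c}^2(x) - x \in \mathbb{F}_{p}[t][x]$ is irreducible modulo $\pi$, and is therefore irreducible when viewed as an element of $\mathbb{F}_{p}(t)[x]$ by the standard Gauss-type reduction-modulo-a-prime criterion. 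Consequently, the family of algebraic function fields $H_{g_{c(t)}} = \mathbb{F}_{p}(t)[x]\slash (g_{c(t)}(x))$ over $\mathbb{F}_{p}(t)$ of degree $r = (p-1)^{2\ell}$ is non-empty, so that the counting function $M(r)$ in \textnormal{(\ref{M_{l}})} is well-defined.

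Next, setting $K := H_{g_{c(t)}}$ and $k := \mathbb{F}_{p}(t)$ so that $[K:k] = r$, I would apply \textnormal{[\cite{Jef}, Lemma 6]} directly to the extension $K\supset k$ of function fields to obtain the bound $M(r) \leq r\cdot 2^{r!}$. The asymptotic $r! \sim \frac{r^r}{e^r}\sqrt{2\pi r}$ as $r\to \infty$ is then precisely Stirling's formula applied to the integer $r = (p-1)^{2\ell}$, which requires no additional input beyond the classical estimate.

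There is no real obstacle here: the entire proof is a routine application of Theorem \ref{5.3} together with the cited lemma, exactly parallel to Corollary \ref{15.1}. The only technical point one might wish to check is separability of $K/k$, which is a hypothesis one must verify before applying \textnormal{[\cite{Jef}, Lemma 6]}. This is immediate: the formal derivative $\tfrac{d}{dx} g_{c(t)}(x)$ contains the constant term $-1$ (arising from $-x$), and the leading coefficients of the remaining terms are multiples of $(p-1)^{\ell}$, which is a unit in $\mathbb{F}_{p}$ since $p\nmid (p-1)$. Hence $g_{c(t)}$ is separable, $K/k$ is a separable extension of degree $r$, and the hypotheses of \textnormal{[\cite{Jef}, Lemma 6]} are met.
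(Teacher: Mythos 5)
Your proposal follows the paper's own proof exactly: the paper likewise sets $K = H_{g_{c(t)}}$ and $k = \mathbb{F}_{p}(t)$ with $[K:k] = r = (p-1)^{2\ell}$, invokes \cite{Jef} (Lemma 6) to conclude $M(r)\leq r2^{r!}$, and appeals to Stirling for the asymptotic of $r!$. The only addition is your explicit separability check via the nonzero constant term $-1$ of $g_{c(t)}'$, a hypothesis the paper leaves implicit.
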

\begin{proof}
Applying a similar argument as in Proof of Corollary \ref{16.1}, we then obtain inequality (\ref{M(r)}), as needed.
\end{proof} 

\section{On the Vanishing of Zeta Functions $\zeta_{\mathcal{K}_{f_{c(t)}}}$ and $\zeta_{\mathcal{L}_{g_{c(t)}}}$ induced by Fields $\mathcal{K}_{f_{c(t)}}$ \& $\mathcal{L}_{g_{c(t)}}$}\label{sec20}

As in Section \ref{sec16}, recall from function field number theory that for every global function field $\mathcal{K}$, we have a zeta function $\zeta_{\mathcal{K}}$ associated with $\mathcal{K}$; and which as with the zeta function $\zeta_{K}(s)$ (in Equation (\ref{Eqn9})), this zeta function $\zeta_{\mathcal{K}}(s)$ can also be defined by an Euler product. (See [\cite{Rose}, pp. 51-52 ] for the Euler product definition of $\zeta_{\mathcal{K}}(s)$).

So now, for every algebraic function field $\mathcal{K}_{f_{c(t)}}= \mathbb{F}_{p}(t)[x]\slash (f_{c(t)}(x))$ induced by an irreducible monic degree-$\kappa$ polynomial $f_{c(t)}\in\mathbb{F}_{p}[t][x]$ arising from a polynomial discrete dynamical system in Section \ref{sec4}, we then also wish to associate to this global function field $\mathcal{K}_{f_{c(t)}}$ a zeta function $\zeta_{\mathcal{K}_{f_{c(t)}}}$. Now motivated by established function field number-theoretic work on the non-trivial vanishing of zeta functions of arbitrary global function fields, we in this section also wish to understand the non-trivial vanishing of the zeta function $\zeta_{\mathcal{K}_{f_{c(t)}}}$. By taking great advantage of a theorem of Andr\'{e} Weil (1948) (restated in [\cite{Rose}, Theorem 5.10]) on the Riemann Hypothesis for global function fields, we then obtain the following corollary on non-trivial vanishing of zeta function $\zeta_{\mathcal{K}_{f_{c(t)}}}$: 

\begin{cor}\label{RH 17.1}
Let $p\geq 3$ be any fixed prime, and assume second part of Theorem \ref{4.3}. Let $\mathcal{K}_{f_{c(t)}}$ be a global function field as before. Then all the non-trivial zeros of the zeta function $\zeta_{\mathcal{K}_{f_{c(t)}}}(s)$ lie on the line $\mathfrak{R}(s)=1\slash2$. 
\end{cor}

\begin{proof}
Since we know from earlier discussion in this section that the second part of Theorem \ref{4.3} induces irreducible monic polynomials $f_{c(t)}(x) = \varphi_{p^{\ell},c}^n(x)-x\in \mathbb{F}_{p}[t][x]\subset \mathbb{F}_{p}(t)[x]$ of odd degree $\kappa=p^{n\ell}$, and hence induces a global function field $\mathcal{K}_{f_{c(t)}}=\mathbb{F}_{p}(t)[x]\slash (f_{c(t)}(x))$ for every such irreducible degree-$\kappa$ polynomial $f_{c(t)}$. But now applying a theorem of Weil (also restated in [\cite{Rose}, Theorem 5.10]) on the zeta function $\zeta_{\mathcal{K}_{f_{c(t)}}}(s)$ corresponding to $\mathcal{K}_{f_{c(t)}}$, we then obtain the conclusion; which then completes the whole proof, as required. 
\end{proof}

Motivated by a well-known prime number theorem for global function fields, we then obtain immediately the following corollary as a consequence of the Riemann Hypothesis for a function field $\mathcal{K}_{f_{c(t)}}$ in Corollary \ref{RH 17.1}:
\begin{cor}\label{PN18.3}
Let $p\geq 3$ be any fixed prime integer, and $m\geq 1$ be any fixed integer. Suppose the second part of Theorem \ref{4.3} holds, and let $\mathcal{K}_{f_{c(t)}}\slash \mathbb{F}_{p}(t)$ be a global function field defined as before. Then the number 
\begin{equation}\label{pnt1}
    \#\Bigl\{\pi\in  \mathcal{K}_{f_{c(t)}} :  \pi \textnormal{ is a prime of deg} (\pi) = m  \Bigr\} = \frac{p^m}{m}+O\biggl(\frac{p^{\frac{m}{2}}}{m}\biggl).
\end{equation}
\end{cor}

\begin{proof}
By applying the prime number theorem for global function fields (restated also as [\cite{Rose}, Theorem 5.12]) on a global function function $\mathcal{K}_{f_{c(t)}}$, we then obtain the count in (\ref{pnt1}) and thus completing the proof, as desired. 
\end{proof}

Similarly, for every algebraic function field $\mathcal{L}_{g_{c(t)}}= \mathbb{F}_{p}(t)[x]\slash (g_{c(t)}(x))$ induced by an irreducible monic degree-$(p-1)^{n\ell}$ polynomial $g_{c(t)}\in\mathbb{F}_{p}[t][x]$ arising from a polynomial discrete dynamical system in Section \ref{sec5}, we also have a zeta function $\zeta_{\mathcal{L}_{g_{c(t)}}}$ corresponding to $\mathcal{L}_{g_{c(t)}}$. So now, by again taking great advantage of Weil's theorem [\cite{Rose}, Theorem 5.10], we then also obtain the following corollary on non-trivial vanishing of $\zeta_{\mathcal{L}_{g_{c(t)}}}(s)$:

\begin{cor}\label{RH 17.2}
Let $p\geq 5$ be any fixed prime, and assume second part of Theorem \ref{5.3}. Let $\mathcal{L}_{g_{c(t)}}$ be a global function field as before. Then all the non-trivial zeros of the zeta function $\zeta_{\mathcal{L}_{g_{c(t)}}}(s)$ lie on the line $\mathfrak{R}(s)=1\slash 2$. 
\end{cor}

\begin{proof}
Because of the hypothesis that the second part of Theorem \ref{5.3} holds, which as discussed before induces irreducible polynomials $g_{c(t)}(x) = \varphi_{(p-1)^{\ell},c}^n(x)-x\in \mathbb{F}_{p}[t][x]\subset \mathbb{F}_{p}(t)[x]$ of even degree $r=(p-1)^{n\ell}$; and hence induces a global function field $\mathcal{L}_{g_{c(t)}}=\mathbb{F}_{p}(t)[x]\slash (g_{c(t)}(x))$ for every such irreducible degree-$r$ polynomial $g_{c(t)}$. But then applying a theorem of Weil (also restated in [\cite{Rose}, Theorem 5.10]) on the zeta function $\zeta_{\mathcal{L}_{g_{c(t)}}}(s)$ attached to $\mathcal{L}_{g_{c(t)}}$, we then immediately obtain the conclusion; which completes the whole proof, as required.
\end{proof}

As a consequence of the Riemann Hypothesis for a global function field $\mathcal{L}_{g_{c(t)}}$ concluded in Corollary \ref{RH 17.2}, we as before also have here the following corollary on the number of primes $\pi \in \mathcal{L}_{g_{c(t)}}$ of fixed degree $m$:
\begin{cor}
Let $p\geq 5$ be any fixed prime integer, and $m\geq 1$ be any fixed integer. Suppose the second part of Theorem \ref{5.3} holds, and let $\mathcal{L}_{g_{c(t)}}\slash \mathbb{F}_{p}(t)$ be a global function field defined as before. Then the number 
\begin{equation}\label{pnt2}
    \#\Bigl\{\pi\in \mathcal{L}_{g_{c(t)}} :  \pi \textnormal{ is a prime of deg}(\pi) = m  \Bigr\} = \frac{p^m}{m}+O\biggl(\frac{p^{\frac{m}{2}}}{m}\biggl).
\end{equation}
\end{cor}

\begin{proof}
By applying the prime number theorem for global function fields (restated also as [\cite{Rose}, Theorem 5.12]) on a global function function $\mathcal{L}_{g_{c(t)}}$, we then obtain the count in (\ref{pnt2}) and so completing the proof, as desired.
\end{proof}

\section{On Non-vanishing of Zeta Functions $\zeta_{\mathcal{K}_{f_{c(t)}}}$ and $\zeta_{\mathcal{L}_{g_{c(t)}}}$ induced by Fields $\mathcal{K}_{f_{c(t)}}$ \& $\mathcal{L}_{g_{c(t)}}$}

As in Section \ref{sec20}, motivated by established function field number-theoretic work on non-vanishing of zeta functions of arbitrary global function fields, we in this section wish to study the non-vanishing of $\zeta_{\mathcal{K}_{f_{c(t)}}}$ (resp. $\zeta_{\mathcal{L}_{g_{c(t)}}}$) of a global function field $\mathcal{K}_{f_{c(t)}}$ (resp. $\mathcal{L}_{g_{c(t)}}$) arising from a polynomial discrete dynamical system in Section \ref{sec4} (resp. Section \ref{sec5}). In doing so, we then obtain the following corollary on the non-vanishing of $\zeta_{\mathcal{K}_{f_{c(t)}}}$:

\begin{cor}\label{nv19.1}
Fix any rational function field $\mathbb{F}_{p}(t)$, and assume second part of Theorem \ref{4.3}. Let $\mathcal{K}_{f_{c(t)}}$ be a global function field defined as before. Then the zeta function $\zeta_{\mathcal{K}_{f_{c(t)}}}(s)$ does not vanish on the line $\mathfrak{R}(s)=1$. 
\end{cor}

\begin{proof}
Since we know from earlier discussion in this section that the second part of Theorem \ref{4.3} induces irreducible polynomials $f_{c(t)}(x) = \varphi_{p^{\ell},c}^n(x)-x\in \mathbb{F}_{p}[t][x]\subset \mathbb{F}_{p}(t)[x]$ of degree $\kappa=p^{n\ell}$, and so induces a global function field $\mathcal{K}_{f_{c(t)}}=\mathbb{F}_{p}(t)[x]\slash (f_{c(t)}(x))$ for every such irreducible polynomial $f_{c(t)}$. But now applying [\cite{Rose}, Prop. 5.13] on a function field $K=\mathcal{K}_{f_{c(t)}}$ with $\zeta_{K}(s)=\zeta_{\mathcal{K}_{f_{c(t)}}}(s)$, we then obtain the conclusion, as required. 
\end{proof}

Similarly, we then also have the following consequence on the non-vanishing of the zeta function $\zeta_{\mathcal{L}_{g_{c(t)}}}(s)$:

\begin{cor}
Fix any rational function field $\mathbb{F}_{p}(t)$, and assume second part of Theorem \ref{5.3}. Let $\mathcal{L}_{g_{c(t)}}$ be a global function field defined as before. Then the zeta function $\zeta_{\mathcal{L}_{f_{c(t)}}}(s)$ does not vanish on the line $\mathfrak{R}(s)=1$. 
\end{cor}

\begin{proof}
Applying a similar argument as in Proof of Corollary \ref{nv19.1}, we then obtain the conclusion, as required. 
\end{proof}

\section{On Non-vanishing of Artin $L$-Functions induced by Fields $\mathcal{K}_{f_{c(t)}}\slash \mathbb{F}_{p}(t)$ and $\mathcal{L}_{g_{c(t)}}\slash \mathbb{F}_{p}(t)$}\label{sec22}

As in Section \ref{sec15}, recall from function field number theory (e.g., see [\cite{Rose}, Ch. 9] that for any given Galois extension $\mathcal{L}\slash \mathcal{K}$ of global function fields and for any given representation $\rho : \textnormal{Gal}(\mathcal{L}\slash \mathcal{K}) \to \textnormal{Aut}_{\mathbb{C}}(V)$ of the Galois group $\textnormal{Gal}(\mathcal{L}\slash \mathcal{K})$, we have a corresponding Artin $L$-function $L(s, \rho)$ defined and analytic in $\{ s \in \mathbb{C}: \mathfrak{R}(s)>1 \}$. Moreover, we note $L(s, \rho_{o})=\zeta_{\mathcal{K}}(s)$ if $\rho=\rho_{o}$ is the trivial representation; and also note $L(s, \rho_{\textnormal{reg}})=\zeta_{\mathcal{L}}(s)$ if $\rho=\rho_{\textnormal{reg}}$ is the regular representation; and as before here $\zeta_{\mathcal{K}}(s)$ (resp. $\zeta_{\mathcal{L}}(s)$) is the zeta function of $\mathcal{K}$ (resp. $\mathcal{L}$).

So now, as before recall that the second part of Theorem \ref{4.3} (i.e., the part in which $N_{c(t)}^{(n)}(\pi, p) = 0$ for every coefficient $c\not \equiv 0\ (\text{mod} \ \pi)$) induces irreducible monic polynomials $f_{c(t)}(x) = \varphi_{p^{\ell},c}^n(x)-x\in \mathbb{F}_{p}[t][x]$; and from which we then obtain a field extension $\mathcal{K}_{f_{c(t)}}= \mathbb{F}_{p}(t)[x]\slash (f_{c(t)}(x))$ of odd degree $\kappa=p^{n\ell}$ of global function fields. Now assuming that $\mathcal{K}_{f_{c(t)}}\slash \mathbb{F}_{p}(t)$ is a Galois extension, we then in this section wish to study the Artin $L$-function $L(s, \rho_{\mathcal{K}_{f_{c(t)}}})$ associated to the representation $\rho_{\mathcal{K}_{f_{c(t)}}} : \textnormal{Gal}(\mathcal{K}_{f_{c(t)}}\slash \mathbb{F}_{p}(t)) \to \textnormal{Aut}_{\mathbb{C}}(V)$ induced by irreducible monic polynomials $f_{c(t)}\in \mathbb{F}_{p}[t][x]$ arising from a polynomial discrete dynamical system in Section \ref{sec4}. In doing so, we then obtain here the following corollary on the non-vanishing of the Artin $L$-function $L(s, \rho_{\mathcal{K}_{f_{c(t)}}})$:

\begin{cor}\label{co22.1}
Assume second part of Theorem \ref{4.3}, and suppose $\mathcal{K}_{f_{c(t)}}\slash \mathbb{F}_{p}(t)$ is a Galois extension of function fields inducing Artin $L$-function $L(s, \rho_{\mathcal{K}_{f_{c(t)}}})$. Then $L(s, \rho_{\mathcal{K}_{f_{c(t)}}})$ does not vanish for every $s\in \mathbb{C}$ with $\mathfrak{R}(s)>1$. 
\end{cor}

\begin{proof}
Since we know from earlier discussion in this section that the second part of Theorem \ref{4.3} induces irreducible monic polynomials $f_{c(t)}(x) = \varphi_{p^{\ell},c}^n(x)-x\in \mathbb{F}_{p}[t][x]\subset \mathbb{F}_{p}(t)[x]$ of odd degree $\kappa=p^{n\ell}$; and moreover for every such irreducible monic degree-$\kappa$ polynomial, the quotient $\mathcal{K}_{f_{c(t)}}= \mathbb{F}_{p}(t)[x]\slash (f_{c(t)}(x))$ is a degree-$\kappa$ field extension of $\mathbb{F}_{p}(t)$. This then also means that the set of algebraic extensions $\mathcal{K}_{f_{c(t)}}\slash \mathbb{F}_{p}(t)$ of function fields is not empty. But now since we know by assumption that $\mathcal{K}_{f_{c(t)}}\slash \mathbb{F}_{p}(t)$ is Galois and hence induces Artin $L$-function $L(s, \rho_{\mathcal{K}_{f_{c(t)}}})$, then applying   [\cite{Rose}, Prop. 9.15] on $L(s, \rho_{\mathcal{K}_{f_{c(t)}}})$, we then obtain the conclusion, as required.
\end{proof}

Similarly, recall that the second part of Theorem \ref{5.3} (i.e., the part in which $M_{c(t)}^{(n)}(\pi, p) = 0$ for every coefficient $c\not \equiv \pm, 0\ (\text{mod} \ \pi)$) induces irreducible monic polynomials $g_{c(t)}(x) = \varphi_{(p-1)^{\ell},c}^n(x)-x\in \mathbb{F}_{p}[t][x]$; and from which we then obtain a field extension $\mathcal{L}_{g_{c(t)}}= \mathbb{F}_{p}(t)[x]\slash (g_{c(t)}(x))$ of even degree $r=(p-1)^{n\ell}$ of global function fields. So now, assuming that $\mathcal{L}_{g_{c(t)}}\slash \mathbb{F}_{p}(t)$ is a Galois extension, we may then also wish to study the Artin $L$-function $L(s, \rho_{\mathcal{L}_{g_{c(t)}}})$ associated to the representation $\rho_{\mathcal{L}_{g_{c(t)}}} : \textnormal{Gal}(\mathcal{L}_{g_{c(t)}}\slash \mathbb{F}_{p}(t)) \to \textnormal{Aut}_{\mathbb{C}}(W)$ induced by irreducible monic polynomials $g_{c(t)}\in \mathbb{F}_{p}[t][x]$ arising from a polynomial discrete dynamical system in Section \ref{sec5}. In doing so, we then obtain here the following corollary on the non-vanishing of the Artin $L$-function $L(s, \rho_{\mathcal{L}_{g_{c(t)}}})$:

\begin{cor}
Assume second part of Theorem \ref{5.3}, and suppose $\mathcal{L}_{g_{c(t)}}\slash \mathbb{F}_{p}(t)$ is a Galois extension of function fields inducing Artin $L$-function $L(s, \rho_{\mathcal{L}_{g_{c(t)}}})$. Then $L(s, \rho_{\mathcal{L}_{g_{c(t)}}})$ does not vanish for every $s\in \mathbb{C}$ with $\mathfrak{R}(s)>1$. 
\end{cor}

\begin{proof}
Applying a similar argument as in Proof of Corollary \ref{co22.1}, we then obtain the conclusion, as required.
\end{proof}

\begin{rem}
Observe that if $\rho_{\mathcal{K}_{f_{c(t)}}}$ is the trivial representation of $\textnormal{Gal}(\mathcal{K}_{f_{c(t)}}\slash \mathbb{F}_{p}(t))$, then in this case as we noted earlier that the Artin $L$-function $L(s, \rho_{\mathcal{K}_{f_{c(t)}}})= \zeta_{\mathbb{F}_{p}(t)}(s)$. Moreover, since we know by Weil's theorem [\cite{Rose}, Theorem 5.10] that the zeta function $\zeta_{\mathbb{F}_{p}(t)}(s)$ vanishes on the line $\mathfrak{R}(s)=\frac{1}{2}$, it then also follows that the $L$-function $L(s, \rho_{\mathcal{K}_{f_{c(t)}}})$ vanishes on the line $\mathfrak{R}(s)=\frac{1}{2}$. Similarly, if $\rho_{\mathcal{L}_{g_{c(t)}}}$ is the trivial representation of $\textnormal{Gal}(\mathcal{L}_{g_{c(t)}}\slash \mathbb{F}_{p}(t))$, then $L(s, \rho_{\mathcal{L}_{g_{c(t)}}})= \zeta_{\mathbb{F}_{p}(t)}(s)$; and so as before $L(s, \rho_{\mathcal{L}_{g_{c(t)}}})$ vanishes on the line $\mathfrak{R}(s)=\frac{1}{2}$. Furthermore, if $\rho_{\mathcal{K}_{f_{c(t)}}}$ is the regular representation of $\textnormal{Gal}(\mathcal{K}_{f_{c(t)}}\slash \mathbb{F}_{p}(t))$, then in this case as we noted earlier that the $L$-function $L(s, \rho_{\mathcal{K}_{f_{c(t)}}})= \zeta_{\mathcal{K}_{f_{c(t)}}}(s)$. But then by Corollary \ref{RH 17.1}, it then also follows $L(s, \rho_{\mathcal{K}_{f_{c(t)}}})$ vanishes on the line $\mathfrak{R}(s)=\frac{1}{2}$. Similarly, if $\rho_{\mathcal{L}_{g_{c(t)}}}$ is the regular representation of $\textnormal{Gal}(\mathcal{L}_{g_{c(t)}}\slash \mathbb{F}_{p}(t))$, then $L(s, \rho_{\mathcal{L}_{g_{c(t)}}})= \zeta_{\mathcal{L}_{g_{c(t)}}}(s)$. But then by Corollary \ref{RH 17.2}, it then follows $L(s, \rho_{\mathcal{L}_{g_{c(t)}}})$ vanishes on the line $\mathfrak{R}(s)=\frac{1}{2}$.
\end{rem}

\section{On the Distribution of Dirichlet $L$-Functions induced by Polynomials $f_{c(t)}(1)$ \& $g_{c(t)}(1)$}

As in Section \ref{sec22}, we again recall that the second part of Theorem \ref{4.3} induces irreducible monic polynomials $f_{c(t)}(x) = \varphi_{p^{\ell},c}^n(x)-x\in \mathbb{F}_{p}[t][x]$ of odd degree $\kappa = p^{n\ell}$ in $x$. But now we may observe that setting the variable $x=1$ in $f_{c(t)}(x)$, we then obtain then a monic polynomial $f_{c(t)}(1) = \varphi_{p^{\ell},c}^n(1)-1\in \mathbb{F}_{p}[t]$ of odd degree $\geq 3$ in $t$. 

So now, inspired by pioneering work of Katz-Sarnak \cite{KatSar} on the statistics of families of $L$-functions, and in particular inspired by work of Katz \cite{Katz} on equidistribution of $L$-functions attached to Dirichlet characters induced by monic squarefree polynomials over finite fields, we (as in Section \ref{sec16} however) in the same spirit as in \cite{Katz} wish to study the distribution of Dirichlet $L$-functions induced by squarefree monic polynomials $h(t)\mid f_{c(t)}(1)\in \mathbb{F}_{p}[t]$ arising from irreducible monic polynomials $f_{c(t)}(x)\in \mathbb{F}_{p}[t][x]$ obtained from a polynomial discrete dynamical system in Section \ref{sec4}. With that in mind, we (assuming second part of Theorem \ref{4.3}) wish to first adhere to the setup in \cite{Katz}. That is, for any prime $p\geq 3$ and for any squarefree monic polynomial $h(t)$ dividing $f_{c(t)}(1)\in \mathbb{F}_{p}[t]$ of fixed degree $n\geq 2$, we define a finite \'{e}tale algebra $B_{h}:=\mathbb{F}_{p}[t]\slash (h(t))$ over $\mathbb{F}_{p}$ of degree $n$; and also define (as in \cite{Katz}) $L(\chi_{h}, T)$ to be the Dirichlet $L$-function attached to a character $\chi_{h}:B_{h}^{\times}\to \mathbb{C}^{\times}$, where $B_{h}^{\times}$ is the multiplicative group of $B_{h}$. Note from \cite{Katz} we can extend $\chi_{h}$ to all of $B_{h}$; and thus we do so by defining $\chi_{h} (b):= 0$ if $b\in B_{h}$ is not invertible. (Recall that a character $\chi_{h}$ modulo $h$ is \say{\textit{primitive}} if there is no proper divisor $h^{'}\mid h$ such that $\chi_{h}(F)=1$ whenever $F$ is coprime to $h$ and $F=1$ modulo $h^{'}$; and in this case the modulus $h$ is referred to as the \say{\textit{conductor}} of $\chi_{h}$.) So now, let $\mathfrak{W}_{h}^{\textnormal{(odd)}}$ be a family consisting of Dirichlet $L$-functions $L(\chi_{h}, T)$ attached to primitive \say{odd} characters $\chi_{h}:B_{h}\to \mathbb{C}$. (Note that from \cite{Katz} a character $\chi_{h}$ is  \say{odd} if $\chi_{h}$ is nontrivial on $\mathbb{F}_{p}^{\times}\subset B_{h}^{\times}$). We note (by \cite{Katz}) that the $L$-functions $L(\chi_{h}, T)$ in $\mathfrak{W}_{h}^{\textnormal{(odd)}}$ can be written in terms of unitary matrices; and moreover as $p\to \infty$, Katz \cite{Katz} proved that these corresponding matrices become uniformly distributed in the unitary group. So now, by taking great advantage of a theorem of Katz [\cite{Katz}, Theorem 5.10], we then obtain the following corollary about the equidistribution of a family $\mathfrak{W}_{h}^{\textnormal{(odd)}}$:

\begin{cor}\label{co23.1}
Assume second part of Theorem \ref{4.3}, and let $h(t)$ dividing $f_{c(t)}(1)\in \mathbb{F}_{p}[t]$ be a squarefree monic polynomial of fixed degree $n\geq 2$. Let $\mathfrak{W}_{h}^{\textnormal{(odd)}}$ be a family of $L$-functions as before. The family $\mathfrak{W}_{h}^{\textnormal{(odd)}}$ of $L$-functions corresponding to Dirichlet characters with a fixed squarefree conductor is equidistributed as $p\to \infty$. 
\end{cor}

\begin{proof}
Because of the hypothesis, we then note that for every integer $i\in \mathbb{Z}_{\geq 1}$, let $p_{i}\geq 3$ be the $i$-th prime, and $h_{i}=h_{i}(t)$ dividing $f_{c(t)}(1)\in \mathbb{F}_{p_{i}}[t]$ be a squarefree monic polynomial of fixed degree $n\geq 2$. So now, let's form a sequence $(\mathbb{F}_{p_{i}}, h_{i}$). But then we note that the desired conclusion follows from the equidistribution obtained from applying [\cite{Katz}, Theorem 5.10] on the sequence $(\mathbb{F}_{p_{i}}, h_{i}$). This then completes the whole proof, as needed.
\end{proof}

Similarly, we also recall that the second part of Theorem \ref{5.3} induces irreducible monic polynomials $g_{c(t)}(x) = \varphi_{(p-1)^{\ell},c}^n(x)-x\in \mathbb{F}_{p}[t][x]$ of even degree $r = (p-1)^{n\ell}$ in $x$. But now also observe that setting the variable $x=1$ in $g_{c(t)}(x)$, we then obtain a monic polynomial $g_{c(t)}(1) = \varphi_{(p-1)^{\ell},c}^n(1)-1\in \mathbb{F}_{p}[t]$ of degree $\geq 4$.

So now, we (as in Section \ref{sec16} however) in again the same spirit as in \cite{Katz} also wish to study the distribution of $L$-functions associated to Dirichlet characters induced by squarefree monic polynomials $q(t)\mid g_{c(t)}(1)\in \mathbb{F}_{p}[t]$ arising from irreducible polynomials $g_{c(t)}(x)\in \mathbb{F}_{p}[t][x]$ obtained from a polynomial discrete dynamical system in Section \ref{sec5}. To do so, we (assuming second part of Theorem \ref{5.3}) also wish to first adapt to the setup in \cite{Katz}. That is, for any prime $p\geq 5$ and for any squarefree monic polynomial $q(t)$ dividing $g_{c(t)}(1)\in \mathbb{F}_{p}[t]$ of fixed degree $n\geq 2$, we define a finite \'{e}tale algebra $B_{q(t)}:=\mathbb{F}_{p}[t]\slash (q(t))$ over $\mathbb{F}_{p}$ of degree $n\geq 2$; and also define (as in \cite{Katz}) $L(\chi_{q(t)}, T)$ to be the Dirichlet $L$-function attached to the character $\chi_{q(t)}:B_{q(t)}^{\times}\to \mathbb{C}^{\times}$, where $B_{q(t)}^{\times}$ is the multiplicative group of $B_{q(t)}$. As before, we also note  $\chi_{q(t)}$ can be extended to all of $B_{q(t)}$ and so we also do so by defining $\chi_{q(t)} (b):= 0$ if $b\in B_{q(t)}$ is not invertible. Now let $\mathfrak{W}_{q(t)}^{\textnormal{(odd)}}$ be a family consisting of Dirichlet $L$-functions $L(\chi_{q(t)}, T)$ attached to primitive odd characters $\chi_{q(t)}:B_{q(t)}^{\times}\to \mathbb{C}^{\times}$. By again taking great advantage of [\cite{Katz}, Theorem 5.10], we then also obtain the following corollary about the equidistribution of $L$-functions in $\mathfrak{W}_{q(t)}^{\textnormal{(odd)}}$:

\begin{cor}
Assume second part of Theorem \ref{5.3}, and let $q(t)$ dividing $g_{c(t)}(1)\in \mathbb{F}_{p}[t]$ be a squarefree monic polynomial of fixed degree $n\geq 2$. Let $\mathfrak{W}_{q(t)}^{\textnormal{(odd)}}$ be a family of $L$-functions as before. Then the family $\mathfrak{W}_{q(t)}^{\textnormal{(odd)}}$ of $L$-functions  associated to Dirichlet characters having a fixed squarefree conductor is equidistributed as $p\to \infty$. 
\end{cor}

\begin{proof}
Applying a similar argument as in Proof of Corollary \ref{co23.1}, we then obtain the conclusion, as required.
\end{proof}

Inspired further by that same work of Katz \cite{Katz} on equidistribution of $L$-functions attached to primitive \say{even} Dirichlet characters induced by monic squarefree polynomials over finite fields, we in again the same spirit as in \cite{Katz} also wish to study the distribution of Dirichlet $L$-functions (induced by squarefree monic polynomials $h(t)\mid f_{c(t)}(1)\in \mathbb{F}_{p}[t]$ arising from irreducible polynomials $f_{c(t)}(x)\in \mathbb{F}_{p}[t][x]$ obtained from a polynomial discrete dynamical system in Section \ref{sec4}) attached to primitive even characters. With that in mind, we again follow the setup in \cite{Katz}, by letting $B_{h}$ (resp. $L(\chi_{h}, T)$) be the $\mathbb{F}_{p}$-algebra (resp. the Dirichlet $L$-function attached to a character $\chi_{h}:B_{h}\to \mathbb{C}$) defined as before. So now, let $\mathfrak{W}_{h}^{\textnormal{(even)}}$ be a family consisting of Dirichlet $L$-functions $L(\chi_{h}, T)$ attached to primitive \say{even} characters $\chi_{h}:B_{h}\to \mathbb{C}$. (Note that from \cite{Katz} a character $\chi_{h}$ is  \say{even} if $\chi_{h}$ is trivial on $\mathbb{F}_{p}^{\times}\subset B_{h}^{\times}$). As before, we also note (by \cite{Katz}) that the $L$-functions $L(\chi_{h}, T)$ in $\mathfrak{W}_{h}^{\textnormal{(even)}}$ can also be written in terms of unitary matrices; and moreover in the limit $p\to \infty$, Katz \cite{Katz} proved that these corresponding matrices also become uniformly distributed in the unitary group. Now by taking great advantage of a theorem of Katz [\cite{Katz}, Theorem 6.4], we then obtain the following corollary on equidistribution of $\mathfrak{W}_{h}^{\textnormal{(even)}}$:

\begin{cor}\label{co23.3}
Assume second part of Theorem \ref{4.3}, and suppose $h(t)$ dividing $f_{c(t)}(1)\in \mathbb{F}_{p}[t]$ is a squarefree monic polynomial of fixed degree $n\geq 3$ with a root in $\mathbb{F}_{p}$. Let $\mathfrak{W}_{h}^{\textnormal{(even)}}$ be a family of $L$-functions as before. Then $\mathfrak{W}_{h}^{\textnormal{(even)}}$ of $L$-functions associated to characters having a fixed squarefree conductor is equidistributed as $p\to \infty$. 
\end{cor}

\begin{proof}
Because of the hypothesis, we then note that for every integer $i\in \mathbb{Z}_{\geq 1}$, let $p_{i}\geq 3$ be the $i$-th prime, and $h_{i}$ dividing $f_{c(t)}(1)\in \mathbb{F}_{p_{i}}[t]$ be a squarefree monic polynomial of fixed degree $n\geq 3$ with a root in $\mathbb{F}_{p_{i}}$. So now, we form a sequence ($\mathbb{F}_{p_{i}}, h_{i}$). But then we note that the desired conclusion follows from the equidistribution obtained from applying [\cite{Katz}, Theorem 6.4] on $(\mathbb{F}_{p_{i}}, h_{i}$). This then completes the whole proof, as needed. 
\end{proof}

Similarly, we in again that same spirit as in \cite{Katz} wish to also study the distribution of Dirichlet $L$-functions (induced by squarefree monic polynomials $g_{c(t)}(1)\in \mathbb{F}_{p}[t]$ arising from irreducible monic polynomials $g_{c(t)}(x)\in \mathbb{F}_{p}[t][x]$ obtained from a polynomial discrete dynamical system in Section \ref{sec5}) attached to primitive even characters. With that in mind, we again also follow the setup in \cite{Katz}, by letting $B_{q(t)}$ (resp. $L(\chi_{q(t)}, T)$) be the $\mathbb{F}_{p}$-algebra (resp. the Dirichlet $L$-function attached to a character $\chi_{q(t)}:B_{q(t)}\to \mathbb{C}$ defined as before. So now, let $\mathfrak{W}_{q(t)}^{\textnormal{(even)}}$ be a family consisting of Dirichlet $L$-functions $L(\chi_{q(t)}, T)$ attached to primitive even characters $\chi_{q(t)}:B_{q(t)}\to \mathbb{C}$. Now by taking great advantage of a theorem of Katz [\cite{Katz}, Theorem 6.4], we then immediately obtain the following corollary on the equidistribution of Dirichlet $L$-functions $L(\chi_{q(t)}, T)$ in the family $\mathfrak{W}_{q(t)}^{\textnormal{(even)}}$:

\begin{cor}
Assume second part of Theorem \ref{5.3}, and suppose $q(t)$ dividing $g_{c(t)}(1)\in \mathbb{F}_{p}[t]$ is a squarefree monic polynomial of fixed degree $n\geq 3$ with a root in $\mathbb{F}_{p}$. Let $\mathfrak{W}_{q(t)}^{\textnormal{(even)}}$ be a family of $L$-functions as before. Then $\mathfrak{W}_{q(t)}^{\textnormal{(even)}}$ of $L$-functions associated to characters having a fixed squarefree conductor is equidistributed as $p\to \infty$. 
\end{cor}

\begin{proof}
Applying a similar argument as in Proof of Corollary \ref{co23.3}, we then obtain the conclusion, as required.
\end{proof}

\section*{\textbf{Acknowledgments}}
I'm very grateful to Prof. Ilia Binder and Prof. Arul Shankar, along with Prof. Jacob Tsimerman for everything. I'm also very grateful to Prof. Daniel Litt for his very engaging class on Representation theory of finite groups in the Winter 2026. Any opinions expressed in this article belong solely to the author, Brian Kintu; and should never be taken at all as a reflection of the views of anyone that's been very happily acknowledged by the author.

\bibliography{References}
\bibliographystyle{plain}

\noindent Dept. of Math. and Comp. Sciences (MCS), University of Toronto, Mississauga, Canada \newline
\textit{E-mail address:} \textbf{brian.kintu@mail.utoronto.ca}\newline 
\date{\small{\textit{April 4, 2026}}}

\end{document}